\documentclass[12pt]{amsart}

\usepackage[headheight=1cm, left=2cm, right=2cm, top=3cm, bottom=2cm,
marginparwidth=1.7cm, marginparsep=4mm]{geometry}

\usepackage[utf8]{inputenc}
\usepackage[T1]{fontenc}

\usepackage{amsthm}
\usepackage{amsmath}
\usepackage{amsfonts}
\usepackage{amssymb}

\usepackage{xcolor}

\usepackage[colorlinks,
    colorlinks,
    linkcolor={red!80!black},
    citecolor={blue!80!black},
]{hyperref}

\newcommand{\CC}{\mathbb{C}}
\newcommand{\PP}{\mathbb{P}}

\numberwithin{equation}{section}
\counterwithin{figure}{section}
\newtheorem{theorem}{Theorem}[section]
\newtheorem{corollary}[theorem]{Corollary}
\newtheorem{lemma}[theorem]{Lemma}

\newtheorem{proposition}[theorem]{Proposition}
\newtheorem{conjecture}[theorem]{Conjecture}
\theoremstyle{definition}
\newtheorem{definition}[theorem]{Definition}
\newtheorem{remark}[theorem]{Remark}
\newtheorem{example}[theorem]{Example}

\usepackage{todonotes}
\usepackage[dvipsnames]{xcolor}
\usepackage{booktabs}
\usepackage{setspace}
\usetikzlibrary{matrix,arrows, cd, calc, bending, quotes}

\usepackage{enumitem}
\setlist[enumerate,1]{label=(\alph*), ref=(\alph*), itemsep=0em}

\newcommand{\kk}{\Bbbk}
\newcommand{\OO}{\mathcal{O}}
\newcommand{\spann}[1]{\left\langle #1\right\rangle}

\DeclareMathOperator{\End}{End}
\DeclareMathOperator{\GL}{GL}
\DeclareMathOperator{\SL}{SL}

\DeclareMathOperator{\expdim}{expdim}
\DeclareMathOperator{\pr}{pr}

\newcommand{\BBname}{Bia{\l}ynicki-Birula}%
\newcommand{\Gmult}{\mathbb{G}_m}%

\DeclareMathOperator{\Hom}{Hom}
\DeclareMathOperator{\Spec}{Spec}

\DeclareMathOperator{\Gr}{Gr}
\DeclareMathOperator{\Hilb}{Hilb}
\DeclareMathOperator{\Quot}{Quot}
\DeclareMathOperator{\VSP}{VSP}
\DeclareMathOperator{\cVSP}{cVSP}
\newcommand{\VSPbord}{\underline{VSP}}

\newcommand{\VV}{\mathcal{V}}
\newcommand{\WW}{\mathcal{W}}
\newcommand{\ZZ}{\mathcal{Z}}
\newcommand{\VVhat}{\VV_{\,\hat{i}}}
\newcommand{\Vhatpar}[1]{V_{\,\hat{#1}}}
\newcommand{\Vhat}{\Vhatpar{i}}

\newcommand{\pihat}{\widehat{\pi}}
\newcommand{\rhohat}{\widehat{\rho}}
\newcommand{\alphahat}{\widehat{\alpha}}
\newcommand{\UU}{\mathcal{U}}
\newcommand{\Tp}{\widetilde{T}^i}
\newcommand{\Tppar}[1]{\widetilde{T}^{#1}}
\newcommand{\Tpparfr}[1]{{\widetilde{T}^{\mathrm{fr}#1}}}
\newcommand{\varphifr}{\varphi^{\mathrm{fr}}}
\newcommand{\Fppar}[1]{\widetilde{F}^{#1}}
\newcommand{\Xppar}[1]{X'^{#1}}
\newcommand{\GLV}{\GL(V_{\bullet})}
\newcommand{\GLW}{\GL(W_{\bullet})}
\newcommand{\SLV}{\SL(V_{\bullet})}
\newcommand{\tspace}{V_{1, \dots, d}}
\newcommand{\tspacecurly}{\VV_{1, \dots, d}}
\newcommand{\tconcspace}{W_{1, \dots, d}}
\newcommand{\tconcspacecurly}{\WW_{1, \dots, d}}
\newcommand{\tconcspaceconc}{\tconcspace^{\mathrm{conc}}}
\newcommand{\LL}{\mathcal{L}}
\newcommand{\Lmot}{\mathbb{L}}
\DeclareMathOperator{\OpUnres}{Unres}
\newcommand{\unres}{\OpUnres_{i}(T)}
\newcommand{\unrespar}[2]{\OpUnres_{#1}(#2)}
\newcommand{\unresfull}{\unresfullpar{T}}
\newcommand{\unresfullpar}[1]{\OpUnres(#1)}

\newcommand{\unresfrfullpar}[1]{\OpUnres^{\mathrm{fr}}(#1)}
\newcommand{\unresfrfull}{\unresfrfullpar{T}}
\newcommand{\unresfrpar}[2]{\OpUnres^{\mathrm{fr}}_{#1}(#2)}
\newcommand{\into}{\hookrightarrow}
\newcommand{\onto}{\twoheadrightarrow}
\newcommand{\Cen}[1]{\operatorname{Cen}_{#1}}%
\newcommand{\eval}{\epsilon_{\cA}}

\newcommand{\TGr}{T^{\Gr}}

\DeclareMathOperator{\rk}{rk}
\DeclareMathOperator{\brk}{brk}
\DeclareMathOperator{\bcrk}{bcrk}

\DeclareMathOperator{\smrk}{smrk}
\DeclareMathOperator{\Sym}{Sym}
\DeclareMathOperator{\Bl}{Bl}
\DeclareMathOperator{\Sing}{Sing}
\DeclareMathOperator{\Iso}{Iso}
\DeclareMathOperator{\im}{im}
\DeclareMathOperator{\Z}{Zeros}
\DeclareMathOperator{\Seg}{Segre}
\newcommand{\op}{\circ}
\newcommand{\sm}{\mathrm{sm}}

\newcommand{\sigmahat}{\widehat{\sigma}}%
\newcommand{\csigma}{c\sigma}%
\newcommand{\csigmaOneGen}{c\sigma^{1gen}}%
\newcommand{\csigmafr}{c\sigma^{\mathrm{fr}}}%
\newcommand{\ckappa}{c\kappa}%
\newcommand{\Asigma}{\mathrm{A}\sigma}%
\newcommand{\AsigmaHilb}{\Asigma^{\Hilb}}%
\newcommand{\csigmahat}{\widehat{\csigma}}%
\newcommand{\csigmahatfr}{\widehat{\csigma}^{\mathrm{fr}}}%
\newcommand{\ckappahat}{\widehat{\ckappa}}%
\newcommand{\cA}{\mathcal{A}}%

\usepackage{marginnote}
\begin{document}

\title{Unrestrictions and concise secant varieties}
\author{Jakub Jagie{\l}{\l}a}
\address[Jakub Jagie{\l}{\l}a]{Wydział Matematyki, Informatyki i Mechaniki, Uniwersytet Warszawski,
 ul.~Stefana Banacha
2, 02-097 Warsaw, Poland.}
\email{j.jagiella@uw.edu.pl}

\author{Joachim Jelisiejew}
\address[Joachim Jelisiejew]{Wydział Matematyki, Informatyki i Mechaniki, Uniwersytet Warszawski,
 ul.~Stefana Banacha
2, 02-097 Warsaw, Poland.}
\email{j.jelisiejew@uw.edu.pl}
\thanks{Both authors supported by National Science Centre grant 2023/50/E/ST1/0033.}
\date{\today{}}

\begin{abstract}
    We introduce the concise secant varieties, which are, informally speaking, modular partial desingularisations of secant varieties to
    Segre embeddings. More precisely, they are projective and birational to the abstract secant
    varieties, yet each of their points corresponds to a \emph{concise} tensor of appropriate
    border rank (that is, to a minimal border rank tensor).

    We discuss implications throughout the theory of tensors,
    including a characterisation of border rank $\leq r$ tensors as
    unrestrictions of minimal border rank $r$ tensors (also in the
    Veronese and Segre-Veronese cases), a characterisation of tensors with cactus rank $\leq r$, concise versions
    of border apolarity including the fixed point theorem, concise Varieties of Sums of Powers, counting points on
    the second secant variety, connections to defectivity and identifiability in the Segre case, to the
    Salmon conjecture etc.
\end{abstract}

\maketitle

\tableofcontents

\section{Introduction}

For a smooth variety $X\subseteq \mathbb{P}^N$, it is very interesting to study its higher secant varieties $\sigma_r(X) = \Sigma_{r-1}(X)$.
When the embedding $X\subseteq \mathbb{P}^N$ is positive enough with respect to $r$, for example, after a high Veronese reembedding,
a lot is known about $\sigma_r(X)$, see, for example~\cite{Ullery, Ein_Niu_Park, Park, Agostini_Park}. The core reason behind
these beautiful results is the following: every point $\sigma_r(X)$ is tame in
the language of~\cite{bucz_bucz_smoothable_rank_example}, or,
equivalently, that the universal bundle $\mathbb{P}\UU$ on $\Hilb_r^{\sm}(X)$ yields a partial desingularisation $\mathbb{P}\UU\to \sigma_r(X)$, or,
equivalently, that the abstract secant variety $\AsigmaHilb_r$ (defined in the flavour below) requires no closure.
For Segre embeddings, positivity is absent, there are wild points, and there seemed to be a
consensus among experts that the above approach fails hopelessly.

In this paper we actually construct an analogous theory in the Segre setup. Our analogue
of  $\mathbb{P}\UU$ is called the concise secant variety $\csigma_r$. We prove that it compactifies an open part of the
abstract secant variety $\AsigmaHilb_r$, that it has very good singularities
for small $r$, always has the expected dimension etc. We obtain applications to
secant varieties,
in particular new approaches to varieties of sums of powers VSP, an alternative to border apolarity,
an approach to Salmon problem and to defectivity.
Considering the limits of individual points, we recover a result of Buczy{\'n}ski (see below Theorem~\ref{thm:min_brk_unres} for the history).
The embedding of $\AsigmaHilb_r$ into $\csigma_r$ yields a generalisation of apolarity lemma, called
\emph{cactus apolarity lemma} below.

Let $d\geq 3$ and $\tspace := V_1\otimes \ldots \otimes V_d$.
Throughout we concentrate on the Segre embedding
\[
    \Seg = \mathbb{P}V_1 \times \ldots \times \mathbb{P}V_d\into \mathbb{P}\tspace,
\]
leaving its subvarieties to future work.
We drop $\Seg$ from the notation and write $\sigma_r$ instead of $\sigma_r(\Seg)$ etc.
We begin the discussion from the elementary concept of unrestrictions.


\subsection{Unrestrictions}
For a tensor $[T]\in \mathbb{P}\tspace$, a central problem is to determine its
border rank $\brk(T)$ that is, the smallest integer $r$ such that $[T] =
\lim_{t\to 0} [T_t]$ for some tensors $T_t$ of rank $\leq r$. When all
coordinate spaces $V_{\bullet}$ have dimension $r$, Buczy{\'n}ski
observed~\cite{Bucz_Communication}
that $\brk(T)\leq r$ exactly when $T$ is a restriction of a minimal border
rank tensor (see~\S\ref{ssec:smoothableRank}, \S\ref{ssec:conciseSecants} for definitions). This is important,
since restriction is a linear-algebraic operation, while passing to the limit is much more involved.

Based on Buczy{\'n}ski's observation, we show that the operation of passing to the limit $[T] =
\lim_{t\to 0} [T_t]$ can be split into two steps:
\begin{enumerate}
    \item\label{it:mbrk} the \emph{minimal border rank limit} $\Tppar{} =
        \lim_{t\to 0} T_t$,
    \item\label{it:restr} a restriction from $\Tppar{}$ to $T$.
\end{enumerate}
We prove this in much greater generality in Theorem~\ref{thm:min_brk_unres} below. The result is explicit and constructive. Schematically,
the splitting can be presented as
\begin{equation}\label{eq:factorisation}
    \begin{tikzcd}
        & \Tppar{}\ar[d, mapsto, "\mathrm{restr.}"]\\
        T_t \ar[ur, mapsto, "\mathrm{limit}"] \ar[r, mapsto, "\mathrm{limit}"] & T
    \end{tikzcd}
\end{equation}
This is useful: in the first step, we obtain tensors of minimal border
rank, which are much better understood
thanks to theory of centroids~\cite{JLP}. In the second step, the
restriction is a purely linear-algebraic operation and its choice is free: any
restriction of a minimal border rank tensor yields a tensor of border rank
$\leq r$.


\subsection{Main features of concise secant varieties}
Tensors of rank $\leq r$ form a secant variety $\sigma_r$ and any limit as above
yields a curve $[T_t]\in \sigma_r$. We construct a new variety, called the
concise secant variety $\csigma_r$.
The limit $\Tppar{} = \lim_{t\to 0} T_t$ yields a curve in $\csigma_r$.
The variety $\csigma_r$ comes with a map
$\rho\colon \csigma_r\to \sigma_r$ which is the ``universal
restriction'' map. It maps the curve upstairs to the one downstairs,
yielding the restriction $\Tppar{}$ to $T$, so that we obtain
\[
    \begin{tikzcd}
        & \csigma_r\ar[d, "\rho"]\\
        \sigma_r^{\op} \ar[r, hook, "\mathrm{op}"]\ar[ru, hook, "\mathrm{op}"] & \sigma_r
    \end{tikzcd}
\]
which upgrades~\eqref{eq:factorisation}, see
\S\ref{sec:unrestrictions_geometric} for details. While above we assumed $\dim
V_i \geq r$, the construction of $\csigma_r$ is general
and requires no such assumption.

In comparison with $\sigma_r$, the variety $\csigma_r$ has a number of astonishingly good properties:
\begin{enumerate}
    \item It is projective, so one can take limits of tensors, yet \emph{every}
        point of $\csigma_r$ corresponds to a \emph{concise} tensor, so the
        limiting tensor never ceases to be concise, see~\eqref{eq:framedPoints:intro}.
        Imprecisely speaking, $\csigma_r$ \emph{requires no closure}, in contrast with $\sigma_r$.
    \item The singularities of $\csigma_r$ are much better behaved than the
        ones of $\sigma_r$ for all $r$, see \S\ref{ssec:singularities}. For example, $\csigma_r$ is smooth for $r\leq 3$.
    \item The variety $\csigma_r$ compactifies an open locus of $\left\{(T, Z)\ |\
            T\in \spann{Z} \right\}$, where $Z\into \mathbb{P}V_1 \times \ldots
            \times \mathbb{P}V_d$ is a zero-dimensional scheme of degree $r$.
            Thus it serves as an analogue of the abstract secant variety,
            which is a central object for birational geometry of $\sigma_r$, see~\S\ref{ssec:abstractSecantsIntro}. Again,
            it is crucial that $\csigma_r$ involves no closure.
    \item The variety $\csigma_r$ always has the expected dimension $m\cdot
        (\sum_{i=1}^d \dim V_{i} - (d-1)) - 1$, see
        Corollary~\ref{ref:dimensionOfcsigma:cor} for details. Its equations
        are also independent of the dimensions of coordinate spaces
        $V_{\bullet}$, see~\S\ref{ssec:smoothEquivalences}, and they are known for $r\leq 5$,
        see Corollary~\ref{ref:equationsSmallr:cor}.
\end{enumerate}
There is also a concise cactus variety and other variants.

\subsection{Construction of the concise secant variety}
\newcommand{\Taf}{T^{\mathrm{af}}}
\newcommand{\Tproj}{T^{\mathrm{proj}}}

Intuitively, the map $\rho\colon \csigma_r \to \sigma_r$ is a generalisation of the map
$\UU\to V$, where $\UU\to \Gr(r, V)$ is the universal subbundle. Thus, for the
construction we will need analogues of the Grassmannian and vector bundles: instead of
elements of a fixed vector space $\tspace$, our families of tensors will lie on a vector bundle
$\VV_1\otimes \ldots \otimes \VV_d$ on a variety $X$. For example, $\mathbb{P}\tspace$
comes with a family of tensors $\Tproj\colon \OO_{\PP \tspace}(-1) \to \OO_{\mathbb{P}\tspace}\otimes \tspace$
and $\tspace$ comes with a ``trivial'' family $\Taf\colon \OO_{\tspace}\to \OO_{\tspace}\otimes \tspace$.

The construction of $\csigma_r$ is just a special case of a more general and versatile construction of the unrestriction
schemes $\unres{}$, which we introduce in Definition~\ref{ref:unrestriction:def}. The
unrestriction scheme depends on a parameter $m$, which we set to $m := r$ here.

Fix a  variety $X$ with a family of tensors $T$, for example $\mathbb{P}\tspace$ with $\Tproj$ above.
The \emph{unrestriction scheme}
$\unrespar{1}{T}\xrightarrow{\pi_1} X$ comes with a family of tensors $\Tppar{1}\in \UU_1 \otimes V_2\otimes \ldots \otimes V_d$,
where $\UU_1$ is a vector bundle of rank $r$ and $\Tppar{1}$ is $\UU_1$-concise. This means that every element of $\Tppar{1}$
is a tensor living in a vector space isomorphic to $\kk^r \otimes V_2\otimes
\ldots \otimes V_d$ and concise with respect to first coordinate. The unrestriction scheme is appropriately universal with
respect to this property, see Definition~\ref{ref:unrestriction:def} for details.

The crucial next step is that we can iterate this construction.
Namely, we take $\unrespar{1, 2}{\Tppar{1}}\to \unrespar{1}{T}$, which comes with a family of tensors $\Tppar{1,2}$, 
concise on the first \emph{two} coordinates.
In this way, we finally arrive at
\[
    \unresfull := \unrespar{1, \ldots ,d}{\Tppar{1,\dots,d-1}} \xrightarrow{\pi} X.
\]
This is a smooth projective variety
(Corollary~\ref{ref:unrestrictionSmoothness:cor}) with a family of tensors
$\Tppar{}$. The map $\pi$ is birational when $\dim V_{\bullet}\geq r$.
For $T = \Tproj$, the ambient variety $\unresfullpar{\Tproj}$ contains the loci of interest:
\begin{enumerate}
    \item The concise secant variety $\csigma_r\subseteq \unresfullpar{\Tproj}$ is the closed locus where $\Tppar{}$ has minimal border rank.
        This variety comes with a map $\csigma_r\to \sigma_r$, which is birational in good situations, see~\S\ref{ssec:identifiability}.
    \item The concise cactus variety $\ckappa_r\subseteq \unresfullpar{\Tproj}$ is
        likewise obtained as the locus where $\Tppar{}$ has minimal border
        cactus rank and it yields a map $\ckappa_r\to \kappa_r$.
\end{enumerate}
Analogously, taking $T = \Taf$, the ambient variety $\unresfullpar{\Taf}$, which is also smooth,
contains the loci that resolve cones:
\begin{enumerate}
    \item The minimal border rank locus $\csigmahat_r\subseteq \unresfullpar{\Taf}$ has a map to the cone $\sigmahat_r \into \tspace$ over the secant variety $\sigma_r$. We call it the \emph{arithmetic concise secant variety}. The map $\csigmahat_r\to \sigmahat_r$ is birational in good situations, see~\S\ref{ssec:identifiability}.
    \item Analogously, the cactus locus $\ckappahat_r\subseteq \unresfullpar{\Taf}$ has a map $\ckappahat_r\to \widehat{\kappa}_r$.
\end{enumerate}
We review the construction in Section~\S\ref{sec:unrestrictionSchemes}.

\subsubsection{Framed concise secant variety and singularities}

To describe points of $\csigma_r$, it is easier to first describe a related variety, the
framed concise secant $\csigmafr_r$, which is constructed in~\S\ref{ssec:framedUnrestrictions}.
The map $\csigmafr_r\to \csigma_r$ is a principal $\GL_r^{\times d}$-bundle, hence the local properties of
$\csigmafr_r$ and $\csigma_r$ agree, but $\csigmafr_r$ is not projective (while $\csigma_r$ is).

Fix \emph{$r$-dimensional} vector spaces $W_1$, \ldots , $W_d$. The $\kk$-points of the framed concise secant are
\begin{align}\label{eq:framedPoints:intro}
    \csigmafr_r(\kk) =
    \left\{ 
        \begin{array}{c | c}
            \Tpparfr{}\in W_1\otimes_{\kk} \ldots \otimes_{\kk} W_d &\Tpparfr{}\mathrm{\ concise,} \brk(\Tpparfr{}) = r\\
            {[\varphi_1]}\in \mathbb{P}\Hom(W_1, V_1)
                &\varphi_1, \ldots , \varphi_d \mbox{ linear maps }\\
              (\varphi_i\in \Hom(W_i, V_i))_{2\leq i\leq d}  & \mbox{satisfying conciseness conditions } \eqref{eq:starCondition}
        \end{array}
    \right\}.
\end{align}
Crucially, the conciseness conditions~\eqref{eq:starCondition} are open, hence there are no equations coming from them (see Proposition~\ref{ref:framedUnrestrictionPoints:prop} for details). The group $\GL(W_1)\times \ldots \times \GL(W_d)$ acts on $\csigmafr_r$
in a natural way. Thanks to~\eqref{eq:starCondition}, this is a free action. The quotient is $\csigma_r$. On the level of points, we obtain
\begin{equation}\label{eq:concisePoints:intro}
    \csigma_r(\kk) = \csigmafr_r(\kk) / \GL(W_1)\times \ldots \times \GL(W_d).
\end{equation}
The map $\rho\colon \csigma_r(\kk)\to \sigma_r(\kk)$ sends $(\Tppar{},
\varphi_1, \ldots ,\varphi_d)$ to the restriction $[T] =
[\varphi_{1,\dots,d}(\Tppar{})] \in \PP\tspace$.

\newcommand{\rhofr}{\rho^{\mathrm{fr}}}%
The variety $\csigmafr_r$ has another map to a secant variety. This is the map
$\alpha\colon \csigmafr_r\to W_1\otimes \ldots \otimes W_d$ that sends $(\Tpparfr{}, \varphi_1, \dots, \varphi_d)$ to $\Tpparfr{}$.
The map $\alpha$ is smooth, with image equal to the concise locus of $\sigma_r\subseteq W_1\otimes \ldots \otimes W_d$.

\subsection{Applications of concise secant varieties}

\subsubsection{Resolutions of singularities}\label{ssec:singularities}

We pass to analysing the singularities of secant varieties. The two main goals here are
to understand the singular locus and resolve singularities.
When the variety is embedded positively enough, there are many positive results~\cite{Ullery, Ein_Niu_Park, Park, Agostini_Park}.

In our setup, without positivity, the situation is more complicated. We are not aware of results about resolution of singularities.
For describing the singular locus, partial results appear in~\cite{Han__singularities, Furukawa_Han}.
A full answer is known only for $r=2$ and $r=d=3$. The paper~\cite{michalek_oeding_zwiernik} describes $\sigma_2$ for every $d$.
For $d=3$, Buczy{\'n}ski and Landsberg~\cite{Buczynski_Landbserg_sigma3} give normal forms
of tensors in $\sigma_3$ and equations are known here, from which one can
describe the singular locus.

In contrast, our method works  \emph{for every} $d\geq 3$ and yields \emph{resolution} of singularities in the Segre case
for $r=2,3,4$, and, conjecturally, for other values of $r$. Importantly, we do not perform any
case-by-case analysis, but rather recast the problem as a question of singularities of Hilbert and Quot schemes,
so that the results follow from known ones. The order $d\geq 3$ of tensors is not important throughout the argument.
\begin{theorem}[Theorem~\ref{ref:singularities:thm}]\label{ref:resolutions:introthm}
    \begin{enumerate}
        \item The variety $\csigma_r$ is nonsingular for $r\leq 3$.
        \item The variety $\csigma_4$ is singular, with
            singular locus of codimension $9$. It is Cohen-Macaulay with
            rational (in fact, terminal) singularities.
            The singular locus is smoothly equivalent to the vertex of the affine cone over
            $\Gr(2, 6)\into \mathbb{P}^{14}$. Its singularities are resolved by a single blow up of
            the singular locus.
        \item Assume that $\rho$ is birational (this happens when $\dim V_{\bullet} = r$ and, in fact, in much
            greater generality, see~\S\ref{ssec:identifiability}). For $r=2,3$ the map $\rho\colon \csigma_r\to \sigma_r$
            is a resolution of singularities of $\sigma_r$. For $r=4$ the map $\Bl_{\Sing(\csigma_4)} \csigma_4 \to \csigma_4\to \sigma_4$
            is a resolution of singularities.
    \end{enumerate}
\end{theorem}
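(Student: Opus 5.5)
The plan is to reduce every local question about $\csigma_r$ to a local question about a moduli space of modules or algebras, where the answers are already known. First, since $\csigmafr_r\to\csigma_r$ is a principal $\GL_r^{\times d}$-bundle, $\csigma_r$ and $\csigmafr_r$ are smoothly equivalent at corresponding points. Next, the map $\alpha\colon\csigmafr_r\to W_1\otimes\ldots\otimes W_d$ is smooth, with image the concise locus of $\sigma_r$ in $W_1\otimes\ldots\otimes W_d$. This holds because the conciseness conditions~\eqref{eq:starCondition} on $(\varphi_i)$ are open, and the fibres are open subsets of products of Hom-spaces. Hence the singularities of $\csigma_r$ are exactly the singularities of the concise minimal border rank locus in $(\kk^r)^{\otimes d}$.

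For that locus I would use centroid theory~\cite{JLP}. A concise minimal border rank tensor $\Tpparfr{}\in W_1\otimes\ldots\otimes W_d$ (with $d\geq 3$) is, up to $\GLW$, determined by an $r$-dimensional module over its centroid. More precisely, the concise tensors of minimal border rank should form a locus smoothly equivalent to the locus of smoothable $r$-dimensional modules (or, equivalently, points of the principal component of a Quot scheme $\Quot_r(\OO^{\oplus k})$ over a polynomial ring). In the $1$-generic/$\End$-closed situation they correspond to smoothable degree $r$ algebras, i.e.\ to $\Hilb_r^{\sm}(\mathbb{A}^n)$. I would establish this smooth equivalence using the description of points in~\S\ref{ssec:smoothEquivalences}. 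I would also check that it is independent of $d$ and of $\dim V_\bullet$, which is consistent with the claim that the equations do not depend on the $V_\bullet$.

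With this reduction, the three parts follow from known facts.

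(a) For $r\leq 3$, the Hilbert schemes $\Hilb_r(\mathbb{A}^n)$ and the relevant Quot schemes of points are smooth, so $\csigma_r$ is nonsingular.

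(b) For $r=4$, the only singular points of the principal component of $\Hilb_4(\mathbb{A}^n)$ (and of the module analogue) are the local algebras with maximal ideal $\mathfrak{m}$ satisfying $\mathfrak m^2=0$ and $\dim\mathfrak m=3$. The tangent-space computation there shows that the singularity is smoothly equivalent to the cone over $\Gr(2,6)$ in its Plücker embedding $\Gr(2,6)\subseteq\mathbb{P}^{14}$; this is the classical description of $\Hilb_4(\mathbb{A}^3)$ at $\Spec \kk[x,y,z]/(x,y,z)^2$. The properties claimed in (b) are then read off from this model. Codimension $9$ equals $15-6$ of the cone. The cone over a Grassmannian is Cohen--Macaulay, Gorenstein and terminal, and it is resolved by one blow-up of its vertex, whose exceptional divisor is $\Gr(2,6)$. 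Since smooth equivalence preserves all these properties, and blow-ups commute with smooth base change, they transfer to $\csigma_4$ and to $\Bl_{\Sing(\csigma_4)}\csigma_4$.

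(c) Part (c) follows immediately. $\rho$ is proper because $\csigma_r$ is projective, and it is birational by assumption. Its source is smooth for $r\leq 3$, and after the single blow-up for $r=4$. Hence $\rho$, respectively $\Bl_{\Sing(\csigma_4)}\csigma_4\to\sigma_4$, is a resolution.

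The main obstacle will be the $r=4$ local analysis. I must check that every singular concise minimal border rank tensor in $(\kk^4)^{\otimes d}$ corresponds to exactly this one singularity type. Module-type points that do not come from algebras must not produce extra singular strata, for instance from the Quot scheme $\Quot_4$ with several generators. The codimension must also come out to $9$ inside $\csigma_4$ rather than inside the Hilbert scheme. I would first treat the $1$-generic case via $\Hilb_4$. I would then use the known classification of $4$-dimensional modules/centroids, together with the result that non-$1$-generic concise minimal border rank tensors for small $r$ degenerate from $1$-generic ones, to argue that the remaining points lie in the smooth locus.
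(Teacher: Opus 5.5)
Your overall architecture is the paper's: the smooth equivalence $\csigma_r\leftarrow\csigmafr_r\to\sigmahat_r^{\mathrm{conc}}$, then the comparison of tensors with Hilbert/Quot points (GJLM), then Katz's cone over $\Gr(2,6)$. The gap is the step that decides \emph{which} points are singular, and what you propose there for $r=4$ runs in the wrong direction. That non-$1$-generic tensors ``degenerate from $1$-generic ones'' says nothing about their smoothness, since every point of a variety is a limit of smooth points. What works is the converse. $\Sing(\sigmahat_4^{\mathrm{conc}})$ is closed and $\GL(W_1)\times\ldots\times\GL(W_d)$-stable, so a tensor is a smooth point as soon as its orbit closure contains a smooth point. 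The paper uses the classification of $4$-dimensional modules (Appendix~B of Jagiella--Jelisiejew) to show that every concise minimal border rank tensor with $m=4$ is either isomorphic to $U_{2,6}$, the multiplication tensor of $\kk[x_1,x_2,x_3]/(x_1,x_2,x_3)^2$, or degenerates \emph{to} $U_{2,8}$. A versal deformation computation then shows that $[\OO^{\oplus 2}\onto M_{2,8}]$ is a smooth point of the principal component of the Quot scheme. A tangent space count would not suffice there, because this point also lies on other components.

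Your sorting of cases is also off. With the paper's definition, $1$-generic tensors are multiplication tensors of Gorenstein algebras; in length $4$ these all embed in $\mathbb{A}^2$, so they are smooth points. The singular tensor $U_{2,6}$ is itself not $1$-generic, because $\cA_{2,6}$ is not Gorenstein. So the plan ``treat the $1$-generic case via $\Hilb_4$, then show the remaining points are smooth'' cannot succeed. Your assertion that the module analogue contributes no further singular strata is exactly what the classification plus computation is needed for.

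The same missing input affects your general reduction and part (a). Concise minimal border rank tensors are not module tensors in general: $1$-degenerate ones exist for $m\geq 5$. The module description is only available for tensors generic on at least $d-2$ coordinates, and for $m\leq 4$ this genericity is a fact you must invoke.

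For $r\leq 3$ you need that only algebras occur. For $r=2$, every concise minimal border rank tensor is a unit tensor or a $W$-state. For $r=3$, each is generic on at least $d-1$ coordinates, by the Buczy{\'n}ski--Landsberg normal forms. Then smoothness of $\Hilb_2(\mathbb{A}^1)$ and $\Hilb_3(\mathbb{A}^2)$ is enough. Your claim that ``the relevant Quot schemes'' are smooth is false as a statement about Quot schemes. For example, $\Quot_2(\OO^{\oplus 2}_{\mathbb{A}^2})$ has dimension $6$ but an $8$-dimensional tangent space at the quotient $\kk^2$ with zero action. So that claim proves nothing until you identify which points actually occur.

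The tensor--Hilbert/Quot comparison also does not follow from the description of points in \S\ref{ssec:smoothEquivalences}. That description only gives $\csigma_m\sim\sigmahat_m^{\mathrm{conc}}$; the comparison is the separate input from GJLM. Once these inputs are supplied, the rest of your plan is fine: codimension $9$ from the cone model, the Gorenstein, terminal and rational properties via Katz and Koll\'ar, the single blow-up, and part (c).
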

We have a similar description also for $\csigma_5$, which we will discuss elsewhere,
see Remark~\ref{ref:singularitiesFive:rmk}.

Also for general $r$, the geometry of $\csigma_r$ is better than the geometry of
$\sigma_r$. The secant variety suffers  because $\sigma_{r-1}$ forms a ``black hole'' inside:
at any point  $[T]\in \sigma_{r-1}\subseteq \sigma_r$, the tangent space $T_{[T]}\sigma_r$ 
is the whole tangent space to $\mathbb{P}\tspace$ and there is little hope for a meaningful geometry.
In contrast, for any $r$, the geometry of $\csigma_r$ at \emph{any} point is smoothly equivalent to the geometry
of $\sigma_r\subseteq \mathbb{P}\tconcspace$ at a concise point, where $\tconcspace = W_1\otimes \ldots \otimes W_d$
and $\dim W_{\bullet} = r$, see~\S\ref{ssec:smoothEquivalences}.

\subsubsection{Algebraic characterisation of the smoothable rank}\label{ssec:smoothableRank}

    The following observation is well-known and sometimes called the
    Apolarity Lemma. Recall that a tensor $T \in \tspace$ is a
    \emph{restriction} of a tensor $T'\in
    \tconcspace$ if there are linear maps $\varphi_i\colon W_i\to V_i$ such
    that the induced map $\tconcspace\to \tspace$ maps $T'$ to
    $T$.
    \begin{lemma}[Apolarity Lemma]\label{ref:apolarityLemma:lem}
        Let $T\in \tspace$ be a tensor. The following are equivalent:
        \begin{enumerate}
            \item\label{it:rankOne} the rank of $T$ is at most $r$,
            \item\label{it:rankSpan} there are $r$ points $x_1, \ldots , x_r\in \Seg$ such that
                $T$ lies in $\spann{x_1, \ldots ,x_r}$,
            \item\label{it:rankAlg} the tensor $T$ is a restriction of the unit tensor
                $\spann{r}$.
        \end{enumerate}
    \end{lemma}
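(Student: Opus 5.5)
The plan is to prove the cycle of implications (a)$\Rightarrow$(b)$\Rightarrow$(c)$\Rightarrow$(a), working throughout with honest tensors (not projective classes) and allowing zero or repeated summands. Here $\spann{r}$ denotes the unit tensor $\sum_{j=1}^r e^1_j\otimes \ldots \otimes e^d_j\in W_1\otimes\ldots\otimes W_d$, where $\dim W_i = r$ and $e^i_1,\dots,e^i_r$ is a fixed basis of $W_i$.

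For (a)$\Rightarrow$(b), write $T=\sum_{j=1}^{s} v^1_j\otimes\ldots\otimes v^d_j$ with $s\le r$ and every summand nonzero. Take $x_j=[v^1_j\otimes\ldots\otimes v^d_j]\in\Seg$. If $s<r$, pad with arbitrary further points of $\Seg$; repetitions are harmless since only the span matters. The case $T=0$ is trivial. For (b)$\Rightarrow$(a), choose representatives $x_j=[v^1_j\otimes\ldots\otimes v^d_j]$. Since $T\in\spann{x_1,\ldots,x_r}$, we can write $T=\sum_j \lambda_j v^1_j\otimes\ldots\otimes v^d_j$. Absorbing each scalar $\lambda_j$ into the first factor, $v^1_j\mapsto\lambda_j v^1_j$, exhibits $T$ as a sum of at most $r$ rank-one tensors.

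For (a)$\Rightarrow$(c), start from $T=\sum_{j=1}^{r} v^1_j\otimes\ldots\otimes v^d_j$, padding with zero summands if the rank is below $r$. Define linear maps $\varphi_i\colon W_i\to V_i$ by $e^i_j\mapsto v^i_j$. The induced map $\varphi_1\otimes\ldots\otimes\varphi_d$ sends $e^1_j\otimes\ldots\otimes e^d_j$ to $v^1_j\otimes\ldots\otimes v^d_j$. By linearity it sends $\spann{r}$ to $T$. For (c)$\Rightarrow$(a), suppose $T=(\varphi_1\otimes\ldots\otimes\varphi_d)(\spann{r})$. Then
\[
T=\sum_{j=1}^r \varphi_1(e^1_j)\otimes\ldots\otimes\varphi_d(e^d_j),
\]
which is a sum of at most $r$ tensors each of rank at most one, so $\rk T\le r$.

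No step is a real obstacle; the argument is just unwinding the definitions. The only care needed is bookkeeping of degenerate cases: zero summands, repeated points, and the scalars in the span in (b). These are handled by allowing $\varphi_i$ to be non-injective and by absorbing scalars into one tensor factor.
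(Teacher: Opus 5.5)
Your proof is correct. Each implication is a direct unwinding of the definitions, and allowing zero summands, repeated points, vanishing coefficients $\lambda_j$ and non-injective $\varphi_i$ covers all the degenerate cases.

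The paper gives no proof of this lemma; it quotes it as well known. The argument it does give is for the generalisation, Theorem~\ref{ref:spanChar:thm} and Corollary~\ref{ref:spanCharPopular:cor}. That result specialises to this lemma for $\cA=\kk^{\times r}$, and it follows a different route:
\begin{enumerate}
\item A morphism $Z=\Spec(\cA)\to\PP V_1\times\ldots\times\PP V_d$ is encoded by line bundles $\cA\otimes V_i^{\vee}\onto W_i^{\vee}$.
\item The span $\spann{Z}$ is computed as the image of $(W_1^{\vee}\otimes_{\cA}\ldots\otimes_{\cA}W_d^{\vee})^{\vee}$ (Proposition~\ref{ref:linearSpanAbstract:prop}). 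So tensors in the span are exactly regular restrictions of evaluation tensors.
\item Evaluation tensors are identified with multiplication tensors of Gorenstein quotients $\cA/I$ (Proposition~\ref{ref:evaluationTensors:prop}).
\end{enumerate}

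For $\cA=\kk^{\times r}$ the quotients are $\kk^{\times s}$ with $s\le r$, and their multiplication tensors are the unit tensors $\spann{s}$. Your degenerate cases match the paper's mechanism as follows. Your vanishing coefficients $\lambda_j$ correspond to $\epsilon$ failing to be a dual generator. Your basis vectors sent to zero (the padding in (a)$\Rightarrow$(c), or arbitrary $\varphi_i$ in (c)) correspond to non-regular restrictions. The paper handles both by passing to such a quotient.

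Your route is shorter and entirely elementary, but it rests on writing $T$ as an explicit sum of rank-one tensors. The paper's route never uses such a decomposition. That is what lets it work for non-reduced $Z$, and hence for cactus rank, where no rank-one decomposition is available.
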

    One can generalise~\ref{it:rankSpan}, replacing the tuple $\left\{ x_1,
    \ldots ,x_r \right\}$ by any smoothable finite scheme $Z\into \Seg$ of degree $r$.
    This gives rise to the notion of \emph{smoothable rank} $\smrk(T)$ of $T$.
    It is known that $\smrk(T) \geq \brk(T)$. Equality occurs for example for
    Generalised Additive Decompositions~\cite{Bernardi_Oneto_Taufer__On_GADs} but not in
    general~\cite{bucz_bucz_smoothable_rank_example}. One can go further and drop the smoothability
    requirement for $Z$ above, which leads to the notion of \emph{cactus rank}.
    We generalise Lemma~\ref{ref:apolarityLemma:lem} to this setup.
    We need the following definition. A restriction given by $\varphi_i\colon \cA^{\vee}\to V_i$, $i=1, \ldots , d$ is called \emph{regular},
    if the image of each map $\varphi_i^{\vee}\colon V_i^{\vee}\to \cA$ contains an invertible element of $\cA$, see Definition~\ref{ref:regularAndJointlySpanning:def}.

    \begin{theorem}[Cactus Apolarity Lemma, {Corollary~\ref{ref:spanCharPopular:cor}}]\label{ref:restriction:introthm}
        Let $T\in \tspace$ be a tensor. The following are equivalent:
        \begin{enumerate}
            \item\label{it:cactusrankOne} the cactus rank of $T$ is at most $r$,
            \item\label{it:cactusrankSpan} there is a finite  scheme $Z =
                \Spec(\cA)\to \Seg$
                with $\deg Z = r$ such that
                $T$ lies in $\spann{Z}$,
            \item\label{it:cactusrankAlg} the tensor $T$ is a regular restriction of the
                multiplication tensor in a Gorenstein algebra $\cA'$
                with $\dim_{\kk} \cA' \leq r$.
        \end{enumerate}
        Having $\cA$ as in~\ref{it:cactusrankSpan}, we can choose $\cA'$ to be a
        quotient algebra of $\cA$. In particular, if $\dim_{\kk} V_{\bullet} = r$ and $T$ is concise, then $T$ is
        isomorphic to the multiplication tensor in $\cA$ itself and $\cA$ is Gorenstein.
    \end{theorem}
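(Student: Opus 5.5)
The plan is to translate everything into the language of finite algebras with functionals, so that the span $\spann{Z}$ becomes the image of a dual map and the restriction in~\ref{it:cactusrankAlg} becomes explicit. The equivalence \ref{it:cactusrankOne}$\Leftrightarrow$\ref{it:cactusrankSpan} is essentially the definition of cactus rank; the only point to address is that Definition uses closed subschemes while \ref{it:cactusrankSpan} allows arbitrary morphisms $Z\to\Seg$.
\begin{itemize}
    \item Replacing $Z$ by its scheme-theoretic image only decreases the degree and does not change the span.
    \item Conversely, a scheme of degree $r'<r$ can be enlarged to degree $r$ by adding reduced points of $\Seg$ disjoint from it, which only enlarges the span.
\end{itemize}
So the real content is \ref{it:cactusrankSpan}$\Leftrightarrow$\ref{it:cactusrankAlg}.

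First I describe a morphism $Z=\Spec(\cA)\to\Seg$ algebraically.
\begin{itemize}
    \item Since $\cA$ is a finite $\kk$-algebra, it is semilocal, so every line bundle on $Z$ is trivial.
    \item A morphism to $\mathbb{P}V_i$ is therefore the same as a linear map $\varphi_i^{\vee}\colon V_i^{\vee}\to\cA$ whose image generates the unit ideal, modulo $\cA^{*}$.
    \item Generating the unit ideal means that the image avoids each of the finitely many maximal ideals. Over an infinite field a general element of the image then avoids all of them simultaneously, i.e.\ is invertible. This is exactly the regularity condition of Definition~\ref{ref:regularAndJointlySpanning:def}.
    \item The linear span $\spann{Z}$ is the projectivised image of the dual of the multiplication map
    \[
    \psi\colon V_1^{\vee}\otimes\ldots\otimes V_d^{\vee}\to\cA,\qquad \alpha_1\otimes\ldots\otimes\alpha_d\mapsto \varphi_1^{\vee}(\alpha_1)\cdots\varphi_d^{\vee}(\alpha_d),
    \]
    because the linear forms vanishing on $Z$ are precisely $\ker\psi$.
\end{itemize}
Hence $T\in\spann{Z}$ if and only if $T=\psi^{\vee}(f)$ for some $f\in\cA^{\vee}$, that is, $T(\alpha_1,\ldots,\alpha_d)=f(\prod_i\varphi_i^{\vee}(\alpha_i))$.

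For \ref{it:cactusrankSpan}$\Rightarrow$\ref{it:cactusrankAlg}, let $I\subseteq\cA$ be the largest ideal contained in $\ker f$ and set $\cA'=\cA/I$.
\begin{itemize}
    \item The functional $f$ descends to $\cA'$ and generates $(\cA')^{\vee}$ as an $\cA'$-module, so $\cA'$ is Gorenstein with $\dim\cA'\le r$.
    \item Using the isomorphism $\cA'\cong(\cA')^{\vee}$ given by $f$, the $d$-linear form $(a_1,\ldots,a_d)\mapsto f(a_1\cdots a_d)$ is the multiplication tensor of $\cA'$.
    \item The composites $V_i^{\vee}\to\cA\to\cA'$ still contain invertible elements in their images, since invertibility is preserved under quotients.
    \item Their duals therefore exhibit $T$ as a regular restriction of the multiplication tensor of $\cA'$.
\end{itemize}
This also gives the claim that $\cA'$ can be taken to be a quotient of $\cA$.

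For \ref{it:cactusrankAlg}$\Rightarrow$\ref{it:cactusrankSpan}, I run the same dictionary backwards. The regular maps $V_i^{\vee}\to\cA'$ define a morphism $\Spec\cA'\to\Seg$, and the formula above shows $T\in\spann{\Spec\cA'}$; the degree is then padded to $r$ as in the first paragraph.

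For the final statement, suppose $\dim V_\bullet=r$ and $T$ is concise. Conciseness forces each $\varphi_i^{\vee}\colon V_i^{\vee}\to\cA'$ to be injective. Since $\dim\cA'\le r=\dim V_i$, each $\varphi_i^{\vee}$ is an isomorphism and $\dim\cA'=r$. Then $I=0$ and $\cA'=\cA$, so $\cA$ is Gorenstein and $T$ is isomorphic to its multiplication tensor.

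I expect the main care to be needed in correctly identifying $\spann{Z}$ for a non-embedded $Z$, and in matching conventions: which copies of $\cA$ versus $\cA^{\vee}$ carry the ``multiplication tensor'', and the exact form of regularity. I would fix these conventions by checking the reduced case $\cA=\kk^r$, where the multiplication tensor must reduce to the unit tensor of Lemma~\ref{ref:apolarityLemma:lem}.
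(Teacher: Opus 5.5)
Your proposal is correct and follows essentially the paper's route. The paper identifies $\spann{Z}$ as the image of the dual of the trivialised multiplication map (Proposition~\ref{ref:linearSpanAbstract:prop}, Theorem~\ref{ref:spanChar:thm}), passes to the Gorenstein quotient $\cA/I$ with $I=\{a : f(\cA a)=0\}$ (Proposition~\ref{ref:evaluationTensors:prop}), and proves \ref{it:cactusrankSpan}$\Rightarrow$\ref{it:cactusrankOne} via the scheme-theoretic image. The paper allows an arbitrary field, so two small adjustments make your argument work over finite $\kk$ as well: use throughout the unit-ideal form of regularity from Definition~\ref{ref:regularMap:def}, which passes to quotients just as well, instead of ``contains an invertible element''; and pad the degree with extra points mapped anywhere, not necessarily disjoint ones, since \ref{it:cactusrankSpan} does not require $Z\to\Seg$ to be an embedding.
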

    \begin{example}
        Let $\cA = \Spec \kk[\varepsilon]/(\varepsilon^3)$. Its multiplication tensor is isomorphic to the symmetric tensor
        $x^2z + xy^2$. A restriction of such a tensor is regular if (on every coordinate) it sends $x$ to a nonzero element.
        The algebra $\cA$ also admits Gorenstein quotients $\cA/(\varepsilon^2)$ and $\cA/(\varepsilon)$ with multiplication
        tensors $x^2y$, $x^3$, respectively. More on such tensors in given in~\cite{landsberg_teitler, CFJ, FOS}.
    \end{example}

    Theorem~\ref{ref:restriction:introthm} follows from the more refined
    Theorem~\ref{ref:spanChar:thm}. Partially symmetric versions of this theorem also follow, but
    will be presented elsewhere. An analogous version for smoothable rank exists, see Remark~\ref{ref:smoothableApolarityLemma:rem}.

    \subsubsection{Abstract secant varieties}\label{ssec:abstractSecantsIntro}

The \emph{abstract secant variety}
\[
    \Asigma_r\subseteq \Sym^{r} \Seg \times \mathbb{P}\tspace
\]
can be defined as the closure of the locus
$\Asigma_r^{\op}$ of all pairs
$(\left\{ x_1, \ldots ,x_{r} \right\}, [T])$, where $x_1, \ldots ,x_r\in \Seg$
are linearly independent and $[T]$ lies in the span $\spann{x_1, \ldots
,x_r}\subseteq \mathbb{P}\tspace$.
The projection onto the second coordinate yields a map $\Asigma_r\to \sigma_r$.

The abstract secant variety is absolutely central in the study of identifiability of
tensors and defectivity of secant varieties, see~\cite{Chiantini_Ciliberto__Weakly_defective, Abo_Ottaviani_Peterson_segre, Abo_Brambilla, Casarotti_Mella} and much more references in~\S\ref{ssec:identifiability}. In these
applications, one takes a general point $(Z, [T])$ of $\Asigma_r^{\op}$ and proves that
the forgetful map behaves ``well-enough'' near $(Z, [T])$. For this, it is
very useful to \emph{degenerate} $(Z, [T])$ to have better control over its
geometry, for example for induction. A typical example is to put points of
$Z$ on a hyperplane. Sometimes, points are allowed to collide~\cite{Galuppi_Oneto}.
However, overall, the boundary of $\Asigma_r$ has no clear description, no formula
for a tangent space at a given point and needs to be avoided.

It would be useful to have even more flexibility in degeneration. This is not
possible for the usual abstract secant variety $\Asigma_r$, since we have no
control over the closure. So one could wonder about other varieties
$\mathcal{M}$ which fill into the diagram
\[
    \begin{tikzcd}
        \Asigma_r^{\op} \ar[r, hook, "\mathrm{op}"]\ar[rd] & \mathcal{M}\ar[d]\\
        & \sigma_r
    \end{tikzcd}
\]
We show that the concise secant variety $\csigma_r$ is one such variety. In fact, one can even enlarge
the locus $\Asigma_r^{\op}$ naturally. Consider
\[
    \AsigmaHilb_r = \left\{
        \begin{array}{c l}
            (Z, T)\ | &Z\into \Seg,\ Z \mbox{ zero-dimensional smoothable Gorenstein, }\deg Z =
        r\\
                   &\pr_{2, \ldots ,d}\spann{Z} \simeq \mathbb{P}^{r-1},\ T\in \spann{Z}\mbox{ not in the span of any subscheme of }Z
        \end{array}
    \right\},
\]
where $\pr_{2, \ldots ,d}\spann{Z}$ denotes the span of $Z$ inside $\mathbb{P}V_{2, \ldots ,d}$.
\begin{theorem}[Theorem~\ref{ref:abstractInsideConcise:thm}]\label{refintro:abstractInsideConcise:thm}
    The variety $\AsigmaHilb_r$ embeds as an open subvariety of $\csigma_r$. The map $\rho\colon \csigma_r\to \sigma_r$
    extends the natural projection map $\AsigmaHilb_r\to \sigma_r$.
\end{theorem}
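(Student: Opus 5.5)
We will construct a morphism $\AsigmaHilb_r\to\csigma_r$ through the universal property of the unrestriction schemes (Definition~\ref{ref:unrestriction:def}), applied iteratively to the family $\Tproj$. Then we will show that this morphism is an open immersion compatible with $\rho$.

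\emph{Step 1: the lifted family.} Over $\AsigmaHilb_r$ we have the universal finite flat Gorenstein family $\mathcal{Z}=\Spec \mathcal{A}$ of degree $r$, with $\mathcal{A}$ a locally free algebra of rank $r$, together with the point $[T]$. The embedding $\mathcal{Z}\into\Seg$ gives maps $\varphi_i\colon \mathcal{A}^\vee\to V_i\otimes \mathcal{L}_i$ for suitable line bundles $\mathcal{L}_i$. The condition $T\in\spann{Z}$ means that $T$ is the image of a functional $f\in\mathcal{A}^\vee$ under the induced map. We then pass from the functional to a tensor. Put $\Tppar{}:=$ the multiplication tensor of $\mathcal{A}$ contracted with $f$, so that
\[
\Tppar{}(a_1,\dots,a_d)=f(a_1\cdots a_d),
\]
viewed as a section of $\mathcal{A}^{\vee\otimes d}$. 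By construction, the restriction of $\Tppar{}$ along $(\varphi_1,\dots,\varphi_d)$ is $T$.

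The hypothesis that $T$ is not in the span of any proper subscheme says exactly that $f$ generates $\mathcal{A}^\vee$ as an $\mathcal{A}$-module, i.e.\ that $f$ is a dual generator (Gorenstein). Consequently $\Tppar{}$ is concise, since each flattening is the nondegenerate form $(a,b)\mapsto f(ab)$. Moreover $\Tppar{}$ has minimal border rank $r$ because $Z$ is smoothable: under a smoothing, $\Tppar{}$ degenerates from unit tensors. The condition $\pr_{2,\dots,d}\spann{Z}\simeq\PP^{r-1}$ should supply the conciseness conditions~\eqref{eq:starCondition} on the maps $\varphi_i$, namely injectivity on the relevant contracted factors. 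I expect this matching of the span condition with~\eqref{eq:starCondition} to require the most care.

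\emph{Step 2: the morphism.} Using the $d$-fold iterated universal property, the data $(\mathcal{A}^\vee,\dots,\mathcal{A}^\vee,\Tppar{},\varphi_\bullet)$ yield a morphism $\iota\colon\AsigmaHilb_r\to\unresfullpar{\Tproj}$. Because $\Tppar{}$ has minimal border rank, $\iota$ factors through $\csigma_r$. The identity $\rho\circ\iota=\mathrm{pr}_2$ holds by construction.

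\emph{Step 3: open immersion.} This is the main obstacle. First I will identify the image. Using the description~\eqref{eq:concisePoints:intro} together with Theorem~\ref{ref:restriction:introthm} in families, it should be the open locus $U\subseteq\csigma_r$ defined by two conditions:
\begin{enumerate}
\item $\Tppar{}$ is isomorphic to the multiplication tensor of a Gorenstein algebra, i.e.\ it is $1$-generic in a strong sense, with an invertible element present in the image of each flattening. This is an open condition.
\item The maps $\varphi_i^\vee$ satisfy the regularity of Definition~\ref{ref:regularAndJointlySpanning:def}, which is also open.
\end{enumerate}

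Next I will build an inverse $U\to\AsigmaHilb_r$. On $U$, the algebra structure on $\mathcal{W}_1^\vee$ is recovered functorially from the tensor by the centroid construction of~\cite{JLP}: the unit is determined by a chosen invertible element, and regularity fixes this choice up to the $\GL$-action. The algebra $\mathcal{A}$ together with the regular maps $\varphi_i$ then produces the embedding $\Spec\mathcal{A}\into\Seg$. One must check that the family version of the centroid is locally free and commutes with base change over $U$. I plan to deduce this from the openness of the maximal-dimensional centroid locus together with the smoothness of $\alpha$.

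Finally, the two maps are mutually inverse. This can be verified on $T'$-points using the universal properties, which gives the desired open embedding.
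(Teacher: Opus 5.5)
Your architecture is the paper's: you build maps in both directions on $S$-points between $\AsigmaHilb_r$ and an open locus of $\csigma_r$, using the evaluation tensor of a dual generator in one direction and the family of centroids in the other. What is missing is the one observation that makes both directions work.

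Let $\epsilon$ be a dual generator of $\cA$, let $\Tppar{}$ be its evaluation tensor, and let $B\subseteq\cA$ be the span of all products $\varphi_2^{\vee}(v_2^*)\cdots\varphi_d^{\vee}(v_d^*)$. An element $a\in\cA\simeq W_1^{\vee}$ lies in the kernel of the $W_1$-contraction of $\varphi_{2,\dots,d}(\Tppar{})$ iff $\epsilon(aB)=0$, i.e.\ iff $a\epsilon\in B^{\perp}$. Since $a\mapsto a\epsilon$ is an isomorphism $\cA\to\cA^{\vee}$, the tensor $\varphi_{2,\dots,d}(\Tppar{})$ is $W_1$-concise iff $B=\cA$. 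That is the same as $\varphi_2,\dots,\varphi_d$ being jointly spanning, which is the same as $\pr_{2,\dots,d}\spann{Z}\simeq\PP^{r-1}$.

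\textbf{Forward direction.} The ``if'' half of this equivalence is exactly the step you leave as ``should supply'' (it is Lemma~\ref{ref:consiseness1generic:lem}). The remaining conditions of~\eqref{eq:starCondition} need only regularity, because at least two $W$-factors are left. Together with regularity of $\varphi_1$, the equivalence also makes all $\varphi_i$ jointly spanning (Lemma~\ref{ref:regularity:lem}), hence makes the relative span a subbundle (Proposition~\ref{ref:linearSpanAbstractRelative:prop}). Only then does $T$ lift to a section $f$ over the base at all, and uniquely. You need this both to define $\Tppar{}$ as a family and to see that your two maps are mutually inverse.

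\textbf{Inverse direction.} Here a step fails as stated. Regular maps $\varphi_i$ give only a morphism $\Spec\cA\to\Seg$, not an embedding: if every $\varphi_i^{\vee}(V_i^{\vee})$ equals $\kk\cdot 1$, all $\varphi_i$ are regular and the morphism is constant. You also never check $\pr_{2,\dots,d}\spann{Z}\simeq\PP^{r-1}$, which membership in $\AsigmaHilb_r$ requires. Both follow from the ``only if'' half above: on the $1$-generic locus of $\csigma_r$, condition~\eqref{eq:starCondition} forces $B=\cA$, which gives the closed embedding and the span condition. This also shows that your $U$ coincides with the paper's tame locus $\csigmaOneGen_r$, where joint spanning of $\varphi_2,\dots,\varphi_d$ is imposed explicitly.

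\textbf{Smaller corrections.}
\begin{enumerate}
\item The line bundles $\WW_i^{\vee}$ live on $\ZZ$ and are not in general $\cA$ twisted by a line bundle from the base. So $f$ should be a section of $(\WW_1^{\vee}\otimes_{\cA}\cdots\otimes_{\cA}\WW_d^{\vee})^{\vee}$, not of $\cA^{\vee}$.
\item On the way back nothing needs to be chosen, and no unit or invertible element has to be fixed. The algebra is $\Cen{\Tppar{}}$ itself, and the $\WW_i^{\vee}$ are merely $\Cen{\Tppar{}}$-line bundles.
\item Local freeness and base change for the centroid come from the defining map having constant rank. This holds because the centroid of a $1$-generic, centroid-abundant tensor has dimension exactly $r$ (Propositions~\ref{ref:genericity:prop} and~\ref{ref:centroidsInFamilies:prop}).
\end{enumerate}
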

The advantage of $\csigma_r$ over $\Asigma_r$ is that every point of $\csigma_r$ corresponds to a tensor
and the tangent space at such a point can be described under very mild conditions, using a generalisation
of smooth equivalences from~\S\ref{ssec:smoothEquivalences}; this will be explained
in a future work.

\subsubsection{Varieties of sums of powers}\label{ssec:VSP}

Theorem~\ref{refintro:abstractInsideConcise:thm} has immediate consequences for the varieties of sums of powers.
Recall that these varieties were introduced by Ranestad and Schreyer~\cite{ranestad_schreyer_VSP} as closures of
\begin{equation}\label{eq:VSP}
    \VSP(T, r) = \overline{\left\{ [Z]\into \Seg\ |\ Z\mbox{ zero-dimensional smoothable Gorenstein, }\deg Z = r,\ T\in \spann{Z}\right\}}.
\end{equation}
These varieties exhibit very interesting global geometry, especially in the defective
cases, see for example~\cite{Ranestad_Iliev__VSPcubic, Ranestad_Voisin__VSP,
Ranestad_Schreyer__polar_simplices,
Jelisiejew_Ranestad_Schreyer__polar_simplices}, yielding hyperk{\"a}hler varieties etc.
However, as in the case of secant varieties, the need for closure is a source of major technical problems,
as it is very hard to prove that the closure is smooth, for example, or to describe it, see the discussion in~\cite{Jelisiejew_Ranestad_Schreyer__polar_simplices}.
The need of closure is also the same as existence of wild tensors, as in~\cite{bucz_bucz_smoothable_rank_example}.

One solution for this problem was recently introduced using border apolarity~\cite{Buczyska_Buczynski__border}. There, one considers $\VSPbord(T, r)$
which, roughly speaking, consists of all ``good'' multigraded ideals $I$ with fixed
Hilbert function and such that $T$ is apolar $I$.
The word ``good'' above technically means that $[I]$ lies in the \emph{Slip
component} of a multigraded Hilbert scheme of
Haiman-Sturmfels~\cite{Haiman_Sturmfels__multigraded}.
Border apolarity has led to dramatic improvements in border rank estimates, see~\cite{Conner_Harper_Landsberg, Conner_Huang_Landsberg, Mandziuk_Ventura}.

Working with $\VSPbord(T, r)$ poses two challenges:
\begin{enumerate}
    \item For a given ideal $I$, it is notoriously hard to determine if it lies in the Slip component.
        This is because the multigraded Hilbert scheme, apart from containing the Slip component, contains
        quite a number of ``slop'' ideals, easy to generate but containing
        little geometric information. Therefore, it is notoriously hard to
        describe this Hilbert scheme, see~\cite{Jelisiejew_Mandziuk}.
        In broad terms, the same happens for the usual Hilbert scheme, but the pathologies of the multigraded one show up for smaller $r$ and
        seem much worse.
    \item Even when computing the variety $\VSPbord(T, r)$ is fully possible, it contains rational parts, coming
        from ``artefacts'' of the high-degree parts of ideals $I$, see for
        example~\cite[Theorem~6.6, Remark~7.10]{Huang_Michalek_Ventura}. As a result, the
        global geometry of $\VSPbord(T, r)$ is not the expected one: the $\VSPbord(T, r)$
        viewed as a compactification, adds too much.
\end{enumerate}

The concise secant variety allows for an alternative compactification of the set $(T, Z)$, $T\in \spann{Z}$.
Theorem~\ref{refintro:abstractInsideConcise:thm} suggests the following definition.
\begin{definition}\label{ref:conciseVSP:def}
    The \emph{concise variety of sums of powers} $\cVSP(T,r)$ is fibre $\rho^{-1}(T)$, where $\rho\colon \csigma_r\to \sigma_r$
    is the natural map.
\end{definition}
Theorem~\ref{refintro:abstractInsideConcise:thm} implies that $\cVSP(T, r)$ contains all interesting $Z$ such that
$T\in \spann{Z}$, up to mild conditions on the span. The variety $\cVSP(T, r)$ is also much smaller than $\VSPbord(T, r)$
in general.
\begin{example}
    If $T$ is a concise tensor of minimal border rank $r$, then $\cVSP(T, r)$ is a point, regardless of whether
    $T$ is tame or wild. In particular, in both examples~\cite[Theorem~6.6, Remark~7.10]{Huang_Michalek_Ventura},
    the variety $\VSPbord(T, r)$ has high dimension ($r+2$, $\geq 8$, respectively) while $\cVSP(T, r)$ is a point.
\end{example}

\subsubsection{Concise version of border apolarity}\label{ssec:borderApolarity}

    As discussed above, border apolarity has been a successful tool for obtaining lower bounds on the border rank.
    In this section we explain that the same can be achieved with $\csigma_r$. To compare the two,
    we can say that:
    \begin{enumerate}
        \item\label{it:tmpBorder} Border apolarity produces~\cite[Theorems~1.2,
            4.3]{Buczyska_Buczynski__border} a Borel-fixed ideal $I$ inside the
            apolar ideal of the tensor $T$. The main
            question is whether $I$ is the Slip component. The obstacles for proving that are discussed in~\S\ref{ssec:VSP} above.
        \item\label{it:tmpConcise} The procedure below produces a \emph{Borel-fixed} unrestriction $\Tppar{}\geq T$. The unrestriction
            is a concise tensor in $\tconcspace$, where $\dim W_{\bullet} = r$, so that $\brk \Tppar{}\geq r$. The main question is whether $\Tppar{}$ has
            border rank $r$.
    \end{enumerate}
    In our view, there are two advantages of~\ref{it:tmpConcise} over~\ref{it:tmpBorder}. First, the procedure below requires no apolarity, no multigraded Hilbert schemes etc.
    Second, more importantly, the output $\Tppar{}$ is a tensor and can be tested using all existing tools of the theory:
    Koszul and Young flattenings~\cite{Landsberg_Ottaviani}, centroids~\cite{JLP}, etc.
    
    Let us finally describe the procedure more carefully. Let $[T]\in \mathbb{P}\tspace$ and let $\rho\colon \csigma_r\to \sigma_r$. First, almost directly from the definitions, we have the following:
    \begin{enumerate}
        \item $\brk T\leq r$ if and only if $\rho^{-1}([T])$ is nonempty,
        \item the cactus rank of $T$ is $\leq r$ if and only if $\rho^{-1}([T])$ contains a $1$-generic tensor (see~\S\ref{ssec:genericityConditions}),
        \item the smoothable rank of $T$ is $\leq r$ if and only if $\rho^{-1}([T])$ contains a $1$-generic tensor with smoothable centroid,
        \item the rank of $T$ is $\leq r$ if and only if $\rho^{-1}([T])$ contains a $1$-generic tensor whose centroid is a reduced algebra.
    \end{enumerate}
    The power of border apolarity comes from the \emph{fixed ideal theorem (fit)}~\cite[Theorem~4.3]{Buczyska_Buczynski__border},
    which allows one to reduce greatly the number of cases, to look for Borel-fixed ideals.
    The following is the analogue of this theorem.
    \begin{theorem}[Fixed unrestriction theorem]\label{ref:cfit:thm}
        Let $[T]\in \mathbb{P}\tspace$ be a tensor, let $G\subseteq \SLV$ be the stabilizer of $[T]$ and let $B\subseteq G$ be a Borel subgroup.
        Then $\brk T\leq r$ if and only if there exists a $B$-fixed tensor $\Tppar{}$ restricting to $T$.
    \end{theorem}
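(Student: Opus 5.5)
The plan is to apply the Borel fixed point theorem to the fibre $\rho^{-1}([T])\subseteq \csigma_r$. One direction is immediate. If $\Tppar{}\in\tconcspace$, with $\dim W_\bullet = r$, is $B$-fixed and restricts to $T$, then in the intended formulation $\Tppar{}$ is a point of $\csigma_r$, so it has border rank $r$. Restriction does not increase border rank, hence $\brk T\leq r$. The content is in the converse. Here one should read ``$B$-fixed tensor $\Tppar{}$ restricting to $T$'' as a $B$-fixed point of $\rho^{-1}([T])$, i.e.\ a $B$-fixed class of $(\Tppar{},\varphi_1,\dots,\varphi_d)$ modulo $\GL(W_1)\times\dots\times\GL(W_d)$, as in~\eqref{eq:concisePoints:intro}.

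For the converse, assume $\brk T\leq r$. Using the characterisation listed just above the theorem ($\brk T\le r$ if and only if $\rho^{-1}([T])\neq\emptyset$), the fibre $F:=\rho^{-1}([T])$ is nonempty. The key steps, in order, are as follows.

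First, show that the $\SLV$-action on $\mathbb{P}\tspace$ lifts to an action on $\unresfullpar{\Tproj}$ preserving $\csigma_r$, with $\rho$ equivariant. This holds because the unrestriction schemes are defined functorially (Definition~\ref{ref:unrestriction:def}) from the family $\Tproj$, which is $\GLV$-equivariant. Concretely, $g=(g_1,\dots,g_d)$ acts by $(\Tppar{},\varphi_i)\mapsto(\Tppar{},g_i\varphi_i)$. This action commutes with the $\GL(W_\bullet)$-action and preserves both the conciseness conditions~\eqref{eq:starCondition} and the condition $\brk\Tppar{}=r$.

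Second, deduce that $G$, the stabilizer of $[T]$, preserves $F$. Hence $B\subseteq G$ acts on $F$.

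Third, observe that $F$ is a closed subscheme of the projective variety $\csigma_r$, so it is complete. Borel's fixed point theorem, applied to the connected solvable group $B$ acting on the nonempty complete variety $F_{\mathrm{red}}$, gives a $B$-fixed point $p\in F$.

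Finally, unwind what $p$ is. It is a tensor $\Tppar{}$ of minimal border rank $r$ together with maps $\varphi_\bullet$ such that $\varphi_{1,\dots,d}(\Tppar{})=T$ up to scale, and $B$-fixedness of $p$ is exactly the statement of the theorem. If one wants $\Tppar{}$ itself fixed as a tensor, use the freeness of the $\GL(W_\bullet)$-action. For each $b\in B$ there is a unique $h(b)\in\GL(W_\bullet)$ with $b\cdot\varphi=\varphi\circ h(b)$ and $h(b)\Tppar{}=\Tppar{}$. Then $b\mapsto h(b)$ is a homomorphism $B\to \operatorname{Stab}(\Tppar{})$, so $\Tppar{}$ carries a compatible $B$-action making the restriction $B$-equivariant. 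This is the analogue of ``Borel-fixed ideal''.

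I expect the main obstacle to be the first step: checking rigorously that the iterated unrestriction construction $\unresfullpar{\Tproj}$ carries an $\SLV$-action compatible with $\rho$, and that $\csigma_r$ is closed and invariant. This requires equivariance of each stage $\unrespar{1,\dots,i}{\cdot}$. My approach would be to deduce it from the universal property: the translate by $g$ of the universal family is again a $\UU$-concise unrestriction of $g^*\Tproj\cong\Tproj$, and so it is classified by a unique automorphism. Projectivity of $\csigma_r$ and the point description in Proposition~\ref{ref:framedUnrestrictionPoints:prop} are taken from earlier in the paper.
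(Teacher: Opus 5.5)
Your proposal is correct and follows the paper's proof: the paper also applies the Borel fixed point theorem to $B$ acting on the projective fibre $\rho^{-1}([T])$, and the $\SLV$-equivariance of the unrestriction tower that you flag as the main obstacle is supplied by the group-action discussion in \S\ref{ssec:groupActions}. Your final step, which uses freeness of the $\GLW$-action to obtain a homomorphism $B\to\operatorname{Stab}(\Tppar{})$, is a correct elaboration that the paper leaves implicit.
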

    To understand Borel-fixed tensors, we need to carefully see how the action of $B$ lifts to an action on $\tconcspace$.
    This is discussed formally in~\S\ref{ssec:groupActions} and concretely on a small, but illuminating example of $\csigma_2$,
    in Proposition~\ref{ref:sigmaFixedPoints:prop}.

    As a proof of concept, we give the following example.
    \begin{example}
        Let $T$ be the $3\times3$-matrix multiplication tensor. Using border apolarity and extensive computations~\cite{Conner_Harper_Landsberg}
        are able to show that $\brk T\geq 17$. We can prove that their computation can be reinterpreted in our framework, so that Theorem~\ref{ref:cfit:thm}
        also yields $\brk T \geq 17$.
    \end{example}
    Finally, let us remark that the above ``concise version of border apolarity'' does not really use any apolarity. This is
    not necessarily good or bad, but we want to stress it to avoid confusion.

\subsection{Open problems}

Miraculously, concise secant varieties seem to apply to a range of tensor problems of various flavours.
We sketch three possible applications. None of them is currently worked out, so below we mostly illustrate
a possible approach.

\subsubsection{Lower bounds for border ranks, including matrix multiplication}

    This problem is a straightforward continuation of the discussion in~\S\ref{ssec:borderApolarity}. It is also quite mature:
    it is clear what to do, it only leaves the question on how effective it will be.

    Namely, consider the matrix multiplication tensor $M_n\in V_1 \otimes V_2 \otimes V_3$, where $\dim V_{\bullet} = n^2$.
    It is stabilised by the Borel subgroup $B$ of upper-triangular $n\times n$ matrices.

    To prove, for example, that $\brk M_3 > 17$, one can enumerate possible $B$-fixed concise tensors $\Tppar{}$
    in $\rho^{-1}(M_{3})$ with $\dim \Cen{\Tppar{}}\geq 17$, to prove that none of them has minimal border rank tensor.
    The code provided by Austin Conner for the paper~\cite{Conner_Harper_Landsberg} can be adapted to this purpose.
    Then, one has to rule out the possible candidates, showing that none of them has minimal border rank.
    This part is currently unexplored, but it is tempting to employ the suite of already existing tools: so-called
    $(2,1,0)$-tests of~\cite{Conner_Harper_Landsberg}, the more classical Koszul flattenings~\cite{Oeding_Ottaviani} and nonlinear theory of~\cite{Dolevalek_Michalek}.

\subsubsection{Non-defectivity of Segre}

    It is conjectured since~\cite{Abo_Ottaviani_Peterson_segre} that the Segre varieties are nondefective except for
    unbalanced cases and a small number of exceptions, see more discussion in~\S\ref{ssec:identifiability}. The conjecture was verified
    for an enormous number of cases, but it seems open in general.

    Theorem~\ref{ref:abstractInsideConcise:thm} allows us to rephrase nondefectivity as generic finiteness of $\rho\colon \csigma_r\to \sigma_r$.
    This allows for more aggressive degenerations: it would be enough to prove that $\rho^{-1}([T])$ is finite for a single tensor $T$, which
    may be quite degenerate. Such degenerations techniques are a powerful tool, perhaps their most impressive use case is the proof of generic
    Green's conjecture~\cite{Green__Voisin, Green__Weyman}.
    The question here would be: can we find explicit tensors $T$ with zero-dimensional $\rho^{-1}([T])$? How degenerate can they be?

\subsubsection{The Salmon problem}

Take $d = 3$, $\dim V_{\bullet} = 4$. The famous Salmon problem~\cite{Allman_Rhodes} asks for the ideal of equations of $\sigma_4\subseteq \mathbb{P}\tspace$.
The set-theoretic solution is given by Friedland~\cite{Friedland, Friedland_Gross}, but the scheme- or ideal-theoretic description seem unknown, see~\cite{Landsberg_Manivel, Bates_Oeding}.
Current attempts at the problem are based mostly on direct calculation.

The above gives a new strategy, as follows. Let $\widetilde{\csigma_4} := \Bl_{\Sing(\csigma_4)} \csigma_4$ be the resolution
of singularities of $\sigma_4$, as described in~\S\ref{ssec:singularities} above. Let $\widetilde{\sigma_4}$ be the normalisation of $\sigma_4$.
From Oeding's Conjecture~\ref{ref:Oeding:conj} we expect that $\widetilde{\sigma_4} = \sigma_4$.

Since $\csigma_4$ is normal, the map $\rho\colon \csigma_4\to \sigma_4$ lifts to a map $\csigma_4\to \widetilde{\sigma_4}$ and yields a map
\[
    \widetilde{\rho}\colon \widetilde{\csigma_4}\to \widetilde{\sigma_4}.
\]
By Zariski Main Theorem, the pushforward $(\widetilde{\rho})_* \OO_{\widetilde{\csigma_4}}$ is the structure sheaf of $\widetilde{\sigma_4}$.
We expect that $\widetilde{\csigma_4}$ admits a torus action with isolated fixed points. In this situation, the multigraded Hilbert series
of $\sigma_4$ could be computed by torus localisation~\cite{Fulton_Anderson}. Namely, to compute
$H^0(\OO_{\sigma_4}(\ell))$ for some $\ell\in \mathbb{Z}_{\geq
0}$, we instead compute the weights on the torus-linearised bundle
$\widetilde{\rho}^*\OO_{\sigma_4}(\ell)$.
Assuming Oeding's conjecture~\ref{ref:Oeding:conj}, this yields the dimensions of homogeneous pieces of coordinate ring of $\sigma_4$, hence predicts the number
of equations in respective degrees.

Throughout, we work over an arbitrary field $\kk$.

%
        
        

\subsection{Acknowledgements}

    We thank Jarek Buczy{\'n}ski for very helpful comments on an earlier draft and for the permission to discuss results
    related to his contributions to unrestrictions. We thank Giorgio Ottaviani for questions
    regarding the abstract secant variety and Pierpaola Santarsiero for discussions about identifiability. We thank
    Mat{\v{e}}j Dole{\v{z}}{\'a}lek and Joseph Landsberg for alerting us about multiple typos and Maciej Ga{\l}{\k{a}}zka for observing an issue
    in an earlier version of Lemma~\ref{l:algorithm_veronese}.

\section{Unrestrictions}\label{sec:unrestrictions}
\newcommand{\pstspace}{S^{\nu_1}V_1 \otimes \ldots \otimes S^{\nu_d}V_d}

For the most part of this section, we work in the general setup of partially symmetric tensors 
$T \in \pstspace$. This setup includes symmetric tensors $F \in S^\nu V$ and tensors $T \in \tspace$.
We assume that our base field $\kk$ has characteristic zero or greater than all $\nu_\bullet$. Throughout this work, the symbol ${\bullet}$ denotes an arbitrary index in the range $1,2, \ldots ,d$.

Recall that a tensor $T \in \pstspace$ is \emph{concise} if it does not lie in any proper subspace 
of the form $S^{\nu_1}V_1' \otimes \ldots \otimes S^{\nu_d}V_d' \subset \pstspace$. The border rank of 
a concise tensor is at least the maximum of dimensions of all $V_\bullet$. In particular,
when all $V_\bullet$ have dimension $m$, each concise tensor has border rank at least $m$. Concise 
tensors of border rank $m$ are called \emph{minimal border rank} tensors. They are better 
understood than general, non-concise border rank $m$ tensors, see the introduction and~\cite{JLP}.

\begin{definition}
    Let $T, \Tppar{}\in \pstspace$ be tensors. 
    The tensor $\Tppar{}$ is an \emph{unrestriction} of $T$ if $T$ is a \emph{restriction} of $\Tppar{}$, that is there exist linear maps 
    $\varphi_i \colon V_i \to V_i$
    such that $(\varphi_1 \otimes \dots \otimes \varphi_d)(\Tppar{}) = T$:
    \[
    \begin{tikzcd}
        \Tppar{} \arrow[r, phantom, "\in"] \arrow[d, phantom, "\rotatebox{90}{$\leq$}"]  & S^{\nu_1}V_1 
        \arrow[r, phantom, "\otimes"] \arrow[d, "\varphi_1"] & \dots 
        \arrow[r, phantom, "\otimes"] & S^{\nu_d} V_d \arrow[d, "\varphi_r"]\\ 
        T \arrow[r, phantom, "\in"] & S^{\nu_1} V_1 \arrow[r, phantom, "\otimes"] & 
        \dots \arrow[r, phantom, "\otimes"] & S^{\nu_d} V_d
    \end{tikzcd}
    \]
\end{definition}
The main result of this section is the following corollary from the more refined Proposition~\ref{p:algorithm_full}.
Roughly speaking, it says that every tensor of a fixed border rank can be obtained from a 
concise tensor of the same border rank via the linear-algebraic operation of restriction.

\begin{theorem}\label{thm:min_brk_unres}
    Assume that $\dim_\kk V_\bullet = m$.
    \begin{enumerate}
        \item Every tensor $T \in \pstspace$ of border rank $m$ is a restriction of a minimal 
        border rank tensor $\Tppar{} \in \pstspace$. 
        \item Moreover, given a degeneration of concise rank $m$ tensors $T_t$ witnessing the border rank 
        of the tensor $T$, one can explicitly construct a minimal border rank unrestriction $\Tppar{} \geq T$.
    \end{enumerate} 
\end{theorem}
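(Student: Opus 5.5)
The plan is to work over $\kk[[t]]$ and $\kk((t))$, and to build $\Tppar{}$ as the limit of the family $T_t$ after a $t$-dependent change of coordinates in each factor. I handle the factors one at a time; each step makes the limit concise in one more factor and keeps $T$ a restriction of the new limit. This also proves part (2), since the construction is explicit.

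First I reduce to a curve $T_t\in \pstspace\otimes\kk[[t]]$ with $T_0=T$ such that $T_t$ has rank $\leq m$ and is concise over $\kk((t))$. Concise tensors are open and dense in $\sigma_m$ when $\dim V_\bullet=m$, so a witnessing degeneration can be perturbed to consist of concise tensors; in part (2) this is given.

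For the first factor I use the flattening $T_t\colon (S^{\nu_1}V_1)^\vee\otimes\kk((t))\to S^{\nu_2}V_2\otimes\dots\otimes \kk((t))$. In the partially symmetric case I use instead the first-derivative flattening $V_1^\vee\to S^{\nu_1-1}V_1\otimes\dots$, which is legitimate because of the assumption on $\operatorname{char}\kk$. Conciseness means that the image of $V_1^\vee\otimes\kk((t))$ under this flattening is an $m$-dimensional $\kk((t))$-subspace $E$. Set
\[
L:=E\cap\bigl(S^{\nu_1-1}V_1\otimes\dots\otimes\kk[[t]]\bigr).
\]
This is a saturated lattice, free of rank $m$, and its reduction mod $t$ is $m$-dimensional. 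Choose a $\kk[[t]]$-basis of $L$. Comparing it with the images of a basis of $V_1^\vee$ gives $g_1\in \GL_m(\kk((t)))$ such that $T'_t:=g_1\cdot T_t$ is integral and its limit $T'_0$ is concise in the first factor. The image of $V_1^\vee\otimes\kk[[t]]$ lies in $L$, so $h_1:=g_1^{-1}$ has entries in $\kk[[t]]$. In the symmetric setting this has to be phrased as $T_t$ being the image of $T'_t$ under $S^{\nu_1}h_1$. Specialising at $t=0$ gives $T=(S^{\nu_1}h_1(0)\otimes \mathrm{id})(T'_0)$. So $T$ is a restriction of $T'_0$.

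Next I repeat the step for factors $2,\dots,d$, each time starting from the previous integral family. The key check is that conciseness already gained in factors $1,\dots,i-1$ is not lost. At step $i$ the old limit equals $h_i(0)$ applied to the new limit. A restriction cannot be concise in a factor where the tensor it comes from is not, so the new limit is still concise in factors $1,\dots,i-1$, and by construction it is also concise in factor $i$. After $d$ steps we get $g=(g_1,\dots,g_d)$ with $\Tppar{}:=\lim_{t\to0} g\cdot T_t$ concise, and $T$ is obtained from $\Tppar{}$ by composing the maps $h_i(0)$. For the rank bound, $g\cdot T_t$ has rank $\leq m$ over $\kk((t))$, hence for all but finitely many specialisations of $t$ after clearing denominators. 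So $\brk\Tppar{}\leq m$, and conciseness forces $\brk\Tppar{}=m$.

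The main obstacle I expect is the partially symmetric bookkeeping. I must make sure that the change of coordinates $g_i$ built from the derivative flattening acts through $S^{\nu_i}$ compatibly. In particular, saturating the image of $V_i^\vee$ has to yield an honest $\GL(V_i)$-element, not just a $\GL$ of the flattening's image. I would handle this via the standard identification of the span of first partials with the minimal subspace $V_i'$ such that $T\in S^{\nu_i}V_i'\otimes\cdots$. The characteristic assumption is exactly what makes this identification valid.
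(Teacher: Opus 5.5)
Your argument is correct when $\nu_\bullet=1$. There, saturating the image of the flattening and comparing bases is the Cramer's-rule step of Lemma~\ref{l:algorithm_step}, and your iteration is that of Proposition~\ref{p:algorithm_segre}.

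\textbf{The gap is the partially symmetric step}, which you flag as the main obstacle but do not resolve. When $\nu_i\ge 2$, an element $g\in\GL(V_i)$ acts on both the source and the target of the derivative flattening: $\partial_\alpha\bigl((S^{\nu_i}g)T\bigr)=(S^{\nu_i-1}g)\bigl(\partial_{g^{\vee}\alpha}T\bigr)$. So the image for $g\cdot T_t$ is $(S^{\nu_i-1}g)(E)$, not $E$. Prescribing that a basis of $V_i^\vee$ maps to a $\kk[\![t]\!]$-basis of $L$ is therefore not realised by any coordinate change on $V_i$. ``Comparing bases'' does not produce a $g_i$ with the properties you claim.

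Concretely, take $F_t=x^2+ty^2\in S^2\kk^2[\![t]\!]$.
\begin{itemize}
\item The partials are $2x$ and $2ty$, so $L=\langle x,y\rangle$. Your comparison gives $y\mapsto t^{-1}y$, and hence the non-integral family $x^2+t^{-1}y^2$.
\item In fact no admissible $g_1\in\GL_2(\kk(\!(t)\!))$ exists. Suppose $F_t=(S^2h)(\widetilde F_t)$ with $h$ integral and $\widetilde F_0$ a nondegenerate quadric. Taking Gram determinants gives $t=\det(h)^2\cdot u$ with $u$ a unit, which is impossible.
\item The correct rescaling is $y\mapsto t^{-1/2}y$.
\end{itemize}
This example has a limit of border rank $1<m$. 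However, you assert the step for every concise family and apply it to intermediate families, and nothing in your argument uses $\brk T=m$ to exclude such behaviour.

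\textbf{The missing idea is that of Lemma~\ref{l:algorithm_veronese}.} Apply it to the jointly concise tuple spanning the image of the flattening in $S^{\nu_i}V_i$, adding one essential variable $v_{r+1}$ at a time.
\begin{enumerate}
\item Make a unipotent substitution $v_j\mapsto v_j-\lambda_j(t)v_{r+1}$ with $\lambda_j\in t\kk[\![t]\!]$. This removes from $\partial_{r+1}$ the part lying in the span of $\partial_1,\dots,\partial_r$, leaving $t^dR_t$ with $R_{t=0}$ linearly independent of the old partials.
\item Rescale $v_{r+1}\mapsto t^{-w}v_{r+1}$ with $w=\min_j e_j/j$. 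Here $e_j$ is the minimal valuation of coefficients of monomials of degree $j$ in $v_{r+1}$.
\item In general $w$ is only rational, which forces the base change $\kk[\![t]\!]\subset\kk[\![t^{1/\nu!}]\!]$.
\item Joint conciseness of the limit then follows from the case split $w<e_1$ versus $w=e_1$.
\end{enumerate}
Without this, or an equivalent valuation-theoretic construction, part (2) and the symmetric case of part (1) remain unproved.

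A minor point: for the rank bound, use that $\sigmahat_m$ is closed. A $\kk[\![t]\!]$-point whose generic point lies in $\sigmahat_m$ then has its closed point there too. Your phrase ``all but finitely many specialisations of $t$'' does not make sense for power series.
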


In the Segre case, that is, when $\nu_{\bullet} = 1$,
Theorem~\ref{thm:min_brk_unres} was first observed by
Buczy{\'n}ski~\cite{Bucz_Communication}. While presenting this work on the INABAG 2025
conference, we learned that, again in the Segre case, it can be also deduced
from~\cite{Christandl_Gesmundo_Zuiddam__A_gap_in_the_subrank}. This argument now appeared
as~\cite{Chang_Gesmundo_Zuiddam}.

The argument of Theorem~\ref{thm:min_brk_unres} is constructive, so for every witness of a border rank, we obtain a minimal border rank tensor.
This allows to obtain a plethora of interesting minimal border rank tensors, as we illustrate below.

\begin{example}\label{ex:wedge}
    Assume that $\kk$ has characteristic different from $2, 3$.
    Consider the antisymmetric tensor 
    \[
        T = e_1 \wedge e_2 \wedge e_3 \in \Lambda^3 \kk^3 \subset \kk^3 \otimes \kk^3 \otimes \kk^3
        \into \kk^5 \otimes \kk^5 \otimes \kk^5.
    \]
    The tensor $T$ has both rank and border rank $5$~\cite{Ilten_Teitler, Derksen_Makam}.
    The rank yields a tautological unrestriction $T \leq \spann{5}$ from the unit tensor. There is 
    another minimal border rank unrestriction 
    \[
        T = 
        \begin{bmatrix}
            0 & x_3 & -x_2 & 0 & 0 \\
            -x_3 & 0 & x_1 & 0 & 0 \\
            x_2 & -x_1 & 0 & 0 & 0 \\
            0 & 0 & 0 & 0 & 0 \\
            0 & 0 & 0 & 0 & 0 \\
        \end{bmatrix}
        \leq 
        \begin{bmatrix}
            x_5 & x_3 + x_4 & -x_2 & x_1 & x_2 \\
            -x_3 + x_4 & 0 & x_1 & 0 & 0 \\
            x_2 & -x_1 & 0 & 0 & 0 \\
            x_1 & 0 & 0 & 0 & 0 \\
            x_2 & 0 & 0 & 0 & 0 \\
        \end{bmatrix}
        = \Tppar{}
    \] 
    from the tensor $\Tppar{}$ isomorphic to the tensor $T_{\OO_{54}}$ from the classification
    from~\cite[Theorem 1.7]{JLP}.
\end{example}

Before stating and proving Proposition~\ref{p:algorithm_full}, we discuss the intuitive notion of 
a degeneration of tensors. There are a few closely related ways to formally define it. 
Let us list a few.
\begin{enumerate}
    \item For $\kk = \CC$, a degeneration of tensors $T_t$ to a tensor $T$ can be thought of as a 
        sequence of tensors in $\pstspace$ converging to $T$ in the usual topological sense. 
        In this interpretation, the general tensor $T_{t \neq 0}$ is a general tensor from this sequence.
    \item For $\kk = \CC$, a degeneration of tensors $T_t$ to a tensor $T$ can be thought of as a 
        curve germ $\Delta \to \pstspace$, where $\Delta\subseteq \mathbb{C}$ is a small disk around $0$, such that $0\in \Delta$
        is mapped to $T$. In this interpretation, the general tensor $T_{t \neq 0}$ 
        is the image of general point of $\Delta\setminus\{0\}$.
    \item More algebraically and for any field $\kk$, a degeneration of tensors $T_t$ to a tensor $T$ can be thought of as a 
        curve germ $\Spec \kk[\![t]\!] \to \pstspace$ such that the unique $\kk$-point of 
        $\Spec \kk[\![t]\!]$ is mapped to $T$. In this interpretation, the general tensor $T_{t \neq 0}$ 
        is the image of the generic point of $\Spec \kk[\![t]\!]$.
    \item For general $\kk$, a degeneration of tensors $T_t$ to a tensor $T$ can be thought of as an
        single tensor $T_t\in (\pstspace)[\![t]\!]$ such that $T_{t = 0}\in \tspace$ 
        coincides with the tensor $T$. In this interpretation, the general tensor $T_{t \neq 0}$
        is the tensor $T_t$ viewed as an element of the $\kk(\!(t)\!)$-vector space $(\pstspace)(\!(t)\!)$.
\end{enumerate} 
In our proofs, we formally use the last of these notions. However, the arguments easily adjust 
to other notions of a degeneration of tensors, in particular the ones listed above, so the reader is welcome to think about a degeneration 
of tensors according to his own taste. 
\smallskip

We say that one can \emph{algorithmically construct} an unrestriction $\Tppar{}$ using a degeneration of 
tensors $T_t$ if one can write an algorithm which takes a degeneration $T_t$ as an input and returns a 
corresponding unrestriction $\Tppar{}$ as an output. 
The proofs of results from this section have this feature.

\smallskip
Let us state the general result for any partially symmetric format.

\begin{proposition}\label{p:algorithm_full}
    Let $T \in \pstspace$ be a tensor. Suppose we have a degeneration of tensors $T_t \in (\pstspace)[\![t]\!]$ 
    such that $T_{t = 0} = T$ and the general tensor $T_{t \neq 0}$ is concise.
    \begin{enumerate}
        \item Using $T_t$, one can algorithmically construct a concise unrestriction $\Tppar{}$ of the tensor $T$.
        \item If the general tensor $T_{t \neq 0}$ has border rank $\leq m$, then $\Tppar{}$
            has border rank $\leq m$ as well.
    \end{enumerate}
\end{proposition}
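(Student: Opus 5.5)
The plan is to work over $\kk[\![t]\!]$ and replace the standard lattice $V_i[\![t]\!]\subseteq V_i(\!(t)\!)$ by a smaller lattice "adapted" to $T_t$, one factor at a time. After the base change, the special fibre is concise, and the change of basis evaluated at $t=0$ provides the restriction maps.

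\emph{Step 1 (one factor, Segre case).} Consider the first flattening
\[
T_t^{(1)}\colon (V_2\otimes\dots\otimes V_d)^\vee[\![t]\!]\to V_1[\![t]\!].
\]
Let $L_1$ be its image. It is a $\kk[\![t]\!]$-submodule of the free module $V_1[\![t]\!]$, hence free. Since $T_{t\neq0}$ is concise, $L_1\otimes\kk(\!(t)\!)=V_1(\!(t)\!)$, so $L_1$ has rank $m$.

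Pick a basis of $L_1$ and let $g_1(t)$ be the matrix whose columns are this basis. Then $g_1(t)$ is an $m\times m$ matrix over $\kk[\![t]\!]$ that is invertible over $\kk(\!(t)\!)$, with $g_1(t)\,V_1[\![t]\!]=L_1$. Since $T_t\in L_1\otimes V_2\otimes\dots\otimes V_d[\![t]\!]$, we can write
\[
T_t=(g_1\otimes \mathrm{id})(T^1_t)
\]
with $T^1_t$ having coefficients in $\kk[\![t]\!]$. By construction, the first flattening of $T^1_t$ surjects onto $\kk[\![t]\!]^m$. Reducing modulo $t$, it follows that $T^1_{t=0}$ is $V_1$-concise. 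Moreover,
\[
T=T_{t=0}=(g_1(0)\otimes\mathrm{id})(T^1_{t=0}),
\]
so $T$ is a restriction of $T^1_{t=0}$.

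\emph{Step 2 (iterating).} Repeat Step 1 on the second factor of $T^1_t$, and continue through all $d$ factors. The point to verify is that the later steps do not destroy conciseness already achieved on earlier factors. Passing from $T^1_t$ to $T^2_t=(\mathrm{id}\otimes g_2^{-1}\otimes\mathrm{id})T^1_t$ has two effects on the first flattening. It replaces the test functionals on $V_2$ by their pullbacks through $g_2^{\vee}$. The dual lattice $L_2^\vee$ contains $V_2^\vee[\![t]\!]$. Consequently the image of the first flattening can only grow inside $\kk[\![t]\!]^m$, and it therefore stays everything. Composing the $g_i(0)$ then exhibits $T$ as a restriction of the concise tensor
\[
\Tppar{}:=T^d_{t=0}.
\]
Every step is linear algebra over $\kk[\![t]\!]$, namely Smith normal form or Hermite reduction, so the construction is algorithmic.

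\emph{Step 3 (border rank).} Over $\kk(\!(t)\!)$ the tensor $T^d_t$ is a $\GL$-translate of $T_t$. Hence it has border rank $\leq m$, i.e.\ it is a $\kk(\!(t)\!)$-point of the closed, $\GL$-invariant cone $\sigmahat_m$. Its coefficients lie in $\kk[\![t]\!]$, so it defines a $\kk[\![t]\!]$-point of the affine space, and the generic point lands in the closed subscheme $\sigmahat_m$. It follows that the special fibre $\Tppar{}$ also lies in $\sigmahat_m$, i.e.\ $\brk\Tppar{}\leq m$.

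\emph{Main obstacle: the partially symmetric case.} The flattening image lies in $S^{\nu_1}V_1$, not in $V_1$, so there is no lattice in $V_1$ to read off directly. I would first try to define $L_1$ as the image of the contraction with
\[
S^{\nu_1-1}V_1^\vee\otimes S^{\nu_2}V_2^\vee\otimes\dots\otimes S^{\nu_d}V_d^\vee[\![t]\!],
\]
which lands in $V_1[\![t]\!]$. The characteristic assumption ($\mathrm{char}\,\kk=0$ or larger than all $\nu_\bullet$) makes polarisation valid, so conciseness is equivalent to surjectivity of this partial contraction. The substitution is then $S^{\nu_1}g_1$, and the argument proceeds as above. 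The step I expect to need the most care is checking that $T_t$ genuinely lies in $S^{\nu_1}L_1\otimes\cdots$ over $\kk[\![t]\!]$, and not merely after inverting $t$; this again uses polarisation.
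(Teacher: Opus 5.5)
Your Segre-case argument (Steps 1--3) is correct. It is essentially the paper's alternative algorithm from Lemma~\ref{l:algorithm_step} and Proposition~\ref{p:algorithm_segre}: the columns of the $m\times m$ submatrix $(X_i)_t$ of minimal-valuation determinant used there form a $\kk[\![t]\!]$-basis of your image lattice $L_1$.

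\textbf{The gap is in the partially symmetric case.} The proposition is stated for partially symmetric tensors, and your proposal breaks down exactly at the step you flagged: in general $T_t$ does \emph{not} lie in $S^{\nu_1}L_1\otimes\cdots$ over $\kk[\![t]\!]$. Take the symmetric case $\nu=2$, $V=\langle x,y\rangle$, $F_t=x^2+ty^2$ with $\mathrm{char}\,\kk\neq 2$. Contracting with $V^\vee$ gives $L=\langle x,ty\rangle$. In the basis $x'=x$, $y'=ty$ you get $F_t=x'^2+t^{-1}y'^2$, which has a pole. Polarisation does not rescue this. The polar form contains $ty\otimes y$, which lies in $(L\otimes V)\cap(V\otimes L)$ but not in $L\otimes L$, and the symmetric substitution $S^{\nu_1}g_1$ needs the latter.

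\textbf{No construction over $\kk[\![t]\!]$ can work.} Suppose $F_t=(S^2\varphi_t)(\widetilde F_t)$ with $\varphi_t$ and $\widetilde F_t$ defined over $\kk[\![t]\!]$ and $\widetilde F_{t=0}$ concise, i.e.\ a nondegenerate quadric. Comparing Gram determinants gives $t=(\det\varphi_t)^2\cdot u$ with $u$ a unit. Hence $1=2\,v_t(\det\varphi_t)$, which is impossible.

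\textbf{The missing idea is a ramified base change.} The paper does this in Lemma~\ref{l:algorithm_veronese}, adding one variable $v_{r+1}$ at a time in three moves:
\begin{enumerate}
\item A unipotent $\kk[\![t]\!]$-substitution $v_i\mapsto v_i-\lambda_i(t)v_{r+1}$ arranges $\partial_{r+1}\mathbf{F}_t=t^{k}\mathbf{R}_t$ with $\mathbf{R}_{t=0}\notin\langle\partial_1\mathbf{F},\dots,\partial_r\mathbf{F}\rangle$.
\item Over $\kk[\![t^{1/\nu!}]\!]$, rescale $v_{r+1}\mapsto t^{-w}v_{r+1}$ with the rational weight $w=\min_j e_j/j$.
\item A case analysis, $w<e_1$ versus $w=e_1$, shows the new partials are linearly independent.
\end{enumerate}
This lemma is applied to a basis of the flattening image in $S^{\nu_1}V_1$ and iterated over the factors. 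Your Step~3 then goes through unchanged over the DVR $\kk[\![t^{1/N}]\!]$. In the example above, $w=1/2$ and $y\mapsto t^{-1/2}y$ yields the concise unrestriction $x^2+y^2$.
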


Proposition~\ref{p:algorithm_full} above is proved via an iterative construction.
Lemma~\ref{l:algorithm_veronese} below describes the general step of this constrution and itself
proves Proposition~\ref{p:algorithm_full} in the special case of symmetric tensors $F \in S^\nu V$,
see Corollary~\ref{c:algorithm_veronese}.

To state Lemma~\ref{l:algorithm_veronese} cleanly, we first introduce the following handy notion.
\begin{definition}
    A collection of tensors $T_1, \dots, T_n \in \pstspace$ is \emph{jointly concise}
    if $T_1, \dots, T_n$ do not simultaneously lie in a proper subspace
    $S^{\nu_1}V_1' \otimes \ldots \otimes S^{\nu_d}V_d' \subsetneq \pstspace$.
\end{definition}
\begin{example}\label{e:jointly_concise}
    A tensor $T \in \pstspace$ is $V_i$-concise if and only if polynomials in $S^{\nu_i} V_i$
    spanning the image of the flattening
    \[
        T \colon S^{\nu_1}V_1^\vee \otimes \ldots \otimes S^{\nu_{i-1}} V_{i-1}^\vee \otimes 
         S^{\nu_{i+1}} V_{i+1}^\vee \otimes \ldots \otimes S^{\nu_d}V_d^\vee \to S^{\nu_i} V_i
     \]
    are jointly concise.
\end{example}

Now we are ready to proceed with the proofs. The crucial step is case of polynomials.
Here, we will need to extend coefficients, that is, consider $\kk[\![t^{1/N}]\!]$, rather than
$\kk[\![t]\!]$.

\begin{lemma}\label{l:algorithm_veronese}
    Let $(F_1)_t, \dots, (F_n)_t \in S^\nu V[\![t]\!]$ be degenerations of polynomials
    to $F_1, \dots, F_n \in S^\nu V$
    such that the general polynomials $(F_1)_{t \neq 0}, \dots, (F_n)_{t \neq 0}$ are jointly concise.
    Let $N = \nu!$ and extend the coefficients to $\kk[\![t^{1/N}]\!]$.
    One can algorithmically construct
    \begin{itemize}
        \item a degeneration of linear maps $\varphi_t \colon V[\![t^{1/N}]\!] \to V[\![t^{1/N}]\!]$ to a linear map $\varphi$ and
        \item degenerations of polynomials $(\Fppar{}_1)_t, \dots, (\Fppar{}_n)_t \in S^\nu V[\![t^{1/N}]\!]$ to polynomials $\Fppar{}_1, \dots, \Fppar{}_n$
    \end{itemize} 
    that satisfy the following conditions:
    \begin{enumerate}
        \item the polynomials $\Fppar{}_1, \dots, \Fppar{}_n$ are jointly concise,
        \item the map $\varphi$ yields restrictions $\varphi(\Fppar{}_*) = F_*$,
        \item the general map $\varphi_{t \neq 0}$ is a linear isomorphism
            and $\varphi_{t \neq 0}((\Fppar{}_*)_{t \neq 0}) = (F_*)_{t \neq 0}$.
    \end{enumerate}
\end{lemma}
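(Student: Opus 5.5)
The plan is to build $\varphi_t$ as a \emph{diagonal rescaling} in a suitably adapted basis, with fractional exponents, and to check the three conditions directly. Throughout, $R := \kk[\![t^{1/N}]\!]$ and $K := \kk(\!(t^{1/N})\!)$.

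\textbf{Step 1: the catalecticant lattice.} Consider the $R$-submodule
\[
M \subseteq V[\![t^{1/N}]\!]
\]
spanned by all $(\nu-1)$-st order partial derivatives of $(F_1)_t,\dots,(F_n)_t$, taken with respect to $S^{\nu-1}V^\vee[\![t^{1/N}]\!]$.
\begin{itemize}
\item By Example~\ref{e:jointly_concise}, generic joint conciseness says exactly that $M\otimes_R K = V\otimes K$.
\item Since $R$ is a DVR and $M$ is finitely generated and torsion-free, $M$ is free of rank $m=\dim V$.
\item Smith normal form then gives a basis $e_1,\dots,e_m$ of $V[\![t^{1/N}]\!]$ and exponents $a_i\geq 0$ with $M=\bigoplus_i t^{a_i}e_i R$. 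This step is constructive.
\end{itemize}

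\textbf{Step 2: the first candidate.} The naive candidate is $\varphi_t(e_i) = t^{a_i}e_i$ and $(\Fppar{}_*)_t := \varphi_t^{-1}((F_*)_t)$.

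Integrality holds because the characteristic is $0$ or $>\nu$, so $\nu!$ is invertible. Polarisation (Euler's formula iterated) then shows that a form whose $(\nu-1)$-st derivatives all lie in $M$ lies in $S^\nu M$. Hence $(\Fppar{}_*)_t\in S^\nu V[\![t^{1/N}]\!]$.

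Conditions (b) and (c) follow:
\begin{itemize}
\item $\varphi_{t\neq 0}$ is invertible, and $\varphi_t((\Fppar{}_*)_t) = (F_*)_t$ by construction.
\item Setting $t=0$ gives $\varphi(\Fppar{}_*) = F_*$.
\end{itemize}

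\textbf{Step 3: the real difficulty is condition (a).} The derivatives of $\varphi_t^{-1}(F_t)$ with respect to the standard dual lattice are $\varphi_t^{-1}$ applied to derivatives of $F_t$ with respect to the \emph{smaller} lattice $\varphi_t^{\vee}(V^\vee[\![t^{1/N}]\!])$. They therefore need not generate $V[\![t^{1/N}]\!]$ modulo $t$.

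To fix this, I would rescale by fractional weights rather than integer ones. In the adapted basis, choose $\varphi_t = \mathrm{diag}(t^{b_i})$ with $b_i\in \frac{1}{N}\mathbb{Z}_{\geq 0}$, chosen to balance the two effects. Scaling $e_i$ by $t^{b_i}$ changes a monomial $e^{\alpha}$ with $|\alpha|=\nu$ by $t^{\sum \alpha_ib_i}$. The requirement is twofold:
\begin{itemize}
\item every coefficient of $(F_*)_t$ should remain integral after dividing by these weights;
\item for each $i$, some monomial involving $e_i$ should have leading order exactly zero.
\end{itemize}
This is a linear-programming / Newton-polygon condition on the exponents. Solving it for the $b_i$ introduces denominators dividing $\nu!$, which is why $N=\nu!$ appears.

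\textbf{Fallback: iteration.} If a single choice is not achievable, I would iterate Steps 1--2 instead. At each step, if the limit $\Fppar{}_*$ is not jointly concise, pass to a proper subspace witnessing the failure and rescale the complement further. This strictly increases $\operatorname{val}\det\varphi_t$.

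Termination comes from a bound. The quantity $\operatorname{val}\det\varphi_t$ is bounded by the valuation of a fixed nonzero maximal minor of the catalecticant of $(F_*)_t$. Such a minor exists by generic joint conciseness, and it is unchanged up to units by the construction.

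I expect the main obstacle to be exactly this: guaranteeing joint conciseness of the limit while keeping integrality. Concretely, this means either proving the existence of the balancing fractional weights, or establishing the monotone bounded invariant that makes the iteration terminate.
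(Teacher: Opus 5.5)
Your proposal does not prove the lemma. The only substantive condition, (a), is left open, as you say yourself, and the steps you do carry out contain errors.

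\textbf{Step~2 is false for $\nu\geq 2$.} Saying that all $(\nu-1)$-st partials of $F$ lie in $M=\bigoplus_i t^{a_i}e_iR$ is equivalent to $\operatorname{val}(c_\alpha)\geq \max_{i:\,\alpha_i>0} a_i$ for the coefficient $c_\alpha$ of $e^\alpha$. Euler's formula gives only $F\in S^{\nu-1}V[\![t^{1/N}]\!]\cdot M$, not $F\in S^\nu M$. Integrality of $\varphi_t^{-1}(F)$, however, requires $\operatorname{val}(c_\alpha)\geq\sum_i\alpha_ia_i$. Already for $\dim V=1$, $\nu=2$, $F_t=te^2$ you get $M=t\,\kk[\![t]\!]\,e$, $\varphi_t(e)=te$, and $\varphi_t^{-1}(F_t)=t^{-1}e^2$. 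So your first candidate is not even a degeneration and (b) fails. The claim holds only for $\nu=1$, where everything reduces to Smith normal form.

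\textbf{Step~3 only states what is wanted, and its criterion is too weak.} ``Every $e_i$ occurs in a monomial of order zero'' does not imply joint conciseness of the limit. For $F_t=(e_1+te_2)^3+t^5e_2^3$, the weights $(0,1)$ in the basis $e_1,e_2$ keep everything integral and make both variables appear, yet the limit is $(e_1+e_2)^3$. You would have to show that in a Smith basis of $M$ some weights give linearly independent partials in the limit, and you offer no argument for this.

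\textbf{The fallback does not close the gap.}
\begin{enumerate}
\item It does not say how far to rescale. Integrality forces the exponent to be at most $\min_j e_j/j$, where $e_j$ is the least valuation of a coefficient of a monomial of degree $j$ in the complementary variable.
\item Rescaling a $\kk$-complement need not enlarge the concise space of the limit. In the example above, rescaling $e_2$ by $t$ turns the limit $e_1^3$ into $(e_1+e_2)^3$.
\item A strictly increasing, bounded sequence of rational values of $\operatorname{val}\det\varphi_t$ need not stabilise when denominators can grow at each round, so your termination argument is incomplete.
\end{enumerate}

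\textbf{The missing idea} is the paper's one-variable-at-a-time step, in which a $t$-adic shear precedes the rescaling. Suppose $\mathbf{F}=\mathbf{F}_{t=0}$ is jointly concise exactly in $v_1,\dots,v_r$.
\begin{enumerate}
\item Repeatedly subtract from $\partial_{r+1}\mathbf{F}_t$ multiples of $\partial_1\mathbf{F}_t,\dots,\partial_r\mathbf{F}_t$ until $\partial_{r+1}\mathbf{F}_t=\sum_{i\leq r}\lambda_i(t)\,\partial_i\mathbf{F}_t+t^d\mathbf{R}_t$ with $\lambda_i\in t\kk[t]$ and $\mathbf{R}_{t=0}\notin\langle\partial_1\mathbf{F},\dots,\partial_r\mathbf{F}\rangle$. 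This terminates precisely because the general member is jointly concise.
\item The substitution $v_i\mapsto v_i-\lambda_i(t)v_{r+1}$, which is the identity modulo $t$, kills the sum.
\item Only then is $v_{r+1}$ alone rescaled by $t^{w}$, with $w=\min_j e_j/j$.
\item A two-case check shows the limit is jointly concise in $v_1,\dots,v_{r+1}$. If $w<e_1$, the new partial is divisible by $v_{r+1}$ while no nonzero combination of the old ones is. If $w=e_1$, then $\mathbf{R}_{t=0}$ does not involve $v_{r+1}$, so setting $v_{r+1}=0$ returns the independent elements $\partial_1\mathbf{F},\dots,\partial_r\mathbf{F},\mathbf{R}_{t=0}$.
\end{enumerate}
The concise dimension therefore grows by one per round, so the process stops after at most $m$ rounds. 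This is exactly the progress measure, with control of integrality, that your iteration lacks.
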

\begin{proof}
    \newcommand{\FF}{\mathbf{F}}
    \newcommand{\FFpar}{\widetilde{\mathbf{F}}}
    \newcommand{\RR}[1]{\mathbf{R}^{(#1)}}
    \newcommand{\HH}{\mathbf{H}}
    Fix a basis $v_1, \dots, v_m$ of $V$ such that $F_1, \dots, F_n$ are jointly concise in 
    variables $v_1, \dots, v_r$, that is, $V' = \spann{v_1, \dots, v_r} \subset V$
    is the smallest subspace such that $F_1, \dots, F_n \in S^\nu V'$. 
    Below, we explain how to construct unrestrictions that are jointly concise at least in variables $v_1,\dots,v_r, v_{r+1}$.
    To construct unrestrictions that are jointly concise in all variables, we simply iterate this algorithm.

    Consider the tuple of polynomials $\FF_t = ((F_1)_t, \dots, (F_n)_t)$. Let $\FF := \FF_{t=0}$. We have
    $\partial_{r+1} \FF = 0$, so $\partial_{r+1} \FF_t = t \RR{1}_t$ for some tuple of polynomials $\RR{1}_t$.
    If $\RR{1}_{t=0}$ lies in the $\kk$-linear span $\spann{\partial_1 \FF, \dots, \partial_r \FF}$,
    then we can find coefficients $\lambda_i^1 \in \kk$ such that 
    $\RR{1}_t = \sum_{i=1}^r \lambda_i^1 \partial_i \FF_t + t \RR{2}_t$,
    so that
    \[
        \partial_{r+1} \FF_t = \sum_{i=1}^r (\lambda_i^1 t) \partial_i \FF_t + t^2 \RR{2}_t.
    \]
    If $\RR{2}_{t=0} \in \spann{\partial_1 \FF, \dots, \partial_r \FF}$, then we can find 
    coefficients $\lambda_i^2 \in \kk$ such that 
    $\partial_{r+1} \FF_t = \sum_{i=1}^r (\lambda_i^1 t + \lambda_i^2 t^2) \partial_i \FF_t + t^3 \RR{3}_t$,
    and so on.
    This process must terminate, because otherwise it would yield coefficients $(\lambda_i)_t \in \kk[\![t]\!]$
    such that $\partial_{r+1} \FF_t = \sum_{i=1}^r (\lambda_i)_t \partial_i \FF_t$, 
    contradicting the assumption that $\FF_t$ are jointly concise. Thus,
    we eventually find an integer $d$ and coefficients $(\lambda_i)_t \in (t\kk[t])_{\leq d-1} \subset \kk[\![t]\!]$
    such that 
    \[
        \partial_{r+1} \FF_t = \sum_{i=1}^r (\lambda_i)_t \partial_i \FF_t + t^d \RR{d}_t,
    \]
    where $\RR{d}_{t=0} \notin \spann{\partial_1 \FF, \dots, \partial_r \FF}$. 
    Consider the $\kk[\![t]\!]$-linear change of variables $\psi_t$ given by $v_i \mapsto v_i - (\lambda_i)_t v_{r+1}$ 
    for $i=1,\dots, r$ and $v_j \mapsto v_j$ for $j = r+1, \dots, m$. Applying it to $\FF_t$, we reduce to the case 
    \begin{equation}\label{eq:tmpRemainder}
        \partial_{r+1} \FF_t = t^d \RR{d}_t.
    \end{equation} 
    Since $\lambda_i$ are divisible by $t$, the map $\psi_t$ is the identity
    map modulo $t$, so $\partial_1 \FF, \dots, \partial_r \FF$
    and $\RR{d}_{t=0}$
    remain unchanged.  

    Now we proceed to rescale $v_{r+1}$ by a power of $t$, to obtain an unrestriction.
    Recall that all polynomials in $\FF_t$ have degree $\nu$.
    For every integer $1\leq j\leq \nu$, let $e_j\geq 1$ be the minimal number such that some polynomial in $\FF_t$ contains
    a monomial of the form $\mu_{a_1, \ldots ,a_m} v_1^{a_1} \ldots v_m^{a_m}$,
    where $a_{r+1} = j$ and $0\neq \mu_{a_1, \ldots ,a_m}\in \kk[\![t]\!]$ is divisible by
    $t^{e_j}$, but not by $t^{e_j+1}$. If such a monomial does not exist, we put $e_j := \infty$.
    Since some monomial containing $v_{r+1}$ exists, at least one $e_j$ is finite.

    Let $w = \min(e_1, e_2/2, \ldots , e_{\nu}/\nu)$. This is a finite positive rational number.
    Extend the coefficients to $\kk[\![t^{1/\nu!}]\!]$ and consider the tuple of polynomials 
    \[
        \FFpar_t = (\FF_t)|_{v_{r+1} := t^{-w}v_{r+1}}
    \]
    and $\FFpar := \FFpar_{t=0}$.
    By definition of $w$, the polynomials $\FFpar_t$ have coefficients in $\kk[\![t^{1/\nu!}]\!]$, there are no negative powers of $t$.
    The tuple $\FFpar_t$ restricts to $\FF_t$ via the map $\varphi_t \colon v_{r+1} \mapsto t^w v_{r+1}$.

    It remains to show that 
    $\partial_1 \FFpar, \dots, \partial_r \FFpar, \partial_{r+1} \FFpar$ are linearly independent.
    We consider two cases
    \begin{enumerate}
        \item $w < e_1$. In this case $\partial_{r+1}\FFpar$ is divisible by $v_{r+1}$.
            However, the partials
            \[
                \partial_{1}\FFpar|_{v_{r+1} = 0} = \partial_{1}\FF, \ldots , \partial_{r}\FFpar|_{v_{r+1} = 0} = \partial_{r}\FF
            \]
            are linearly independent,
            so no linear combination of $\partial_{1} \FFpar, \ldots ,\partial_{r}\FFpar$ is divisible by $v_{r+1}$. 
        \item $w = e_1$. By~\eqref{eq:tmpRemainder}, every monomial in $\FF_t$ divisible by
            $v_{r+1}$ has coefficients in $(t^d)$, so $e_j\geq d$ for all $j=1,\dots,\nu$.
            If $\RR{d}_{t=0}$ depends on the variable $v_{r+1}$, then $\FF_t$ contains a monomial divisible by $v_{r+1}^2$ and with
            coefficient in $(t^d) \setminus (t^{d+1})$, so $e_j = d$ for some $j \geq 2$.
            This gives $w \leq e_j/j = d/j < d \leq e_1$, a contradiction.
            We conclude that $\RR{d}_{t=0}$ does not contain $v_{r+1}$.
            Therefore, substituting $v_{r+1} := 0$ in $\partial_{1}\FFpar, \ldots , \partial_{r+1}\FFpar$, we obtain the tuple
            $\partial_{1}\FF, \ldots , \partial_{r}\FF, \RR{d}_{t=0}$.
            As discussed around~\eqref{eq:tmpRemainder}, its elements are $\kk$-linearly independent.

    \end{enumerate}
    In both cases, the elements of $\FFpar$ are 
    indeed jointly concise at least 
    in variables $v_1, \dots, v_r, v_{r+1}$. 
\end{proof}
\begin{corollary}\label{c:algorithm_veronese}
    Let $F \in S^\nu V$ be a polynomial. Suppose we have a degeneration of polynomials
    $F_t \in S^\nu V[\![t]\!]$ such that $F_{t = 0} = F$ and the general polynomial
    $F_{t \neq 0}$ is concise.
    \begin{enumerate}
        \item Using $F_t$, one can algorithmically construct a concise unrestriction 
        $\Fppar{} \in S^\nu V$ of $F$.
    \item If the general polynomial $F_{t \neq 0}$ has border rank $\leq m$,
        then $\Fppar{}$ has border rank $\leq m$ as well.
    \end{enumerate}
\end{corollary}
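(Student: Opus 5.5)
The plan is to apply Lemma~\ref{l:algorithm_veronese} repeatedly with $n=1$, $(F_1)_t := F_t$. Conciseness of a single polynomial is the same as joint conciseness of the one-element tuple, so the hypothesis of the lemma is exactly that $F_{t\neq 0}$ is concise. One application gives a degeneration $\varphi_t$ of linear maps and a degeneration $\Fppar{}_t$ over $\kk[\![t^{1/N}]\!]$. Its special fibre $\Fppar{} := \Fppar{}_{t=0}$ is concise in at least one more variable than $F$, and $\varphi(\Fppar{}) = F$. Moreover, $\varphi_{t\neq0}$ is an isomorphism with $\varphi_{t\neq 0}(\Fppar{}_{t\neq 0}) = F_{t\neq0}$, so $\Fppar{}_{t\neq0}$ is again concise. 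I would also substitute $s := t^{1/N}$ so that the new degeneration lives in $S^\nu V[\![s]\!]$.

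Hence the output is again a valid input for the lemma, with $\Fppar{}_t$ replacing $F_t$. I would iterate at most $\dim V$ times. The number of variables in which the special fibre is concise strictly increases and is bounded by $\dim V$, so the process stops with $\Fppar{}$ concise. Composing the restrictions at $t=0$ gives $\varphi^{(1)}\circ\cdots\circ\varphi^{(k)}(\Fppar{}) = F$, so $\Fppar{}$ is a concise unrestriction of $F$. Each step is an explicit procedure, so the whole construction is algorithmic, which proves the first part.

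For the second part, the key fact is that border rank is invariant under linear isomorphisms, also over the extended field $\kk(\!(t^{1/N})\!)$. The final general polynomial $\Fppar{}_{t\neq0}$ equals $(\varphi_{t\neq 0})^{-1}(F_{t\neq0})$ up to composition of isomorphisms, so it has border rank $\leq m$. Since $\Fppar{}$ is the specialisation of this degeneration, it lies in the closure of the border rank $\leq m$ locus, and that locus is closed. Hence $\brk \Fppar{}\leq m$.

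The main point needing care is the meaning of border rank for the "general polynomial" over the field extensions $\kk(\!(t)\!)\subset\kk(\!(t^{1/N})\!)$. I would state it as: the $\kk(\!(t)\!)$-point $F_{t\neq0}$ lies in the secant variety $\sigma_m$, a closed subvariety defined over $\kk$. This property is preserved under field extension and under invertible linear changes of coordinates, since $\sigma_m$ is $\GL(V)$-stable. Closedness then passes the property to the special fibre, because a $\kk[\![s]\!]$-point whose generic point lies in $\sigma_m$ has its closed point in $\sigma_m$. With this formulation, the rest is bookkeeping.
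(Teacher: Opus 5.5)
Your proposal is correct and matches the paper's proof: apply Lemma~\ref{l:algorithm_veronese} with $n=1$, and get the border rank bound because $\Fppar{}_{t\neq 0}$ is isomorphic to $F_{t\neq 0}$ via $\varphi_{t\neq 0}$ and border rank $\leq m$ is preserved under specialisation. The only difference is that you run the one-variable-at-a-time iteration yourself, whereas the lemma as stated already delivers full joint conciseness (its proof contains that iteration), so your loop is redundant but harmless.
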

\begin{proof}
    Apply Lemma~\ref{l:algorithm_veronese} for $n=1$ and $(F_1)_t = F_t$. The first claim is immediate. 
    For the second claim, note that the general polynomial $\Fppar{}_{t \neq 0}$ is isomorphic to 
    the general polynomial $F_{t \neq 0}$ via $\varphi_{t \neq 0}$.
\end{proof}

\begin{example}\label{ex:smallCW}
    Consider the symmetric tensor $x_1x_2x_3$, which is the small Coppersmith-Winograd tensor~\cite[Chapter~15]{BurgisserBook}.
    It has border rank $4$, as witnessed, for example, by the limit
    \[
        F'_t = (x_1 + tx_2 + tx_3)^3 - (x_1+tx_2)^3 - (x_1+tx_3)^3 + x_1^3 = 6t^2\cdot x_1x_2x_3 + t^3(\ldots).
    \]
    The general tensor here, viewed in $\kk[x_1, \ldots ,x_4]$, is not concise, so we need to perturb it, for
    example by taking $N\gg 0$ and adding a ``quickly decaying'' term $t^Nx_4$ to the last factor:
    \[
    \begin{split}
        F_t =& (x_1 + tx_2 + tx_3)^3 - (x_1+tx_2)^3 - (x_1+tx_3)^3 + (x_1 + t^N x_4)^3 \\
        =& 6t^2\left(x_1 x_2 x_3 + \frac{1}{2} t x_2^2 x_3 + \frac{1}{2}t x_2 x_3^2 + 
        \frac{1}{2} t^{N-2} x_1^2 x_4 + \frac{1}{2} t^{2N-2} x_1 x_4^2 + \frac{1}{6}t^{3N-2} x_4^3\right).
    \end{split}
    \]
    Following the algorithm from Lemma~\ref{l:algorithm_veronese}, we look at monomials containing $x_4$,
    pick the minimal value among ratios between exponent of $t$ and exponent of $x_4$, which is 
    $
        N-2 = \min(\frac{N-2}{1}, \frac{2N-2}{2}, \frac{3N-2}{3}),
    $
    and apply the change of coordinates via $x_4 \mapsto t^{-(N-2)}x_4$. This way we obtain the degeneration 
    \[
        \Fppar{}_t = 6t^2\left(x_1 x_2 x_3 + \frac{1}{2} t x_2^2 x_3 + \frac{1}{2}t x_2 x_3^2 + 
        \frac{1}{2} x_1^2 x_4 + \frac{1}{2} t^{2} x_1 x_4^2 + \frac{1}{6}t^{4} x_4^3\right).
    \]
    The corresponding minimal border rank unrestriction is $\Fppar{} = x_1x_2x_3 + \frac{1}{2}x_4x_1^2$,
    which is isomorphic to the big Coppersmith-Winograd tensor.

    The tensor $x_1x_2x_3$ has several non-isomorphic unrestrictions from minimal border rank,
    for example:
    \begin{enumerate}
        \item  $x_1x_2x_3 + x_4x_1^2$, which is the big Coppersmith-Winograd tensor (obtained as above),
        \item  $x_1x_2x_3 + x_4(x_1^2 + x_2^2)$, which is isomorphic to the multiplication tensor in the finite 
        algebra $\kk[y_1]/(y_1^2) \times \kk[y_2]/(y_2^2)$,
        \item  $x_1x_2x_3 + x_4(x_1^2 + x_2^2 + x_3^2 + x_4^2)$, which is isomorphic to the unit tensor $\spann{4}$ of rank $4$.
    \end{enumerate}
\end{example}

Now we prove the general result for all partially symmetric formats.
\begin{proof}[Proof of Proposition~\ref{p:algorithm_full}]
    First, we construct an unrestriction $\Tppar{1} \in \pstspace$ that is concise on the first 
    coordinate $S^{\nu_1} V_1$. Consider the flattening of $T_t$ with respect to the coordinates 
    $S^{\nu_2} V_2 \otimes \dots \otimes S^{\nu_d} V_d$ and let $(F_1)_t, \dots, (F_n)_t \in (S^{\nu_1}V_1)[\![t]\!]$
    be a basis of its image. The general tensors $(F_\bullet)_{t \neq 0}$ are jointly concise by
    Example~\ref{e:jointly_concise}. The algorithm from Lemma~\ref{l:algorithm_veronese}
    produces a degeneration of maps $(\varphi_1)_t \in \End(V)[\![t^{1/N}]\!]$ 
    with $(\varphi_1)_{t \neq 0}$ invertible such that the degeneration of tensors 
    $((\varphi_1)_{t \neq 0})^{-1} T_{t \neq 0}$ extends to a degeneration of tensors $\Tppar{1}_t$ 
    whose limit $\Tppar{1}$ is a $V_1$-concise unrestriction of $T$.

    To obtain a concise unrestriction $\Tppar{} \geq T$, we just iterate the algorithm on the other coordinates. 
    This way we produce a sequence of unrestrictions 
    $\Tppar{}:= \Tppar{1,\dots,d} \geq \dots \geq \Tppar{1, 2} \geq \Tppar{1} \geq T$
    such that each tensor $\Tppar{1,\dots,i}$ is a priori $V_i$-concise. However, this forces $\Tppar{1,\dots,i}$ to be 
    $V_1, \dots, V_i$-concise, so the final unrestriction $\Tppar{}$ is indeed concise.

    The assertion about border rank follows from the fact the general tensor $\Tppar{}_{t \neq 0}$
    is isomorphic to the general tensor $T_{t \neq 0}$, as border rank can only drop in the limit.
\end{proof}

\begin{remark}
The algorithm from Proposition~\ref{p:algorithm_full} in general requires passing to an 
extension $\kk[\![t]\!] \subset \kk[\![t^{1/N}]\!]$, where $N = \prod_{i=1}^d \nu_i!$.
However, in the special case of tensors in $\tspace$, all $\nu_{\bullet}$ are equal to one, so
no extension is necessary.
\end{remark}

Below, we present an alternative algorithm for producing concise unrestrictions
for tensors of the Segre format $T \in \tspace$, where extensions do not appear and fewer choices are made. 
Moreover, this version can be reinterpreted geometrically using the setup of 
unrestriction schemes introduced in this paper, see~\S\ref{sec:unrestrictions_geometric}.
One downside is that this algorithm employs inverses of matrices, so for human-made computations it is less recommended that the one above.

\begin{proposition}\label{p:algorithm_segre}
    Let $T \in \tspace$ be a tensor. Suppose we have a degeneration of tensors
    $T_t \in \tspace[\![t]\!]$ such that $T_{t = 0} = T$ and the general tensor
    $T_{t \neq 0}$ is concise.
    \begin{enumerate}
        \item Using $T_t$, one can algorithmically construct a concise unrestriction 
        $\Tppar{} \in \tspace$ of $T$.
        \item If the general tensor $T_{t \neq 0}$ has border rank $\leq m$, 
        then $\Tppar{}$ has border rank $\leq m$ as well.
    \end{enumerate}
\end{proposition}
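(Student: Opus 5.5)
The plan is to reuse the coordinate-by-coordinate structure of the proof of Proposition~\ref{p:algorithm_full}, but to replace the polynomial step (Lemma~\ref{l:algorithm_veronese}) by a purely linear-algebraic step. That step needs neither root extensions nor the choice of a monomial order; it is based on lattices of $\kk[\![t]\!]$-modules.

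\textbf{The key step for coordinate $V_1$.} Consider the flattening
\[
T_t\colon (V_2\otimes\ldots\otimes V_d)^\vee[\![t]\!]\to V_1[\![t]\!]
\]
and let $M_t\subseteq V_1[\![t]\!]$ be its image. Since $T_{t\neq 0}$ is $V_1$-concise, $M_t\otimes\kk(\!(t)\!)=V_1(\!(t)\!)$. Define the saturation
\[
L:=\bigl(M_t\otimes \kk(\!(t)\!)\bigr)\cap\bigl(t^{-k}V_1[\![t]\!]\bigr),
\]
or more canonically a lattice $L\subseteq V_1(\!(t)\!)$ containing $M_t$ and spanned by $M_t$ over $\kk[\![t]\!]$ after inverting $t$. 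The natural choice is $L:=\kk[\![t]\!]$-span of $t^{-a_j}m_j$, where $m_1,\dots,m_k$ comes from a Smith normal form. By the elementary divisor theorem over the PID $\kk[\![t]\!]$, choose bases $u_1,\dots,u_{m}$ of $V_1[\![t]\!]$ and generators of $M_t$ so that $M_t=\bigoplus t^{a_j}\kk[\![t]\!]u_j$ with all $a_j<\infty$ (finiteness is exactly conciseness). This is algorithmic: Gaussian elimination over $\kk[\![t]\!]$ on a truncation, where one inverts matrices. Let $g_t\colon V_1[\![t]\!]\to V_1[\![t]\!]$ be the map $u_j\mapsto t^{a_j}u_j$. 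It is injective with $g_{t\neq0}$ invertible, and $g_t^{-1}T_t=:\Tppar{1}_t$ has coefficients in $\kk[\![t]\!]$ because its flattening image is $\bigoplus\kk[\![t]\!]u_j=V_1[\![t]\!]$. The limit $\Tppar{1}$ then has flattening image surjecting onto $V_1$ modulo $t$ (Nakayama), so it is $V_1$-concise. Moreover, $g_{t=0}(\Tppar{1})=T$, so $\Tppar{1}$ is an unrestriction via $\varphi_1=g_0$.

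\textbf{Iteration.} Iterate over $V_2,\dots,V_d$, applying the same step to $\Tppar{1,\dots,i-1}_t$, whose general member is still concise since it is isomorphic to $T_{t\neq0}$. At step $i$ we modify only the $i$-th factor by an isomorphism over $\kk(\!(t)\!)$. Conciseness on earlier coordinates is preserved in the limit, by the same argument as in the proof of Proposition~\ref{p:algorithm_full}. Alternatively, one can argue directly that a restriction in coordinate $i$ cannot destroy the $V_j$-conciseness of the unrestricted tensor, since the restriction lowers the flattening rank and the original limit was already concise. Composing the maps $g_0$ on each coordinate exhibits $T$ as a restriction of $\Tppar{}:=\Tppar{1,\dots,d}$. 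The border rank claim follows because $\Tppar{}_{t\neq0}\cong T_{t\neq0}$ over $\kk(\!(t)\!)$, and border rank is lower semicontinuous along the family.

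\textbf{Expected obstacle.} The main point needing care is the algorithmic Smith normal form over $\kk[\![t]\!]$ from finite data. I would handle it by noting that the exponents $a_j$ are bounded by the valuation of a nonzero maximal minor of the flattening. It therefore suffices to work modulo $t^{N}$ for $N$ larger than this valuation, where the computation involves only inverting matrices over $\kk[\![t]\!]$ with unit determinants.
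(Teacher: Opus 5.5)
Your proof is correct and follows the paper's strategy. At each coordinate you factor the flattening through a $\kk[\![t]\!]$-basis of its image lattice, apply $\varphi_t^{-1}$ on that coordinate, and iterate; conciseness on earlier coordinates (flattening ranks only drop under restriction) and the border rank bound (generic isomorphism plus closedness of $\sigmahat_m$) are handled exactly as in the paper.

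The only difference is how you produce the basis of the image. The paper (Lemma~\ref{l:algorithm_step}) takes the $m$ columns of the flattening matrix whose maximal minor has minimal valuation, and uses Cramer's rule to see that $X_t^{-1}M_t$ is integral with an identity block; you use the elementary divisor theorem instead. Both $X_t$ and your $g_t$ map $\kk[\![t]\!]^m$ isomorphically onto the same lattice, so they differ by an element of $\GL_m(\kk[\![t]\!])$ and give isomorphic unrestrictions (cf.\ Proposition~\ref{prop:uniqueness_of_unres}). Your preliminary ``saturation'' lattice $L$ is never used and can be dropped.
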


Using the geometric perspective, we deduce that different choices of bases and minors in the 
algorithmic construction from the proof of Proposition~\ref{p:algorithm_segre} lead to isomorphic 
unrestrictions.
\begin{proposition}[Proposition~\ref{prop:uniqueness_of_unres}]
    Let $T \in \sigmahat_m \subset \tspace$ be a tensor of border rank $\leq m$.
    Let $T_t$ be a degeneration of tensors of border rank $\leq m$ to $T$ with 
    general tensor $T_{t \neq 0}$ concise. 
    Consider any two unrestrictions 
    \[(\Tppar{}, \varphi_1, \dots, \varphi_d)\quad\text{and}\quad (\Tppar{'}, \varphi_1', \dots, \varphi_d')\] 
    constructed from $T_t$ using the algorithmic from the proof 
    of Proposition~\ref{p:algorithm_segre} for different choices of bases and minors.
    Then there exists linear isomorphisms $\psi_i \colon V_i \to V_i$ such that 
    \[
        \Tppar{'} = \psi_{1,\dots,d}(\Tppar{}) \quad\text{and}\quad \varphi_i' = \varphi_i \circ \psi_i^{-1} \quad\text{for every} \quad i=1,\dots,d.
    \]
\end{proposition}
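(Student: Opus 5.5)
We do not yet see the algorithm of Proposition~\ref{p:algorithm_segre}, so we assume it runs as follows. At step $i$ one takes the $\kk[\![t]\!]$-tensor $\Tppar{1,\dots,i-1}_t$ and its flattening $\Tppar{1,\dots,i-1}_t\colon V_{\hat i}^\vee[\![t]\!]\to V_i[\![t]\!]$. This flattening is generically surjective by conciseness of $T_{t\neq 0}$. One then chooses bases and a nonvanishing maximal minor. The image of the flattening is a saturated-up-to-torsion submodule $\mathcal{M}_i \subseteq V_i(\!(t)\!)$ of rank $m$, namely the $\kk[\![t]\!]$-span of the columns. The choice of bases and minor produces a $\kk[\![t]\!]$-basis of $\mathcal{M}_i$, equivalently a map $(\varphi_i)_t\colon \kk[\![t]\!]^m\to V_i[\![t]\!]$ with image $\mathcal{M}_i$ and generically invertible. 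The unrestriction is the limit of $(\varphi_i)_t^{-1}$ applied to the tensor, and $\varphi_i = (\varphi_i)_{t=0}$.

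The plan is to show that the essential output of step $i$ depends only on the lattice $\mathcal{M}_i$, which is intrinsic to the degeneration, and not on the choices. This is the content of the geometric reinterpretation via the unrestriction scheme $\unrespar{i}{\cdot}$, whose $\kk[\![t]\!]$-points over the given curve should correspond to such lattices. I would argue directly.

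\textbf{Step 1.} Let $(\varphi_i)_t$ and $(\varphi'_i)_t$ come from two choices. Both are bases of the same free rank-$m$ module $\mathcal{M}_i$, so $(\varphi'_i)_t = (\varphi_i)_t\circ (g_i)_t$ with $(g_i)_t\in \GL_m(\kk[\![t]\!])$. Then
\[
(\varphi'_i)_t^{-1}\Tppar{1,\dots,i-1}_t = (g_i)_t^{-1}(\varphi_i)_t^{-1}\Tppar{1,\dots,i-1}_t .
\]
Both sides are integral, so passing to $t=0$ gives limits related by $\psi_i := (g_i)_{t=0}^{-1}$. We also get $\varphi'_i = \varphi_i\circ (g_i)_{t=0} = \varphi_i\circ\psi_i^{-1}$.

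\textbf{Step 2.} We must check that the lattice at step $i+1$ changes compatibly. The new tensor differs by the $\kk[\![t]\!]$-invertible change $(g_i)_t$ on the $i$-th factor. The column span of its flattening with respect to factor $i+1$ therefore coincides with the previous one: the flattening is precomposed with an automorphism of $V_{\widehat{i+1}}^\vee[\![t]\!]$. So $\mathcal{M}_{i+1}$ is unchanged, and induction over $i=1,\dots,d$ runs. The accumulated changes $(g_j)_t$ on earlier factors only act through $\kk[\![t]\!]$-automorphisms. After the last step we get $\Tppar{'} = \psi_{1,\dots,d}(\Tppar{})$ with the stated relation for all $\varphi_i$. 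Here $\varphi_i$ denotes the specialization of the composite of the maps on factor $i$, which is altered only at step $i$.

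\textbf{Main obstacle.} The hard part is Step 1's key claim that the algorithm's output basis spans exactly the canonical lattice $\mathcal{M}_i$, independently of the minor chosen. If the algorithm instead selects $m$ columns via a minor, these columns might span a proper sublattice of finite colength. I would first try to show that the algorithm's conciseness of the limit forces saturation. If the chosen columns spanned a proper sublattice $\mathcal{M}'\subsetneq\mathcal{M}_i$, some other column would equal $t^{-1}$ times an integral combination of the chosen ones. After the change of basis, the limit tensor would then fail to be integral, or the specialized flattening would fail to be surjective, contradicting that the algorithm's output is $V_i$-concise. Equivalently, the limit is $V_i$-concise exactly when the basis generates $\mathcal{M}_i$, and two generating bases differ by $\GL_m(\kk[\![t]\!])$. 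Failing a direct argument, I would invoke the universal property of $\unrespar{i}{\cdot}$. By the valuative criterion, the projective unrestriction scheme has a unique lift of the curve, and two such lifts, being equal, give isomorphic framed data up to $\GL$.
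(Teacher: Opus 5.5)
Your proof is correct, but its main line differs from the paper's; the valuative-criterion fallback you mention at the end is exactly what the paper does. The paper never looks at lattices. It notes that the algorithm, run in fixed bases of $W_1,\dots,W_d$, gives a $\kk[\![t]\!]$-point of $\csigmahatfr_m$, and that the image of this point under $\upsilon$ is a lift of $T_t$ along $\rhohat\colon \csigmahat_m\to\sigmahat_m$. Since $\rhohat$ is proper and an isomorphism over the concise locus, the valuative criterion makes this lift unique. So the two outputs lie in one fibre of the principal $\GLW$-bundle $\upsilon$, and hence differ by some $(\psi_1,\dots,\psi_d)$.

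You instead use that each successive flattening factors through its image lattice, uniquely up to $\GL_m(\kk[\![t]\!])$. You combine this with the observation that earlier changes act by automorphisms of the source of the next flattening.

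The step you flagged as the main obstacle is immediate in the actual algorithm (Lemma~\ref{l:algorithm_step}). The minor is chosen of minimal valuation, so by Cramer's rule $(X_i)_t^{-1}M_t$ has entries in $\kk[\![t]\!]$. Thus every column of $M_t$ is a $\kk[\![t]\!]$-combination of the chosen columns, and these form a basis of $\mathcal{M}_i$. Your Nakayama argument via $V_i$-conciseness of the limit gives the same conclusion.

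One wording correction: $\mathcal{M}_i$ has full rank in $V_i[\![t]\!]$, so it is saturated only when it equals $V_i[\![t]\!]$, that is, only when $T$ is already $V_i$-concise. So ``forces saturation'' should read ``the chosen columns generate $\mathcal{M}_i$''; that is all you use.

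What each route buys: yours is elementary and works for any degeneration with concise general member. It needs neither the border rank hypothesis nor $\csigmahat_m$, and it exhibits $\psi_i=(g_i)_{t=0}^{-1}$ explicitly. The paper's route identifies the output with the canonical point of $\csigmahat_m$ determined by $T_{t\neq 0}$ alone, so uniqueness holds for any procedure that produces a framed lift. Your column lattice and the saturated row lattice defining the Grassmannian point of $\unrespar{i}{\cdot}$ are two sides of the same image factorisation, which is why the two arguments agree.
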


We reiterate that even though Proposition~\ref{p:algorithm_segre} is formally a special case of
Proposition~\ref{p:algorithm_full}, the proof given below is different and independent of 
the proof of Proposition~\ref{p:algorithm_full}.

The construction is iterative. The following lemma describes the general step.
\begin{lemma}\label{l:algorithm_step}
    Let $T_t \in \tspace[\![t]\!]$ be a degeneration of tensors to a tensor $T$ such that the general tensor
    $T_{t \neq 0}$ is $V_i$-concise. 
    One can algorithmically construct
    \begin{itemize}
        \item a degeneration of linear maps $(\varphi_i)_t \colon V_i[\![t]\!] \to V_i[\![t]\!]$ to a linear map $\varphi_i$ and
        \item a degeneration of tensors $\Tppar{i}_t \in \tspace[\![t]\!]$ to a tensor $\Tppar{i}$
    \end{itemize} 
    that satisfy the following conditions:
    \begin{enumerate}
        \item the tensor $\Tppar{i}$ is $V_i$-concise,
        \item the map $\varphi_i$ yields a restriction $\varphi_i(\Tppar{i}) = T$,
        \item the general map $(\varphi_i)_{t \neq 0}$ is a linear isomorphism
            and $(\varphi_i)_{t \neq 0}(\Tppar{i}_{t \neq 0}) = T_{t \neq 0}$.
    \end{enumerate}
\end{lemma}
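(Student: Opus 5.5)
The plan is to flatten $T_t$ along $V_i$ and reduce to a statement about a matrix over $\kk[\![t]\!]$, then fix it by a Smith-normal-form type change of basis.

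\textbf{Setup.} Let $m_i=\dim V_i$ and write $U:=\bigotimes_{j\neq i}V_j$. View $T_t$ as a $\kk[\![t]\!]$-linear map
\[
    M_t\colon U^\vee[\![t]\!]\to V_i[\![t]\!],
\]
and $V_i$-conciseness of $T$ as surjectivity of $M_{t=0}$. The image $L:=\im M_t$ is a $\kk[\![t]\!]$-submodule of the free module $V_i[\![t]\!]$. Since $T_{t\neq0}$ is $V_i$-concise, $L\otimes\kk(\!(t)\!)=V_i(\!(t)\!)$, so $L$ is free of rank $m_i$, because $\kk[\![t]\!]$ is a PID. The natural candidate is $\Tppar{i}_t:=\psi_t^{-1}(T_t)$, where $\psi_t\colon V_i[\![t]\!]\to V_i[\![t]\!]$ is a $\kk[\![t]\!]$-linear map sending $V_i[\![t]\!]$ isomorphically onto $L$.

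\textbf{Construction of $\psi_t$.} Choose $\psi_t$ explicitly: pick a basis of $L$, for instance by column reduction or Smith normal form of $M_t$. Over a DVR we get bases $\{e_k\}$ of $V_i[\![t]\!]$ and elements $f_1,\dots,f_{m_i}$ of $U^\vee[\![t]\!]$ with $M_t(f_k)=t^{a_k}e_k$. Set $(\varphi_i)_t:=\psi_t\colon e_k\mapsto t^{a_k}e_k$. Then $\det\psi_t=t^{\sum a_k}\neq0$, so $(\varphi_i)_{t\neq0}$ is invertible. Define $\Tppar{i}_t:=((\varphi_i)_t^{-1}\otimes\mathrm{id})(T_t)$.

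\textbf{Verification.}
\begin{enumerate}
    \item $\Tppar{i}_t$ is integral. Its flattening is $\psi_t^{-1}\circ M_t$, and $\im M_t=L=\psi_t(V_i[\![t]\!])$, so this flattening has entries in $\kk[\![t]\!]$.
    \item The composite $\psi_t^{-1}\circ M_t$ is surjective onto $V_i[\![t]\!]$. By Nakayama, its reduction mod $t$ is surjective, which is exactly $V_i$-conciseness of $\Tppar{i}=\Tppar{i}_{t=0}$.
    \item Applying $\psi_t$ gives $T_t$, so setting $t=0$ yields $\varphi_i(\Tppar{i})=T$, where $\varphi_i=\psi_{t=0}$. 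The identity $(\varphi_i)_{t\neq0}(\Tppar{i}_{t\neq0})=T_{t\neq0}$ holds by construction.
\end{enumerate}

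\textbf{Algorithmic aspect and main obstacle.} The real content is making the choice of basis of $L$ algorithmic. Over $\kk[\![t]\!]$ we cannot literally compute with infinite series. The approach is to truncate modulo $t^{K}$ with $K$ larger than the maximal elementary divisor exponent $\sum a_k$. That exponent is bounded by the $t$-adic valuation of some nonzero maximal minor of $M_t$, which is detectable at finite order.

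Concretely, pick a maximal minor $\Delta$ of minimal valuation, corresponding to columns $f_1,\dots,f_{m_i}$. The columns $M_t(f_k)$ then span $L$: any other column is a $\kk(\!(t)\!)$-combination of them with coefficients given by Cramer's rule, namely ratios of minors by $\Delta$, and these are integral because $\Delta$ has minimal valuation. So $\psi_t$ can be taken to be the matrix with columns $M_t(f_k)$, and its inverse is computed by Cramer's rule. This matches the paper's remark that the Segre algorithm uses inverses of matrices and choices of minors. Checking this integrality claim, that minimal-valuation minors generate the image, is the step I expect to need the most care.
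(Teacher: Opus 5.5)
Your proposal is correct. Its concrete construction in the final paragraph is exactly the paper's proof: choose a maximal minor of minimal valuation, take $\psi_t$ to be that $m\times m$ submatrix, and use Cramer's rule to see that $\psi_t^{-1}M_t$ has entries in $\kk[\![t]\!]$. The integrality step you flag is immediate, since each Cramer numerator is either zero or another maximal minor, hence of valuation at least $v(\Delta)$.

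Your module-theoretic framing is a mild generalisation: any $\psi_t$ mapping $V_i[\![t]\!]$ isomorphically onto $L=\im M_t$ works, for instance one obtained from Smith normal form. It also makes visible why different choices differ only by an element of $\GL(V_i[\![t]\!])$, which is the one-step case of Proposition~\ref{prop:uniqueness_of_unres}.
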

\begin{proof}
    Fix coordinates on the vector space $\tspace$. Let $M_t$ be the $(m \times n)$-matrix representing the flattening
    $T_t \colon \Vhat^\vee[\![t]\!] \to V_i[\![t]\!]$. The general tensor 
    $T_{t \neq 0}$ is $V_i$-concise, so $m \leq n$ and  $M_t$ has a non-zero maximal minor. 
    Let $(X_i)_t$ be a $(m \times m)$-submatrix of $M_t$ which determinant has minimal valuation 
    among all non-zero minors of $M_t$. Cramer's rule from linear algebra says that entries 
    of the matrix $(X_i)_{t \neq 0}^{-1} \cdot M_{t \neq 0}$ are maximal minors of $M_{t \neq 0}$ 
    divided by $\det (X_i)_{t \neq 0}$, so they have positive valuation.
    Let us view $(X_i)_{t \neq 0}^{-1} \cdot M_{t \neq 0}$ as a matrix $(M_i)_t$ with coefficients in $\kk[\![t]\!]$.
    The submatrix of $(M_i)_t$ indexed by the same columns as the submatrix $(X_i)_t$ of $M_t$ is the identity matrix,
    so $(M_i)_{t = 0}$ has full rank. Moreover, we have $(X_i)_t \cdot (M_i)_t = M_t$,
    so in particular $(X_i)_{t = 0} \cdot (M_i)_{t = 0} = M_{t = 0}$.
    Finally, take $(\varphi_i)_t$ to be the degeneration of maps corresponding to $(X_i)_t$
    and $\Tp_t$ to be the degeneration of tensors corresponding $(M_i)_t$.
\end{proof}
\begin{proof}[Proof of Proposition~\ref{p:algorithm_segre}]
    First, take the degeneration $T_t$. The general tensor $T_{t \neq 0}$ is 
    concise, so we can apply the algorithm from Lemma~\ref{l:algorithm_step} on $V_1$ to construct 
    a degeneration of tensors $\Tppar{1}_t$ and a degeneration of maps $(\varphi_1)_t$.
    The tensor $\Tppar{1}$ is $V_1$-concise and the map $\varphi_1$ gives a restriction $\Tppar{1} \geq T$.
    Then, take the degeneration $\Tppar{1}_t$ and apply the construction from Lemma~\ref{l:algorithm_step}
    on $V_2$ to get degenerations $\Tppar{1,2}_t$ and $(\varphi_2)_t$ yielding a $V_2$-concise tensor $\Tppar{1,2}$ 
    restricting to $T_1$ via $\varphi_2$, and so on. 
    After $d$ steps, we get tensors $\Tppar{1}, \Tppar{1,2} \dots, \Tppar{1,\dots,d}\in \tspace$ and maps 
    $\varphi_1 \in \End(V_1), \dots, \varphi_d \in \End(V_d)$ such that each $\Tppar{1,\dots,i}$ is $V_i$-concise
    and $\varphi_i(\Tppar{1,\dots,i}) = \Tppar{1,\dots,i-1}$. 
    The flattening ranks can only drop under restriction, so the $i$-th tensor $\Tppar{1,\dots,i}$ is in fact 
    $V_1, \dots, V_i$-concise. In particular, the tensor $\Tppar{} := \Tppar{1,\dots,d}$ is concise
    and the maps $\varphi_1, \dots, \varphi_d$ give an unrestriction $T \leq \Tppar{}$.

    Now assume that the general tensor $T_{t \neq 0}$ has border rank $\leq m$.
    The general map $(\varphi_{1, \dots, d})_{t \neq 0}$ yields an isomorphism between 
    $\Tppar{}_{t \neq 0}$ and $T_{t \neq 0}$, so the general tensor $\Tppar{}_{t \neq 0}$
    and its limit $\Tppar{}$ have border rank $\leq m$ as well. 
\end{proof}

\begin{example}[Bini's tensor]
    Let $T$ be the tensor corresponding to multiplication of $2 \times 2$-matrices
    with the bottom-right entry of the first matrix being zero, see~\cite[Subsection 2.1.4]{JM_complexity}.
    This tensor has border rank equal to 5, which is witnessed by the following degeneration of unit tensors:
    \[
    \begin{aligned}
        T_t = &(t a_1 + a_3) \otimes b_2 \otimes (c_2 + t c_4)
        +(t a_1 + a_2) \otimes (b_1 + t b_3) \otimes c_1
        +(a_2 + a_3) \otimes (b_1 + t b_4) \otimes (c_2 + t c_3)\\
        +&(-a_3 + t^2 a_4) \otimes (b_1 + b_2 + t b_4 + t^2 b_5) \otimes c_2
        +(-a_2 + t^2 a_5) \otimes b_1 \otimes (c_1 + c_2 + t c_3 + t^2 c_5).
    \end{aligned}
    \]
    The algorithm applied to $T_t$ produces the degeneration of tensors
    \[
        \Tppar{}_t :=
        \begin{bmatrix}
        x_1 & x_2 & x_3 & 0 & x_5 \\
        0 & -tx_2 + x_3 & 0 & 0 & 0 \\
        0 & x_4 & 0 & -tx_2 + x_3 + tx_4 & 0 \\
        x_5 & 0 & 0 & 0 & tx_5 \\
        0 & -tx_1 + tx_2 + x_5 & -t^2x_1 + tx_3 + tx_5 & 0 & 0
        \end{bmatrix}
    \]
    whose general tensor $\Tppar{}_{t \neq 0}$ is isomorphic to $T_{t \neq 0}$ via the maps
    \[
        \begin{bmatrix}
        1 & 0 & 0 & 1 & t \\
        0 & 0 & 0 & 0 & 1 \\
        0 & 0 & 1 & 0 & 0 \\
        0 & t & 0 & t & 0 \\
        t & t & t^2 & 0 & 0
        \end{bmatrix},\quad
        \begin{bmatrix}
        1 & 0 & 0 & 0 & 0 \\
        0 & 0 & 1 & 0 & 0 \\
        0 & 0 & 0 & 1 & 0 \\
        0 & 0 & 0 & 0 & 1 \\
        0 & -t & 0 & 0 & 0
        \end{bmatrix},\quad
        \begin{bmatrix}
        1 & 0 & 0 & 0 & 0 \\
        0 & 1 & 0 & 0 & 0 \\
        0 & 0 & 1 & 0 & 0 \\
        0 & 0 & 0 & 1 & 0 \\
        t^2 & 0 & 0 & 0 & -t
        \end{bmatrix}.
    \]
    To get the corresponding minimal border rank unrestriction $\Tppar{}$, we simply set $t = 0$.
    In a suitable basis, this unrestriction can be represented as
    \[
        T = \begin{bmatrix}
        x_1 & 0 & x_3 & 0 & 0 \\
        0 & x_1 & 0 & x_3 & 0 \\
        x_2 & 0 & 0 & 0 & 0 \\
        0 & x_2 & 0 & 0 & 0 \\
        0 & 0 & 0 & 0 & 0
        \end{bmatrix}
        \leq 
        \begin{bmatrix}
        x_1 & x_5 & x_3 & 0 & x_2 \\
        0 & x_1+x_4 & 0 & x_3 & 0 \\
        x_2 & 0 & 0 & 0 & 0 \\
        0 & x_2 & 0 & 0 & 0 \\
        0 & x_3 & 0 & 0 & 0
        \end{bmatrix}
        = \Tppar{}.
    \]
    The tensor $\Tppar{}$ is the $1$-degenerate tensor $T_{\mathcal{O}_{56}}$ from the classification from~\cite[Theorem 1.7]{JLP}.
\end{example}

In the algorithmic proof of Proposition~\ref{p:algorithm_full}, we had to choose an order of coordinates $V_1, \dots, V_d$. 
The following example shows that different choices can lead to non-isomorphic unrestrictions.
\begin{example}\label{ex:order_matters}
    Let $d = 3$ and $\dim_\kk V_\bullet = 2$. Consider the following degeneration of concise tensors, 
    represented in the matrix form as
  \[
    \begin{bmatrix}
    x_1 & t^2 x_2 \\
    t x_2 & t x_1 \\
  \end{bmatrix}.
  \]
  Applying the algorithm first to columns and then to variables yields the constant degeneration
  \[
  \begin{bmatrix}
    x_1 & t^2 x_2 \\
    t x_2 & t x_1 \\
  \end{bmatrix}
  \leadsto 
  \begin{bmatrix}
    x_1 & t x_2 \\
    t x_2 & x_1 \\
  \end{bmatrix}
  \leadsto 
  \begin{bmatrix}
    x_1 & x_2 \\
    x_2 & x_1 \\
  \end{bmatrix}
  \xrightarrow{t \to 0} \begin{bmatrix}
    x_1 & x_2 \\
    x_2 & x_1 \\
  \end{bmatrix}.
  \]
  Starting from rows, we obtain another degeneration
  \[
  \begin{bmatrix}
    x_1 & t^2 x_2 \\
    tx_2 & tx_1 \\
  \end{bmatrix}
  \leadsto
  \begin{bmatrix}
    x_1 & t^2 x_2 \\
    x_2 & x_1 \\
  \end{bmatrix}
  \xrightarrow{t \to 0} \begin{bmatrix}
    x_1 & 0 \\
    x_2 & x_1 \\
  \end{bmatrix}.
  \]
  The first minimal border rank unrestriction is the multiplication tensor in the algebra
  $\kk \times \kk$, written in the basis $(1,1), (1,-1)$. The second one
  comes from multiplication in $\kk[x]/(x^2)$. In particular, they are not isomorphic. 
\end{example}

\section{Unrestriction schemes}\label{sec:unrestrictionSchemes}

In this section we return to the Segre format and we construct the unrestriction schemes. The unrestrictions
from Section~\ref{sec:unrestrictions} are curves on these varieties.

We use the following conventions. On a $\kk$-vector space $V$ we put an affine space structure by
$V = \Spec\Sym V^{\vee}$. More generally, we identify a locally free sheaf
$\VV$ on a scheme $X$ with the associated vector bundle $\Spec_X \Sym
\VV^{\vee}$.

To cleanly state the result, we need a notion of a family of tensors parameterised by a variety $X$.
In the literature, such a family is sometimes defined to be a morphism $X\to V_1\otimes \ldots \otimes V_d$. However, this setup
is rather restrictive: in this way there is \emph{no} universal family over
$\mathbb{P}(V_1\otimes \ldots \otimes V_d)$, see
Example~\ref{ex:universalFamily} below. We take a different route by using
vector bundles.
We introduce the notions below.
The following dictionary may help to quickly grasp them:
\[
    \begin{array}{c c c}
        & \mathrm{tensor} & \mathrm{family\ of\ tensors}\\
        \toprule
        & T\in V_1\otimes \ldots \otimes V_d & T\colon \LL \to \VV_1 \otimes \ldots \otimes \VV_d\\
        \mathrm{concise} & V_1^{\vee}\to V_2\otimes \ldots \otimes V_d \mbox{ injective} & \VV_1^{\vee} \otimes \LL \into \VV_2\otimes \ldots \otimes \VV_d\mbox{ subbbundle}\\
        \mathrm{concise} & V_2^{\vee}\otimes \ldots \otimes V_d^{\vee}\to V_1 \mbox{ surjective} &  \VV_2^{\vee}\otimes \ldots \otimes \VV_d^{\vee}\otimes \LL \onto \VV_1 \mbox{ surjective}
    \end{array}
\]

A map $\VV'\into \VV$ of vector bundles is a \emph{subbundle} if it is injective and $\VV/\VV'$ is also a vector bundle.
This also has a much more intuitive equivalent definition: it is not only injective, it stays injective after restriction to a point.
We make this explicit by the following lemma.
\begin{lemma}\label{ref:subbudle:lem}
    Let $\VV'\to \VV$ be a map of vector bundles (not a priori assumed injective) on a scheme $X$. The following are equivalent:
    \begin{enumerate}
        \item\label{it:subbundleAll} for every morphism $f\colon Y\to X$, the pullback $f^*\VV'\to f^*\VV$ is injective,
        \item\label{it:subbundlePoint} for every $x\in X$, the induced map $\VV'|_x\to \VV|_x$ is injective,
        \item\label{it:subbundle} $\VV'\to \VV$ is a subbundle (in particular, it is injective).
    \end{enumerate}
\end{lemma}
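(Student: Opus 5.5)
The plan is to prove the cycle of implications \ref{it:subbundleAll}$\Rightarrow$\ref{it:subbundlePoint}$\Rightarrow$\ref{it:subbundle}$\Rightarrow$\ref{it:subbundleAll}.

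The first implication is immediate: take $f\colon \Spec \kappa(x)\to X$ to be the inclusion of a point. Then $f^*\VV'\to f^*\VV$ is exactly $\VV'|_x\to \VV|_x$.

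The last implication is also formal. If $\VV'\to\VV$ is injective with locally free cokernel $\mathcal{Q}$, then the sequence $0\to \VV'\to \VV\to \mathcal{Q}\to 0$ is locally split, since $\mathcal{Q}$ is locally free and hence locally projective. Pullback along any $f\colon Y\to X$ preserves locally split short exact sequences, because $f^*$ is additive and commutes with direct sums. Hence $f^*\VV'\to f^*\VV$ is injective.

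The real content is \ref{it:subbundlePoint}$\Rightarrow$\ref{it:subbundle}, which is local on $X$. Fix $x\in X$ and pass to an affine neighbourhood $\Spec R$ of $x$ on which $\VV'\simeq R^{a}$ and $\VV\simeq R^{b}$. The map is then a $b\times a$ matrix $M$ over $R$, and by assumption $M(x)$ has rank $a$ over $\kappa(x)$. So some $a\times a$ minor of $M$ does not vanish at $x$. After shrinking to the principal open set where this minor is invertible, we may reorder the basis of $R^b$ so that $M=\begin{bmatrix} A\\ B\end{bmatrix}$ with $A$ invertible. Composing with the automorphism $\begin{bmatrix} A^{-1} & 0\\ -BA^{-1} & 1\end{bmatrix}$ of $R^b$ turns $M$ into $\begin{bmatrix} 1\\ 0\end{bmatrix}$. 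This is the inclusion of a direct summand, so it is injective and its cokernel is $R^{b-a}$, which is free. Covering $X$ by such opens gives that $\VV'\to\VV$ is injective with locally free cokernel.

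The only step needing care is this local splitting argument. In particular, it must not use any reducedness or Noetherian hypotheses. The invertible-minor argument above avoids them, since it works over an arbitrary ring $R$, and it works at every point, closed or not. The only input is that the nonvanishing of a minor at $x$ means that minor is a unit in the local ring at $x$, hence invertible on some principal open neighbourhood of $x$.
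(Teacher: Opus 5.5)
Your proof is correct and follows essentially the same route as the paper: the two formal implications, then a local argument showing that a fibrewise injective map of free modules is, near each point, the inclusion of a direct summand with free complement. The only difference is in the local step: the paper passes to the local ring and uses Nakayama to complete a basis of the image to a basis of the target, while you invert a nonvanishing maximal minor on a principal open set and row-reduce explicitly, which has the small advantage of producing the splitting directly on an open neighbourhood without a spreading-out step.
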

\begin{proof}
    \def\mm{\mathfrak{m}}%
    The implication $\ref{it:subbundleAll}\implies\ref{it:subbundlePoint}$ is formal.
    The implication $\ref{it:subbundle}\implies\ref{it:subbundleAll}$ follows from the fact that pullback of
    sequences of vector bundles stay exact.
    Assume $\ref{it:subbundlePoint}$ holds. The claim~\ref{it:subbundle} is local on $X$, so we may assume that $X = \Spec(A)$
    for a local ring $(A, \mm)$ and that $\VV$, $\VV'$ come from $A$-modules $V$, $V'$. By assumption, the map $V/\mm V\to V'/\mm V'$
    is injective. Pick elements $v_1, \ldots ,v_k$ of $V$ and $v_{k+1}, \ldots ,v_l$ of $V'$ such that the classes of $v_1, \ldots ,v_k$ form
    a basis of the vector space $V/\mm V$ and $v_1, \ldots , v_l$ form a basis of the vector space $V'/\mm V'$. This implies that $V\to V'$
    is injective. Consider the free submodule $V'' \subseteq V'$ generated by
    $v_{k+1}, \ldots ,v_l$. One verifies that the map $V' \to V'/V$ restricts
    to an isomorphism $V''\to V'/V$, hence $V'/V$ is a free $A$-module.
\end{proof}

Below, when working with vector bundles over $X$, we write $\otimes$ instead of $\otimes_{\OO_X}$. Similarly, when working with
vector spaces, we write $\otimes$ instead of $\otimes_{\kk}$. When $V$ is a vector space, we denote by $\OO_{X}\otimes V$ the trivial
vector bundle on $X$ with fibre $V$.
For vector spaces $V_1, \ldots ,V_d$, we write $\tspace := V_1 \otimes \ldots \otimes V_d$.
For vector bundles $\VV_1, \ldots , \VV_d$, we write $\tspacecurly := \VV_1 \otimes \ldots \otimes \VV_d$.
When $\tspacecurly$ is clear form the context, we write $\VVhat$ for $\VV_1\otimes  \ldots \otimes\VV_{i-1} \otimes \VV_{i+1}\otimes \ldots \otimes \VV_d$.

\begin{definition}\label{ref:tensor:def}
    Let $X$ be a scheme.
    A \emph{family of tensors over $X$} or a \emph{tensor on $X$} is a map
    $T\colon \LL\to \tspacecurly$, where $\LL$ is a line bundle. For a map
    $f\colon Y\to X$, we have an induced family over $Y$ given by $f^*T\colon
    f^*\LL \to f^*\tspacecurly$. When $Y = \{x\}$ is a point of $X$, we use the
    notation $T|_x$ instead of $f^*T$.
The map $T\colon \LL\to \tspacecurly$ is a subbundle $\LL\into \tspacecurly$ if and only if $T|_x\neq 0$ for every $x\in X$.
\end{definition}

\begin{example}\label{ex:universalFamily}
    Let $V_1, \ldots ,V_d$ be vector spaces.
    \begin{enumerate}
        \item
            There is a tensor over $\tspace$
            \[
                \Taf\colon \OO_{\tspace} \to \OO_{\tspace} \otimes \tspace
            \]
            such that for every $\kk$-point $x$ of $\tspace$ the tensor $\Taf|_x\in \tspace$ is the 
            tensor corresponding to $x\in \tspace$.
            In this case $\OO_{\tspace}$ is not a subbundle, since $\Taf|_0 = 0$.
        \item There is a tensor over $\mathbb{P}(\tspace)$ given by the subbundle
            \begin{equation}\label{eq:univTensorProj}
                \Tproj\colon \OO_{\mathbb{P}(\tspace)}(-1)\into \OO_{\mathbb{P}(\tspace)}\otimes \tspace.
            \end{equation}
            Over a  point a $\kk$-point $[T] \in \mathbb{P}(\tspace)$, the induced map on fibers is the inclusion map
            \[
                \Tproj|_{[T]}\colon \OO_{\mathbb{P}(\tspace)}(-1)|_{[T]} = \kk\cdot T \hookrightarrow \tspace = 
                (\OO_{\mathbb{P}(\tspace)} \otimes \tspace)|_{[T]}.
            \]
            Observe that in this case it is not possible to use only trivial vector bundles.
        \item Let $X \into \mathbb{P}(\tspace)$ be a closed subscheme. The restriction of~\eqref{eq:univTensorProj} to $X$
            yields a tensor $\OO_{X}(-1) \into \OO_{X}\otimes \tspace$. We will use this observation for $X$ being a
            secant or a cactus variety.
        \item Let $V$ be a vector space and let $\UU^\vee$ be the universal subbundle on $\Gr(m, V)$. 
            There is a tensor over $\Gr(m,V)$
            \[
                \TGr \colon \OO_{\Gr(m, V)} \to \UU \otimes V
            \]
            corresponding to the subbundle inclusion $\UU^\vee \into \OO_{\Gr(m, V)} \otimes V$.
            This tensor is $\UU$-concise in the sense of Definition~\ref{ref:concise:def}.
    \end{enumerate}
\end{example}

\begin{remark}\label{ref:twistingNotation:rem}
    A tensor of the form $T\colon \OO_X \to \tspacecurly$ is the same as an
    element $T\in H^0(\tspacecurly)$. In the latter form, families of tensors
    appear everywhere throughout the literature, for example in works related
    to secant varieties of any variety or in the toric setup~\cite{Galazka, Buczyska_Buczynski__border}.
    A tensor of the form $T\colon \LL \to \tspacecurly$ can be twisted by $\LL$ to obtain an equivalent
    tensor $T\colon \OO_X \to (\LL^{\vee}\otimes \VV_1)\otimes \VV_{2 \ldots
    d}$ and in what follows we sometimes use both of these equivalent forms.
\end{remark}

\begin{remark}\label{ref:localFamilies:rem}
    Let $T\colon \LL\to \tspacecurly$ be a family of tensors and $x\in X$ be a point.
    There is an open neighbourhood $x\in U \subseteq X$ such that $\LL|_U$, $(\VV_1)|_U$,
     \ldots, $(\VV_d)|_{U}$ are all trivial bundles. The tensor $T|_U$ is isomorphic to a tensor
     of the form $\OO_U \to \OO_U \otimes \tspace$, hence to a map $f\colon U\to \tspace$. In particular,
     $T|_U = f^*\Taf$, where $\Taf$ is the universal tensor on $\tspace$. Informally speaking,
     locally on $X$ the notion of a family agrees with the more common notion of $X\to \tspace$.
\end{remark}

\begin{definition}\label{ref:concise:def}
A family of tensors $T\colon \LL\to \tspacecurly$ is \emph{$\VV_i$-concise} if
the contraction
\[
    T_{\VV_i}\colon  \VV_i^{\vee}\otimes\LL \to \VVhat
\]
is a vector subbundle. In this case $\LL\to \tspacecurly$ is also a subbundle.
The family $T$ is \emph{concise} if it is concise with respect to every
coordinate.
\end{definition}

\begin{definition}\label{def:concise}
    A family of tensors $T \colon \LL \to \tspacecurly$ 
    is a \emph{restriction} of another family of tensors $\Tppar{} \colon \LL \to \WW_{1,\dots,d}$
    if there exist $\OO_X$-linear maps $\varphi_i \colon \WW_i \to \VV_i$ such 
    that $(\varphi_1 \otimes \dots \otimes \varphi_d)(\Tppar{}) = T$.
\end{definition}

\begin{remark}\label{ref:concise:rem}
    A family of tensors $T$ over $X$ is $\VV_i$-concise if and only if for every point $x\in X$, the induced
contraction ${T_{\VV_i}}|_x$ is injective. In particular, if $T$ restricts to a $\VV_i$-concise tensor
(in the sense of Definition~\ref{def:concise}), then
$T$ itself is $\VV_i$-concise. 
\end{remark}


\subsection{Unrestriction schemes}

    In this section we define unrestriction schemes. Special cases of these
    serve as smooth ``ambient'' varieties in which concise secant and cactus
    varieties live.

    We need a piece of notation. For a map $\phi\colon \WW_1\to \WW_2$ of vector bundles on $X$, the \emph{zero locus
    of $\phi$} is the closed subscheme $\Z(\phi)$ of $X$ which is locally given by entries of $\phi$, that is,
    for any open $U\subseteq X$ with isomorphisms $(\WW_1)|_U \simeq \OO_U^{\rk
    \WW_1}$, $(\WW_{2})|_U\simeq \OO_U^{\rk \WW_2}$, the ideal of $\Z(\phi)$ is given by the entries of the matrix
    $\phi \in \OO_{U}^{\rk \WW_1 \cdot \rk \WW_2}$.

The following is our main object of interest. Again, we fix an integer $m$, which can be arbitrary, but in applications it will usually
be equal to the border rank.
\begin{definition}[Unrestriction scheme]\label{ref:unrestriction:def}
    Take a family of tensors $T\colon \LL\to \tspacecurly$. Fix an index $i$.
    Take the bundle of Grassmannians $p \colon \Gr_X(m, \VVhat) \to X$ and
    let $\UU_i^{\vee} \into p^*\VVhat$ denote the universal subbundle (the dual is introduced here to keep the notation consistent).
    The \emph{$i$-th unrestriction scheme} of $T$ is the closed subscheme 
    \[
        \begin{tikzcd}
            \unres \ar[rd]\ar[rr, hook, "\mathrm{cl}"] &&\Gr_X(m, \VVhat)\ar[ld, "p"]\\
            & X
        \end{tikzcd}
    \]
    defined as the zero locus of the following map of vector bundles
    \begin{equation}\label{eq:unrestrictionDef}
        p^*(\VV_i^\vee \otimes \LL) \xrightarrow{p^*T_{\VV_i}} p^*\VVhat \to \frac{p^*\VVhat}{\UU_i^{\vee}}.
    \end{equation}
\end{definition}
The unrestriction scheme comes with a $\UU_i$-concise family of tensors
\[
 \Tppar{i}\colon \OO_{\unres} \into p^*\VVhat \otimes \UU_i.
\]
By equation~\eqref{eq:unrestrictionDef}, the contraction
$p^*T_{\VV_i}\colon p^*(\VV_i^{\vee} \otimes \LL)\to p^*\VVhat$ factors through $\UU_i^{\vee}\into p^*\VVhat$,
so $p^*T_{\VV_i}$ induces a uniquely determined map $p^*(\VV_i^{\vee} \otimes \LL) \to \UU_i^{\vee}$. Its dual map
$\varphi_i\colon \UU_i\to p^*(\VV_i \otimes \LL^{\vee})$ shows that $p^*T$ is a restriction of $\Tppar{i}$; here we twist by $\LL$ as explained in Remark~\ref{ref:twistingNotation:rem}. This explains
the name $\unres$.
It would be more precise to include $m$ in the notation $\OpUnres_i^m(T)$, yet in our setup $m$ is 
fixed, so we prefer the lighter notation.

\begin{lemma}\label{ref:baseChangeUnres:lem}
    Let $f\colon Y\to X$ be a morphism. Then the pullback $Y\times_X \unres$ is isomorphic to the unrestriction scheme $\unrespar{i}{f^*T}$.
\end{lemma}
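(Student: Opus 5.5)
The plan is to show that both sides are the same closed subscheme of the same Grassmann bundle. We use two facts: relative Grassmannians commute with base change, and zero loci of bundle maps commute with pullback.

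First, the Grassmannian bundle. Let $p\colon \Gr_X(m,\VVhat)\to X$. The relative Grassmannian represents the functor of rank $m$ subbundles of $\VVhat$. Hence its formation commutes with base change, and there is a canonical isomorphism
\[
    Y\times_X \Gr_X(m,\VVhat)\simeq \Gr_Y(m, f^*\VVhat).
\]
Write $f'\colon \Gr_Y(m,f^*\VVhat)\to \Gr_X(m,\VVhat)$ for the projection and $q$ for the bundle map to $Y$. Under this isomorphism the universal subbundle pulls back to the universal subbundle, that is, $f'^*\UU_i^{\vee}\simeq \UU_{i,Y}^{\vee}$ as subbundles of $f'^*p^*\VVhat = q^*f^*\VVhat$. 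By Lemma~\ref{ref:subbudle:lem}, pullback of subbundles preserves exactness, so quotients are also compatible:
\[
    f'^*\bigl(p^*\VVhat/\UU_i^{\vee}\bigr)\simeq q^*f^*\VVhat/\UU_{i,Y}^{\vee}.
\]

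Second, the map \eqref{eq:unrestrictionDef}. The contraction is compatible with pullback, $f^*(T_{\VV_i}) = (f^*T)_{f^*\VV_i}$, since contraction is built from tensor operations that commute with $f^*$. Therefore the pullback along $f'$ of the map \eqref{eq:unrestrictionDef} for $T$ is the map \eqref{eq:unrestrictionDef} for $f^*T$, under the identifications above.

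Third, zero loci. For any bundle map $\phi$ on a scheme $Z$ and any morphism $g\colon Z'\to Z$, we have $\Z(g^*\phi) = Z'\times_Z \Z(\phi)$. This holds because both are locally cut out by the pulled-back matrix entries: the ideal of $\Z(g^*\phi)$ is the image of the ideal of $\Z(\phi)$ in $\OO_{Z'}$. Applying this with $g = f'$ gives
\[
    \unrespar{i}{f^*T} = \Gr_Y(m,f^*\VVhat)\times_{\Gr_X(m,\VVhat)}\unres = Y\times_X \unres.
\]

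The only point requiring care is that the identification of universal subbundles and quotients is canonical and compatible with the maps, so that the two zero loci agree as schemes and not merely as sets. This follows from the functor-of-points description of the Grassmannian. As a byproduct, the universal family $\Tppar{i}$ and the maps $\varphi_i$ also pull back to the corresponding data for $f^*T$.
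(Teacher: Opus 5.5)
Your proposal is correct and follows the paper's approach. The paper's proof is the one-line remark that both the Grassmannian bundle and the zero-locus condition defining the unrestriction scheme pull back nicely. You have simply spelled out those two base-change compatibilities in detail: the relative Grassmannian together with its universal subbundle and quotient, and the zero loci of bundle maps.
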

\begin{proof}
    Both the bundle of Grassmannians and the condition~\eqref{eq:unrestrictionDef} pull back nicely.
\end{proof}

\begin{example}[Scheme of unrestrictions of a fixed tensor $T$]\label{ex:fibre}
    Let $T\colon \LL\into \tspacecurly$ be a family of tensors.
    Let $x\in X$ be a $\kk$-point and $T|_x\colon \LL|_{x}\into {\VV_1}|_x\otimes \ldots \otimes {\VV_d}|_x$ be the restriction.
    Then the fiber of $\unres \to X$ over $x$ is isomorphic to $\unrespar{i}{T|_x}$.
\end{example}

    Lemma~\ref{ref:baseChangeUnres:lem} yields also the functor of points of $\unres$.
    Namely, for a $\kk$-scheme $S$, a morphism $S\to \unres$ is the same as a morphism $f\colon S\to X$,
    together with a subbundle $\UU_i^{\vee}\into f^*\VVhat{}$ such that $f^*T_{\VV_i}\colon f^*(\LL \otimes \VV_i^{\vee})\to
    f^*\VVhat{}$ factors through $\UU_i^{\vee}$.

    In the following, we obtain another characterisation of the functor of points. In particular, applying $S = \kk$,
    we obtain a description of $\kk$-points.
    \begin{proposition}[Points of unrestriction scheme]\label{ref:functorOfUnrestrictions:prop}
        Let $T\colon \LL\into \tspacecurly$ be a family of tensors on $X$.
        For a $\kk$-scheme $S$, a morphism $S\to\unres$ is the same as a morphism $f\colon S\to X$,
        together with a pair $(U_i, \varphi_i)$, where
        \begin{enumerate}
            \item $U_i$ is a subbundle $U_i^{\vee} \into f^*\VVhat{}$, so it yields an $U_i$-concise tensor
                $\Tppar{i}\colon \OO_S \into U_i \otimes f^*\VVhat{}$,
            \item $\varphi_i$ is a map of vector bundles $\varphi_i\colon U_i \to f^*(\LL^{\vee} \otimes \VV_i)$,
        \end{enumerate}
        such that $\varphi_i(\Tppar{i})$ is equal to $f^*T$ as global sections of $f^*(\LL^{\vee} \otimes \tspacecurly)$.
        These data can be visualised as
        \[
            \begin{tikzcd}
                \Tppar{i} \arrow[r, phantom, "\colon"] \arrow[d, phantom, "\leq" rotate = 90] &[-1cm]
                \OO_S \arrow[d, equal] \arrow[r] &
                f^*\VVhat{}\ar[d, equal] \ar[r, phantom, "\otimes"] &[-0.7cm] U_i \arrow[d, "\varphi_i"] \\
                f^*T \arrow[r, phantom, "\colon"] &
                \OO_S \arrow[r] & f^*\VVhat{} \ar[r, phantom, "\otimes"] & f^*(\VV_i \otimes \LL^\vee).
            \end{tikzcd}
        \]
    \end{proposition}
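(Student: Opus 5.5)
The plan is to start from the functor-of-points description stated just before Proposition~\ref{ref:functorOfUnrestrictions:prop}. By Lemma~\ref{ref:baseChangeUnres:lem} and the universal property of the relative Grassmannian $\Gr_X(m,\VVhat)$, a morphism $S\to\unres$ amounts to three data: a morphism $f\colon S\to X$, a rank $m$ subbundle $U_i^{\vee}\into f^*\VVhat$, and the requirement that the pullback of the zero-locus condition~\eqref{eq:unrestrictionDef} holds. That requirement says exactly that the composite $f^*(\VV_i^{\vee}\otimes\LL)\to f^*\VVhat\to f^*\VVhat/U_i^{\vee}$ is zero. The task is therefore to show that such a factorisation condition is equivalent to the existence of a map $\varphi_i$ with $\varphi_i(\Tppar{i}) = f^*T$, and that this $\varphi_i$ is unique.

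For the forward direction, suppose $f^*T_{\VV_i}$ factors through $U_i^{\vee}$. Since $U_i^{\vee}\into f^*\VVhat$ is injective, the factorisation $\psi\colon f^*(\VV_i^{\vee}\otimes\LL)\to U_i^{\vee}$ is unique. Set $\varphi_i := \psi^{\vee}\colon U_i\to f^*(\VV_i\otimes\LL^{\vee})$. The tensor $\Tppar{i}\colon\OO_S\to U_i\otimes f^*\VVhat$ corresponds, under $\Hom(U_i^{\vee}, f^*\VVhat)\simeq H^0(U_i\otimes f^*\VVhat)$, to the inclusion $\iota\colon U_i^{\vee}\into f^*\VVhat$. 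Likewise $f^*T$, twisted as in Remark~\ref{ref:twistingNotation:rem}, corresponds to the contraction $f^*T_{\VV_i}\in\Hom(f^*(\VV_i^{\vee}\otimes\LL), f^*\VVhat)$. Applying $\varphi_i$ to the $U_i$ factor of $\Tppar{i}$ corresponds to precomposing $\iota$ with $\varphi_i^{\vee} = \psi$. This identifies $\varphi_i(\Tppar{i})$ with $\iota\circ\psi = f^*T_{\VV_i}$, that is, with $f^*T$.

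For the converse, let $(U_i,\varphi_i)$ be given with $\varphi_i(\Tppar{i}) = f^*T$. The same dictionary gives $f^*T_{\VV_i} = \iota\circ\varphi_i^{\vee}$, so the composite to $f^*\VVhat/U_i^{\vee}$ vanishes and $f$ lifts to $\unres$. These two constructions are mutually inverse. In one direction this is the uniqueness of the factorisation through the injection $\iota$. In the other, $\varphi_i$ is determined by $\varphi_i^{\vee}$, which in turn is forced by $\iota\circ\varphi_i^{\vee} = f^*T_{\VV_i}$. Both constructions are compatible with pullback along $S'\to S$, so they give an isomorphism of functors.

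The step I expect to need the most care is the identification of $\varphi_i(\Tppar{i})$ with $\iota\circ\varphi_i^{\vee}$. Everything there is canonical, but the $\LL$-twists and the dualisations must be tracked consistently. I would check this locally, on opens where all bundles are trivial, using Remark~\ref{ref:localFamilies:rem}; there the claim reduces to the matrix identity $M = X\cdot M'$, as in the proof of Lemma~\ref{l:algorithm_step}. The remainder of the argument is formal.
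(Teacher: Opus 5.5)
Your proposal is correct and takes the same route as the paper. One direction pulls back the universal subbundle together with the dual of the unique factorisation. The converse uses the universal property of $\Gr_X(m,\VVhat)$ and the vanishing of the pulled-back map~\eqref{eq:unrestrictionDef}; you simply make explicit what the paper's short proof leaves implicit, namely the uniqueness of the factorisation through the injective $\iota$ and the identification of $\varphi_i(\Tppar{i})$ with $\iota\circ\varphi_i^{\vee}$.
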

    \begin{proof}
        From a map $S\to \unres$, we obtain the subbundle $U_i$ by pulling back $\UU_i$ and the map $\varphi_i$ by pulling back
        the map $\varphi_i$ from Definition~\ref{ref:unrestriction:def}. Conversely, having $U_i$, we obtain a morphism
        \[
            \begin{tikzcd}
                S \ar[rr, "\Tppar{i}"]\ar[rd, "f"] && \Gr_X(m, \VVhat{})\ar[ld, "p"]\\
                                      & X
            \end{tikzcd}
        \]
        and $\varphi_i$ shows that it factors through the zero locus defining $\unres{}$.
    \end{proof}

The construction of unrestrictions can be iterated: from a family of tensors $T$ on $X$,
we obtained a family $\Tppar{1}$ on $\unrespar{1}{T}\xrightarrow{\pi_1} X$ concise on the first coordinate.
We can iteratively consider the unrestriction scheme 
\[
    \unrespar{2}{\Tppar{1}} \xrightarrow{\pi_2} \unrespar{1}{T}
\]
together with the family of tensors $\Tppar{1,2}$. This family is concise on the second coordinate by definition,
but it is also concise on the first coordinate, because its restriction is $\Tppar{1}$, see Remark~\ref{ref:concise:rem}.
\begin{definition}
    The $i$-th \emph{iterated unrestriction scheme} of $T$ is the scheme 
    \[
        \unrespar{1, \dots, i}{T} := \unrespar{i}{\Tppar{1, \dots, i-1}}.
    \]
    It comes with a family $\Tppar{1, \ldots , i}$ of tensors which is concise on coordinates $1, \ldots ,i$.
\end{definition}
The iterated unrestriction schemes fit into the following sequence
\[
    \unrespar{1, \dots, i}{T} \xrightarrow{\pi_i} \unrespar{1, \dots, i-1}{T} \xrightarrow{\pi_{i-1}}
    \dots \xrightarrow{\pi_3} \unrespar{1, 2}{T} \xrightarrow{\pi_2} \unrespar{1}{T}
    \xrightarrow{\pi_1} X.
\] 
In particular, we have the \emph{(full) unrestriction scheme} $\unresfullpar{T} := \unrespar{1, \ldots ,d}{T}$ with a concise tensor $\Tppar{}$.
The points of this scheme can be described by iterating Proposition~\ref{ref:functorOfUnrestrictions:prop}.

We emphasise here that the order of coordinates matters in the definition of the unrestriction scheme, as already Example~\ref{ex:order_matters} shows.
It would be perhaps more proper to call it \emph{an} unrestriction scheme rather than \emph{the} unrestriction scheme and consider all possible orders, but we feel that here it would only cause further confusion, so we decided against it. For us here, the order is fixed.

\subsection{Concise and secant varieties}\label{ssec:conciseSecants}

Having the notion of unrestrictions, we can finally define the objects of main interest: concise secant and cactus varieties.
Consider $m$-dimensional vector spaces $W_1, \ldots ,W_d$ and let $T\in \tconcspace$.
Recall from~\cite{JLP} that the tensor $T$ has \emph{minimal border rank} if $T$ is concise and $\brk T = m$.
\begin{definition}\label{ref:secantLocus:def}    
    Let $T\colon \LL\into \tconcspacecurly$ be a concise (Definition~\ref{ref:concise:def}) tensor on $X$, where every $\WW_i$ has rank $m$.
    \begin{enumerate}
        \item The \emph{minimal border rank locus} is the largest reduced closed
            subscheme $X^{\brk=m} \subseteq X$ such that for every $x\in
            X^{\brk=m}$, the tensor $T|_x$ has border rank $m$. Observe that $\brk = m$ is the
            minimal possible value given that $T$ is concise.
        \item The \emph{minimal border cactus rank locus} is the largest closed
            subscheme $X^{\bcrk=m} \subseteq X$ such that for every $x\in
            X^{\bcrk=m}$, the tensor $T|_x$ has cactus border rank $m$. Buczy{\'nski}-Keneshlou~\cite{Buczynski_Keneshlou}
            give a canonical scheme structure on this locus.
    \item
        Consider the following map of vector bundles
            \[
                \End(\WW_1)\times \ldots \times\End(\WW_d)\to \prod_{i=1}^{d-1} \WW_1\otimes \ldots \otimes \WW_d
            \]
            given by $(X_1, \ldots ,X_d)\mapsto (X_1\circ T - X_i\circ T)_{i=2,
            \ldots ,d}$. The \emph{centroid abundant locus} is the closed
            subscheme of $X$ given as the locus where this map has kernel of rank $\geq m$.
        \end{enumerate}
    We say that $T$ \emph{has minimal border rank} / \emph{has minimal border cactus rank} / \emph{is centroid-abundant},
    if respective loci above are equal to the whole $X$.
\end{definition}

\begin{definition}[Concise secant and cactus varieties]\label{ref:conciseSecant:def}
    Let $V_1, \ldots ,V_d$ be vector spaces. Fix an integer $m\geq 1$. Recall that $\Tproj$ denotes the universal tensor 
    over $\mathbb{P}(\tspace)$ and $\Taf$ denotes the universal tensor over $\tspace$, see~Example~\ref{ex:universalFamily}.
    \begin{enumerate}
        \item
            \emph{The $m$-th concise secant variety} $\csigma_m$ is the minimal border rank locus
            for the universal tensor on the unrestriction scheme $\pi\colon \unrespar{}{\Tproj}\to \mathbb{P}(\tspace)$.
            We have $\pi(\csigma_m) = \sigma_m$, where $\sigma_m$ is the $m$-th secant 
            variety of $\Seg \into \PP(\tspace)$. The map $\rho\colon \csigma_m \onto \sigma_m$ is the restriction of $\pi$.
        \item
            \emph{The arithmetic $m$-th concise secant variety}
            $\csigmahat_m$ is the minimal border rank locus
            for the universal tensor on the unrestriction scheme $\pihat\colon \unrespar{}{\Taf}\to \tspace$.
            Again, the restriction of $\pihat$ yields a surjective map $\rhohat\colon \csigmahat_m \onto \sigmahat_m$,
            where $\sigmahat_m$ is the cone over $\sigma_m$.
        \item \emph{The $m$-th concise cactus variety} $\ckappa_m$ is the minimal cactus border rank locus
            for the universal tensor on the unrestriction scheme $\unrespar{}{\Tproj}\to \mathbb{P}(\tspace)$.
            The \emph{$m$-th arithmetic cactus variety} $\ckappahat_m$ is defined analogously.
    \end{enumerate}
    
\end{definition}

        \begin{proof}[Proof of Theorem~\ref{ref:cfit:thm}]
            By construction, the morphism $\rho$ is projective and $B$ acts on the projective scheme $\rho^{-1}([T])$.
            By the Borel Fixed Point Theorem, this action admits a fixed point, which is the required $\Tppar{}$.
        \end{proof}

\begin{example}[Baby case: matrices]\label{ex:baby}
For $d=2$, that is, for matrices, the questions of tensor rank
are quite trivial. This makes it an ideal case for illustrating the new elements of $\csigma_m$.
Fix vector spaces $V_1, V_2$ and consider the determinantal variety
\[
    \sigma_m = \left\{ [M]\in \mathbb{P}(V_1 \otimes V_2)\ |\ \rk M\leq m \right\}.
\]
Take the Grassmannian $\Gr(m, V_2)$ of $m$-dimensional
subspaces of $V_2$ and its universal subbundle $\UU_1^\vee \subseteq \Gr(m,
V_2)\times V_2$. We can form a projective subbundle $\mathbb{P}(V_1\otimes\UU_1^\vee)$ inside
$\mathbb{P}(V_1\otimes V_2) \times \Gr(m, V_2)$, which comes with a map 
$\rho\colon \mathbb{P}(V_1 \otimes \UU_1^\vee)\to \mathbb{P}(V_1\otimes V_2)$.

It will turn out that $\csigma_m$ is $\mathbb{P}(V_1 \otimes \UU_1^\vee)$ in our case.
This is a smooth variety; a projective bundle over $\Gr(r, V_2)$.
To understand it better, we will describe its points (or, to be precise, $\kk$-points).
These are pairs. First, there is a point of the Grassmannian $\Gr(m,
V_2)$,
which we denote by $[U_1^{\vee}]\in \Gr(m, V_2)$. It corresponds to an
injection $U_1^{\vee} \into V_2$, so to a rank $m$ tensor $\Tppar{}\in
U_1\otimes V_2$. Second, there is an element of
$\mathbb{P}(V_1 \otimes \UU_1^\vee)|_{[U_1^{\vee}]}$, that is, a class $[\varphi_1]\in
\mathbb{P}(V_1 \otimes U_1^{\vee})$, where $\varphi_1\colon U_1\to V_1$ is a
nonzero linear map.

The map $\rho$ sends $(\Tppar{}, [\varphi_1])$ to the tensor $[\varphi_1(\Tppar{})]\in \mathbb{P}(V_1 \otimes V_2)$.
The image of $\rho$ is $\sigma_m$. If $V_1, V_2$ are at least $m$-dimensional,
then the map $\rho$ is birational and a resolution of singularities of $\sigma_m$. It is much employed 
in the geometry of syzygies, where it is attributed to Kempf~\cite{Kempf__Images_of_vector_bundles}, see~\cite[(6.1.9)]{Weyman__Cohohomology_of_vector_bundles}.
\end{example}

\subsection{Structure of unrestrictions and $\csigma_m$ over $\mathbb{P}(\tspace)$ and $\tspace$}

    In this section we discuss group actions and bundles which appear on unrestrictions starting from
    the ``universal'' tensors over $\mathbb{P}(\tspace)$ and $\tspace$. These structures appears essentially
    because $\tspace$ is a trivial vector bundle.

    Let $\Taf\colon \OO_{\tspace}\to \OO_{\tspace} \otimes \tspace$ be the universal tensor over $\tspace$ and let
    \[
        \unresfullpar{\Taf}\to \ldots \to \unrespar{1}{\Taf} \to \tspace.
    \]
    be its unrestriction schemes. Similarly, let $\Tproj\colon \OO_{\mathbb{P}(\tspace)}(-1) \into
    \OO_{\mathbb{P}(\tspace)}\otimes\tspace$ be the universal tensor of
    $\mathbb{P}(\tspace)$ and let 
    $
    \unresfullpar{\Tproj}\to \ldots \to \unrespar{1}{\Tproj} \to \mathbb{P}(\tspace)
    $ be its unrestriction schemes.

    \subsubsection{Group actions}\label{ssec:groupActions}

        Let $\GLV := \GL(V_1)\times \ldots \times \GL(V_d)$ be the general linear group.
        It acts naturally on $\tspace$ and on $\Vhatpar{1} = V_2 \otimes \dots \otimes V_d$, hence it acts on $\Gr(m, \Vhatpar{1}) \times \tspace$.
        The action restricts to $\unrespar{1}{\Taf}$ and makes $\unrespar{1}{\Taf}\to \tspace$
        into a $\GLV$-equivariant map. The action of $\GLV$ on $\Vhatpar{1}$ gives an action of $\GLV$ 
        on the (trivial) bundle $\OO_{\unrespar{1}{\Taf}}\otimes \Vhatpar{1}$. This action preserves the universal subbundle,
        hence induces a linearisation of the universal subbundle. Therefore we obtain an action on the Grassmannian bundle over
        $\unrespar{1}{\Taf}$ and on its subscheme $\unrespar{2}{\Taf}$. Continuing, we obtain $\GLV$-actions on 
        all unrestriction schemes $\unrespar{i}{\Taf}$ such that the restriction maps
        $\unresfullpar{\Taf}\to \ldots \to \unrespar{1}{\Taf}\to \tspace$ are equivariant.

        The argument can be repeated in the projective setting. There is one difference though. The action of scalar
        matrices $\Gmult^d \subseteq \GLV$ on $\mathbb{P}(\tspace)$ is trivial and so is their action on $\Gr(m, \Vhatpar{1})$. By induction,
        we conclude that $\Gmult^d$ acts trivially. In practice, it is hence reasonable to restrict to the action of $\SLV$ in the projective
        setting.

        One could ask, if there is a bigger connected group acting. The answer
        is expected to be negative in many cases. Indeed,
        if $\rho$ is birational, then a connected group action on $\csigma_m$
        descends to one on the normalisation of $\sigma_m = \rho(\csigma_m)$,
        see~\cite[Proposition~2.25]{Brion}. This is the case when $\dim
        V_{\bullet} = m$, but also in much larger number of cases discussed
        in~\S\ref{ssec:identifiability}.

    \subsubsection{Bundles}
    Recall that $\Vhatpar{1} = V_2 \otimes \ldots \otimes V_d$ and
    consider the Grassmannian $\Gr(m, \Vhatpar{1})$ with its universal tensor $\TGr$,
    see Example~\ref{ex:universalFamily}. Note that the tensor $\TGr$ is concise on the first coordinate.
    Let $\unresfullpar{\TGr} = \unrespar{2, \ldots ,d}{\TGr}\to \Gr(m, \Vhatpar{1})$ be its unrestriction scheme.
    \newcommand{\ehat}{\widehat{e}}

    The scheme $\unrespar{1}{\Taf}$ is closed inside $\tspace \times \Gr(m, \Vhatpar{1})$, 
    so there is a natural
    \[\ehat\colon \unrespar{1}{\Taf} \to \Gr(m, \Vhatpar{1}).
    \] 
    By construction, we have that the universal tensor $\Tppar{1}$ on $\unrespar{1}{\Taf}$ 
    is the pullback $\ehat^* \TGr$, see Definition~\ref{ref:unrestriction:def}.

    \begin{proposition}[Unrestriction scheme as a bundle]\label{ref:bundleAffine:prop}
        The morphism $\ehat$ makes $\unrespar{1}{\Taf}$ into a vector bundle of rank $m^2$ over $\Gr(m, \Vhatpar{1})$, more precisely
        it is the bundle $\UU_1^{\vee}\otimes V_1$, where $\UU_1^{\vee}$ is the universal subbundle.
        The diagram
        \begin{equation}\label{eq:unresBundle}
            \begin{tikzcd}
                \unresfullpar{\Taf} \ar[r]\ar[d] & \unresfullpar{\TGr}\ar[d]\\
                \unrespar{1}{\Taf} \ar[r, "\ehat"] & \Gr(m, \Vhatpar{1})
            \end{tikzcd}
        \end{equation}
        is cartesian, so $\unresfullpar{\Taf}\to \unresfullpar{\TGr}$ is also a vector bundle of rank $m^2$.
    \end{proposition}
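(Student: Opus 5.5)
Since every construction here is functorial, I will argue through functors of points, using Proposition~\ref{ref:functorOfUnrestrictions:prop}. By that proposition, for a $\kk$-scheme $S$ a morphism $S\to\unrespar{1}{\Taf}$ is a triple consisting of a morphism $f\colon S\to \tspace$ (equivalently, a section $T_S\in H^0(\OO_S)\otimes\tspace$), a subbundle $U_1^\vee\into \OO_S\otimes \Vhatpar{1}$ of rank $m$, and a map $\varphi_1\colon U_1\to \OO_S\otimes V_1$ with $\varphi_1(\Tppar{1})=T_S$. Here $\Tppar{1}$ is the tautological tensor of the inclusion $U_1^\vee\into\OO_S\otimes\Vhatpar{1}$.

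The key observation is that $f$ is determined by the other data: $T_S=\varphi_1(\Tppar{1})$. Hence an $S$-point is the same as a pair $(U_1^\vee,\varphi_1)$ with no constraint, i.e.\ a morphism $g\colon S\to\Gr(m,\Vhatpar{1})$ together with a section of $g^*(\UU_1^\vee\otimes V_1)=g^*\Hom(\UU_1,V_1)$. This is exactly the functor of points of the total space of $\UU_1^\vee\otimes V_1$ over the Grassmannian. The step needing care is matching the scheme structure: $\unrespar{1}{\Taf}$ is defined as a zero locus inside $\Gr(m,\Vhatpar{1})\times\tspace$. To check it, I would work locally on a standard chart of the Grassmannian, where $\UU_1^\vee$ is trivial with basis $u_1,\dots,u_m$. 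There, the condition that $T_{V_1}$ factors through $\UU_1^\vee$ says that $T$ lies in $V_1\otimes \UU_1^\vee$ as a subbundle of $V_1\otimes\Vhatpar{1}$. Because $V_1\otimes\UU_1^\vee\into V_1\otimes\Vhatpar{1}$ is a subbundle, the zero locus is exactly the graph of the linear embedding $V_1\otimes\UU_1^\vee\to V_1\otimes \Vhatpar{1}$. This identifies $\unrespar{1}{\Taf}$ with the total space of $\UU_1^\vee\otimes V_1$, and $\ehat$ with the projection. The bundle has rank $m\dim V_1$; the statement's ``$m^2$'' presumably refers to the case $\dim V_1=m$. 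I will note this and prove the general rank.

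For the cartesian square, I would apply Lemma~\ref{ref:baseChangeUnres:lem} iteratively. Since $\Tppar{1}=\ehat^*\TGr$ (already noted in the text), we get $\unrespar{2}{\Tppar{1}}\cong \unrespar{1}{\Taf}\times_{\Gr}\unrespar{2}{\TGr}$, with the universal tensor $\Tppar{1,2}$ pulled back from $\TGr$'s unrestriction. Inductively, for each $i$ we have $\unrespar{1,\dots,i}{\Taf}\cong\unrespar{1}{\Taf}\times_{\Gr}\unrespar{2,\dots,i}{\TGr}$, compatibly with the universal tensors, again by base change. At $i=d$ this gives the cartesian diagram~\eqref{eq:unresBundle}. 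Pullbacks of vector bundles are vector bundles of the same rank, which gives the final claim.

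The main obstacle is the scheme-theoretic identification in the first step: the zero locus must be the reduced graph, not something with embedded structure. The subbundle argument handles this. Compatibility of the universal tensors under the base change identifications is routine but needs to be stated so that the induction goes through.
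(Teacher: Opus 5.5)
Your proof is correct and follows essentially the paper's route. It identifies $S$-points of $\unrespar{1}{\Taf}$ with pairs $(U_1^\vee,\varphi_1)$ via Proposition~\ref{ref:functorOfUnrestrictions:prop}, with the tensor recovered as $\varphi_1(\Tppar{1})$. It then gets the cartesian square by iterating Lemma~\ref{ref:baseChangeUnres:lem}, using that $\Tppar{1}$ is pulled back from $\TGr$ along $\widehat{e}$.

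Your local-chart check is redundant: the functor-of-points identification is already an isomorphism of schemes by Yoneda. Your rank correction is right. The bundle $\UU_1^{\vee}\otimes V_1$ has rank $m\dim V_1$, which equals $m^2$ only when $\dim V_1=m$.
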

    Observe that $\unresfullpar{\TGr}$ is a projective variety, while $\unresfullpar{\Taf}\to \unresfullpar{\TGr}$ is affine, so
    the above precisely measures the lack of projectivity of $\unresfullpar{\Taf}$.

    \begin{proof}
        The map $\varphi_1$ from Definition~\ref{ref:unrestriction:def} yields a map from $\unrespar{1}{\Taf}$ to
        the bundle $\UU_1^{\vee}\otimes V_1$. Conversely, from an element of $\UU_1^{\vee}\otimes V_1$, we obtain a map $\varphi_1\colon \UU_1\to V_1$
        and using the description in Proposition~\ref{ref:functorOfUnrestrictions:prop}, a map $\UU_1^{\vee}\otimes V_1\to \unrespar{1}{\Taf}$.
        This proves the first part.

        The tensor $\Tppar{1}$ on $\unrespar{1}{\Taf}$ is a pullback via $\ehat$ of the tensor $\TGr$ on $\Gr(m, \Vhatpar{1})$.
        By repeatedly using the base change Lemma~\ref{ref:baseChangeUnres:lem}, we obtain that~\eqref{eq:unresBundle} is cartesian.
    \end{proof}
    In the projective case, the variety $\unrespar{1}{\Tproj}$
    comes with a natural map
    \[e\colon \unrespar{1}{\Tproj}\to \Gr(m, \Vhatpar{1}).\]
    \begin{proposition}\label{ref:bundleProjective:prop}
        The morphism $e$ makes $\unrespar{1}{\Tproj}$ into a projectivisation of vector bundle of rank $m^2$ over $\Gr(m, \Vhatpar{1})$, more precisely
        it is the bundle $\mathbb{P}(\UU_1^{\vee}\otimes V_1)$, where $\UU_1^{\vee}$ it the universal subbundle.
        The diagram
        \[
            \begin{tikzcd}
                \unresfullpar{\Tproj} \ar[r]\ar[d] & \unresfullpar{\TGr}\ar[d]\\
                \unrespar{1}{\Tproj} \ar[r, "e"] & \Gr(m, \Vhatpar{1})
            \end{tikzcd}
        \]
        is cartesian, so $\unresfullpar{\Tproj}\to \unresfullpar{\TGr}$ is also a projectivised vector bundle of rank $m^2$.
    \end{proposition}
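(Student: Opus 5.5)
The plan mirrors the proof of Proposition~\ref{ref:bundleAffine:prop}, with the trivial line bundle replaced by $\OO(-1)$ and an extra $\Gmult$-quotient; we compare functors of points. By Proposition~\ref{ref:functorOfUnrestrictions:prop}, an $S$-point of $\unrespar{1}{\Tproj}$ consists of three pieces of data:
\begin{enumerate}
    \item a morphism $f\colon S\to \mathbb{P}(\tspace)$, that is, a line subbundle $L:=f^*\OO(-1)\into \OO_S\otimes\tspace$;
    \item a subbundle $U_1^\vee\into \OO_S\otimes \Vhatpar{1}$ of rank $m$, which is the same as a morphism $g\colon S\to \Gr(m,\Vhatpar{1})$;
    \item a map $\varphi_1\colon U_1\to L^\vee\otimes V_1$ with $\varphi_1(\Tppar{1}) = f^*\Tproj$.
\end{enumerate}
Since $\Tppar{1}=g^*\TGr$ is just the tautological inclusion $U_1^\vee\into \Vhatpar{1}$, the tensor $\varphi_1(\Tppar{1})$ is the composite $L\to U_1^\vee\otimes V_1\into \Vhatpar{1}\otimes V_1$ obtained from $\varphi_1$ by twisting (Remark~\ref{ref:twistingNotation:rem}). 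So the data $(f,\varphi_1)$ over $g$ amounts to a map $\psi\colon L\to g^*(\UU_1^\vee\otimes V_1)$ whose composite with $g^*(\UU_1^\vee\otimes V_1)\into \OO_S\otimes\tspace$ is the given inclusion $L\into\OO_S\otimes\tspace$.

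The key step is that $\psi$ is itself a subbundle. Indeed, $\UU_1^\vee\otimes V_1\into \OO\otimes \Vhatpar{1}\otimes V_1$ is a subbundle, and the composite $L\to \OO_S\otimes\tspace$ is a subbundle. Hence $\psi$ is injective at every point, and by Lemma~\ref{ref:subbudle:lem} it is a subbundle. So the data $(g,L,\psi)$ is exactly an $S$-point of $\mathbb{P}(\UU_1^\vee\otimes V_1)$ lying over $g$.

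Conversely, an $S$-point $(g, L\into g^*(\UU_1^\vee\otimes V_1))$ of the projective bundle gives back all three pieces of data. Composing with the subbundle inclusion gives a line subbundle $L\into\OO_S\otimes\tspace$, hence $f$. The pair $(U_1,\varphi_1)$ comes from untwisting $\psi$, and it satisfies the compatibility condition by construction. The two constructions are mutually inverse and natural in $S$, so by Yoneda $\unrespar{1}{\Tproj}\cong \mathbb{P}(\UU_1^\vee\otimes V_1)$ over $\Gr(m,\Vhatpar{1})$, compatibly with $e$. I expect the main care to go into this identification: the twist by $L$, and checking that the line subbundle condition (not merely a nonzero map) is automatic.

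For the cartesian square, observe that by construction of $\unrespar{1}{\Tproj}$ as a closed subscheme of the Grassmannian bundle, the universal tensor $\Tppar{1}$ equals $e^*\TGr$ as a family of tensors, up to the trivial identification of line bundles (it is the tautological $\OO\to \UU_1\otimes\Vhatpar{1}$). Applying the base change Lemma~\ref{ref:baseChangeUnres:lem} successively for $i=2,\dots,d$ then gives
\[
\unrespar{1,\dots,i}{\Tproj}\cong \unrespar{1}{\Tproj}\times_{\Gr(m,\Vhatpar{1})}\unrespar{2,\dots,i}{\TGr}.
\]
Each step uses that the universal tensor on the left is the pullback of the universal tensor on the right. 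At $i=d$ this is the claimed cartesian diagram. Since projective bundles are stable under base change, $\unresfullpar{\Tproj}\to\unresfullpar{\TGr}$ is the projectivisation of the pullback of $\UU_1^\vee\otimes V_1$, which has rank $m\dim V_1$, i.e.\ $m^2$ when $\dim V_1=m$.
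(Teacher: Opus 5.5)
Your proposal is correct and follows the paper's argument. You identify $S$-points of $\unrespar{1}{\Tproj}$ with line subbundles of $g^*(\UU_1^{\vee}\otimes V_1)$ via $\varphi_1$, which the paper phrases dually as a surjection $\UU_1\otimes V_1^{\vee}\onto \OO(1)$; your use of Lemma~\ref{ref:subbudle:lem} supplies the justification that the paper leaves implicit. You then obtain the cartesian square by repeatedly applying Lemma~\ref{ref:baseChangeUnres:lem} to $\Tppar{1} = e^*\TGr$, as the paper does, and your remark that the rank is $m\dim V_1$ (equal to $m^2$ only when $\dim V_1 = m$) is accurate.
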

    \begin{proof}
        The argument mirrors Proposition~\ref{ref:bundleAffine:prop}, so we repeat the argument. The only difference is that we obtain a restriction map
        $\varphi_1\colon \UU_1 \to V_1\otimes \OO(1)$, hence a surjection $\UU_1\otimes V_1^{\vee}\to \OO(1)$, which yields a map to $\mathbb{P}(\UU_1^{\vee}\otimes V_1)$.
    \end{proof}

\subsection{Framed unrestrictions}\label{ssec:framedUnrestrictions}

Sometimes, it may be useful to work with ``usual'' tensors, that is, elements of a fixed vector space $W_1\otimes \ldots \otimes W_d$ rather than 
of a nontrivial vector bundle. This can be achieved naturally using framings. The result is the framed unrestriction scheme defined below.
The price paid is that the framed unrestriction scheme is no longer projective over the base scheme.

Recall that for a scheme $X$ and vector bundles $\VV'$, $\VV''$ of rank $m$ on $X$, there is a
principal $\GL_{m}$-bundle $\Iso(\VV', \VV'') \to X$ such that for any $\phi\colon S\to X$, the morphisms
$S\to \Iso(\VV', \VV'')$ over $X$ are in bijection with isomorphisms $\phi^*\VV'\to \phi^*\VV''$. It is called the \emph{bundle of isomorphisms}.
In the special case when $\VV' = \OO_X\otimes W$, the variety $\Iso(\VV', \VV'')\to X$ is a $\GL(W)$-bundle.
The bundles of isomorphisms can be applied very generally. To stay focused, we apply it only for the full unrestriction scheme.
Let $\GLW := \GL(W_1)\times \ldots \times\GL(W_d)$.

\begin{definition}
    Fix $m$-dimensional $\kk$-vector spaces $W_1$, \ldots ,$W_d$. 
    Let $T\colon \LL\to \VV_1 \otimes \ldots \otimes \VV_d$ be a
    tensor on $X$ and let $\Xppar{} :=\unresfullpar{T}$ be its full
    unrestriction scheme,
    with a tensor $\Tppar{}\colon \OO_{\Xppar{}} \into \UU_1\otimes \ldots \otimes \UU_d$.
    The \emph{framed unrestriction scheme} of $T$ is the principal $\GLW$-bundle
    \[
        \unresfrfull := \prod_{i=1}^d \Iso_{\Xppar{}}(\OO_{\Xppar{}}\otimes W_i, \UU_i) \xrightarrow{\upsilon} \Xppar{}.
    \]
\end{definition}
On the framed unrestriction scheme, we have in particular canonical isomorphisms
\[
    u_i\colon \OO_{\unresfrfull} \otimes W_i \to \upsilon^*\UU_i,
\]
so the family of tensors $\upsilon^*\Tppar{}$ yields a family of tensors
\[
    \Tpparfr{}\colon \OO_{\unresfrfull}\into \OO_{\unresfrfull}\otimes W_1\otimes \ldots \otimes W_d.
\]

Let us now describe the points of $\unresfrfull{}$.
\begin{proposition}\label{ref:framedUnrestrictionPoints:prop}
    Let $S$ be an $\kk$-scheme with a morphism $f\colon S\to X$. Morphisms
    $S\to \unresfrfull$ over $X$ are in bijection with tuples $(\Tpparfr{},
    \varphifr_{1}, \ldots , \varphifr_d)$, where $\Tpparfr{}\colon \OO_S\into \OO_S\otimes W_1\otimes
    \ldots \otimes W_d$ is a tensor, $\varphifr_1\colon \OO_S\otimes W_1 \to f^*(\VV_1\otimes \LL^\vee)$,
    $\varphifr_2\colon \OO_{S}\otimes W_2 \to f^*\VV_2$, \ldots , $\varphifr_{d}\colon \OO_S \otimes W_d\to f^*\VV_d$ are
    maps of bundles. These are required to satisfy the following condition:
    \begin{align}\label{eq:starCondition}
        \Tpparfr{} & \mbox{ is } W_1\text{-}, \ldots , W_{d-2}\text{-}, W_{d-1}\text{-}, W_d\text{-concise}\\\notag
        \varphifr_d(\Tpparfr{}) & \mbox{ is } W_1\text{-},
        \ldots , W_{d-2}\text{-}, W_{d-1}\text{-concise}\\\notag
        \varphifr_{d-1}\circ{}\varphifr_d(\Tpparfr{}) & \mbox{ is } W_1\text{-},
        \ldots , W_{d-2}\text{-concise}\\\notag
         \ldots \\\notag
         \varphifr_2\circ{} \ldots \varphifr_{d-1}\circ{} \varphifr_d(\Tpparfr{}) & \mbox{ is } W_1\text{-concise}\\\notag
         \varphifr_1\circ{} \varphifr_2\circ{} \ldots \varphifr_{d-1}\circ{}\varphifr_d(\Tpparfr{}) &= T.
    \end{align}
\end{proposition}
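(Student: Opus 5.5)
We argue by induction on the tower $\unresfullpar{T} = \unrespar{1,\dots,d}{T}\to \dots \to \unrespar{1}{T}\to X$, using Proposition~\ref{ref:functorOfUnrestrictions:prop} at each level. A morphism $S\to \unresfullpar{T}$ over $f$ is, by that proposition applied $d$ times, a chain of data: subbundles $U_1^\vee\into f^*\VVhat$ with restriction $\varphi_1\colon U_1\to f^*(\VV_1\otimes\LL^\vee)$, giving a $U_1$-concise tensor $\Tppar{1}$ in $U_1\otimes f^*\Vhatpar{1}$. Next, $U_2^\vee\into U_1\otimes f^*\VV_3\otimes\dots\otimes f^*\VV_d$ with $\varphi_2\colon U_2\to f^*\VV_2$ and $\Tppar{1,2}$, and so on, up to a concise $\Tppar{}\in U_1\otimes\dots\otimes U_d$ with $\varphi_1\circ\dots\circ\varphi_d(\Tppar{}) = f^*T$. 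At stage $i$ the line bundle is $\OO_S$, because the $i$-th tensor is of the form $\OO\to\cdots$. Lifting to the framed scheme $\unresfrfull$ additionally means choosing trivialisations $u_i\colon \OO_S\otimes W_i\xrightarrow{\sim} U_i$.

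The forward map sends this datum to $\Tpparfr{} := (u_1^{-1}\otimes\dots\otimes u_d^{-1})(\Tppar{})$ and $\varphifr_i := \varphi_i\circ u_i$. Conditions~\eqref{eq:starCondition} then hold as follows. The tensor $\varphi_{i+1}\circ\dots\circ\varphi_d(\Tppar{})$ is exactly $\Tppar{1,\dots,i}$, since each step of the tower recovers the previous tensor by applying $\varphi$. This tensor is concise on coordinates $1,\dots,i$ by the iterated construction together with Remark~\ref{ref:concise:rem}, and conciseness is preserved by the isomorphisms $u_j$.

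Conversely, given $(\Tpparfr{},\varphifr_\bullet)$ satisfying~\eqref{eq:starCondition}, we rebuild the chain bottom-up. Set $T^{(i)} := \varphifr_{i+1}\circ\dots\circ\varphifr_d(\Tpparfr{})$, a tensor in $W_1\otimes\dots\otimes W_i\otimes f^*\VV_{i+1}\otimes\dots\otimes f^*\VV_d$ (with $\LL^\vee$ twists on the first factor only at the last step). The condition that $T^{(i)}$ is $W_i$-concise means, by Definition~\ref{ref:concise:def}, that the contraction $W_i^\vee\otimes\OO_S\to W_1\otimes\dots\otimes\widehat{W_i}\otimes\dots$ is a subbundle. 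Via Lemma~\ref{ref:subbudle:lem} this is checked pointwise, so it gives the point $U_i^\vee := $ image of this contraction in the appropriate Grassmannian bundle, and hence a lift through $\unrespar{i}{\cdot}$. The map $\varphifr_i$ provides the factorisation required by Proposition~\ref{ref:functorOfUnrestrictions:prop}. Iterating from $i=1$ to $d$ yields $S\to\unresfullpar{T}$ with $U_i\cong \OO_S\otimes W_i$ via the given frame, hence $S\to\unresfrfull$. One checks directly that the two constructions are mutually inverse and functorial in $S$.

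The main obstacle is bookkeeping: keeping track of which bundle $\VVhat$ appears at each stage (it involves previously constructed $U_j$ for $j<i$, identified with $W_j$ via frames) and verifying that the Grassmannian point determined by the image of the contraction of $T^{(i)}$ is the correct one. In particular, we must check that the ordering of conditions in~\eqref{eq:starCondition} matches that at stage $i$, where only $W_1,\dots,W_i$-conciseness of $T^{(i)}$ is needed (the $W_j$-conciseness for $j<i$ being automatic, already encoded earlier). The uniqueness of the factorisation, which makes the bijection well defined, follows because $U_i^\vee\into(\cdot)$ is injective, so $\varphi_i$ is determined by $T^{(i-1)}$ and $\Tppar{}$.
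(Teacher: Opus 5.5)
Your proposal is correct and follows essentially the same route as the paper. You describe $S$-points of the tower by repeated use of Proposition~\ref{ref:functorOfUnrestrictions:prop} and transport through the framings via $\varphifr_i=\varphi_i\circ u_i$; conversely, you build $U_i^{\vee}$ inductively as the image of the $W_i$-contraction of $\varphifr_{i+1}\circ\dots\circ\varphifr_d(\Tpparfr{})$, after identifying $W_1,\dots,W_{i-1}$ with the already constructed $U_1,\dots,U_{i-1}$, and set $\varphi_i=\varphifr_i\circ u_i^{-1}$. Your closing remark about uniqueness of the factorisation is not needed for the bijection, since the $\varphifr_i$ are part of the data, but it is harmless.
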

The conciseness conditions above are redundant: if a restriction of a tensor is, say, $W_1$-concise, then the tensor
itself is $W_1$-concise. Therefore, it is enough to require that $\Tpparfr{}$ is $W_d$-concise, $\varphifr_d(\Tpparfr{})$ is $W_{d-1}$-concise,
\ldots, $\varphifr_2\circ \ldots \circ \varphifr_{d-1}(\Tpparfr{})$ is $W_1$-concise.
\begin{proof}
    A morphism $\phi\colon S\to \unresfrfull$ yields a morphism $f\colon S\to X$ and a morphism $S\to \Xppar{} = \unrespar{1, \ldots ,d}{T}$ over $X$.
    By pulling back respective universal bundles, the latter yields
    \begin{enumerate}
        \item a subbundle $U_1^{\vee}\into f^*(\VV_2 \otimes \ldots \otimes \VV_d)$,
        \item a subbundle $U_2^{\vee}\into U_1\otimes f^*(\VV_3 \otimes \ldots \otimes \VV_d)$,
        \item and so on, all the way to
        \item a subbundle $U_d^{\vee}\into U_1\otimes  \ldots \otimes U_{d-1}$,
            this last one yielding a tensor denoted by $\Tppar{}|_S \colon \OO_S \into \OO_S \otimes U_1 \otimes \ldots \otimes U_d$,
    \end{enumerate}
    together with restriction maps $\varphi_1\colon U_1\to f^*(\VV_1 \otimes
    \LL^{\vee})$ and $\varphi_i\colon U_i\to f^*\VV_i$ for $i=2, \ldots , d$, such that
    $\varphi_1\circ \ldots \circ \varphi_d(\Tppar{}|_S) = T$.
    By the properties of the bundles of isomorphisms, the morphism $\phi\colon S\to \unresfrfull$ additionally yields isomorphisms
    $(u_i)|_S\colon \OO_S\otimes W_i\to U_i$.
    Now, the compositions $\varphi_i \circ (u_i)|_S$ give the maps $\varphifr_i$ from the statement, for $i=1, \ldots ,d$, while
    $((u_1)|_S)^{-1}\circ \ldots ((u_d)|_S)^{-1}\Tppar{}|_S$ yields $\Tpparfr{}$.
    The fact that respective $U_i^{\vee}$ are subbundles becomes equivalent to the required conciseness conditions.
    This gives one direction.

    The argument can be reversed, as follows. Suppose we are given $(\Tpparfr{}, \varphifr_1, \ldots ,\varphifr_d)$.
    Let
    \[
        \Tpparfr{}_{\leq i} := (\varphi_{i+1}\circ{} \ldots \circ{}
        \varphi_d)(\Tpparfr{})\colon \OO_S \into W_1\otimes
        \ldots \otimes W_i\otimes_{\kk} f^*\VV_{i+1}\otimes \ldots \otimes
        f^*\VV_d
    \]
    be the restriction. The tensor $\Tpparfr{}_{\leq 1}$ is $W_1$-concise, hence yields a subbundle
    $\OO_S \otimes W_1^{\vee}\into f^*\VV_{2}\otimes \ldots \otimes f^*\VV_d$. To be formal, denote by $U_1^{\vee}$ the image of this 
    and by $(u_1)|_S^{\vee}\colon \OO_S \otimes W_1^{\vee}\to U_1^{\vee}$ the isomorphism to the image, so that
    $(u_1)|_S\colon \OO_S \otimes W_1 \to U_1$ is also an isomorphism of vector bundles. Let
    \[
        \varphi_1 := (u_1)|_S^{-1} \circ \varphifr_1\colon U_1\to f^*(\VV_1 \otimes \LL^\vee).
    \]
    Having done this, the next steps follow by induction. Suppose we constructed $U_j^{\vee}$, $\varphi_j$, $(u_j)|_S$ for $j=1, \ldots , i-1$.
    To construct $U_i^{\vee}$, use $u_1, \ldots ,u_{i-1}$ to translate the tensor $\Tpparfr{}_{\leq i}$ to a tensor in
    \[
        U_1\otimes \ldots \otimes U_{i-1}\otimes (\OO_S \otimes W_i)\otimes f^*(\VV_{i+1}\otimes \ldots \otimes \VV_d).
    \]
    Its contraction with respect to $W_i$ yields the desired $U_i$, and then $(u_i)|_S$, $\varphi_i$ are constructed as above.
    We obtain also a concise tensor $\Tppar{}\in U_1\otimes \ldots \otimes U_d$ from $\Tpparfr{}$ and $u_1, \ldots ,u_d$.
    By repetitive applications of Proposition~\ref{ref:functorOfUnrestrictions:prop}, the tuple $(\Tppar{}, \varphi_1, \ldots ,\varphi_d)$ yields
    a map $S\to \unresfullpar{T}$. The isomorphisms $(u_1)|_S$, \ldots, $(u_d)|_S$ then yield a map to $\unresfrfull$.
\end{proof}

\begin{definition}\label{def:framed_concise_secant}
    Let $V_1, \ldots ,V_d$ be vector spaces. Fix an integer $m\geq 1$. Recall that $\Tproj$ denotes the universal tensor 
    over $\mathbb{P}(\tspace)$ and $\Taf$ denotes the universal tensor over $\tspace$. 
    \begin{enumerate}
        \item
            \emph{The $m$-th framed concise secant variety} $\csigmafr_m$ is the minimal border rank locus
            on the framed unrestriction scheme 
            $\upsilon \colon \unresfrfullpar{\Tproj} \to \unresfullpar{\Tproj}$.
            Since the universal tensor on $\unresfrfullpar{\Tproj}$ is a pullback of the one on $\unresfullpar{\Tproj}$,
            we have $\csigmafr_m = \upsilon^{-1}(\csigma_m)$,
            and hence $\upsilon$ restricts to a smooth surjective map $\csigmafr_m\onto \csigma_m$.
        \item
            \emph{The $m$-th framed arithmetic concise secant variety} $\csigmahatfr_m$ is the minimal border rank locus
            on the framed unrestriction scheme 
            $\upsilon \colon \unresfrfullpar{\Taf} \to \unresfullpar{\Taf}$.
            Since the universal tensor on $\unresfrfullpar{\Taf}$ is a pullback of the one on $\unresfullpar{\Taf}$,
            we have $\csigmahatfr_m = \upsilon^{-1}(\csigmahat_m)$,
            and $\upsilon$ restricts to a smooth surjective map $\csigmahatfr_m\onto \csigmahat_m$.
    \end{enumerate}
\end{definition}

\subsection{Geometric interpretation of the algorithm from \S\ref{sec:unrestrictions}}\label{sec:unrestrictions_geometric}
In this section, we explain the geometry behind the algorithm from Proposition~\ref{p:algorithm_segre}
using concise secant varieties. 
\smallskip

Let $T \in \sigmahat_m$ be a tensor of border rank $\leq m$ and let $T_t \in \tspace[\![t]\!]$ 
be a degeneration of border rank $\leq m$ tensors to $T$ with general tensor $T_{t \neq 0}$ concise.
This is the data of a morphism
\[T_t \colon \Spec \kk[\![t]\!] \to \sigmahat_m \subset \tspace\]
such that $T_{t \neq 0} \colon \Spec \kk(\!(t)\!) \to \sigmahat_m$
factors through the embedding of the concise locus $\sigmahat_m^{\mathrm{conc}} \subset \sigmahat_m$ 
\[
\begin{tikzcd}
    \Spec \kk(\!(t)\!) \arrow[d, hook] \arrow[r, "T_{t \neq 0}"] & \sigmahat_m^\mathrm{conc} \arrow[d, hook] \\
    \Spec \kk[\![t]\!] \arrow[r, "T_t"] & \sigmahat_m 
\end{tikzcd}
\]
and $T_{t = 0} \colon \Spec \kk \to \sigmahat_m$ corresponds to $T$.
Consider the concise secant variety $\csigmahat_m$ together with the map $\rhohat \colon \csigmahat_m \to \sigmahat_m$,
see Definition~\ref{ref:conciseSecant:def}. 
The map $\sigmahat$ is an isomorphism over $\sigmahat_m^{\mathrm{conc}}\subset \sigmahat_m$, 
so we can view the general tensor
as a morphism $T_{t \neq 0} \colon \Spec \kk(\!(t)\!) \to \rhohat^{-1}(\sigmahat_m^{\mathrm{conc}}) \subset \csigmahat_m$,
so that we have the following commutative diagram:
\[
\begin{tikzcd}
    \Spec \kk(\!(t)\!) \arrow[d] \arrow[r, "T_{t \neq 0}"] & \csigmahat_m \arrow[d, "\rhohat"] \\
    \Spec \kk[\![t]\!] \arrow[r, "T_t"] & \sigmahat_m 
\end{tikzcd}
\]
The morphism $\rhohat$ is projective, so and hence proper, so by the valuative criterion for properness 
there exists a unique lift 
\begin{equation}\label{eq:lift_cisgma}
\begin{tikzcd}
    \Spec \kk(\!(t)\!) \arrow[d] \arrow[r, "T_{t \neq 0}"] & \csigmahat_m \arrow[d, "\rhohat"] \\
    \Spec \kk[\![t]\!] \arrow[r, "T_t"] \arrow[ru, "\exists!"] & \sigmahat_m 
\end{tikzcd}
\end{equation}
This morphism corresponds to a degeneration of unrestrictions 
\[
    (\Tppar{}_t, (\varphi_1)_t, \dots, (\varphi_d)_t) \colon 
    \Spec \kk[\![t]\!] \to \csigmahat_m.
\]
This is the unique degeneration of unrestrictions that extends the degeneration $T_t$.
For $t=0$, we obtain a minimal border rank unrestriction 
$(\Tppar{}, \varphi_1, \dots, \varphi_d)\in \csigmahat_m$ of the tensor $T$ 
that is uniquely determined by the choice of a degeneration $T_t$.

Let us compare it to the algorithmic proof of Proposition~\ref{p:algorithm_segre}.
Recall that the algorithm takes a degeneration
$T_t \colon \Spec\kk[\![t]\!] \to \sigmahat_m$ and produces a degeneration 
$\Tppar{}_t \colon \Spec\kk[\![t]\!] \to \sigmahat_m$ together with degenerations of restriction maps 
$(\varphi_1)_t, \ldots, (\varphi_d)_t$
by iteratively applying the construction described in the proof of Lemma~\ref{l:algorithm_step}.
Geometrically, the algorithm yields an explicit lift
\begin{equation}\label{eq:lift_csigmafr}
\begin{tikzcd}
     & \csigmahatfr_m \arrow[d, "\upsilon"] \\
     & \csigmahat_m \arrow[d, "\rhohat"] \\
    \Spec \kk[\![t]\!] \arrow[ruu]\arrow[r, "T_t"] & \sigmahat_m
\end{tikzcd}
\end{equation}
A choice of a degeneration $T_t$ does not uniquely determine an unrestriction because 
each step of the algorithm involves a choice of bases and a choice of a minor of the matrix 
describing the flattening of the degeneration obtained in the previous step. 
On the geometric side, this is reflected by the fact that the composite map $\rhohat \circ \upsilon$
is not proper, so there is no reason for a lift as in~\eqref{eq:lift_csigmafr} to be unique.
However, every such lift has the property that the composite map 
$\Spec \kk[\![t]\!] \to \csigmahat_m$ gives a lift as in~\eqref{eq:lift_cisgma},
which is unique. This shows that any degeneration of unrestrictions produced by the algorithm from 
Proposition~\ref{p:algorithm_segre} gives the same isomorphism class in $\csigmahat_m(\kk[\![t]\!])$.
The result below formalizes these observations.
\begin{proposition}\label{prop:uniqueness_of_unres}
    Let $T \in \sigmahat_m \subset \tspace$ be a tensor of border rank $\leq m$. 
    Fix a degeneration of border rank $\leq m$ tensors $T_t \colon \Spec\kk[\![t]\!] \to \sigmahat_m$ 
    to $T$ such that the general tensor is $T_{t \neq 0}$ concise. 
    Consider any two unrestrictions 
    \[
    (\Tppar{}, \varphi_1, \dots, \varphi_d)\quad\text{and}\quad (\Tppar{'}, \varphi_1', \dots, \varphi_d')
    \] 
    constructed from $T_t$ using the algorithm from Proposition~\ref{p:algorithm_segre},
    but for different choices of bases and minors.
    Then there exists linear isomorphisms $\psi_i \colon V_i \to V_i$ such that 
    \[
        \Tppar{'} = \psi_{1,\dots,d}(\Tppar{}) \quad\text{and}\quad \varphi_i' = \varphi_i \circ \psi_i^{-1} \quad\text{for every} \quad i=1,\dots,d.
    \]
\end{proposition}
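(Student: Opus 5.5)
The plan is to formalise the discussion preceding the statement: both outputs of the algorithm are $\kk[\![t]\!]$-points of the framed scheme $\csigmahatfr_m$ lifting the same $T_t$. Their images in $\csigmahat_m$ are lifts as in~\eqref{eq:lift_cisgma}, so they coincide by uniqueness. Two framings of the same point of $\csigmahat_m$ differ by an element of $\GLW$, and this element supplies the isomorphisms $\psi_i$.

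First I will check that the algorithm's output really is a $\kk[\![t]\!]$-point of $\csigmahatfr_m$ over $T_t$. Take the data $(\Tppar{}_t, (\varphi_1)_t,\dots,(\varphi_d)_t)$ produced by iterating Lemma~\ref{l:algorithm_step}, with $W_i := V_i$. I will verify the conditions~\eqref{eq:starCondition} over $S=\Spec\kk[\![t]\!]$, using Proposition~\ref{ref:framedUnrestrictionPoints:prop} with $X = \tspace$ and $T = \Taf$.
\begin{itemize}
\item Conciseness over $S$ means the contractions are subbundles. By Lemma~\ref{ref:subbudle:lem} it suffices to check injectivity at the closed point, and there the algorithm gives conciseness of $\Tppar{1,\dots,i}$ at $t=0$.
\item Conciseness at the generic point then follows automatically.
\item The composite restriction equals $T_t$ by construction.
\item Minimal border rank holds because $\Tppar{}_{t\neq0}\simeq T_{t\neq 0}$ has border rank $\le m$ and border rank is closed.
\end{itemize}
There is one subtlety: the lemma produces maps $(\varphi_i)_t$ in the reverse nesting order relative to~\eqref{eq:starCondition}. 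I will match the indexing, noting that $\Tppar{1,\dots,i}$ is obtained from $\Tppar{1,\dots,i-1}$ by unrestricting the $i$-th factor. This yields a morphism $\Spec\kk[\![t]\!]\to\csigmahatfr_m$, and the same holds for the primed data.

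Next I will compose both morphisms with $\upsilon$ to get $\ell,\ell'\colon \Spec\kk[\![t]\!]\to \csigmahat_m$, with $\rhohat\circ\ell=\rhohat\circ\ell'=T_t$. On the generic point, $T_{t\neq0}$ is concise. I claim $\rhohat$ is an isomorphism over the concise locus. Indeed, for concise $T$ the unrestriction to $m$-dimensional spaces with $\dim V_\bullet = m$ is forced: at each step the subbundle $\UU_i^\vee$ must equal the image of the contraction, which is already rank $m$. By Proposition~\ref{ref:functorOfUnrestrictions:prop} the fibre of each $\pi_i$ is then a reduced point, scheme-theoretically, for any test scheme. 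Hence $\ell$ and $\ell'$ agree on $\Spec\kk(\!(t)\!)$. Since $\rhohat$ is projective, hence separated, the valuative criterion gives $\ell=\ell'$.

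Finally, the two framed points lie in the same fibre of the principal $\GLW$-bundle $\upsilon$ over the $\kk[\![t]\!]$-point $\ell$. Since $\upsilon$ is a torsor, they differ by a unique $g_t=(g_{1,t},\dots,g_{d,t})\in\GLW(\kk[\![t]\!])$. Concretely, $g_{i,t} = (u_i')^{-1}u_i$ for the framings $u_i,u_i'$ of the common pulled-back bundles $U_i$. Specialising to $t=0$ gives $\psi_i := g_{i,0}\in \GL(V_i)$. Translating the framing change via Proposition~\ref{ref:framedUnrestrictionPoints:prop}:
\begin{itemize}
\item $\Tpparfr{}$ transforms by $\psi_{1,\dots,d}$;
\item $\varphifr_i = \varphi_i\circ u_i$ transforms to $\varphi_i\circ\psi_i^{-1}$.
\end{itemize}
This gives the claimed identities. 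The main obstacle I expect is the first step, namely checking carefully that the algorithm's output satisfies the nested conciseness conditions scheme-theoretically over $\kk[\![t]\!]$ with the correct index convention. Once that bookkeeping is settled, the rest is formal.
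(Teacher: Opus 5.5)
Your proposal is correct and follows the same route as the paper's proof. Both outputs of the algorithm are realised as $\kk[\![t]\!]$-points of $\csigmahatfr_m$. Their images in $\csigmahat_m$ are lifts of $T_t$ along the proper (in particular separated) map $\rhohat$, so they coincide, and the principal $\GLW$-bundle structure of $\upsilon$ then supplies the $\psi_i$. Your additional checks fill in details the paper only asserts: the nested conciseness conditions over $\kk[\![t]\!]$, the fact that $\rhohat$ is an isomorphism over the concise locus, and the explicit transformation rule for the framings.
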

\begin{proof}
    Recall that $W_1, \dots, W_d$ are the $m$-dimensional vector spaces from the definition of 
    framed concise secant variety, see Definition~\ref{def:framed_concise_secant}.
    Fix bases of $W_1, \dots, W_d$. At the first step, the algorithm fixes a basis of $V_1$ and 
    produces a $V_1$-concise degeneration of tensors 
    $\Tppar{1}_t \colon \Spec \kk[\![t]\!] \to \sigmahat_m \subset \tspace$ and a degeneration of maps 
    $(\varphi_1)_t \colon \Spec \kk[\![t]\!] \to \End(V_1)$ such that 
    $(\varphi_1)_t(\Tppar{1})_t = T_t$. We work in bases, so we can view $(\Tppar{1}_t, (\varphi_1)_t)$
    as a map $\Spec \kk[\![t]\!] \to W_1 \otimes V_{2,\dots,d} \times \Hom(W_1, V_1)$.
    By iterating this procedure, we obtain a map 
    \[
        (\Tppar{}_t, (\varphi_1)_t, \dots, (\varphi_d)_t) \colon \Spec \kk[\![t]\!] \to \csigmahatfr_m \subset W_{1,\dots,d}\times \prod_{i=1}^d \Hom(W_i, V_i).
    \]
    By construction, the composite map $\upsilon \circ (\Tppar{}_t, (\varphi_1)_t, \dots, (\varphi_d)_t)
    \colon \Spec \kk[\![t]\!] \to \csigmahat_m$ is a lift as in~\eqref{eq:lift_cisgma}.
    By properness of $\rhohat$, such a lift is unique, so $\upsilon \circ (\Tppar{}_t, (\varphi_1)_t, \dots, (\varphi_d)_t)$
    does not depend on any choices made in the algorithm. In particular, any two unrestrictions 
    $(\Tppar{}, \varphi_1, \dots, \varphi_d), (\Tppar{'}, \varphi_1', \dots, \varphi_d')$
    constructed from $T_t$ lie in the same fiber of $\upsilon$. The map $\upsilon$ is a principal
    $\GLW$-bundle, so any two unrestrictions in the same fiber differ by some $(\psi_1, \dots, \psi_d) \in \GLW$.
\end{proof}

\subsection{Smooth equivalences}\label{ssec:smoothEquivalences}
The most important consequence of
Proposition~\ref{ref:framedUnrestrictionPoints:prop} is that it allows us to reduce to the usual setup as follows.
Let $\Taf$ be the universal tensor on $\tspace$ and $\unresfrfullpar{\Taf}\to \tspace$ be the framed unrestriction scheme.
For a $\kk$-scheme $S$, to have a morphism $S\to
\tspace$ is the same as to have a tensor $T = \Taf|_S\colon \OO_S\to
\OO_S\otimes \tspace$. Therefore, the functor of points of $\unresfrfullpar{\Taf}$ simplifies:
we can forget the map $S\to X$ \emph{and} the last condition in~\eqref{eq:starCondition}. This means
that $\unresfrfullpar{\Taf}$ parameterises tuples $(\Tpparfr{}, \varphifr_1, \ldots ,\varphifr_d)$ where
$\Tpparfr{}$ is a concise tensor and $\varphifr_i\colon W_i\to V_i$ are
arbitrary restrictions (for $i=1, \ldots ,d$) that satisfy conciseness conditions
from~\eqref{eq:starCondition} without the last one. Therefore, the scheme
$\unresfrfullpar{\Taf}$ is an open subscheme of the affine space
\[
    \tconcspace \times \Hom(W_1, V_1)\times \ldots \times  \Hom(W_d, V_d).
\]
Let us denote the projection $\unresfrfullpar{\Taf}\to \tconcspace$ by $\alphahat$.
Let us denote by $\tconcspaceconc$ the open subvariety of $\tconcspace$ that parameterises
concise tensors.
We obtain the \emph{affine smooth equivalence} diagram
\begin{equation}\label{eq:smoothEquivalenceFramed}
    \begin{tikzcd}
        & \unresfrfullpar{\Taf} \ar[rd, "\alphahat", "\mathrm{sm}"']\ar[ld, two heads, "\upsilon"', "\mathrm{sm}"]\\
        \unresfullpar{\Taf} && \tconcspaceconc.
    \end{tikzcd}
\end{equation}
The map $\alphahat$ is smooth as a projection, while $\upsilon$ is smooth as a structure map of a principal $\GLW$-bundle.
The family of tensors $\Tpparfr{}$ on $\unresfrfullpar{\Taf}$ is defined as the
pullback $\upsilon^*\Tppar{}$.  Moreover, by the construction of the framed
concise secant variety, we obtain $\upsilon^*\Tppar{} \simeq \Tpparfr{} = \alphahat^*\Taf_W$.

An important consequence is of smooth equivalence is the smoothness of full unrestriction schemes for universal tensors on $\tspace$ and $\mathbb{P}(\tspace)$.
\begin{corollary}[Smoothness of universal unrestrictions]\label{ref:unrestrictionSmoothness:cor}
    Let $\Taf$, $\Tproj$ be the usual universal tensors on $\tspace$ and $\mathbb{P}(\tspace)$, respectively.
    The schemes $\unresfullpar{\Taf}$, $\unresfullpar{\Tproj}$ are smooth.
    Moreover, the fibre $\unresfullpar{\Taf} \to \tspace$ over $0\in \tspace$ is smooth.
\end{corollary}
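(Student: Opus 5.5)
The smoothness of $\unresfullpar{\Taf}$ will follow directly from the affine smooth equivalence~\eqref{eq:smoothEquivalenceFramed}. The scheme $\unresfrfullpar{\Taf}$ is an open subscheme of the affine space $\tconcspace\times\prod_i \Hom(W_i,V_i)$, hence smooth over $\kk$. The map $\upsilon\colon \unresfrfullpar{\Taf}\to\unresfullpar{\Taf}$ is a principal $\GLW$-bundle, so it is smooth and surjective, and in particular faithfully flat. Smoothness over $\kk$ descends along smooth surjections: if $Y\to X$ is smooth surjective and $Y$ is smooth over $\kk$, then $X$ is smooth over $\kk$. To see this, note that flatness of $X$ over $\kk$ is automatic, and geometric regularity descends along faithfully flat maps, since regularity of local rings descends under flat local homomorphisms (apply this after base change to $\overline{\kk}$). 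This settles $\unresfullpar{\Taf}$.

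For the fibre over $0\in\tspace$, I would use the functor of points. By Proposition~\ref{ref:framedUnrestrictionPoints:prop}, the preimage under $\upsilon$ of this fibre parametrises tuples $(\Tpparfr{},\varphifr_1,\dots,\varphifr_d)$ satisfying the open conciseness conditions of~\eqref{eq:starCondition} together with the closed condition $\varphifr_{1,\dots,d}(\Tpparfr{})=0$. This closed condition is not obviously smooth, so I would instead use the structure from Proposition~\ref{ref:bundleAffine:prop}. There $\unresfullpar{\Taf}\to\unresfullpar{\TGr}$ is the vector bundle obtained by pulling back $\UU_1^\vee\otimes V_1$, and under this identification the composite to $\tspace$ is the map recording $\varphi_1$ applied to $\Tppar{1}$. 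Since $\Tppar{1}$ is $\UU_1$-concise, the contraction $\UU_1^\vee\to V_{2,\dots,d}$ is injective at every point. Hence $\varphi_1\mapsto\varphi_1(\Tppar{1})$ is injective fibrewise, and the fibre over $0$ is exactly the zero section $\varphi_1=0$. The fibre is therefore isomorphic to $\unresfullpar{\TGr}$.

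It then remains to show that $\unresfullpar{\TGr}$ is smooth. This is implied by the smoothness of $\unresfullpar{\Taf}$, because $\unresfullpar{\Taf}$ is a vector bundle over $\unresfullpar{\TGr}$, hence smooth and surjective over it, and smoothness descends as above. Alternatively, one can see it directly: the zero section of a vector bundle over a smooth total space is smooth, since a section of a smooth morphism into a smooth scheme has smooth image when the base is the image.

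For $\unresfullpar{\Tproj}$, Proposition~\ref{ref:bundleProjective:prop} shows that it is a projectivised vector bundle of rank $m^2$ over $\unresfullpar{\TGr}$. It is therefore smooth over a smooth base, hence smooth.

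The main obstacle is the middle step: correctly identifying the fibre over $0$ with the zero section. This requires checking that, via the cartesian square~\eqref{eq:unresBundle}, the coordinate $\varphi_1\in\UU_1^\vee\otimes V_1$ determines the image point $\varphi_1(\Tppar{1})\in\tspace$ injectively and scheme-theoretically, and not merely on $\kk$-points. Scheme-theoretic injectivity holds because the $\UU_1$-contraction of $\Tppar{1}$ is a subbundle (Lemma~\ref{ref:subbudle:lem}), so the induced map of bundles $\UU_1^\vee\otimes V_1\to \OO\otimes\tspace$ is a subbundle. Its zero scheme over $0$ is then precisely the zero section.
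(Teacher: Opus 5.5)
Your proposal is correct and follows the paper's proof. You get smoothness of $\unresfullpar{T^{\mathrm{af}}}$ from the framed smooth equivalence, identify the fibre over $0$ with the zero section of the vector bundle $\unresfullpar{T^{\mathrm{af}}}\to\unresfullpar{\TGr}$, and obtain $\unresfullpar{T^{\mathrm{proj}}}$ as the projectivisation of that bundle, all as the paper does. You also make explicit two points the paper leaves implicit: the descent of smoothness along $\upsilon$, and the scheme-theoretic identification of the fibre with the zero section, which uses that $\UU_1^\vee\otimes V_1\to\OO\otimes\tspace$ is a subbundle.
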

\begin{proof}
    The smoothness of $\unresfullpar{\Taf}$ follows directly from~\eqref{eq:smoothEquivalenceFramed}.
    Propositions~\ref{ref:bundleAffine:prop}-\ref{ref:bundleProjective:prop} show that
    $\unresfullpar{\Tproj}$ is a projectivisation of a total space of vector bundle $\unresfullpar{\Taf}$, hence it is smooth
    as well. The fibre of $\unresfullpar{\Taf}\to \tspace$ over $0$ is the zero section of the aforementioned vector bundle, hence is smooth
    as well.
\end{proof}

\begin{example}\label{ex:fibres}
    \def\bfP{\mathbf{P}}
    In general, it is not true that the unrestriction scheme of any tensor is smooth.
    To show this, by Example~\ref{ex:fibre}, it is enough to show that a fiber of 
    $\pihat\colon \unresfullpar{\Taf}\to \tspace$ over some tensor is not smooth. 
    
    Take $d =3$, $\dim V_\bullet = 2$ and let $T \in V_{1,2,3}$ be a rank one tensor.
    The fiber $\pihat_1^{-1}(T) \subset \Gr(2, V_2 \otimes V_3)$ is isomorphic 
    to the projective plane $\bfP = \PP((V_2 \otimes V_3) / \spann{T(V_1^\vee)})$. 
    There is one point $x_2 \in \pihat_1^{-1}(T)$ such that $\Tppar{1}|_{x_2}$ 
    is not $V_2$-concise and another point $[x_3]$ such that $\Tppar{1}|_{x_3}$ 
    is not $V_3$-concise. Similarly as above, the fibers of $\pihat_2 \circ \pihat_3$ over 
    $x_2$ and $x_3$ are projective planes $\bfP_2, \bfP_3$. Summing up, $\pihat^{-1}(T)$ 
    consists of three projective planes $\bfP, \bfP_2, \bfP_3$ glued by identifying points 
    $x_2' \in \bfP_2$, $x_3'\in \bfP_3$ with points $x_2, x_3 \in \bfP$. 
\end{example}

Another consequence, which is crucial to this article, is that the singularities of $\csigmahat_m$, $\csigma_m$
are smoothly equivalent to the singularities of the \emph{concise locus} of $\sigmahat_m$.
The concise locus is much better behaved than the whole variety $\sigmahat_m$.
Below we use the framed minimal border rank loci, see
Definition~\ref{def:framed_concise_secant}.

\begin{theorem}\label{ref:SmoothEquivalenceCsigmahat:thm}
    The diagram~\eqref{eq:smoothEquivalenceFramed} restricts to a smooth equivalence
    \begin{equation}\label{eq:smoothEquivalenceCsigmahat}
        \begin{tikzcd}
        & \csigmahatfr_m \ar[rd, "\alphahat", "\mathrm{sm}"']\ar[ld, two heads, "\upsilon"', "\mathrm{sm}"]\\
            \csigmahat_m && \sigmahat_m^{\mathrm{conc}}
        \end{tikzcd}
    \end{equation}
    where $\csigmahatfr_m \subset \unresfrfullpar{\Taf}$ is the minimal border rank locus (Definition~\ref{def:framed_concise_secant}) and $\sigmahat_m^{\mathrm{conc}}\subseteq \tconcspaceconc$
    is the concise locus of the secant variety inside $\tconcspace$.
\end{theorem}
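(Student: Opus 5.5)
The plan is to restrict the smooth equivalence \eqref{eq:smoothEquivalenceFramed} to the minimal border rank loci. Smoothness of both arrows will come from base change; the main task is to show that the relevant subschemes correspond under the two maps.

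We first recall the identification $\upsilon^*\Tppar{} \simeq \Tpparfr{} = \alphahat^*\Taf_W$, where $\Taf_W$ is the universal tensor on $\tconcspace$, restricted to $\tconcspaceconc$. For a point $x\in\unresfrfullpar{\Taf}$, the tensor $\Tpparfr{}|_x$ is therefore isomorphic both to $\Tppar{}|_{\upsilon(x)}$ (via the framing isomorphisms $u_i$) and to the tensor $\alphahat(x)\in\tconcspace$. Border rank is invariant under isomorphisms of the coordinate spaces, so
\[
\Tppar{}|_{\upsilon(x)} \text{ has border rank } m \iff \alphahat(x)\in \sigmahat_m.
\]
This statement holds on points of every residue field; over non-closed fields we use that border rank is stable under field extension, or we check only on $\kk$-points after base change to the algebraic closure. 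Combined with Definition~\ref{def:framed_concise_secant}, it gives set-theoretically
\[
\csigmahatfr_m=\upsilon^{-1}(\csigmahat_m)=\alphahat^{-1}(\sigmahat_m^{\mathrm{conc}}).
\]

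Next we upgrade this to an equality of schemes. All three loci are defined as reduced closed subschemes: $\csigmahat_m$ and $\csigmahatfr_m$ are minimal border rank loci, hence the largest reduced subschemes with the given set of points, and $\sigmahat_m^{\mathrm{conc}}$ is an open subset of a variety. Both $\upsilon$ and $\alphahat$ are smooth. Preimages of reduced subschemes under smooth morphisms are reduced, since smooth morphisms preserve reducedness. So $\upsilon^{-1}(\csigmahat_m)$ and $\alphahat^{-1}(\sigmahat_m^{\mathrm{conc}})$ are reduced closed subschemes with the same underlying set as $\csigmahatfr_m$, and therefore all three coincide as schemes. Here I rely on the paper's convention that ``largest reduced closed subscheme with these points'' is the reduced induced structure.

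Finally, the restricted maps are base changes of $\upsilon$ and $\alphahat$ along closed immersions, so they are smooth. The map $\upsilon$ remains surjective onto $\csigmahat_m$, because it is a principal bundle and the locus is its full preimage. This gives the diagram \eqref{eq:smoothEquivalenceCsigmahat}. If surjectivity of $\alphahat$ onto $\sigmahat_m^{\mathrm{conc}}$ is also wanted, take $\varphifr_i$ to be isomorphisms $W_i\to V_i$ when $\dim V_i= m$. In general, surjectivity is not needed for a smooth equivalence at the points in the image.

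The main obstacle I expect is the pointwise claim over non-algebraically-closed residue fields together with the reducedness bookkeeping. The rest is formal base change.
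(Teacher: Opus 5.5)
Your proposal is correct and follows essentially the same route as the paper. Both arguments identify $\csigmahatfr_m$ with $\upsilon^{-1}(\csigmahat_m)$ and with $\alphahat^{-1}(\sigmahat_m^{\mathrm{conc}})$, using that the framed universal tensor is a pullback along both maps, and then get smoothness by base change. Your explicit reducedness step (a smooth pullback of a reduced closed subscheme is reduced, so the loci agree as schemes and not only as sets) is what the paper's one-line equalities use implicitly.
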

\begin{proof}
    By Definition~\ref{ref:secantLocus:def}, the variety $\sigmahat_m^{\mathrm{conc}}$ is equal to $(\tconcspaceconc)^{\brk = m}$ and the variety $\csigmahat_m$
    is equal to $\unresfullpar{\Taf}^{\brk =m}$.
    Since the tensor on $\unresfrfullpar{\Taf}$ is isomorphic to the pullback of the tensor on $\unresfullpar{\Taf}$, we have that
    \[
        \csigmahatfr_m = (\unresfrfullpar{\Taf})^{\brk=m} = \upsilon^{-1}((\unresfullpar{\Taf})^{\brk = m}),
    \]
    hence $\upsilon$ restricts to a smooth map $\csigmahatfr_m\to \csigmahat_m$. The tensor
    on $(\unresfrfullpar{\Taf})$ is also isomorphic to the pullback of the tensor on $\tconcspaceconc$, so the same argument yields that
    \[
        \csigmahatfr_m = \alphahat^{-1}((\tconcspaceconc)^{\brk = m}) = \alphahat^{-1}(\sigmahat_m^{\mathrm{conc}}).\qedhere
    \]
\end{proof}

We can repeat the procedure for the projective versions. Actually, there are two strategies to do this.
First, by Propositions~\ref{ref:bundleAffine:prop}-\ref{ref:bundleProjective:prop}, we know that $\unresfullpar{\Tproj}$
is a free quotient by $\Gmult$ of an open subset of $\unresfullpar{\Taf}$ and one can obtain the smooth equivalence
by quotienting out~\eqref{eq:smoothEquivalenceFramed}.
We take a different route, mostly to show that the result is obtained purely from the general theory.

Consider $\unresfrfullpar{\Tproj}$. Proposition~\ref{ref:framedUnrestrictionPoints:prop} yields its functor of points.
Once again, for a morphism $f\colon S\to \unresfrfullpar{\Tproj}$, we obtain in
particular a tensor $T\colon f^*\OO(-1)\to \tspace$, a $W_2$-concise tensor $T' := \varphifr_2\circ \ldots
\circ\varphifr_d(\Tpparfr{})$ and a restriction
map $\varphifr_1\colon \OO_S\otimes W_1\to f^*\OO(1)\otimes V_1$ such that $\varphifr_1(T') = T$. The contraction $(T')_1^{\vee}$ is a surjective map
$\Vhatpar{1}^{\vee}\onto W_1$. Together with $\varphifr_1$ it yields a surjection
\[
    \OO_S \otimes \tspace^{\vee} = \OO_S\otimes V_1^{\vee}\otimes \Vhatpar{1}^{\vee} \onto f^*\OO(1),
\]
hence a map $S\to \mathbb{P}\tspace$, which coincides with $T$. Therefore, once more, we can discard $T$ and the last
equality of~\eqref{eq:starCondition} from the data and obtain only open conditions on $\varphifr_1, \ldots ,\varphifr_d$.
We conclude that there is a \emph{projective smooth equivalence}
\begin{equation}\label{eq:smoothEquivalenceFramedProj}
    \begin{tikzcd}
        & \unresfrfullpar{\Tproj} \ar[rd, "\alpha", "\mathrm{sm}"']\ar[ld, two heads, "\upsilon"', "\mathrm{sm}"]\\
        \unresfullpar{\Tproj} && \tconcspace^{\mathrm{conc}}.
    \end{tikzcd}
\end{equation}
One might expect to see the projectivisation of $\tconcspace^{\mathrm{conc}}$ on the right hand side. However, observe that
on the concise locus this makes little difference: $\tconcspace^{\mathrm{conc}}$ is a $\Gmult$-bundle over its projectivisation.
Arguing exactly as in Theorem~\ref{ref:SmoothEquivalenceCsigmahat:thm} we obtain the following smooth equivalence
\begin{equation}\label{eq:smoothEquivalenceCsigma}
    \begin{tikzcd}
        & \csigmafr_m \ar[rd, "\alpha", "\mathrm{sm}"']\ar[ld, two heads, "\upsilon"', "\mathrm{sm}"]\\
        \csigma_m && \sigmahat_m^{\mathrm{conc}}
    \end{tikzcd}
\end{equation}
Moreover, we can use the framing on the intermediate secant varieties $c_{1, \ldots i}\sigma_m \subseteq \unrespar{1, \ldots ,i}{\Tproj}$.
For them, we obtain smooth equivalences
\begin{equation}\label{eq:smoothEquivalenceCsigmaPartial}
    \begin{tikzcd}
        & c_{1,\dots,i}\sigma^\mathrm{fr}_m \ar[rd, "\alpha_{1, \ldots ,i}", "\mathrm{sm}"']\ar[ld, two heads, "\upsilon_{1, \ldots ,i}"', "\mathrm{sm}"]\\
        c_{1, \ldots ,i}\sigma_m && \sigmahat_m^{W_1-, \ldots, W_i-\mathrm{conc}}
    \end{tikzcd}
\end{equation}
where $c_{1,\dots,i}\sigma^\mathrm{fr}_m \subset (\unresfrpar{1, \ldots ,i}{\Tproj})^{\brk = m}$ is the minimal border rank locus and $\sigmahat_m^{W_1-, \ldots, W_i-\mathrm{conc}}\subseteq W_1\otimes \ldots \otimes W_i\otimes V_{i+1}\otimes \ldots \otimes V_d$ is the
locus of $W_1$-, \ldots , $W_i$-concise tensors of border rank $m$.

\begin{corollary}[Dimension of concise secant]\label{ref:dimensionOfcsigma:cor}
    For every $V_1, \ldots , V_d$ such that $\csigma_m$ is nonempty, the dimension of $\csigma_m$ is $m\cdot (\sum_{i=1}^d \dim V_{i}) - m(d-1)-1$,
    and the dimension of $\csigmahat_m$ is $m\cdot (\sum_{i=1}^d \dim V_{i}) - m(d-1)$.
\end{corollary}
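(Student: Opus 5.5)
The plan is to compute $\dim \csigmafr_m$ in two ways using the smooth equivalence \eqref{eq:smoothEquivalenceCsigma}, and then divide out by the principal bundle. The map $\upsilon\colon \csigmafr_m\to\csigma_m$ is a principal $\GLW$-bundle, so it is smooth, surjective and of relative dimension $d m^2$. Hence
\[
\dim \csigma_m = \dim \csigmafr_m - dm^2 .
\]
Similarly, \eqref{eq:smoothEquivalenceCsigmahat} gives $\dim \csigmahat_m = \dim\csigmahatfr_m - dm^2$. Since the dimension of a nonempty scheme is preserved up to the relative dimension under smooth surjections, both claims reduce to computing $\dim \csigmafr_m$ and $\dim\csigmahatfr_m$ through the maps $\alpha$ and $\alphahat$.

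For $\alphahat$, the discussion before \eqref{eq:smoothEquivalenceFramed} shows that $\unresfrfullpar{\Taf}$ is an open subscheme of
\[
\tconcspace\times\prod_i \Hom(W_i,V_i).
\]
Therefore $\alphahat$ is the restriction of a projection with fibres open in $\prod_i\Hom(W_i,V_i)$, of dimension $m\sum_i\dim V_i$. Since $\csigmahatfr_m=\alphahat^{-1}(\sigmahat_m^{\mathrm{conc}})$, every nonempty fibre of $\csigmahatfr_m\to\sigmahat_m^{\mathrm{conc}}$ is a nonempty open subset of this affine space. The remaining input is the dimension of the concise locus $\sigmahat_m^{\mathrm{conc}}\subseteq\tconcspace$ with $\dim W_\bullet=m$. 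It is a nonempty open subset of the cone $\sigmahat_m$, because the unit tensor is concise. That cone is irreducible of dimension $m(dm-(d-1))$, since $\sigma_m$ is nondefective in the balanced cube format $m\times\cdots\times m$ for $d\ge3$; equivalently, the generic rank-$m$ tensor $\sum_{j} a_j^1\otimes\cdots\otimes a_j^d$ has a parameter count of $m(dm-d+1)$ with finite identifiability. This nondefectivity is the step I expect to be the main obstacle. To settle it I would compute the tangent space at the unit tensor via Terracini's lemma: the span of the tangent spaces $W\otimes e_j\otimes\cdots\otimes e_j+\ldots$ at the points $e_j^{\otimes d}$ has dimension exactly $m(dm-d+1)$, because the $m$ points are in coordinate position and the only overlaps are the lines spanned by the $e_j^{\otimes d}$ themselves.

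Combining these steps gives
\[
\dim\csigmahatfr_m = m(dm-d+1)+m\sum_i\dim V_i,
\]
provided $\csigmahatfr_m$ is nonempty. In that case the image of $\alphahat$ is open in $\sigmahat_m^{\mathrm{conc}}$, and because $\sigmahat_m^{\mathrm{conc}}$ is irreducible the dimension count is correct. Subtracting $dm^2$ then yields
\[
\dim\csigmahat_m = m\sum_i\dim V_i - m(d-1).
\]

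For $\csigma_m$ I would argue identically with $\alpha$ from \eqref{eq:smoothEquivalenceFramedProj}. The framed projective scheme is again open in $\tconcspace\times\prod_i\Hom(W_i,V_i)$, except that the $\Gmult$-scaling is already absorbed. Concretely, Proposition~\ref{ref:bundleProjective:prop} shows that $\csigma_m$ is the $\Gmult$-quotient of an open subset (complement of the zero section) of $\csigmahat_m$. This gives $\dim\csigma_m=\dim\csigmahat_m-1$, which is the claimed formula. Nonemptiness of $\csigma_m$ is equivalent to nonemptiness of $\csigmahat_m$ away from the zero fibre, so the hypothesis in the statement covers both cases.
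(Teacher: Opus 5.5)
Your proof is correct and follows essentially the same route as the paper: a dimension count through the smooth equivalence $\upsilon$, $\alphahat$, using $\dim \sigmahat_m^{\mathrm{conc}} = dm^2-m(d-1)$, fibres of $\alphahat$ of dimension $m\sum_i \dim V_i$, and fibres of $\upsilon$ of dimension $dm^2$. Your additions are valid refinements rather than a different approach: the Terracini check at the unit tensor justifies the expected dimension, which the paper simply asserts; openness of $\alphahat$ covers its possible non-surjectivity; and you obtain the projective case from the $\Gmult$-quotient of $\csigmahat_m$ minus the zero section, whereas the paper uses that the fibres of $\alpha$, being open in $\mathbb{P}\Hom(W_1,V_1)\times\prod_{i\geq 2}\Hom(W_i,V_i)$, are one dimension smaller.
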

\begin{proof}
    The spaces $W_1, \ldots ,W_d$ are $m$-dimensional by definition, so
    the secant variety $\sigmahat_m^{\mathrm{conc}}\subseteq \tconcspace$ has expected dimension $dm^2 - m(d-1)$.
    The fibres of $\alphahat$ have dimension $m\cdot \left( \sum_{i=1}^d \dim
    V_i \right)$, the fibres of $\alpha$ one dimension less, and the fibres of
    $\upsilon$ have dimension $dm^2$, hence the claim follows.
\end{proof}

\begin{remark}
    Usually, the definition of smooth equivalence assumes surjectivity of both maps. In our setup, the maps $\alphahat$, $\alpha$ need
    not be surjective. However, they fail to be such only in somewhat pathological situations, so we chose to use the term anyway.
\end{remark}

\subsubsection{Smooth equivalences and singularities}\label{ssec:singularitiesMain}

    Above, we reduced the ``iterated setup'' to the usual one, that is, to the concise locus of $\sigmahat_m \subseteq W_{1,\dots, d}$.
    Part of this locus, which is ``1-generic enough'' is understood thanks to a
    long-investigated connection to the Hilbert
    scheme and Quot scheme~\cite{poonen_moduli_space, Blaser_Lysikov,
    Landsberg_Michalek__Abelian_Tensors, Wojtala_Irreversibility, JLP, GJLM}. This allows us to use the already
    existing knowledge of singularities of Hilbert and Quot schemes to obtain corresponding statements about the singularities of the
    concise secant variety.

    Consider a concise tensor $\Tppar{}\in \tconcspace$. Recall that $\Tppar{}$ is $1_{W_i}$-generic if there exist an element
    $\alpha_i\in W_i^{\vee}$ and an index $j$ such that $\Tppar{}(\alpha_i)\colon W_j^{\vee}\to W_1\otimes \ldots \otimes W_{\hat{i}}\otimes \ldots \otimes W_{\hat{j}}\otimes  \ldots \otimes W_d$ has maximal rank. This is an open condition. It will be discussed more thoroughly in
    Proposition~\ref{ref:genericity:prop}. 
    Currently, we need the following dictionary, which emerged in~\cite{JLP} using the ideas of~\cite{Landsberg_Michalek__Abelian_Tensors}.
    \begin{center}
    \begin{tabular}{c c}
        a tensor generic on coordinates & is isomorphic to\\
        \toprule
        $\geq d-2$ & multiplication tensor in a module\\
        $\geq d-1$ & multiplication tensor in an algebra\\
        $d$ & multiplication tensor in a Gorenstein algebra
    \end{tabular}
\end{center}

    The part of smooth equivalences which we need may be informally summarised as follows:
    \begin{enumerate}
        \item if $T$ is a tensor isomorphic to a multiplication tensor in an algebra $\cA$, then
            $T$ has minimal border rank if and only if $\cA$ is smoothable. Suppose that this condition holds,
            then the geometry of $T\in \sigmahat_m$ is the same as the geometry of any point $[\Spec(\cA)\subseteq \mathbb{A}^N]\in \Hilb_m^{\sm}(\mathbb{A}^N)$,
            up to adding free parameters,
        \item if $T$ is a tensor isomorphic to a multiplication tensor in a module $M$, then
            $T$ has minimal border rank if and only if $M$ is smoothable. Suppose that this condition holds,
            then the geometry of $T\in \sigmahat_m$ is the same as the geometry of any point $[\OO_{\mathbb{A}^N}^r\onto M]\in \Quot_m^{\sm}(\OO_{\mathbb{A}^N}^r)$,
            up to adding free parameters.
    \end{enumerate}
    For $m=2,3, 4$, we will only need the following part.
    \begin{proposition}[\cite{GJLM}, smoothness]\label{ref:Hilbsmoothness:prop}
        \begin{enumerate}
            \item Suppose that a tensor $T$ is isomorphic to a multiplication tensor in an algebra $\cA$.
                Then $T\in \sigmahat_m$ is a smooth point if and only if a point $[\Spec(\cA)\subseteq \mathbb{A}^N]\in \Hilb_m^\sm(\mathbb{A}^N)$
                is smooth.
            \item Suppose that a tensor $T$ is isomorphic to a multiplication tensor in a module $M$.
                Then $T\in \sigmahat_m$ is a smooth point if and only if a point $[\OO_{\mathbb{A}^N}^r\onto M]\in \Quot_m^\sm(\OO_{\mathbb{A}^N}^r)$
                is smooth.
        \end{enumerate}
    \end{proposition}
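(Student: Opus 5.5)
The plan is to pass through the dictionary of \cite{JLP} and \cite{Landsberg_Michalek__Abelian_Tensors}: an algebra-type tensor is a structure tensor, and a smoothable structure tensor lies in the image of a smooth, surjective-onto-its-image morphism from the Hilbert (resp. Quot) scheme side, which transports local properties such as smoothness. For part (a), write $\Tppar{} = \mu_{\cA}$ with $\dim \cA = m$. Consider the locus $\mathcal{M}_{\mathrm{alg}}\subseteq \tconcspace$ of tensors isomorphic to multiplication tensors of unital commutative algebras; by the table it is the open $1_{W_1}$-, $1_{W_2}$-generic part of the concise tensors. The key step is to factor this locus smoothly. Concretely, I would use the \emph{framed algebra} moduli space: the scheme $\mathcal{B}$ of commutative associative unital algebra structures on $\kk^m$. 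Sending a structure to its multiplication tensor, followed by $\GL(W_1)\times\GL(W_2)\times\GL(W_3)$ acting (for $d=3$; for general $d$, the iterated product $\cA^{\otimes (d-1)}\to \cA$), gives a map $\GLW\times \mathcal{B}\to \tconcspace$. This map is smooth onto the $1$-generic locus, with fibres that are torsors for the stabiliser, which is essentially $\GL_m$ modulo automorphisms. It follows that, locally near $\Tppar{}$, the open part of $\sigmahat_m$ consisting of $1$-generic tensors is smoothly equivalent to the smoothable component $\mathcal{B}^{\sm}$ near $[\cA]$. The minimal border rank condition corresponds exactly to smoothability of $\cA$, which is the standard fact recalled in the informal summary.

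Next, I would relate $\mathcal{B}$ to $\Hilb_m(\mathbb{A}^N)$ for $N\ge m$. The scheme of pairs consisting of an algebra structure on $\kk^m$ together with $N$ elements generating it is open in $\mathcal{B}\times (\kk^m)^N$, hence smooth over $\mathcal{B}$. It maps to $\Hilb_m(\mathbb{A}^N)$ as a principal $\GL_m$-bundle over the open locus of points whose embedding uses such generators; the image is all of $\Hilb_m(\mathbb{A}^N)$ once a framing of the coordinate ring is chosen. This gives a smooth equivalence between $(\mathcal{B}^{\sm},[\cA])$ and $(\Hilb_m^{\sm}(\mathbb{A}^N),[\Spec \cA\subseteq\mathbb{A}^N])$. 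The point $[\Spec\cA\subseteq\mathbb{A}^N]$ is independent of the chosen embedding up to smooth equivalence, because changing the embedding amounts to changing the generators.

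Chaining these smooth equivalences, smoothness of $T\in\sigmahat_m$ is equivalent to smoothness of $[\Spec\cA]\in\Hilb^{\sm}_m$, since smoothness descends and ascends along smooth surjections onto neighbourhoods. Part (b) is identical, replacing algebras by modules: $1_{W_1}$-generic tensors correspond to $m$-dimensional modules over a commutative subalgebra generated by $d-2$ families of commuting matrices. One uses the scheme of tuples of commuting matrices together with a cyclic framing, which maps smoothly to $\Quot_m(\OO^r_{\mathbb{A}^N})$, with $r=\dim$ of the generating space.

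The main obstacle I expect is verifying that the map from framed algebras (resp. framed modules) to tensors is smooth, rather than merely surjective onto the $1$-generic locus. In particular one must check that the closed conditions of associativity and commutativity correspond exactly to the equations defining the tensor locus, with no extra scheme structure. I would first attempt this by exhibiting an explicit local section: fix $\alpha_1,\alpha_2$ making the contraction invertible, normalise it to the identity, and read off the algebra structure. This would show that the tensor locus is locally a product of $\mathcal{B}$ with an affine space.
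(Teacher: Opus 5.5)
There is no proof in the paper to compare with: the statement is quoted from \cite{GJLM} and only paraphrased as a smooth equivalence ``up to free parameters''. Your strategy is that smooth-equivalence argument. It maps a framed space of algebra structures (resp.\ commuting-matrix structures) smoothly to both $\sigmahat_m$ and $\Hilb^{\sm}_m$ (resp.\ $\Quot^{\sm}_m$), and it can be completed. However, several intermediate claims are false as written, and the decisive step is only sketched.

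(1) The multiplication tensors of commutative unital algebras are \emph{not} the open part of concise tensors generic on $d-1$ coordinates. For $d=3$, the tensors that are $1_{W_1}$- and $1_{W_2}$-generic are exactly the structure tensors of unital algebras that may be non-associative and non-commutative, so a general tensor is among them. Only inside $\sigmahat_m$ does this genericity force algebra type.

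(2) You write ``$1$-generic'', which in this paper means generic on all $d$ coordinates. By Corollary~\ref{ref:1generic:cor} that forces $\cA$ to be Gorenstein. Statement (a) is for arbitrary $\cA$, and the case the paper actually needs, $\cA_{2,6}$ in Theorem~\ref{ref:singularities:thm}, is not Gorenstein.

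(3) The fibre of $\GLW\times\mathcal{B}\to\tconcspace$ over $\mu_{\cA}$ is not a torsor for the stabiliser. It is $\{h\in\GLW : h\cdot\mu_{\cA}\in\mathcal{B}\}$, a union of cosets of $\mathrm{Stab}(\mu_{\cA})$ indexed by the structures in $\mathcal{B}$ isomorphic to $\cA$. In any case, equidimensional smooth fibres give no flatness unless you already know the target is regular, which is exactly what is at stake.

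(4) The restriction $N\geq m$ is unnecessary and would miss the embeddings the paper uses ($\mathbb{A}^2$ for $m=3$, $\mathbb{A}^3$ for $m=4$). Generating $N$-tuples form an open subset of $\mathcal{B}\times(\kk^m)^N$, smooth over $\mathcal{B}$, for every $N$.

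(5) In (b) the framing is by $r$ generating vectors, not by a cyclic vector.

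The remedy for (3) is your own local-section idea, but applied on an auxiliary space rather than on $\sigmahat_m$ itself. For $d=3$, consider triples $(T,\alpha_1,\alpha_2)$ with $a=T(\cdot,\alpha_2)\colon W_1^\vee\to W_3$ and $b=T(\alpha_1,\cdot)\colon W_2^\vee\to W_3$ invertible. They form an open subset of $\tconcspace\times W_1^\vee\times W_2^\vee$, smooth and surjective over the open set of tensors generic on the first two coordinates. Setting $x\cdot y:=T(a^{-1}x,b^{-1}y)$ and $e:=T(\alpha_1,\alpha_2)$ identifies this set with $\{(\mu,e)\ \text{unital bilinear on } W_3\}\times\Iso(W_1^\vee,W_3)\times\Iso(W_2^\vee,W_3)$.

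A unital algebra isotopic to $\kk^m$ is isomorphic to $\kk^m$. Hence the preimage of the orbit of the unit tensor is $\{\mu\simeq\kk^m\}\times\Iso\times\Iso$. The projection to $\tconcspace$ is smooth, hence open, so the preimage of $\sigmahat_m$ is the reduced closure of that set. This closure is the smoothable component of unital commutative associative structures ($\mathcal{B}^{\sm}$ with varying unit) times smooth factors. No comparison of equations or scheme structures is needed, because every variety in the statement is the reduced closure of a matching dense locus.

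For $d>3$, contract the remaining algebra coordinates with further $\alpha_i$. On the orbit closure, the unit axioms force the $(d-1)$-ary operation to be the iterated product of $\mu(\cdot,\cdot,e,\dots,e)$.

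For (b), normalising $T(\alpha_1)=\mathrm{Id}$ gives commuting $X_1,\dots,X_{m-1}$ with $\mathrm{Id},X_1,\dots,X_{m-1}$ linearly independent, so $N=m-1$. On the principal component the algebra they generate has dimension $\leq m$, which is a closed condition. So their span is automatically closed under products, and the tensor side is an open subset of the principal component of commuting $(m-1)$-tuples. The $r$-framing then relates this to $\Quot^{\sm}_m(\OO^r_{\mathbb{A}^{m-1}})$.
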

    This part, together with a smooth equivalence~\eqref{eq:smoothEquivalenceCsigmahat}, allows us to use the existing results 
    on $\Hilb^{\sm}$ and $\Quot^{\sm}$ to gain knowledge of $\csigma_m$. The
    amount of information obtained in this way is quite dramatic, if one
    compares it with what is known about $\sigma_m$.

    \begin{theorem}[Good geometry of $\csigma_m$]\label{ref:singularities:thm}
        The varieties $\csigma_2$ and $\csigma_3$ are smooth.
        The variety $\csigma_4$ is normal, Gorenstein (in particular, Cohen-Macaulay) and has terminal singularities, hence rational ones.
        Its singular locus has codimension $9$ and blowing it up resolves the singularities.
    \end{theorem}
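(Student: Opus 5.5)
Via the smooth equivalence~\eqref{eq:smoothEquivalenceCsigma}, $\csigma_m \xleftarrow{\upsilon} \csigmafr_m \xrightarrow{\alpha} \sigmahat_m^{\mathrm{conc}}$, with both maps smooth and $\upsilon$ surjective, every local property in the statement can be checked on the concise locus $\sigmahat_m^{\mathrm{conc}}\subseteq \tconcspace$ with $\dim W_\bullet = m$. Smoothness, normality, the Gorenstein and Cohen--Macaulay properties, and terminal or rational singularities all ascend and descend along smooth surjective morphisms. For the image of $\alpha$ they descend along smooth morphisms onto an open subset. The codimension of the singular locus is also preserved, since $\Sing(\csigmafr_m) = \upsilon^{-1}\Sing(\csigma_m) = \alpha^{-1}\Sing(\sigmahat_m^{\mathrm{conc}})$. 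Blowing up the singular locus commutes with flat (in particular smooth) base change, so $\Bl_{\Sing}\csigmafr_m$ is the pullback of both blow-ups. Hence it is enough to prove the following: for $m\le 3$ the concise locus is smooth, and for $m=4$ it has the listed properties, with singular locus of codimension $9$ that is resolved by one blow-up. A small point to settle first is that the image of $\alpha$ meets every point we care about, so that nothing is lost. I expect this to be fine because $\csigma_m$ is defined as the preimage.

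Next I reduce the concise minimal border rank tensors to Hilbert and Quot schemes. For $m\le 4$ I will use the known classification of concise minimal border rank tensors in small formats (from~\cite{JLP}, and from the classical fact that small $m$ forces enough genericity). The claim to establish is that every concise minimal border rank tensor with $m\le 4$ is $1$-generic on at least $d-2$ coordinates. Then, by the dictionary, it is a multiplication tensor of a smoothable module, or of an algebra in the more generic cases. Proposition~\ref{ref:Hilbsmoothness:prop}, together with the smooth equivalences behind it from~\cite{GJLM}, identifies the local geometry, up to free parameters, with that of $\Quot^{\sm}_m(\OO^r_{\mathbb{A}^N})$ or $\Hilb^{\sm}_m(\mathbb{A}^N)$. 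Since $d$ plays no role, this works uniformly.

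I then import known results. $\Hilb_m(\mathbb{A}^N)$ and the relevant Quot schemes are smooth for $m\le 3$. For $m=4$ the only singular points of $\Hilb_4(\mathbb{A}^N)$ come from the algebra $\kk[x,y,z]/(x,y,z)^2$, and their singularities are smoothly equivalent to the cone over $\Gr(2,6)$ in its Plücker embedding. That cone is normal, Gorenstein and terminal, and a single blow-up of its vertex resolves it. The analogous statement is needed for Quot schemes of $4$-dimensional modules; there I expect to use the result from~\cite{GJLM} that the singular points are again of this $\Gr(2,6)$-cone type. The codimension $9$ will be computed by comparing dimensions: the locus of tensors isomorphic to the singular tensor (the relevant orbit in $\tconcspace$, transported through the smooth equivalence) against $\dim \csigma_4$ from Corollary~\ref{ref:dimensionOfcsigma:cor}. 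Equivalently, $9$ is the codimension of the vertex of the cone over $\Gr(2,6)$, that is $\dim\Gr(2,6)+1 = 9$, so the check is consistent.

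The main obstacle is the second step: guaranteeing that every concise minimal border rank tensor for $m=4$, including the $1$-degenerate ones and those in arbitrary $d$, falls under the module or algebra dictionary. Only then is its local geometry fully covered by Hilbert and Quot scheme results; otherwise one needs a separate argument, such as a direct tangent space computation, for the remaining exceptional orbits. I would first try to prove $1_{W_i}$-genericity on $d-2$ coordinates directly for $m\le 4$ from the classification in~\cite{JLP}. Failing that, I would handle any leftover orbits by showing they lie in the closure of the smooth or cone-type loci with the same tangent space dimension.
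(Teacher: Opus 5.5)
\textbf{The gap is in the $m=4$ module case.} Your reduction through the smooth equivalence~\eqref{eq:smoothEquivalenceCsigma} matches the paper, and so does your use of Katz's cone over $\Gr(2,6)$ at the algebra $\kk[x,y,z]/(x,y,z)^2$. The problem is what you do for modules. You propose to invoke a result of~\cite{GJLM} saying that the singular points of Quot schemes of $4$-dimensional modules are again of $\Gr(2,6)$-cone type. The paper uses no such result: \cite{GJLM} enters only through Proposition~\ref{ref:Hilbsmoothness:prop}, which transfers smoothness. It is also not the statement you need. What is needed, and true, is that every minimal border rank tensor not isomorphic to $U_{2,6}$ (the multiplication tensor of $\cA_{2,6}=\kk[x_1,x_2,x_3]/(x_1,x_2,x_3)^2$) is a \emph{smooth} point. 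No global control of the Quot schemes is available; the relevant point $\OO^{\oplus 2}\onto M_{2,8}$ even lies on several components. So the work has to be done pointwise on the principal component.

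The paper does this in three steps.
\begin{enumerate}
\item By the classification of modules in~\cite[Appendix~B]{Jagiella_Jelisiejew}, every minimal border rank tensor with $m=4$ is either isomorphic to $U_{2,6}$ or degenerates to the module tensor $U_{2,8}$.
\item A computer check with Versal Deformations shows that $\OO^{\oplus 2}\onto M_{2,8}$ is a smooth point of the principal component. Hence $U_{2,8}$ is a smooth point of $\sigmahat_4$.
\item The singular locus is closed and $\GL$-invariant, so every tensor whose orbit closure contains $U_{2,8}$ is smooth.
\end{enumerate}
Only then is $\Sing(\csigma_4)$ known to be exactly the $U_{2,6}$-locus. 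After that, Katz's local model gives the rest: codimension $9$ (from $\mathbb{A}^3\subseteq\Hilb_4(\mathbb{A}^3)$, or your count of the cone vertex), normal, Gorenstein and terminal singularities at every point, and resolution by one blow-up. Without these three steps your argument does not determine the singular locus, so none of the $m=4$ assertions follow.

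\textbf{Your fallback has the semicontinuity backwards.} Showing that leftover orbits lie in the closure of the smooth locus gives nothing, since every point of the irreducible $\sigmahat_4^{\mathrm{conc}}$ does. Smoothness propagates the other way: a smooth point must lie in the orbit closure of the given tensor.

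\textbf{You have placed the main obstacle in the wrong step.} Genericity on at least $d-2$ coordinates is taken as known in the paper, coming with the classification. For $m\le 3$ the paper notes more: every such tensor is generic on at least $d-1$ coordinates, so only algebras and Hilbert schemes occur. Your unsupported claim that ``the relevant Quot schemes are smooth for $m\le 3$'' is therefore unnecessary. The real work is the smoothness of the module points at $m=4$, described above.
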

    \begin{proof}
        For $m=2,3$, we employ Proposition~\ref{ref:Hilbsmoothness:prop}.

        For $m=2$, every minimal border rank tensor $T\in \tconcspace$ is either a unit tensor or a generalised
        $W$-state. These two cases are iterated multiplication tensors in algebras $\kk\times \kk$ and $\kk[\varepsilon]/\varepsilon^2$,
        respectively. Both of these algebras can be embedded into
        $\mathbb{A}^1$ and the Hilbert scheme $\Hilb_2(\mathbb{A}^1)\simeq
        \mathbb{A}^2$ is smooth.

        The case $m=3$ is similar. In this case, every minimal border rank tensor $T$ is generic on at least $d-1$ coordinates (for example,
        by the normal forms given in~\cite[p.478]{Buczynski_Landbserg_sigma3}),
        hence corresponds to a multiplication tensor in an algebra $\cA$ with $\dim_{\kk} \cA = 3$. (The unique case not generic
        on all coordinates is the multiplication tensor in $\cA = \kk[x, y]/(x, y)^2$.)

        A scheme $\Spec(\cA)$ has degree $3$, so there exists a closed embedding $\Spec(\cA) \subseteq \mathbb{A}^2$. The Hilbert
        scheme of $3$ points on $\mathbb{A}^2$ is smooth as a very special case of~\cite{fogarty}, so we obtain that $T$ is smooth.

        The argument for $\csigma_4$ is quite similar, but the moduli space considerations become more involved. 
        Every minimal border rank tensor $T$ is generic on at least $d-2$ coordinates,
        hence corresponds to a multiplication tensor in a module $M$.

        The possible modules are classified, for example in~\cite[Appendix~B]{Jagiella_Jelisiejew}. We will keep the indexing consistent
        with this appendix. Let
        $\cA_{2,6}
        = \kk[x_1,x_2,x_3]/(x_1,x_2,x_3)^2$ and let $M_{2,8}$ be the $\kk[x_1,
        x_2, x_3]$-module
        given by
        \[
            M_{2,8} = \frac{\kk[x_1, x_2, x_3]e_1 \oplus \kk[x_1, x_2, x_3]e_2}{(x_1e_1 - x_2e_2,\ x_2e_1,\ x_3^2e_1,\ x_1e_2,\ x_3e_2,\ x_2^2e_2)}.
        \]
        Let $U_{2,6}$ and $U_{2,8}$ be the corresponding tensors.
        Using the package \emph{Versal Deformations}~\cite{Ilten}, we verify that $\OO^{\oplus 2} \onto M_{2,8}$ is a smooth point of the principal
        component of the Quot scheme (we warn the reader that this point lies also on other components of the Quot scheme), so $U_{2,8}$ is a smooth point
        by Proposition~\ref{ref:Hilbsmoothness:prop}.
        In contrast, the point $\Spec(\cA_{2,6})\subseteq \mathbb{A}^3$ of the Hilbert scheme $\Hilb_4(\mathbb{A}^3)$
        is known to be singular, so also $U_{2,6}$ is singular.
        This allows us to completely describe the singular locus. Namely, by~\cite[Appendix~B]{Jagiella_Jelisiejew} again, every tensor of minimal border rank
        $m=4$ is either isomorphic to $U_{2,6}$ or degenerates to $U_{2,8}$. In the latter case, the tensor is smooth.
        Hence, we conclude that the singular locus consists precisely of tensors isomorphic to $U_{2,6}$. The
        schemes isomorphic to $\Spec(\cA_{2,6})\subseteq \mathbb{A}^3$ fill a locus isomorphic to $\mathbb{A}^3 \subseteq \Hilb_4(\mathbb{A}^3)$,
        hence a locus of codimension $9$. Using the smooth equivalence, this shows that also the tensors isomorphic to $U_{2,6}$
        fill a locus of codimension $9$ inside $\csigma_4$. This can also be deduced independently from~\cite[Theorem 1.1]{BGJLM} at least in the case $d=3$.

        Sheldon Katz~\cite{Katz__4points} (see also
        \cite{stevens_deformations_of_singularities, AJR}) described the singularity
        of $\Hilb_4(\mathbb{A}^3)$ and showed that it is a cone over the
        Grassmannian $\widehat{\Gr(2, 6)}\into \mathbb{A}^{15}$. The
        Grassmannian $\Gr(2,6)$ in its Pl{\"u}cker embedding is arithmetically
        Gorenstein, so this singularity is Gorenstein. It is also normal and has terminal (hence, rational) singularities by~\cite[Lemma~3.1, \S2.5]{Kollar__Sings}.
        Blowing up the singular locus corresponds to blowing up the cone point of $\widehat{\Gr(2, 6)}$. This yields
        a line bundle over $\Gr(2, 6)$, hence a nonsingular variety, so this blowup resolves the singularities.
    \end{proof}

\begin{corollary}[Equations of concise secant]\label{ref:equationsSmallr:cor}
    Assume that $\kk$ has characteristic zero.
    For every $m\leq 5$, the concise secant $\csigma_m$, $\csigmahat_m$ are
    set-theoretically equal to the centroid abundant loci (see
    Definition~\ref{ref:secantLocus:def}).
    For every $m\leq 4$, these are equalities of schemes.
\end{corollary}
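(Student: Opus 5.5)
Via the smooth equivalences \eqref{eq:smoothEquivalenceCsigmahat} and \eqref{eq:smoothEquivalenceCsigma}, the statement reduces to one about the concise locus $\sigmahat_m^{\mathrm{conc}}\subseteq \tconcspaceconc$ with $\dim W_\bullet = m$. The centroid abundant locus is defined in Definition~\ref{ref:secantLocus:def} by a rank condition on a map of bundles built functorially from the tensor, so it pulls back along any morphism of families. In particular, the centroid abundant locus of $\unresfrfullpar{\Taf}$ is both $\upsilon^{-1}$ of the one on $\unresfullpar{\Taf}$ and $\alphahat^{-1}$ of the one on $\tconcspaceconc$. The same holds for the minimal border rank loci, as in the proof of Theorem~\ref{ref:SmoothEquivalenceCsigmahat:thm}. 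Since $\upsilon$ is smooth and surjective, equality of closed subschemes (or of underlying sets) can be checked after pullback along $\upsilon$, and therefore on $\unresfrfullpar{\Taf}$. Because $\alphahat$ is smooth (flat), it suffices to prove the corresponding equality inside $\tconcspaceconc$. The projective case is handled identically using \eqref{eq:smoothEquivalenceFramedProj}.

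Thus the claim reduces to the following. For concise $T\in \tconcspace$ with $m\le 5$ and $\kk$ of characteristic zero, $T$ has minimal border rank if and only if $T$ is centroid abundant, that is, $\dim\Cen{T}\ge m$. For $m\le 4$ we must show moreover that the centroid abundant scheme structure coincides with the reduced scheme $\sigmahat_m^{\mathrm{conc}}$. For the set-theoretic part, one inclusion is the standard fact that concise minimal border rank tensors are centroid abundant: the centroid of a unit tensor has dimension $m$, and $\dim \Cen{}$ is upper semicontinuous. Conversely, \cite{JLP} proves that concise centroid abundant tensors for $m\le 5$ have minimal border rank in characteristic zero, which is exactly where the hypothesis on $\kk$ and the bound $m\le 5$ enter; we cite that result.

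The scheme-theoretic statement for $m\le 4$ is the main obstacle. The plan is to show that the centroid abundant scheme is reduced on the concise locus by comparing tangent spaces. At a point $T$ of $\sigmahat_m^{\mathrm{conc}}$, we compare the Zariski tangent space of the centroid abundant scheme with $T_T\sigmahat_m$. For $m\le 3$, Theorem~\ref{ref:singularities:thm} gives smoothness of $\sigmahat_m^{\mathrm{conc}}$, so it suffices that the centroid abundant scheme has tangent space of dimension $\dim\sigmahat_m$. For this we use the dictionary with Hilbert and Quot schemes: $T$ is a multiplication tensor of an algebra or module, the centroid condition corresponds to the commuting condition defining the Hilbert/Quot scheme, and its tangent space corresponds to the Hilbert/Quot tangent space, as in \cite{GJLM}. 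For $m=4$, at the singular points $U_{2,6}$, we would similarly identify the centroid abundant scheme locally with $\Hilb_4(\mathbb{A}^3)$ up to smooth factors. Since that scheme is reduced (Katz's cone over $\Gr(2,6)$, which is irreducible), this gives equality. Finally, a generically reduced scheme whose tangent-space dimension equals that of the reduced irreducible variety $\sigmahat_m^{\mathrm{conc}}$ at smooth points, together with the local identification at singular points, is reduced, which yields the equality of schemes.
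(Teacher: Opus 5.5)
Your proposal follows essentially the same route as the paper's proof sketch. You reduce via the smooth equivalences \eqref{eq:smoothEquivalenceCsigmahat}--\eqref{eq:smoothEquivalenceCsigma} to the concise locus in $\tconcspace$, quote \cite{JLP} for the set-theoretic equality, and for $m\le 4$ transfer reducedness from the relevant Hilbert and Quot schemes through the dictionary used in the proof of Theorem~\ref{ref:singularities:thm}.

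The one step to state more carefully is which Quot-type scheme you compare with at module-type points such as $U_{2,8}$. There the centroid condition is commutativity \emph{and} closure under composition, and the paper itself warns that $M_{2,8}$ lies on further components of the Quot scheme. So your tangent-space count must be done on the locus cut out by the full centroid condition (the ``appropriate'' Quot scheme of the paper), not on an arbitrary $\Quot_m(\OO^r_{\mathbb{A}^N})$.
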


\begin{proof}[Proof-sketch]
    Using~\eqref{eq:smoothEquivalenceCsigma}-\eqref{eq:smoothEquivalenceCsigmahat}, we are reduced to showing that
    the concise part of $\csigmahat_m$ is given by the centroid abundant equations. This is proven in~\cite[Theorem~1.6]{JLP}.
    The equality in~\cite[Theorem~1.6]{JLP} is purely set-theoretic. To obtain the scheme-theoretic one, we use the
    smooth equivalence as in the proof of Theorem~\ref{ref:singularities:thm} above to reduce to showing that appropriate
    Hilbert and Quot schemes are reduced.
\end{proof}

    \begin{remark}\label{ref:singularitiesFive:rmk}
        The variety $\csigma_5$ also seems to have unexpectedly good singularities, in particular its singular locus
        is much smaller than naively expected: the points that correspond to $1$-degenerate tensors (found in~\cite{JLP}) are smooth.
        This implies that also for $m=5$ the equality of Corollary~\ref{ref:equationsSmallr:cor} is scheme-theoretic.
        This will be explained in a future work.
    \end{remark}
    The above results apply to small $m$. We need to stress, however, that even for larger $m$ there are many partial results
    describing the singularities of Hilbert and Quot schemes, see, for example~\cite{Hu__singular_Hilbert_schemes, AJR}. Hence,
    when enough coordinates are generic, it is most fruitful to use smooth equivalences to pass from $\csigma_m$ to analysing 
    $\Hilb_m^{\sm}$ or $\Quot_m^{\sm}$.

    The question of Cohen-Macaulayness of secant varieties has been addressed in many special cases in the literature,
    see, for example~\cite{landsberg_weyman_secants_ideal_sings_of_sv_of_Segree, Conca_deNegri_Stojanac}.
    Oeding put forward the following very bold, yet elegant, conjecture.
    \begin{conjecture}[{\cite[Conjecture~1.1]{Oeding__Cohen_Macaulay}}]\label{ref:Oeding:conj}
        All secant varieties of Segre product of projective spaces are arithmetically Cohen-Macaulay (that is, the affine cone $\sigmahat_r$ is Cohen-Macaulay).
    \end{conjecture}
    The conjecture is wide open.
    Via the smooth equivalences, it contains the conjectures of Artin-Hochster on the Cohen-Macaulayness of the Quot scheme of $\mathbb{A}^2$,
    and Haiman's conjecture~\cite[Conjecture~5.2.1]{Haiman_macdonald} on the Cohen-Macaulayness of $\Hilb^{\sm}_d(\mathbb{A}^n)$, for any $d,n$.
    Under the assumption of the conjecture, normality of $\sigma_r$ translates into $\sigma_r$
    being regular in codimension one, which is in practise easier to check. Hence, the conjecture suggests also the normality
    of all secant varieties.

\subsection{Unrestriction schemes as blowups}

When the coordinate bundles are large enough, the maps $\unresfullpar{\Taf}\to
\tspace$ and $\unresfullpar{\Tproj}\to \mathbb{P}(\tspace)$ are birational. From
the birational geometry perspective, it is interesting to realise them as a
blowup. This is the aim of the current section. 

\begin{proposition}\label{p:bl_in_unres}
    Let $T\colon \LL \to \tspacecurly$ be a family of tensors on a scheme $X$, let $T_{\VV_i}\colon \VV_i^{\vee}\otimes \LL \to \VVhat{}$ be the contraction and
    $\Lambda^m T_{\VV_i}\colon \Lambda^m (\VV_i^{\vee} \otimes \LL) \to \Lambda^m\VVhat{}$ its top exterior power.
    Assume that the vector bundle $\VV_i$ has rank $m$. Then:
    \begin{enumerate}
        \item The pullback of the closed subscheme $\Z(\Lambda^m T_{\VV_i}) \subset X$
            to $\unres$ is locally cut out by one equation $\det \varphi_i^\vee$, where $\varphi_i$
            is as in Definition~\ref{ref:unrestriction:def}.
        \item The blowup of $X$ along $\Z(\Lambda^m T_{\VV_i})$ is the scheme-theoretic closure of $X \setminus \Z(\Lambda^m T_{\VV_i})$
            in $\unres$. The blowup coincides with $\unres$ precisely if $\det \varphi_i^\vee$ is an effective Cartier divisor.
    \end{enumerate}
\end{proposition}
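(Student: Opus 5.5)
The plan is to work locally on $X$ and then glue, exploiting that on $\unres$ the contraction factors through the universal subbundle. By Definition~\ref{ref:unrestriction:def}, on $\unres$ we have the factorisation
\[
    p^*T_{\VV_i}\colon p^*(\VV_i^{\vee}\otimes\LL)\xrightarrow{\varphi_i^{\vee}} \UU_i^{\vee}\into p^*\VVhat{},
\]
where both source and $\UU_i^{\vee}$ have rank $m$ and the second map is a subbundle. Taking $\Lambda^m$ gives
\[
    p^*\Lambda^m T_{\VV_i} = \Lambda^m(\UU_i^{\vee}\into p^*\VVhat{})\circ \det\varphi_i^{\vee},
\]
where $\det\varphi_i^{\vee}\colon p^*\det(\VV_i^{\vee}\otimes\LL)\to\det\UU_i^{\vee}$ is a map of line bundles, i.e.\ locally a single function. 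Since $\UU_i^{\vee}\into p^*\VVhat{}$ is a subbundle, its top exterior power is a subbundle (locally a split injection $\OO\to\Lambda^m\OO^N$ with some Plücker coordinate a unit, by Lemma~\ref{ref:subbudle:lem} fibrewise). Hence the ideal of entries of $p^*\Lambda^mT_{\VV_i}$ equals the ideal generated by $\det\varphi_i^{\vee}$ times the unit ideal of entries of the subbundle inclusion. Since pulling back the zero locus of a map of bundles is the zero locus of the pulled back map (entries pull back), part (a) follows.

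For (b) I would first note that $\unres\to X$ is an isomorphism over $X^\circ := X\setminus \Z(\Lambda^mT_{\VV_i})$: there $T_{\VV_i}$ is already a subbundle of rank $m$, so the only subbundle $\UU_i^{\vee}$ of rank $m$ through which it factors is its image (the factorisation forces $\varphi_i^{\vee}$ to be an isomorphism since $\det\varphi_i^\vee$ is invertible there by (a)); via the functor of points (Proposition~\ref{ref:functorOfUnrestrictions:prop}) this gives a section and identifies $\pi^{-1}(X^\circ)\cong X^\circ$. Let $Y$ be the scheme-theoretic closure of $\pi^{-1}(X^\circ)$ in $\unres$. Then $\pi^{-1}(X^\circ)\subseteq Y$ is schematically dense, and by (a) the pullback of the centre to $Y$ is cut out locally by one function which is a nonzerodivisor on $Y$ (a function invertible on a schematically dense open cannot be a zero divisor), so it is an effective Cartier divisor. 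By the universal property of blowing up we get $Y\to \Bl$, and $Y\to X$ is projective.

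The main obstacle is the reverse map $\Bl\to \unres$. I would use the functor of points: on $B=\Bl$ the pulled-back ideal of $m\times m$ minors of $T_{\VV_i}$ is invertible, and I need a rank $m$ subbundle $U^\vee\subseteq q^*\VVhat{}$ containing the image of $q^*T_{\VV_i}$. The standard route: the invertible ideal of maximal minors means the Plücker-type map $\Lambda^m(q^*\VV_i^{\vee}\otimes\LL)\to \Lambda^m q^*\VVhat{}$ factors as a line bundle $\OO(-E)$-twist of a nowhere vanishing map, giving a morphism to the Grassmannian bundle via Plücker coordinates; one checks it lands in $\Gr$ (Plücker relations hold on the dense open, hence everywhere since $B$ has no embedded components along $E$) and that the image of $T_{\VV_i}$ lies in the corresponding subbundle (again true on the schematically dense open $q^{-1}(X^\circ)$, hence the zero-locus condition holds on $B$). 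This gives $B\to \unres$ landing in $Y$, inverse to $Y\to B$ since both agree on dense opens and are separated. Finally $Y=\unres$ iff $\pi^{-1}(X^\circ)$ is schematically dense in $\unres$, which, given that $\unres\setminus\pi^{-1}(X^\circ)=\Z(\det\varphi_i^\vee)$, is exactly the condition that $\det\varphi_i^{\vee}$ is a nonzerodivisor, i.e.\ an effective Cartier divisor.
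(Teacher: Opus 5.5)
Your proposal is correct. Part (a) is essentially the paper's argument: you factor $p^*T_{\VV_i}$ through $\UU_i^{\vee}$ and use that $\Lambda^m$ of the universal subbundle inclusion is nowhere vanishing.

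For (b) you take a genuinely different, more hands-on route. The paper's argument is short. It uses the standard fact that the blowup along the ideal generated by the entries of $\Lambda^m T_{\VV_i}$ is the schematic closure of the graph of $X\setminus \Z(\Lambda^m T_{\VV_i})\to \mathbb{P}_X(\Lambda^m\VVhat)$. This map factors through the closed subschemes $\unres\subseteq \Gr_X(m,\VVhat)\subseteq \mathbb{P}_X(\Lambda^m\VVhat)$, so the closure may be taken inside $\unres$. The Cartier criterion is then read off from (a).

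You avoid that identification. Instead you build mutually inverse morphisms between $Y$ and the blowup $B$:
\begin{enumerate}
\item $Y\to B$ comes from the universal property, because $\det\varphi_i^{\vee}$ is a nonzerodivisor on $Y$.
\item $B\to Y$ comes from dividing the Pl\"ucker map by a local equation of $E$. You then extend the Grassmannian condition and the factorisation condition from the schematically dense open $q^{-1}(X^\circ)$ to all of $B$.
\end{enumerate}
This is longer, and it effectively reproves the embedding of the blowup into $\mathbb{P}_X(\Lambda^m\VVhat)$. In return it is self-contained. It also makes explicit the isomorphism $\pi^{-1}(X^\circ)\cong X^\circ$, which the paper uses tacitly in the converse direction when it asserts that $D(\Lambda^m T_{\VV_i})$ is schematically dense in $\unres$.

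One small wording fix. Your justification ``$B$ has no embedded components along $E$'' should be replaced by ``$B\setminus E$ is schematically dense in $B$ because $E$ is an effective Cartier divisor''. The scheme $X$ is not assumed Noetherian, and the schematic-density formulation is the one that works in general and is what your later steps actually use.
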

\begin{proof}
    By definition, the pullback of $T_{\VV_i}$ to $\unres$ factors through the universal subbundle inclusion
    \[
    \begin{tikzcd}
        p^*(\VV_i^\vee \otimes \LL) \arrow[rr, "p^*T_{\VV_i}"] \arrow[rd, "\varphi_i^\vee"] && p^*\VVhat \\
        & \UU_i^{\vee} \arrow[ru, hook] & 
    \end{tikzcd}.
    \]
    The universal subbundle has rank $m$, so its $m$-th exterior power is non-vanishing,
    hence 
    \[
        \Z(p^*\Lambda^m T_{\VV_i}) = \Z(\Lambda^m \varphi_i^\vee) = \Z(\det \varphi_i^\vee)
    \]
    which is locally cut out by one equation.

    The tensor $T$ is $\VV_{i}$-concise on the open locus $D(\Lambda^m T_{\VV_i}) := X \setminus \Z(\Lambda^m T_{\VV_i})$,
    so we obtain a lifting
    \[
        \begin{tikzcd}
            & \unres \ar[r, hook]\ar[rd] & \Gr_X(m, \VVhat) \ar[r, hook]\ar[d] & \mathbb{P}_X(\Lambda^m \VVhat)\ar[ld]\\
            D(\Lambda^m T_{\VV_i}) \ar[rr, hook]\ar[ru, dashed, "f"] && X
        \end{tikzcd}
    \]
    The blowup of $X$ along $\Z(\Lambda^m T_{\VV_i})$ is the schematic closure of $f(D(\Lambda^m T_{\VV_i}))$ in $\mathbb{P}_X(\Lambda^m \VVhat)$.
    The morphism $f$ factors through $\unres \subset \Gr_X(m, \VVhat)$, so the blowup is
    equal to the scheme-theoretic closure in $\unres$.
    If this closure is the whole scheme $\unres$, then $\det \varphi_i^\vee$ is a Cartier divisor because it is the ideal
    sheaf of the exceptional locus. Conversely, if $\det \varphi_i^\vee$ is Cartier, then $D(\Lambda^m T_{\VV_i})$ is schematically
    dense in $\unres$ and its closure is the whole scheme.
\end{proof}

\begin{corollary}\label{c:unres_blowup_first}
    Assume that $\dim_{\kk} V_{\bullet} = m$.
    Consider the affine space $\tspace$ with its universal family $\Taf$ as in Example~\ref{ex:universalFamily}. 
    Then $\unrespar{i}{\Taf}$ is the blowup $\Bl_{\Z(\Lambda^m \Taf_{\VV_i})} \tspace$.
\end{corollary}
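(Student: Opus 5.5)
By Proposition~\ref{p:bl_in_unres}(b), applied to $X = \tspace$ with $T = \Taf$, $\LL = \OO_{\tspace}$ and $\VV_\bullet = \OO_{\tspace}\otimes V_\bullet$ (all of rank $m$), it is enough to show that the local equation $\det\varphi_i^\vee$ of the pullback of $\Z(\Lambda^m \Taf_{\VV_i})$ is a nonzerodivisor on $\unrespar{i}{\Taf}$. Equivalently, the open subscheme $D(\Lambda^m\Taf_{\VV_i})$ must be schematically dense in $\unrespar{i}{\Taf}$. The plan is to get this from the explicit geometry of $\unrespar{i}{\Taf}$ given by Proposition~\ref{ref:bundleAffine:prop}.

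First, Proposition~\ref{ref:bundleAffine:prop} is stated for $i=1$, but its proof works verbatim for any $i$. It identifies $\unrespar{i}{\Taf}$ with the total space of the vector bundle $\UU_i^{\vee}\otimes V_i$ over $\Gr(m, \Vhat)$. Indeed, a point is a subspace $U_i^\vee\subseteq \Vhat$ together with an arbitrary map $\varphi_i\colon U_i\to V_i$, and the tensor is recovered as $T = \varphi_i(\Tppar{i})$. Hence $\unrespar{i}{\Taf}$ is smooth and irreducible. It is a vector bundle over a Grassmannian, and the Grassmannian is nonempty because $\dim \Vhat = m^{d-1}\geq m$. In particular $\unrespar{i}{\Taf}$ is an integral scheme.

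Second, on an integral scheme a local section of a line bundle is a nonzerodivisor as soon as it is not identically zero. So we only need that $\det\varphi_i^\vee$ does not vanish on all of $\unrespar{i}{\Taf}$. In the bundle description, $\varphi_i^\vee\colon V_i^\vee\otimes\OO\to \UU_i^\vee$ is the tautological fibre coordinate, a map between rank $m$ bundles. Over any point of $\Gr(m,\Vhat)$, choosing $\varphi_i$ to be an isomorphism $U_i\to V_i$ gives a point where $\det\varphi_i^\vee\neq 0$. Such a choice is possible exactly because $\dim V_i = m$, and this is where that hypothesis enters. So $\det\varphi_i^\vee$ is locally a nonzero element of an integral domain, hence an effective Cartier divisor. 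Proposition~\ref{p:bl_in_unres}(b) then gives $\unrespar{i}{\Taf} = \Bl_{\Z(\Lambda^m \Taf_{\VV_i})} \tspace$.

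The main point needing care is the first step: checking that the bundle description, proved for $i=1$, transfers to index $i$, with the universal tensor still a pullback of $\TGr$ from $\Gr(m,\Vhat)$. That check uses only the functor-of-points description of Proposition~\ref{ref:functorOfUnrestrictions:prop}, which is symmetric in the index, so no genuine obstacle is expected. One should also note that the determinant is taken with respect to local trivialisations of $\UU_i^\vee$; changing the trivialisation multiplies it by a unit, so the Cartier property does not depend on that choice.
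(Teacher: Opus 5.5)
Your proof is correct and follows the paper's argument. The paper also uses Proposition~\ref{ref:bundleAffine:prop} to see that $\unrespar{i}{\Taf}$ is smooth and integral with $\det\varphi_i^\vee$ not identically zero, so it is an effective Cartier divisor and Proposition~\ref{p:bl_in_unres} applies; your remarks on transferring the bundle description to index $i$ and on where $\dim V_i = m$ is used only make explicit what the paper leaves implicit.
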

\begin{proof}
    By Proposition~\ref{ref:bundleAffine:prop}, the scheme $\unres$ is smooth and integral (and $\det \varphi_i^\vee$ is not zero),
    so $\det \varphi_i^\vee$ is automatically a Cartier divisor and Proposition~\ref{p:bl_in_unres} applies.
\end{proof}


To obtain equality of unrestriction and blowup, we would like to refer to Corollary~\ref{c:unres_blowup_first}
as the local model. However, the family from this corollary is canonically a local model only in the framed
setting. Hence, we make a small detour to justify that both unrestrictions and blowups behave well with respect
to smooth maps, or, actually, faithfully flat quasi-compact maps.

Let $f \colon X' \to X$ be a morphism of schemes. Any family of tensors $T$ on $X$ gives rise to 
a family of tensors $T' := f^* T$ on $X'$. By functoriality, the closed subscheme 
$\Z(\Lambda^m T_{\VV_i}) \subset X$ pulls back to the closed subscheme $\Z(\Lambda^m T'_{\VV_i}) \subset X'$
and the unrestriction scheme $\unres \to X$ pulls back to the unrestriction scheme 
$\unrespar{i}{T'} \to X'$. By Proposition~\ref{p:bl_in_unres} and the universal 
property of blowup, there exists a unique map $\Bl_{\Z(\Lambda^m T'_{\VV_i})}X' \to \Bl_{\Z(\Lambda^m T_{\VV_i})}X$
making the following diagram commute:
\[
\begin{tikzcd}[column sep=0.5]
    && \unrespar{i}{T'} \arrow[ddl] \arrow[rrrr] &&&& \unres \arrow[ddl] \\
    \Bl_{\Z(\Lambda^m T'_{\VV_i})}X' \arrow[rru, hook] \arrow[dr] \arrow[rrrr, crossing over] &&&&
    \Bl_{\Z(\Lambda^m T_{\VV_i})}X \arrow[rru, hook] \arrow[dr] && \\
    & X' \arrow[rrrr, "f"] &&&& X & \\
\end{tikzcd}
\]
We will employ the following descent result.
\begin{lemma}\label{l:pullback_flat}
    Let $f \colon X' \to X$ and $T', T$ be as above.
    \begin{enumerate}
        \item If $f$ is flat, then $\Bl_{\Z(\Lambda^m T_{\VV_i})}X = \unres$ implies that
            $\Bl_{\Z(\Lambda^m T'_{\VV_i})}X' =\OpUnres_{i}(T')$.
        \item If $f$ is fpqc, then $\Bl_{\Z(\Lambda^m T_{\VV_i})}X = \unres$
            if and only if $\Bl_{\Z(\Lambda^m T'_{\VV_i})}X' = \unrespar{i}{T'}$.
    \end{enumerate}
\end{lemma}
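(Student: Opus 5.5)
Throughout, write $Z := \Z(\Lambda^m T_{\VV_i})$ and $Z' := \Z(\Lambda^m T'_{\VV_i}) = f^{-1}(Z)$. By Proposition~\ref{p:bl_in_unres}, $\Bl_{Z}X$ is the scheme-theoretic closure of $X\setminus Z$ in $\unres$. Equality $\Bl_Z X = \unres$ holds if and only if the local equation $\delta := \det\varphi_i^\vee$ is a nonzerodivisor on $\unres$, i.e.\ defines an effective Cartier divisor. The same holds on $X'$ with $\delta' := \det(\varphi_i')^\vee$. By Lemma~\ref{ref:baseChangeUnres:lem} we have $\unrespar{i}{T'} = X'\times_X \unres$, and the universal data pull back, so $\delta'$ is the pullback of $\delta$ under the projection $g\colon \unrespar{i}{T'}\to \unres$. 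Both parts thus become statements about a nonzerodivisor under the base-changed map $g$. The base change of a flat (resp.\ fpqc) map is flat (resp.\ fpqc), so $g$ inherits the hypotheses on $f$.

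For (a), assume $\delta$ is a nonzerodivisor on $\unres$, i.e.\ $0\to \OO_{\unres}\xrightarrow{\delta}\OO_{\unres}$ is exact locally. Pulling back along the flat map $g$ preserves exactness, so $\delta'$ is a nonzerodivisor on $\unrespar{i}{T'}$. Proposition~\ref{p:bl_in_unres}(b) then gives $\Bl_{Z'}X' = \unrespar{i}{T'}$. Alternatively, one can use that blowups commute with flat base change and that the closed embedding $\Bl_Z X\into \unres$ pulls back to the corresponding embedding over $X'$. I prefer the nonzerodivisor formulation since it avoids checking compatibility of the embeddings.

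For (b), one direction is (a). For the converse, suppose $\delta'$ is a nonzerodivisor. Let $K := \ker(\delta\colon \OO_{\unres}\to\OO_{\unres})$, computed locally on affine opens where $\delta$ is defined. Flatness of $g$ gives $g^*K = \ker(\delta') = 0$. Since $g$ is faithfully flat (surjective and flat, and quasi-compact so fpqc descent applies), $g^*K=0$ implies $K=0$. Hence $\delta$ is a nonzerodivisor, and Proposition~\ref{p:bl_in_unres} gives $\Bl_Z X = \unres$.

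The main obstacle is bookkeeping rather than depth. One has to make sure that the Cartier-divisor criterion of Proposition~\ref{p:bl_in_unres} is applied to the same local equation on both sides, i.e.\ that $\varphi_i'$ is the pullback of $\varphi_i$ under the identification of Lemma~\ref{ref:baseChangeUnres:lem}. This follows because the universal subbundle and the factorisation defining $\varphi_i$ are both stable under base change. One also has to note that $\delta$ is only defined locally, after trivialising $\det\UU_i$ and $\det \VV_i$. The nonzerodivisor condition is independent of the trivialisation, so we may work on an affine cover of $\unres$ and the preimage cover of $\unrespar{i}{T'}$, and the argument is local.
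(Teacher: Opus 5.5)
Your argument is correct, but it takes a different route from the paper's.

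The paper does not use the Cartier criterion at all. It notes that the back square ($\unrespar{i}{T'}$ over $\unres$) is cartesian, so the front square is cartesian if and only if the top square is. Since blowups commute with flat base change, for flat $f$ the embedding $\Bl_{Z'}X'\into\unrespar{i}{T'}$ is literally the base change of $\Bl_{Z}X\into\unres$ along $g$, which gives (a). Part (b) then follows because being an isomorphism is fpqc-local on the target.

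You instead use the ``if and only if'' in Proposition~\ref{p:bl_in_unres}(b) to turn both equalities into the statement that the local equation $\delta$, resp.\ its pullback, is a nonzerodivisor. After that only elementary facts are needed: flat maps preserve nonzerodivisors, and a flat surjective map reflects vanishing of the quasi-coherent sheaf $K=\ker\delta$.

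What each route buys:
\begin{itemize}
\item The paper's route gives the stronger structural fact that the comparison embedding itself is compatible with flat base change, at the price of citing base change for blowups.
\item Yours is self-contained given Proposition~\ref{p:bl_in_unres}. In (b) it really only uses that $g$ is flat and surjective: vanishing of $K$ can be checked on stalks, and flat local homomorphisms are faithfully flat. So the quasi-compactness in your parenthetical is not needed.
\end{itemize}

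Your bookkeeping concern about $\varphi_i'$ can also be bypassed. Since $Z'=f^{-1}(Z)$ and the back square is cartesian, the preimage of $Z'$ in $\unrespar{i}{T'}$ is $g^{-1}$ of the preimage of $Z$ in $\unres$. Hence a local generator of the one ideal pulls back to a local generator of the other, however $\varphi_i'$ is described.
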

\begin{proof}
    The back square is a pullback, so the front square is a pullback if and only if the top square is 
    a pullback. The front square is a pullback when $f$ is
    flat~\cite[\href{https://stacks.math.columbia.edu/tag/0805}{Tag 0805}]{stacks-project}. In particular,
    this shows the first claim.

    If $f$ is fpqc, then $\unrespar{i}{T'} \to \unres$ is fpqc~\cite[\href{https://stacks.math.columbia.edu/tag/01U9}{Tag 01U9},
    \href{https://stacks.math.columbia.edu/tag/01S1}{Tag 01S1},
    \href{https://stacks.math.columbia.edu/tag/01K5}{Tag 01K5}]{stacks-project} as well. 
    Being an isomorphism is local in fpqc topology~\cite[\href{https://stacks.math.columbia.edu/tag/02L4}{Tag 02L4}]{stacks-project},
    which proves the second claim.
\end{proof}

\begin{proposition}\label{ref:unrestrictionAsBlowp:prop}
    Assume that $\dim_{\kk} V_{\bullet} = m$.
    Let $\tspace$ be the affine space of tensors and let $\Taf$ 
    be the universal tensor over it. Then for every $i=1, \ldots ,d$ we have
    \[
        \unrespar{1, \dots, i}{\Taf} = \Bl_{\Z(\Lambda^m \Tppar{1, \dots, i-1}_{\VV_i})} \unrespar{1, \dots, i-1}{\Taf}.
    \]
    In particular, $\unresfullpar{\Taf}$ is obtained from $\tspace$ by repeated blowups with these centers.
    Same is true in the projective case.
\end{proposition}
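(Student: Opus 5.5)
The plan is to reduce each step to the local model of Corollary~\ref{c:unres_blowup_first} and then transport it with the descent Lemma~\ref{l:pullback_flat}. For $i=1$ the claim is exactly Corollary~\ref{c:unres_blowup_first}. For $i\geq 2$, let $X := \unrespar{1,\dots,i-1}{\Taf}$ with its family $\Tppar{1,\dots,i-1}$. We have $\unrespar{1,\dots,i}{\Taf} = \unrespar{i}{\Tppar{1,\dots,i-1}}$ by definition. Since all $\VV_\bullet$ have rank $m$, Proposition~\ref{p:bl_in_unres} applies, and it suffices to show that $\Bl_{\Z(\Lambda^m \Tppar{1,\dots,i-1}_{\VV_i})} X = \unrespar{i}{\Tppar{1,\dots,i-1}}$.

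To do this, I will use the framed version of $X$. Let $X^{\mathrm{fr}} := \prod_{j<i}\Iso_X(\OO_X\otimes W_j, \UU_j)\xrightarrow{\upsilon} X$. This is a principal $\GL_m^{i-1}$-bundle, hence smooth and surjective, in particular fpqc. By the argument of Proposition~\ref{ref:framedUnrestrictionPoints:prop} and \S\ref{ssec:smoothEquivalences}, applied only to the first $i-1$ coordinates, $X^{\mathrm{fr}}$ is an open subscheme of
\[
    W_1\otimes\cdots\otimes W_{i-1}\otimes V_i\otimes\cdots\otimes V_d \times \prod_{j<i}\Hom(W_j,V_j).
\]
Under this identification, $\upsilon^*\Tppar{1,\dots,i-1}$ is the pullback of the universal affine tensor $\Taf_W$ on $Y := W_1\otimes\cdots\otimes W_{i-1}\otimes V_i\otimes\cdots\otimes V_d$ along the projection $\alpha\colon X^{\mathrm{fr}}\to Y$. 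This projection is the restriction of a product projection to an open subset, hence flat.

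The argument then proceeds in three steps.
\begin{enumerate}
\item Apply Corollary~\ref{c:unres_blowup_first} to $Y$, whose coordinate spaces all have dimension $m$. This gives $\Bl_{\Z(\Lambda^m(\Taf_W)_{V_i})} Y = \unrespar{i}{\Taf_W}$.
\item Apply Lemma~\ref{l:pullback_flat}(a) to the flat map $\alpha$. The blowup therefore equals the unrestriction scheme for $\alpha^*\Taf_W \simeq \upsilon^*\Tppar{1,\dots,i-1}$ on $X^{\mathrm{fr}}$.
\item Apply Lemma~\ref{l:pullback_flat}(b) to the fpqc map $\upsilon$. This descends the equality to $X$.
\end{enumerate}
Iterating over $i$ then gives the description of $\unresfullpar{\Taf}$ as a sequence of blowups.

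For the projective case, I will run the same argument with the projective smooth equivalence~\eqref{eq:smoothEquivalenceFramedProj}. There, the first-step map $\alpha\colon \unresfrpar{1,\dots,i-1}{\Tproj}\to Y$ is again the restriction of a projection from an open subset of an affine space, hence flat. Alternatively, Proposition~\ref{ref:bundleProjective:prop} presents $\unrespar{1,\dots,i-1}{\Tproj}$ as the quotient by a free $\Gmult$-action of an open subset of the affine version. The quotient map is smooth and surjective onto its image, so Lemma~\ref{l:pullback_flat}(b) applies again. For $i=1$ one uses Proposition~\ref{p:bl_in_unres} on $\PP(\tspace)$ with the same $\Gmult$-descent.

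The main obstacle I anticipate is the identification in the framed picture. One must check carefully that $X^{\mathrm{fr}}$ really embeds as an open subset of the product above, meaning the partial analogue of Proposition~\ref{ref:framedUnrestrictionPoints:prop} with only open conditions. One must also check that the isomorphism $\upsilon^*\Tppar{1,\dots,i-1}\simeq\alpha^*\Taf_W$ is compatible with the coordinate bundles in a way that identifies the two closed subschemes $\Z(\Lambda^m(\cdot)_{\VV_i})$ and the two unrestriction schemes. This second point holds because both constructions commute with base change (Lemma~\ref{ref:baseChangeUnres:lem}) and are invariant under isomorphisms of families.
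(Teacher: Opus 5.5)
Your proposal is correct and is essentially the paper's proof: frame the first $i-1$ coordinates, apply Corollary~\ref{c:unres_blowup_first} on $W_{1,\dots,i-1}\otimes V_{i,\dots,d}$, transfer the equality along the smooth projection by Lemma~\ref{l:pullback_flat}(a), and descend it along the principal bundle $\upsilon$ by Lemma~\ref{l:pullback_flat}(b). For the projective case the paper takes essentially your second route, descending along $\tspace\setminus\{0\}\to\PP(\tspace)$, over which $\Taf$ is the pullback of $\Tproj$; in your first route the framed projective scheme is open in a product with a factor $\PP\Hom(W_1,V_1)$, not in an affine space, but the projection is still flat, so the argument goes through.
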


\begin{proof}
    \def\tenmixed{T^{(i)}}
    We employ the framed unrestriction scheme. Recall that $W_1, \ldots ,W_d$ are fixed $m$-dimensional vector spaces.
    Let $\tenmixed$ be the universal family of tensors on the affine space $W_{1, \dots, i-1} \otimes V_{i, \dots, d}$.
    Consider the morphisms, defined similarly as in~\eqref{eq:smoothEquivalenceFramed}
    \[
    \begin{tikzcd}
        & \unresfrpar{1, \ldots, i-1}{\Taf} \arrow[dl, two heads, "\upsilon"', "\mathrm{sm.}"] 
        \arrow[dr, "\alphahat", "\sm."'] & \\
        \unrespar{1, \dots, i-1}{\Taf} && W_{1, \dots, i-1} \otimes V_{i, \dots, d}
    \end{tikzcd}
    \]
    On $\unresfrpar{1, \ldots , i-1}{\Taf}$, the family $\upsilon^* \Tppar{1, \dots, i-1}$ is isomorphic to $\alphahat^*\tenmixed$.
    Corollary~\ref{c:unres_blowup_first} says that $\unrespar{i}{T^{(i)}}$ and 
    $\Bl_{\Z(\Lambda^m \tenmixed_{\VV_i})} (W_{1, \dots, i-1} \otimes V_{i, \dots, d})$ coincide.
    The morphism $\alphahat$ is smooth, so by descent (the first part of Lemma~\ref{l:pullback_flat}) we have 
    \[\unrespar{i}{\alphahat^*\tenmixed} = \Bl_{\Z(\Lambda^m \alphahat^*\tenmixed_{\VV_i})}\unresfrpar{1, \dots, i-1}{\Taf}.\]
    The family $\alphahat^*\tenmixed$ is isomorphic to the family $\upsilon^* \Tppar{1, \dots, i-1}$,
    so we also have 
    \[\unrespar{i}{\upsilon^* \Tppar{1, \dots, i-1}} 
    = \Bl_{\Z(\Lambda^m \upsilon^* \Tppar{1, \dots, i-1}_{\VV_i})}\unresfrpar{1, \dots, i-1}{\Taf}.\]
    The morphism $\upsilon$ is smooth and surjective, so by descend again (the second part of Lemma~\ref{l:pullback_flat})
    we obtain
    \[
    \unrespar{1, \dots, i}{\Taf} = 
    \unrespar{i}{\Tppar{1, \dots, i-1}} 
    = \Bl_{\Z(\Lambda^m \Tppar{1, \dots, i-1}_{\VV_i})}\unrespar{1, \dots, i-1}{\Taf}.
    \]
    The projective case follows also by descent, since $\Taf$ restricted to $\tspace \setminus \{0\}$ is the
    pullback of $\Tproj$ from $\mathbb{P}(\tspace)$.
\end{proof}

As we have seen in~\S\ref{ssec:singularitiesMain}, the singularities of the concise secant variety are well understood.
Therefore, it is very interesting to understand when the map $\rho\colon \csigma_m\to \sigma_m$ is an
isomorphism near a given point $[T]\in \sigma_m$. As usually, it is most convenient to have a statement for
a single unrestriction.

Let $\Taf$, $\Tproj$ be universal tensors on $\tspace$, and $\mathbb{P}(\tspace)$, as usual, see Example~\ref{ex:universalFamily}.
Assume that $\dim V_{\bullet} = m$.
Let $c_{1, \ldots ,i}\sigmahat_m \into \unrespar{1, \ldots ,i}{\Taf}$ be the locus of minimal border rank, as in Definition~\ref{ref:secantLocus:def},
and similarly let $c_{1, \ldots ,i}\sigma_m \into \unrespar{1, \ldots ,i}{\Tproj}$ be the locus of minimal border rank.
\begin{proposition}\label{ref:secantAsBlowup:prop}
    Fix an index $1\leq i\leq d$ and assume that $\dim V_i = m$. Then the variety $c_{1, \ldots ,i}\sigmahat_m$ is the blowup of
    $c_{1, \ldots, i-1}\sigmahat_m$ along the closed subscheme described in Proposition~\ref{p:bl_in_unres}.
    Similarly, $c_{1, \ldots ,i}\sigma_m$ is the blowup of $c_{1, \ldots ,i-1}\sigma_m$.
\end{proposition}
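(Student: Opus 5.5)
The plan is to run the descent argument of Proposition~\ref{ref:unrestrictionAsBlowp:prop} again, now inside the minimal border rank loci. We use that $c_{1,\ldots,i}\sigmahat_m$ is the reduced minimal border rank locus of $\Tppar{1,\ldots,i}$ on $\unrespar{1,\ldots,i}{\Taf}$. We also use that the minimal border rank condition is compatible with restriction: a point of $\unrespar{1,\ldots,i}{\Taf}$ lies in $c_{1,\ldots,i}\sigmahat_m$ if and only if its image in $\unrespar{1,\ldots,i-1}{\Taf}$ lies in $c_{1,\ldots,i-1}\sigmahat_m$. Indeed, on the open locus where $\varphi_i$ is invertible the two tensors are isomorphic. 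Elsewhere, border rank is $\leq m$ upstairs iff $\leq m$ downstairs, because restriction can only lower border rank, and conversely via a limit argument. I would check this pointwise on the dense open locus and then take closures.

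First, the local model. Let $Y := \sigmahat_m^{W_1-,\ldots,W_{i-1}-\mathrm{conc}} \subseteq W_{1,\ldots,i-1}\otimes V_{i,\ldots,d}$, with the restricted universal tensor $T^{(i)}$. Then $\unrespar{i}{T^{(i)}|_Y} = Y\times_{W\otimes V}\unrespar{i}{T^{(i)}}$ by Lemma~\ref{ref:baseChangeUnres:lem}. Its minimal border rank locus is the strict transform $\widetilde{Y}$ of $Y$ in $\Bl_{\Z(\Lambda^m T^{(i)}_{\VV_i})}(W_{1,\ldots,i-1}\otimes V_{i,\ldots,d}) = \unrespar{i}{T^{(i)}}$ (Corollary~\ref{c:unres_blowup_first}). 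By the standard fact, this strict transform equals $\Bl_{Y\cap \Z}Y$. The key point is that the minimal border rank locus upstairs is the closure of its part over $Y\setminus \Z$. To see this, use the smooth equivalence~\eqref{eq:smoothEquivalenceCsigmaPartial} for index $i$: the framed version of $c_{1,\ldots,i}\sigma_m$ is smooth over $\sigmahat_m^{W_1-,\ldots,W_i-\mathrm{conc}}\subseteq W_{1,\ldots,i}\otimes V_{i+1,\ldots,d}$. That variety is irreducible, being an open subset of the irreducible secant variety, and in it the $W_i$-concise tensors restricting via an invertible $\varphi_i$ form a dense open set. So $c_{1,\ldots,i}\sigmahat_m$ has no components inside the exceptional divisor $\Z(\det\varphi_i^\vee)$. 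Since it is reduced, it equals the scheme-theoretic closure of the complement of the divisor, which is the blowup.

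Second, descent. The intermediate framed scheme $c_{1,\ldots,i-1}\sigmahat^{\mathrm{fr}}_m$ carries the smooth map $\upsilon$ to $c_{1,\ldots,i-1}\sigmahat_m$ and the smooth map $\alphahat$ to $Y$, and the two pulled-back tensors are isomorphic. I would apply Lemma~\ref{l:pullback_flat}, or rather its analogue with $X$ replaced by the closed subschemes $Y$ and $c_{1,\ldots,i-1}\sigmahat_m$. Blowups commute with flat base change, and the minimal border rank loci also pull back along smooth maps, since a reduced preimage under a smooth map is reduced. Part (a) then transfers the statement from $Y$ to the framed scheme, and part (b), using that $\upsilon$ is smooth surjective, descends it to $c_{1,\ldots,i-1}\sigmahat_m$. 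The projective case follows as at the end of Proposition~\ref{ref:unrestrictionAsBlowp:prop}, pulling back along $\tspace\setminus\{0\}\to\PP(\tspace)$.

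The main obstacle is the comparison of the minimal border rank locus upstairs with the strict transform. One needs that it contains no component inside the exceptional divisor and that reducedness is preserved. I would handle this with the irreducibility/density argument via the smooth equivalence and the Cartier-divisor criterion of Proposition~\ref{p:bl_in_unres}(b): on the reduced scheme $c_{1,\ldots,i}\sigmahat_m$, $\det\varphi_i^\vee$ is a nonzerodivisor exactly when no component lies in its zero locus.
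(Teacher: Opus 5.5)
Your core argument is essentially the paper's. You identify the blowup of $c_{1,\ldots,i-1}\sigmahat_m$ with the scheme-theoretic closure of the part of $c_{1,\ldots,i}\sigmahat_m$ lying over $D(\det\varphi_i^\vee)$, using Proposition~\ref{p:bl_in_unres} and the Blowup Closure Lemma. You then prove this part is dense via the framed smooth equivalence. There it becomes the nonempty open condition ``$\varphi_i$ invertible'' on an irreducible open subset of $\sigmahat_m^{W_1-,\ldots,W_i-\mathrm{conc}}\times\prod_j\Hom(W_j,V_j)$. The paper phrases this last step as density of the $V_i$-concise tensors in $\sigmahat_m$, which is the same idea.

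Your density argument is already carried out on $c_{1,\ldots,i}\sigmahat_m$ itself. So the detour through the local model $Y$ and the analogue of Lemma~\ref{l:pullback_flat} is valid but superfluous. Applying Proposition~\ref{p:bl_in_unres} to $\Tppar{1,\ldots,i-1}$ restricted to $c_{1,\ldots,i-1}\sigmahat_m$ finishes directly.

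One statement in your proposal is false and must be removed: the claimed compatibility that a point lies in $c_{1,\ldots,i}\sigmahat_m$ \emph{if and only if} its image lies in $c_{1,\ldots,i-1}\sigmahat_m$. Only the ``only if'' direction holds, since restriction can only lower border rank. No limit argument gives the converse, because over a non-$V_i$-concise point the fibre of $\pi_i$ contains arbitrary unrestrictions.

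For a counterexample, take $d=i=3$, $m=3$, $\dim V_\bullet=3$ and $T'=(u_1\otimes u_1'+u_2\otimes u_2'+u_3\otimes u_3')\otimes v$. This tensor gives a point of $c_{1,2}\sigmahat_3$. The fibre of $\pi_3$ over it consists of all $3$-dimensional $U_3^\vee$ containing the identity. For $U_3^\vee=\spann{\mathrm{Id},A,B}$ with $[A,B]\neq0$, the unrestriction is $1$-generic with non-commuting slices. It therefore violates Strassen's equations and has border rank $>3$.

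If your claim held, $c_{1,\ldots,i}\sigmahat_m$ would be the whole total transform $\pi_i^{-1}(c_{1,\ldots,i-1}\sigmahat_m)$, contradicting your own density step. The content of the proposition is precisely that the minimal border rank condition cuts away these extra components inside the exceptional divisor.

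What your argument actually uses is weaker and true. First, $c_{1,\ldots,i}\sigmahat_m\subseteq\pi_i^{-1}(c_{1,\ldots,i-1}\sigmahat_m)$, which follows from the correct direction together with reducedness. Second, the two agree over $D(\det\varphi_i^\vee)$, where $\varphi_i$ is an isomorphism. With the claim replaced by these two facts, the proof goes through.
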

\begin{proof}
    \def\Tu{T^u}%
    Let $\Tu$ be the universal tensor on $X := \unrespar{1, \ldots ,i}{\Taf}
    \xrightarrow{\pihat_i} \unrespar{1, \ldots ,i-1}{\Taf}$. This tensor is
    concise on coordinates $1, \ldots, i$ and $i\geq 1$, so for every point
    $x\in X$ the tensor $\Tu|_{[x]}$ has border rank at least $m$.
    Let $X^{\brk = m}\subseteq X$ be the closed locus (with reduced structure), where
    $\Tu$ has minimal border rank. Let $c_iX^{\brk = m}\subseteq X^{\brk = m}$ be the open sublocus
    consisting of points $x$ such that $\pi(x)$ is also concise on the $i$-th coordinate.
    By Proposition~\ref{p:bl_in_unres} and the Blowup Closure Lemma, the blowup
    of $c_{1, \ldots, i-1}\sigmahat_m$ is equal to the closure of $c_iX^{\brk = m}$ in $X^{\brk = m}$.
    By definition, the variety $c_{1, \ldots ,i}\sigmahat_m$ is equal to $X^{\brk = m}$.
    It remains to show that $c_iX^{\brk = m}$ is schematically dense in $X^{\brk = m}$.
    Since both are reduced, it is enough to show that $c_iX^{\brk = m}$ is
    dense in $X^{\brk = m}$.

    By passing to the framed version as in the proof of Proposition~\ref{ref:unrestrictionAsBlowp:prop},
    we are reduced to showing that the usual secant variety $\sigmahat_m \subseteq \tconcspace$ is
    the closure of its sublocus that consists of tensors concise on the $i$-th coordinate. This is true.
\end{proof}

It is very useful to understand the exceptional locus of $c_{1, \ldots ,i}\sigma_m\to c_{1, \ldots ,i-1}\sigma_m$.
We give a partial result in this direction.
Let us recall the notion of restriction on $i$-th coordinate.
Let $T', T''\in \tspace$. Assume that $\dim V_i = m$.
We say that
\emph{$T''$ restricts to $T'$ on the $i$-th coordinate} if there is a
restriction $T''\geq T'$ given by linear map $\varphi_i\colon V_i\to V_i$. We denote this relation by $T'' \geq_i T'$.
We say that $T'$ \emph{has a unique $i$-th minimal border rank unrestriction} if it admits an unrestriction $T''$
concise on the $i$-th coordinate such that $\brk(T'') = m$ and any two such unrestrictions are isomorphic tensors.

\begin{proposition}\label{ref:uniqueUnrestriction:prop}
    Assume $\dim V_i = m$.
    Let $T'\in \tspace$ be a tensor that has a unique $i$-th minimal border rank unrestriction $T''\in \tspace$ and assume that
    \begin{equation}\label{eq:orbitJump}
        \dim\left( \GLV\cdot T' \right) = \dim \left( \GLV \cdot T'' \right) - 1.
    \end{equation}
    Let $T\colon \LL\to \tspacecurly$ be any family of tensors on a scheme $X$ and
    consider $\rho\colon (\unrespar{i}{T})^{\brk = m}\to X$. Then for every $x\in X$ such that $T|_{[x]}\simeq T'$,
    the fibre $\rho^{-1}([x])$ is finite.
\end{proposition}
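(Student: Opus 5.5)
By Example~\ref{ex:fibre} and Lemma~\ref{ref:baseChangeUnres:lem}, the fibre $\rho^{-1}([x])$ is the minimal border rank locus of $\unrespar{i}{T|_x}$. Choosing trivialisations at $x$ identifies $T|_x$ with $T'\in\tspace$, so we may assume $X=\Spec\kk$ and $T=T'$. By Proposition~\ref{ref:functorOfUnrestrictions:prop}, a $\kk$-point of $F:=(\unrespar{i}{T'})^{\brk=m}$ is a pair consisting of an $m$-dimensional subspace $U_i^{\vee}\subseteq \Vhat$ and a map $\varphi_i\colon U_i\to V_i$, such that the tensor $\Tppar{i}\in U_i\otimes\Vhat$ is $U_i$-concise, has border rank $m$, and satisfies $\varphi_i(\Tppar{i})=T'$.

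Choosing an isomorphism $U_i\simeq V_i$ (possible since $\dim V_i=m$) turns $\Tppar{i}$ into an $i$-th minimal border rank unrestriction of $T'$. By the uniqueness hypothesis this tensor is isomorphic to $T''$. The point of $F$ does not depend on the chosen identification, so $F$ is the set of pairs $(g\cdot T'', \psi)$ with $\psi(g\cdot T'')=T'$, taken modulo $\GL(V_i)$ acting simultaneously on the tensor and on the source of $\psi$. Concretely, we would parametrise $F$ by the incidence variety
\[
    Z = \{(g,\psi)\in \GLV\times \End(V_i)\ :\ \psi(g\cdot T'')=T'\}.
\]
This comes with a surjection $Z\to F$ whose fibres are torsors for $\GL(V_i)$, suitably twisted by the stabiliser of $T''$. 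The task is then to bound $\dim Z$.

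The key step, and the one we expect to be the main obstacle, is a dimension count. The relevant quantities are as follows.
\begin{itemize}
    \item Since $T''$ is $V_i$-concise, $\psi$ is uniquely determined by $g\cdot T''$: if two maps agree on a $V_i$-concise tensor, their difference kills it, hence is zero. So $Z$ embeds into the orbit $\GLV\cdot T''$ modulo the part of $\GL(V_i)$ acting, which has dimension $\dim(\GLV\cdot T'')-\dim(\text{orbit of }\GL(V_i))$.
    \item Set-theoretically, $F$ is the set of $\Vhat$-translates $h\cdot T''$ (with $h\in \prod_{j\ne i}\GL(V_j)$) that restrict to $T'$ on coordinate $i$, taken up to $\GL(V_i)$. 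Equivalently, $F$ is the set of subspaces $(h\cdot T'')(V_i^\vee)\subseteq\Vhat$ containing $T'(V_i^\vee\otimes\LL)$.
\end{itemize}

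We would argue by contradiction. Suppose $F$ has a curve $C$ through some point. The general point of $C$ gives tensors $h_t\cdot T''$, each restricting to $T'$, with varying image subspaces. We then compare orbits. The $\GLV$-orbit of $T'$ consists of restrictions of the orbit of $T''$. Consider the map
\[
    \Phi\colon \GLV\times \End(V_i)^{\mathrm{rk}\,\text{fixed}}\to \tspace,\qquad (g,\psi)\mapsto \psi(g\cdot T''),
\]
restricted to the locus of $\psi$ of the same rank type as in $F$. The image of $\Phi$ lies in $\overline{\GLV\cdot T'}$. The fibre of $\Phi$ over $T'$ contains the preimage of $C$, and it must have dimension at least $\dim\GL(V_i)+1$ plus the stabiliser of $T''$. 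On the other hand, the source has dimension $\dim(\GLV\cdot T'')+\dim\mathrm{Stab}+\dim\{\psi\}$. To make this sharp, we would use that the non-invertible $\psi$ appearing in $F$ form a family whose general member has corank $1$: a general point of $F$ lies over $T'$, which is not $V_i$-concise, and the one-dimensional drop in~\eqref{eq:orbitJump} forces this. Then $\dim\{\psi\}=m^2-1$ modulo $\GL(V_i)$, and the count gives
\[
    \dim \overline{\GLV\cdot T'}\le \dim(\GLV\cdot T'')-1-1,
\]
contradicting~\eqref{eq:orbitJump}. Hence $F$ has no curves and is finite.

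The delicate points are controlling the rank of $\psi$ along $F$ and correctly accounting for stabilisers. We would try first to handle both uniformly via the blowup description of Proposition~\ref{p:bl_in_unres}. There, the locus where $\det\varphi_i^\vee$ vanishes is a Cartier divisor, so the exceptional locus over the orbit of $T'$ has dimension at most $\dim(\GLV\cdot T'')-1$. It surjects onto $\GLV\cdot T'$, whose dimension is exactly $\dim(\GLV\cdot T'')-1$, so its fibres over that orbit must be finite.
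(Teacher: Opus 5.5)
Your dimension count through $\Phi$ is correct and is essentially the paper's proof: the paper passes to the universal family on $\tspace$ and works in the framed unrestriction scheme with the irreducible locus $\mathrm{Res}(T'')\simeq \Hom(W_i,V_i)\times(\text{orbit of }T'')$, of dimension $\dim(\GLV\cdot T'')+m^2$; it uses uniqueness to place the preimage of $\GLV\cdot T'$ inside this locus as a principal $\GL(W_i)$-bundle over $\rho^{-1}(\GLV\cdot T')$, and gets the crucial $-1$ because that preimage lies where the restriction map is not invertible, which is exactly what your restriction to non-invertible $\psi$ achieves. Two simplifications: you only need $\dim\{\det\psi=0\}=m^2-1$ together with $\GLV$-equivariance of $\Phi$, so neither the corank-exactly-one claim nor the image-in-orbit-closure claim is required; and your blowup shortcut only gives the bound $\dim(\GLV\cdot T'')-1$ once you intersect the exceptional divisor with the irreducible locus of unrestrictions isomorphic to $T''$, which is precisely the paper's argument.
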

\begin{proof}
    The claim is local on $X$.
    Choose an open neighbourhood $x\in U \subseteq X$ which trivialises $\LL$ and $\VV_1, \ldots ,\VV_d$,
    as in Remark~\ref{ref:localFamilies:rem}. Replacing $X$ by $U$, we may assume that $\LL$, $\VV_1, \ldots ,\VV_d$
    are trivial and $T$ comes from a map $U\to \tspace$. Since the claim concerns the fibre, we may further replace
    $X$ by $\tspace$, so we are reduced to proving the claim for $(X, T)$ equal to $(\tspace, \Taf)$.

    Consider the unrestrictions $Y' := \unrespar{i}{\Taf}$, $Y := \unresfrpar{i}{\Taf}$, and their minimal border rank
    loci with natural maps $\rho\colon (Y')^{\brk =m}\to \tspace$, $p\colon Y^{\brk = m}\to (Y')^{\brk = m}\to \tspace$.
    Observe that $(Y')^{\brk = m}\to Y^{\brk = m}$ is a principal $\GL(W_i)$-bundle.
    Let $\mathrm{Res}(T'') \subset Y^{\brk = m}$ be the locally-closed locus of
    consisting of $(\Tpparfr{}, \varphifr_1)$, where $\Tpparfr{}$ is isomorphic
    to the unrestriction $T''$ from the statement. This locus is isomorphic to the product of $\Hom(W_i, V_i)$ and
    the $\GL(V_1) \times \ldots  \times\GL(V_{i-1})\times \GL(W_i) \times \GL(V_{i+1})\times\ldots \times \GL(V_d)$ orbit of $T''$.
    By assumption, the locus $\mathrm{Res}(T'')$ contains $p^{-1}(\GLV\cdot T')$, which is
    a principal $\GL(W_i)$-bundle over $\rho^{-1}(\GLV\cdot T')$.
    Putting this together, we obtain
    \[
        \dim \rho^{-1}(\GLV\cdot T') + \dim \GL(W_i) < \dim \mathrm{Res}(T'') = \dim (\GLV\cdot T'') + \dim \Hom(W_i, V_i).
    \]
    Taking into account~\eqref{eq:orbitJump} and $\dim V_i = m = \dim W_i$, the equality can only hold if
    $\dim \rho^{-1}(\GLV\cdot T') = \dim \GLV\cdot T'$. Using surjectivity, this
    implies that $\rho^{-1}(\GLV\cdot T')\to \GLV\cdot T'$ has finite fibres.
\end{proof}

\begin{theorem}\label{ref:uniqueUnrestrictionSecant:thm}
    Let $\dim V_i = m$. Assume that $x\in c_{1, \ldots ,i-1}\sigma_m$ is a normal point
    such that the corresponding tensor $T'$ has a unique $i$-th minimal border
    rank unrestriction $T''$ and assume that
    \[
        \dim\left( \GLV\cdot T' \right) = \dim \left( \GLV \cdot T'' \right) - 1.
    \]
    Then $\rho\colon c_{1, \ldots ,i}\sigma_m\to c_{1, \ldots ,i-1}\sigma_m$ is an isomorphism
    over a neighbourhood of $x$.
\end{theorem}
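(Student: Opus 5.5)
The plan is to combine Proposition~\ref{ref:secantAsBlowup:prop}, Proposition~\ref{ref:uniqueUnrestriction:prop} and Zariski's Main Theorem. By Proposition~\ref{ref:secantAsBlowup:prop}, the map $\rho\colon c_{1, \ldots ,i}\sigma_m\to c_{1, \ldots ,i-1}\sigma_m$ is a blowup. In particular it is projective and birational, with exceptional locus contained in the preimage of the locus where the tensor is not concise on the $i$-th coordinate. The goal is to show that $\rho$ is quasi-finite over a neighbourhood of $x$. Then $\rho$ is finite and birational there, and normality of $x$ makes it an isomorphism.

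\textbf{Step 1: quasi-finiteness at $x$.} Apply Proposition~\ref{ref:uniqueUnrestriction:prop} to the universal family $T := \Tppar{1, \ldots ,i-1}$ on $X := c_{1, \ldots ,i-1}\sigma_m$, which is a restriction of the family on $\unrespar{1, \ldots ,i-1}{\Tproj}$. By Lemma~\ref{ref:baseChangeUnres:lem} the unrestriction scheme is compatible with this base change. The minimal border rank locus of $\unrespar{i}{T}$ over $X$ agrees, at least set-theoretically, with $c_{1, \ldots ,i}\sigma_m$; both are reduced, since the latter is by definition the minimal border rank locus of $\unrespar{i}{\Tppar{1,\dots,i-1}}$ lying over points of $X$. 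With the assumptions on $T'$ and $T''$, Proposition~\ref{ref:uniqueUnrestriction:prop} gives that $\rho^{-1}(x)$ is finite.

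\textbf{Step 2: finiteness near $x$.} The map $\rho$ is proper, and the set of points of the source with positive-dimensional fibre is closed (upper semicontinuity of fibre dimension). Its image under the proper map $\rho$ is therefore a closed subset of $X$. This image does not contain $x$, so over some open neighbourhood $U\ni x$ the map $\rho$ is proper and quasi-finite, hence finite.

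\textbf{Step 3: conclusion.} Over $U$, the map $\rho$ is finite and birational: it is an isomorphism over the dense open locus where the tensor is $i$-concise, and the source is the closure of that locus by the proof of Proposition~\ref{ref:secantAsBlowup:prop}. Shrinking $U$ to the normal locus of $X$ (open if $X$ is excellent, which holds since $X$ is of finite type over $\kk$), Zariski's Main Theorem applies. Concretely, $\rho_*\OO$ is a finite $\OO_U$-algebra contained in the function field, hence equal to $\OO_U$ by normality. Therefore $\rho$ is an isomorphism over $U$.

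The main obstacle is the identification in Step 1. One must check that the minimal border rank locus of $\unrespar{i}{T}$ over $X$ is exactly $c_{1, \ldots ,i}\sigma_m$, rather than something containing extra components over the non-concise locus, and that Step 3 genuinely needs only the reducedness of the source. I would handle this by passing to the framed setting. There both loci are preimages of $\sigmahat_m$ concise on $W_1,\dots,W_{i-1}$ under smooth maps, by~\eqref{eq:smoothEquivalenceCsigmaPartial}, and the reduced structures match.
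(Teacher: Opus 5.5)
Your proposal is correct and follows the paper's proof. The paper also gets finiteness of $\rho^{-1}(x)$ from Proposition~\ref{ref:uniqueUnrestriction:prop}, birationality from Proposition~\ref{ref:secantAsBlowup:prop}, and concludes with Zariski's Main Theorem at the normal point $x$; your Steps~2--3 simply unpack that last step. The identification you flag as the main obstacle can be sidestepped by applying Proposition~\ref{ref:uniqueUnrestriction:prop} directly to $X = \unrespar{1, \ldots ,i-1}{\Tproj}$ with $T = \Tppar{1, \ldots ,i-1}$, whose $i$-th minimal border rank locus is $c_{1, \ldots ,i}\sigma_m$ by definition, and which has the same fibre over $x$.
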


\begin{proof}
    From Proposition~\ref{ref:uniqueUnrestriction:prop}, we get that $\rho^{-1}(x)$ is finite.
    From Proposition~\ref{ref:secantAsBlowup:prop}, we get that $\rho$ is a birational map.
    Since $x$ is normal, Zariski's Main Theorem implies that $\rho$ is an isomorphism over a neighbourhood of $x$.
\end{proof}

\subsection{Geometry of $\csigma_2$}

In this section we apply the above results to the case of the (second) secant variety of the Segre embedding.

\newcommand{\TT}{\mathbb{T}}
\newcommand{\twx}{\widetilde{x}}
\newcommand{\twy}{\widetilde{y}}

Below, we consider varieties $c_{1,\dots,i}\sigma_2(\PP V_1 \times \dots \times \PP V_d)$, 
where all vector spaces $V_1, \dots, V_d$ have dimension $2$. We often fix bases 
$V_i = \langle x_i, y_i \rangle$. For a subset $I \subset \{1, \dots, d\}$, we write 
\[
x_I := \bigotimes_{i \in I} x_i \in \bigotimes_{i \in I} V_i \quad\text{and}\quad 
y_I := \bigotimes_{i \in I} y_i \in \bigotimes_{i \in I} V_i.
\]
It is well-known that there are only two isomorphism classes of concise tensors in $\sigma_2$.
This allows one to classify all isomorphism classes of tensors in $\sigma_2$, as observed in~\cite[Proposition 1.1]{Buczynski_Landbserg_sigma3}.
Namely, the tensors of border rank at most two have the following normal forms, 
    indexed by subsets $I \subset \{1, \dots, d\}$  (below $J$ denotes 
    the complement $\{1,\dots,d\} \setminus I$):
    \begin{itemize}
        \item $B_I = (x_I + y_I) \otimes x_J$, concise exactly on the coordinates $I$,
        \item $C_I = (\sum_{i \in I} x_{I \setminus \{i\}} \otimes y_i) \otimes x_J$,
            concise again on the coordinates indexed by $I$.
    \end{itemize}
    Observe that $C_I$ is the W-state tensored by $x_J$. In particular, $B_I$ degenerates to $C_I$.

\begin{proposition}\label{p:sigma2_iso_after_3_unres}
    The morphism $c\sigma_2 \to c_{1,\dots,i}\sigma_2$ is an isomorphism
    in the neighbourhood of all points $(\Tppar{1,\dots,i}, \varphi_\bullet) \in c_{1,\dots,i}\sigma_2$ 
    such that $\Tppar{1,\dots,i}$ is concise on at least three factors.
    In particular, the morphism $\csigma_2 \to c_{1,2,3}\sigma_2$
    is an isomorphism.
\end{proposition}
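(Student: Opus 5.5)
The plan is to factor $c\sigma_2\to c_{1,\dots,i}\sigma_2$ as the composite of the maps $c_{1,\dots,j}\sigma_2\to c_{1,\dots,j-1}\sigma_2$ for $j=i+1,\dots,d$, and to show that each of them is an isomorphism over a neighbourhood of any point whose tensor is concise on at least three factors. Conciseness only improves under unrestriction, since a restriction of a tensor is concise only where the tensor is. Hence the preimage of such a point in $c_{1,\dots,j}\sigma_2$ again has tensor concise on at least three factors, so an induction on $j$ is possible. For the single step, fix $j$ and a point $x\in c_{1,\dots,j-1}\sigma_2$ with tensor $T'$, concise on a set $I$ with $|I|\geq 3$. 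By the normal forms, $T'\simeq B_I$ or $T'\simeq C_I$.

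\emph{Case $j\in I$.} Here $T'$ is already $V_j$-concise, so $x$ lies outside the centre $\Z(\Lambda^2 T'_{\VV_j})$ of the blowup in Proposition~\ref{ref:secantAsBlowup:prop}. A blowup is an isomorphism away from its centre, so this case is immediate.

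\emph{Case $j\notin I$.} I would apply Theorem~\ref{ref:uniqueUnrestrictionSecant:thm}, which requires three inputs.

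\textbf{Normality of $x$.} By the smooth equivalence \eqref{eq:smoothEquivalenceCsigmaPartial}, the local geometry of $c_{1,\dots,j-1}\sigma_2$ at $x$ is that of $\sigmahat_2$ at a tensor in $W_1\otimes\dots\otimes W_{j-1}\otimes V_j\otimes\dots\otimes V_d$ concise on $W_1,\dots,W_{j-1}$. Such a tensor is of the form $B_I$ or $C_I$ with $|I|\geq 3$. The known description of $\sigma_2$ (\cite{michalek_oeding_zwiernik}) gives that $\sigma_2$ is smooth, hence normal, at every tensor concise on at least three factors: its singular locus consists of $\sigma_1$ together with the loci of tensors concise on at most two factors. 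Since normality is preserved by smooth equivalence, $x$ is normal.

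\textbf{Uniqueness of the minimal border rank unrestriction on coordinate $j$.} A $V_j$-concise unrestriction $T''\geq_j T'$ of border rank $2$ is concise on $I\cup\{j\}$, so it is $B_{I\cup\{j\}}$ or $C_{I\cup\{j\}}$. I would check directly that the type is forced: $B_I$ admits only $B$-type unrestrictions and $C_I$ only $C$-type ones. The argument is that the restriction on coordinate $j$ does not change the slices in the remaining coordinates. On $I$ those slices detect whether the tensor lies in the orbit of $x_I+y_I$ or of the W-state, and since $|I|\geq 2$ the $2$-dimensional span of slices in the $I$-coordinates remains distinguishable.

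\textbf{The dimension jump \eqref{eq:orbitJump}.} This is a direct stabiliser computation. One computes $\dim \GLV\cdot B_I$ and $\dim\GLV\cdot C_I$ as functions of $|I|$ and checks that adding one concise coordinate raises the orbit dimension by exactly one. Heuristically, each new concise factor adds one to the dimension of the choice of a point of $\mathbb{P}V_j$ beyond the fixed $x_j$.

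Applying Theorem~\ref{ref:uniqueUnrestrictionSecant:thm} then gives an isomorphism over a neighbourhood of $x$, which completes the induction step. For the final claim, every tensor on $c_{1,2,3}\sigma_2$ is concise on coordinates $1,2,3$, so the neighbourhoods cover everything and $\csigma_2\to c_{1,2,3}\sigma_2$ is an isomorphism.

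The main obstacle is the uniqueness step together with the orbit dimension count. The place where $|I|\geq 3$, rather than $|I|\geq 2$, is needed most likely appears through normality or uniqueness. For instance, when $|I|=2$ the tensor in $I$ is a $2\times2$ matrix of full rank, and $B$ versus $C$ type is invisible, so $C_{I\cup\{j\}}$ can restrict to a rank-$2$ matrix just as $B$ does. I would therefore first verify carefully that for $|I|\geq 3$ the type is forced.
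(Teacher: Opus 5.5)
Your proposal is correct and follows the paper's proof. In the $V_j$-concise case you use that the blowup is an isomorphism off its centre; otherwise you verify the three hypotheses of Theorem~\ref{ref:uniqueUnrestrictionSecant:thm}: normality via the smooth equivalence and the description of $\sigma_2$, uniqueness from the normal forms $B_I, C_I$, and the orbit dimensions $d+1+|I|$ and $d+|I|$. Two small tightenings. It is simpler to cite normality of $\sigma_2$ directly, as the paper does, rather than a description of its singular locus (for $d=3$, $\sigma_2=\mathbb{P}^7$ is smooth everywhere). The uniqueness step is cleanest as: a nonzero non-injective restriction on coordinate $j$ sends $B_{I\cup\{j\}}$ to $B_I$ (or to a rank-one tensor) and $C_{I\cup\{j\}}$ to $C_I$ (or to a rank-one tensor), while $B_I\not\simeq C_I$ precisely when $|I|\geq 3$.
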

\begin{proof}
    The conciseness assumption means that we need to analyse $B_I$, $C_I$ with $|I|\geq 3$.
    The orbit of $B_{\{1, \dots, d\}}$ has dimension $2d+1$ since its closure is $\sigma_2$. 
    A direct calculation shows that the orbit of $C_I$ with $|I| = 3$ has dimension $d+3$. 
    Moreover, $C_I$ is a degeneration of $B_I$ for each $I$ and $B_{\{1,2,3\}} \leq B_{\{1,\dots, 4\}} 
    \leq \dots \leq B_{\{1,\dots, d\}}$. Thus, the orbit of $B_I$ has dimension $d + 1 + |I|$ 
    and the orbit of $C_I$ has dimension $d + |I|$. A non-zero restriction of a tensor with normal 
    form $B_I$ on $k$-th factor has normal form $B_I$ or $B_{I \setminus\{k\}}$, depending on the 
    rank of the restriction map and whether $k \in I$, and similarly for $C_I$.

    The variety $c_{1,\dots,i}\sigma_2$ is smoothly equivalent to an open subvariety 
    of $\sigma_2$ by~\eqref{eq:smoothEquivalenceCsigmaPartial}, so it is normal~\cite[Theorem 7.10]{michalek_oeding_zwiernik}.
    Let $(\Tppar{1,\dots,i}, \varphi_\bullet) \in c_{1,\dots,i}\sigma_2$ be a point with 
    $\Tppar{1,\dots,i}$ concise on at least three factors.
    If $\Tppar{1,\dots, i}$ is concise on the $(i+1)$-th factor, the fiber is one point because $\rho_{i+1}$ is an isomorphism over the 
    $V_{i+1}$-concise locus.   
    If $\Tppar{1,\dots, i}$ is not concise 
    on the $(i+1)$-th factor, then by the discussion above $\Tppar{1, \ldots ,i}$ satisfies the assumptions
    of Theorem~\ref{ref:uniqueUnrestrictionSecant:thm}.
\end{proof}

\begin{proposition}\label{ref:sigmaFixedPoints:prop}
    Let $\TT \subset \SLV$ be the diagonal torus. The $\TT$-action on $\csigma_2$ 
    has $2^d(2^{d-1} + 2d - 3)$ fixed points. In particular, $\csigma_2$ admits 
    a $\Gmult$-action with finitely many fixed points.
\end{proposition}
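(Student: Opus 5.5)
The plan is to locate the fixed points by pushing them down along $\rho\colon \csigma_2\to\sigma_2$, and then to count them fibre by fibre using the iterated tower $\csigma_2\to\dots\to c_{1}\sigma_2\to\sigma_2$.

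\emph{Reduction to one fibre.} Since $\rho$ is $\SLV$-equivariant, every $\TT$-fixed point of $\csigma_2$ lies over a $\TT$-fixed point of $\sigma_2\subseteq\mathbb{P}(\tspace)$. The weights of the monomial basis $x_A\otimes y_{A^c}$ under the diagonal torus of $\SLV$ are pairwise distinct. Hence the fixed points of $\mathbb{P}(\tspace)$ are exactly the $2^d$ coordinate points, and all of them lie in $\Seg\subseteq\sigma_2$. The Weyl group $(\mathbb{Z}/2)^d$, which swaps $x_i\leftrightarrow y_i$, normalises $\TT$ and permutes these $2^d$ points transitively. It therefore suffices to show that the fibre $\rho^{-1}([x_{1,\dots,d}])$ has exactly $2^{d-1}+2d-3$ fixed points, each of them isolated.

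\emph{First unrestriction step.} By Example~\ref{ex:fibre}, the fibre of $c_1\sigma_2\to\sigma_2$ over $[x_{1,\dots,d}]$ is a closed subscheme of the Grassmannian of $2$-planes in $\Vhatpar{1}$ containing $x_{2,\dots,d}$. This Grassmannian is $\mathbb{P}(\Vhatpar{1}/\spann{x_{2,\dots,d}})\simeq\mathbb{P}^{2^{d-1}-2}$, and I claim the fibre is all of it. Indeed, every such plane $\spann{x_{2,\dots,d},v}$ gives $\Tppar{1}=x_1\otimes x_{2,\dots,d}+y_1\otimes v$, which visibly has border rank $\le 2$. The $\TT$-fixed points of this projective space are the coordinate lines $v=y_S\otimes x_{S^c}$ for nonempty $S\subseteq\{2,\dots,d\}$. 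There are $2^{d-1}-1$ of them, and the corresponding tensor has normal form $B_{\{1\}\cup S}$.

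\emph{Remaining steps, case $|S|\ge2$.} Here $\Tppar{1}$ is concise on at least three factors. By Proposition~\ref{p:sigma2_iso_after_3_unres}, the map $\csigma_2\to c_1\sigma_2$ is an isomorphism near this point, so each such point lifts to exactly one fixed point.

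\emph{Remaining steps, case $S=\{j\}$.} Here $\Tppar{1}\simeq B_{\{1,j\}}$ is concise on only two factors. Let $k$ be the first index in the unrestriction order that is not concise. The fibre of the next step $c_{1,\dots,k}\sigma_2\to c_{1,\dots,k-1}\sigma_2$ over this point is computed exactly as in Example~\ref{ex:fibres}: it is a $\mathbb{P}^2$, namely the $2$-planes containing the appropriate $2$-dimensional flattening image inside a $4$-dimensional space. Its three coordinate points are the fixed points, and each yields a tensor concise on three factors, of type $B_{\{1,j,k\}}$ or $C_{\{1,j,k\}}$. After that, Proposition~\ref{p:sigma2_iso_after_3_unres} again makes all further steps isomorphisms. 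So each of the $d-1$ choices of $j$ contributes $3$ fixed points instead of $1$.

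\emph{Total.} The fibre contains $(2^{d-1}-1)+2(d-1)=2^{d-1}+2d-3$ fixed points, and multiplying by the $2^d$ base points gives the stated count. For the last claim, a generic one-parameter subgroup $\Gmult\subseteq\TT$ has the same fixed locus as $\TT$ on the projective variety $\csigma_2$, so it has finitely many fixed points.

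The main obstacle is the $|S|=1$ case. There one must check carefully, in the iterated setting where the ambient bundles are the universal $\UU_i$ rather than the $V_i$, that the fibre really is a full $\mathbb{P}^2$ lying entirely in the minimal border rank locus. One must also check that the only fixed points are its three coordinate points, which requires verifying that the relevant torus weights on that $4$-dimensional space are distinct. I would verify this in a local trivialisation near the point, reducing to the $d=3$ computation of Example~\ref{ex:fibres} tensored with the constant factor $x_J$.
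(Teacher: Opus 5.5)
Your outline is the paper's. You push fixed points down the tower by equivariance and reduce to the fibre over one of the $2^d$ coordinate points; your Weyl group argument for that reduction is a welcome addition. You then take the coordinate planes $\langle x_{2,\dots,d},\, y_S\otimes x_{S^c}\rangle$, use Proposition~\ref{p:sigma2_iso_after_3_unres} when $|S|\geq 2$, and find three points when $|S|=1$. The count is right, but two steps are wrong as written.

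First, for $d\geq 4$ the fibre of $c_1\sigma_2\to\sigma_2$ over $[x_{1,\dots,d}]$ is \emph{not} all of $\mathbb{P}(\Vhatpar{1}/\langle x_{2,\dots,d}\rangle)$. The tensor $\widetilde{x}_1\otimes x_{2,\dots,d}+\widetilde{y}_1\otimes v$ is $U_1$-concise. It therefore has border rank $2$ only if $\mathbb{P}\langle x_{2,\dots,d},v\rangle$ is a limit of secant lines of $\Seg(\mathbb{P}V_2\times\cdots\times\mathbb{P}V_d)$. That means the line joins $[x_{2,\dots,d}]$ to another point of this Segre or is tangent to it there. Such lines form a $(d-1)$-dimensional family, and $d-1<2^{d-1}-2$. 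This slip is harmless for the count: the weights on $\Vhatpar{1}$ are distinct, and the $2^{d-1}-1$ coordinate points give rank-two tensors, which is all you need.

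The genuine gap is the case $S=\{j\}$, where the same attitude would give a wrong answer. Write the tensor at the point as $w\otimes x_k\otimes x_{\mathrm{rest}}$, where $x_{\mathrm{rest}}$ is the product of the $x_i$ over the factors other than $1,j,k$. Here $w=\widetilde{x}_1\otimes\widetilde{x}_j+\widetilde{y}_1\otimes\widetilde{y}_j\in U_1\otimes F_j$, and $F_j$ is the current $j$-th factor ($V_j$ if $k=2$, $U_2$ if $j=2$). The $k$-th flattening image is the \emph{line} $\langle w\otimes x_{\mathrm{rest}}\rangle$, not a $2$-dimensional space. So the ambient fibre of the $k$-th unrestriction step is the space of $2$-planes through this line in a $2^{d-1}$-dimensional space, a $\mathbb{P}^{2^{d-1}-2}$. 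This coincides with the $\mathbb{P}^2$ of Example~\ref{ex:fibres} only for $d=3$.

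For $d\geq 4$ its $\mathbb{T}$-fixed locus is infinite. Since $w$ spans a $\mathbb{T}$-stable line, $\widetilde{x}_1\otimes\widetilde{x}_j$ and $\widetilde{y}_1\otimes\widetilde{y}_j$ have the same weight. Hence so do $\widetilde{x}_1\otimes\widetilde{x}_j\otimes\mu$ and $\widetilde{y}_1\otimes\widetilde{y}_j\otimes\mu$ for every monomial $\mu\neq x_{\mathrm{rest}}$ in the remaining factors, and each such pair gives a fixed $\mathbb{P}^1$. For the same reason, your proposed check that the weights on the $4$-dimensional space $U_1\otimes F_j$ are distinct fails. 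The correct check is on the $3$-dimensional quotient by $\langle w\rangle$, where the weights are $\deg w$ and $\deg w\pm 2e_j$.

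So the number three rests on an upper bound: every minimal border rank point of this fibre is a $2$-plane inside $U_1\otimes F_j\otimes x_{\mathrm{rest}}$. You only plan to verify the opposite inclusion, and ``reducing to $d=3$ tensored with the constant factor'' presupposes exactly this bound. The missing input, which the paper uses, is the normal form classification. A border rank two tensor concise on factor $k$ is of type $B_I$ or $C_I$ with $k\in I$. A nonzero restriction of it on factor $k$ is concise on $I$, on $I\setminus\{k\}$, or on no factor. Here the restriction is concise exactly on $\{1,j\}$, so $I=\{1,j,k\}$. The unrestriction is therefore non-concise on the remaining factors and lies in $U_1\otimes F_j\otimes U_k\otimes x_{\mathrm{rest}}$, and the $d=3$ computation applies.
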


The description of fixed points is based on the following observations:
\begin{itemize}
    \item The morphisms 
        \[ 
            \csigma_m = c_{1,\dots,d}\sigma_m \xrightarrow{\rho_d} c_{1,\dots,d-1}\sigma_m 
            \xrightarrow{\rho_{d-1}} \dots \xrightarrow{\rho_2} c_1\sigma_m \xrightarrow{\rho_1} \sigma_m
        \]
        are equivariant, so the image of a fixed point of $\csigma_m$ in any 
        $c_{1,\dots,i}\sigma_m$ is a fixed point there. This means that fixed points 
        of $c_{1,\dots,i+1}\sigma_m$ lie over fixed points of $c_{1,\dots,i}\sigma_m$,
        so we can determine them iteratively.
    \item The morphisms $\rho_i$ are projective, by Borel's fixed point theorem the fiber of 
        $\rho_i$ over a fixed point of $c_{1,\dots,i}\sigma_m$ must contain a fixed point of 
        $c_{1,\dots,i+1}\sigma_m$.
\end{itemize}
These observations combined with Proposition~\ref{p:sigma2_iso_after_3_unres} imply that 
the structure of fixed points of $\csigma_2$ is fully determined by the structure 
of fixed points of $c_{1,2,3}\sigma_2$. \\

The fixed points are distributed evenly between fibers over the $2^d$ basis simple tensors,
as these are the fixed points of the $\TT$-action on $\PP(\tspace)$.
We focus on the fiber over $[T] = [x_{\{1,\dots,d\}}] \in \sigma_2$. 
The action on $\tspace$ has one-dimensional eigenspaces, spanned by basis simple tensors. 
Thus, a subspace $U_1^\vee \in \rho_1^{-1}(T) \subset c_1\sigma_2$
is fixed if and only if $U_1^\vee =\langle x_{\{2,\dots,d\}}, x_I \otimes y_J\rangle$ 
for a proper subset $I \subsetneq \{2, \dots, d\}$, where $J = \{2,\dots, d\} \setminus I$. 
Note that the tensors $\twx_1 \otimes x_{\{2,\dots,d\}} + \twy_1 \otimes x_I \otimes y_J$
indeed have (border) rank two.

If $|J| \geq 2$, then the tensor corresponding to $U_1^\vee$ is isomorphic to $B_{\{1\} \cup I}$, 
so it is concise on at least three factors. In this case Proposition~\ref{p:sigma2_iso_after_3_unres} 
implies that the fiber over $U_1^\vee$ contains exactly one fixed point. This way we obtain $\sum_{k=2}^{d-1} 
\binom{d-1}{k} = 2^{d-1} - d$ fixed points. We call these \emph{points of height one}.

The only remaining case is $|J| = 1$. Let us focus on $J = \{2\}$.
The subspace $U_1^\vee$ corresponds to the tensor 
$(\twx_1 \otimes x_2 + \twy_1 \otimes y_2) \otimes x_{\{3,\dots, d\}}$, which is already 
$V_2$-concise. Thus, the fiber $\rho_2^{-1}(U_1^\vee, T)$ is one point $U_2^\vee$,
corresponding to the tensor 
$(\twx_1 \otimes \twx_2 + \twy_1 \otimes \twy_2) \otimes x_{\{3,\dots, d\}}$.
The description of normal forms of border rank two tensors from Proposition~\ref{p:sigma2_iso_after_3_unres}
shows that the tensor from the fiber $\rho_3^{-1}(U_2^\vee, U_1^\vee, T)$ have normal 
forms $B_{\{1,2,3\}}$ or $C_{\{1,2,3\}}$. In particular, they are concise only on the first three 
factors, so the fixed points $U_3^\vee \in \rho_3^{-1}(U_2^\vee, U_1^\vee, T)$ must come from 
fixed points in the corresponding fiber in $\csigma_2(\PP V_1 \times \PP V_2 \times \PP V_3)$.
In this special case one checks directly that the eigenspaces of action on 
$U_1 \otimes U_2 = \langle \twx_1 \otimes \twx_2, \twx_1 \otimes \twy_2,
\twy_1 \otimes \twx_2, \twy_1 \otimes \twy_2 \rangle$ are one-dimensional, 
spanned by these basis vectors, which gives three fixed points in 
$\rho_3^{-1}(U_2^\vee, U_1^\vee, T) \subset \csigma_2(\PP V_1 \times \PP V_2 \times \PP V_3)$: 
\[
    \spann{\twx_1 \otimes \twx_2 + \twy_1 \otimes \twy_2, \twx_1\otimes \twx_2},\
    \spann{\twx_1 \otimes \twx_2 + \twy_1 \otimes \twy_2, \twx_1\otimes \twy_2},\
    \spann{\twx_1 \otimes \twx_2 + \twy_1 \otimes \twy_2, \twy_1\otimes \twx_2}.
\]
Thus, each case $|J| = 1$ contributes three points, which we call \emph{points of height two}.
The total number of points
is $(2^{d-1} - d) + 3(d-1) = 2^{d-1} + 2d - 3$.

\subsubsection{Weights on tangent spaces}

Recall that $\TT \subseteq \SLV$ is the full diagonal torus, which is $d$-dimensional.
We fix weights so that $\deg(x_i) = -e_i$, $\deg(y_i) = e_i$.
For a subset $I\subseteq \{1, \ldots ,d\}$, we denote $e_I := \sum_{i\in I} e_i$.

We will compute the weights on the tangent spaces at fixed points that lie over $[T := x_{\{1, \ldots ,d\}}]\in \mathbb{P}\tspace$.
This tensors corresponds to an embedding $\LL|_{[T]}\into \tspace$. The weight of the $1$-dimensional space $\LL|_{[T]}$
is $\deg(x_{\{1, \ldots ,d\}}) = -e_{\{1, \ldots ,d\}}$, so the weight on $\LL^{\vee}|_{[T]}$ is $e_{\{1, \ldots ,d\}}$.

Consider first a $\TT$-fixed point of concise height one. By the previous section, every such point $[T']$ is
obtained from a $2$-dimensional subspace of $\Vhatpar{1}$ obtain as follows: let $I\subseteq \{2, \ldots , d\}$
be a subset with at least $2$ elements. Let $J := \left\{ 2, \ldots , d \right\}\setminus I$. Then the subspace is
spanned $x_I\otimes x_J = x_{I\cup J}$ and $y_I\otimes x_J$.
The torus $\TT$ acts naturally on $\Vhatpar{1}$, hence also on the universal subbundle $\UU^{\vee}_1$ of $\Gr(2, \Vhatpar{1})$.
Calculating the tangent weights directly on the point corresponding to the subspace is cumbersome, so we pass to
the framed setting.

\newcommand{\frone}{c_{1}\sigma_2^{\mathrm{fr}}}

Take $W_1 = \spann{\twx_1, \twy_1}$ to be a $2$-dimensional $\kk$-vector space with a $\TT$-action such that $\deg(\twx_1) + \deg(x_{I\cup J}) = 0$,
and $\deg(\twy_1) + \deg(y_{I}\otimes x_{J}) = 0$. Consider the bundle $\frone := \Iso(\OO_{c_1\sigma_2}\otimes W_1, \UU_1)$ over $c_1\sigma_2$ and
its point that yields an isomorphism of $W_1$ with $\UU_1|_{[T']}$. The preimage of
$[T']$ under this isomorphism is a tensor
\begin{equation}\label{eq:fixedHeightOne}
    \twx_1 \otimes x_{I\cup J} + \twy_1 \otimes y_I \otimes x_J,
\end{equation}
which is fixed under the natural $\TT$-action on $W_1\otimes \Vhatpar{1}$.

Before we proceed with calculations, let us remark that isomorphisms from
Proposition~\ref{p:sigma2_iso_after_3_unres} are equivariant. This means that if 
$\csigma_2 \to c_{1,\dots,i}\sigma_2$ is an isomorphism near a fixed point, then the
weights on the tangent space to $\csigma_2$ are the same as the weights on the tangent space 
to $c_{1,\dots,i}\sigma_2$, so we can make computations on $c_{1,\dots,i}\sigma_2$.
\begin{lemma}\label{ref:weightsHeightOne:lem}
    The weights of $\TT$-action on the tangent space to $\frone$ at~\eqref{eq:fixedHeightOne} are given by
    \begin{align*}
        2e_i,\ -2e_i, & \quad\mbox{where}\quad i\in I,\\
        2e_j,\ 2e_j, & \quad\mbox{where}\quad j\in J,\\
        0, 0,\ \deg(\twx_1)-\deg(\twy_1),\ \deg(\twy_1) - \deg(\twx_1)&,\ 2e_1,\ 2e_I,\ 2e_1 + 2e_I.
    \end{align*}
    Consequently, the weights on the tangent space to $\csigma_2$ are given by
    \begin{align}\label{eq:weightsHeightOne}
        2e_i,\ -2e_i, & \quad\mbox{where}\quad i\in I,\\\notag
        2e_j,\ 2e_j, & \quad\mbox{where}\quad j\in J,\\\notag
                    2e_1,\ 2e_I,\ 2e_1 + 2e_I. &
    \end{align}
\end{lemma}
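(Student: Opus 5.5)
The plan is to compute the tangent space to $\frone$ at the point~\eqref{eq:fixedHeightOne} through the projective smooth equivalence~\eqref{eq:smoothEquivalenceCsigmaPartial} for $i=1$. At our point the map $\alpha_1\colon \frone\to \sigmahat_2^{W_1\text{-}\mathrm{conc}}\subseteq W_1\otimes \Vhatpar{1}$ is $\TT$-equivariant and smooth. So the tangent space splits $\TT$-equivariantly into two summands, since $\TT$ is linearly reductive. The first summand is the tangent space to $\sigmahat_2$ at $\Tppar{1}=\twx_1\otimes x_{I\cup J}+\twy_1\otimes y_I\otimes x_J$. The second is the tangent space to the fibre of $\alpha_1$. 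As a dimension check, $\dim \frone = (2d+1)+4 = 2d+5$, which equals $2(d-1)+7$, the number of weights listed.

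For the first summand I use Terracini's lemma. Write $p=\twx_1\otimes x_{I\cup J}$ and $q=\twy_1\otimes y_I\otimes x_J$. These two points of the Segre variety differ in the coordinates $\{1\}\cup I$, which are at least three. Hence the affine cones of their tangent spaces, each of dimension $d+1$, meet only in $0$. Their sum has dimension $2d+2=\dim\sigmahat_2$, so it is the whole tangent space at this smooth point. By the choice of $\deg\twx_1,\deg\twy_1$ the vectors $p$ and $q$ have weight $0$, which gives the weights $0,0$. The remaining tangent directions replace one factor of $p$ or of $q$. At $p$, replacing $x_k$ by $y_k$ gives $2e_k$ for $k\in I\cup J$, and replacing $\twx_1$ by $\twy_1$ gives $\deg\twy_1-\deg\twx_1$. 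At $q$, replacing $y_i$ by $x_i$ gives $-2e_i$ for $i\in I$, replacing $x_j$ by $y_j$ gives $2e_j$ for $j\in J$, and replacing $\twy_1$ by $\twx_1$ gives $\deg\twx_1-\deg\twy_1$. This already yields the first two lines of the list, the two zeros and $\pm(\deg\twx_1-\deg\twy_1)$.

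For the second summand, the fibre of $\alpha_1$ over $\Tppar{1}$ is an open subset of $\mathbb{P}\Hom(W_1,V_1)$, because $\OO(1)$ is determined by the data. Its tangent space at $[\varphi_1]$, with $\varphi_1\colon\twx_1\mapsto x_1$ and $\twy_1\mapsto 0$, is $\Hom(\spann{\varphi_1},\Hom(W_1,V_1)/\spann{\varphi_1})$. Its weights are the differences between the weights of the other three matrix entries and that of $\varphi_1$:
\begin{itemize}
\item the entry $\twx_1\mapsto y_1$ gives $2e_1$;
\item the entry $\twy_1\mapsto x_1$ gives $\deg\twx_1-\deg\twy_1 = \deg(y_I\otimes x_J)-\deg(x_{I\cup J}) = 2e_I$;
\item the entry $\twy_1\mapsto y_1$ gives $2e_1+2e_I$.
\end{itemize}
This completes the list for $\frone$. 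I expect this bookkeeping to be the main obstacle. One must check that the twist by $\LL^\vee$ and the linearisation chosen on $W_1$ really make $\varphi_1$ and the point $\TT$-fixed with the stated weights. I would verify this by writing out the $\TT$-action on the functor of points from Proposition~\ref{ref:framedUnrestrictionPoints:prop}.

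For the consequence, $\upsilon\colon\frone\to c_1\sigma_2$ is a principal $\GL(W_1)$-bundle, and the $\GL(W_1)$-orbit through our point is $\TT$-stable. Its tangent space is $\mathfrak{gl}(W_1)$, whose weights are $0,0,\pm(\deg\twx_1-\deg\twy_1)$. Removing these gives the weights on $T\,c_1\sigma_2$. Here $|J|\geq 2$, so $\Tppar{1}$ is concise on at least three factors. Proposition~\ref{p:sigma2_iso_after_3_unres} then shows that $\csigma_2\to c_1\sigma_2$ is an equivariant isomorphism near this point, which yields~\eqref{eq:weightsHeightOne}.
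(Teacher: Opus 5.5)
Your proposal is correct and is essentially the paper's proof. You split the tangent space of $c_1\sigma_2^{\mathrm{fr}}$ along the smooth equivariant map $\alpha_1$ into two parts: the $\mathbb{P}\Hom(W_1,V_1)$ part, where your difference-of-weights computation is equivalent to the paper's weights $0,2e_1,2e_I,2e_1+2e_I$ on the twisted $\Hom(W_1,V_1\otimes\LL^{\vee}|_{[T]})$ with the zero discarded, and the $2d+2$ Terracini directions of $\sigmahat_2$; you then remove the four weights of the Lie algebra of $\GL(W_1)$ and invoke Proposition~\ref{p:sigma2_iso_after_3_unres}, exactly as the paper does. One small slip: in the lemma's notation $I$ is the set carrying the $y$'s, so it is $|I|\geq 2$, not $|J|\geq 2$, that makes \eqref{eq:fixedHeightOne} concise on the at least three factors $\{1\}\cup I$, as you correctly used in the Terracini step.
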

\begin{proof}
    The space $c_1\sigma_2$ is a quotient of $\frone$ by $\GL(W_1)$ and the weights on the tangent space to $\GL(W_1)$
    are $ 0, 0, \deg(\twx_1)-\deg(\twy_1), \deg(\twy_1) - \deg(\twx_1)$, so the latter statement follows from the former one.

    To compute the weights on $\frone$, we use the smooth map $\alpha_1$ from~\eqref{eq:smoothEquivalenceCsigmaPartial} to reduce to the absolute setting.
    In this setting, the variety $\frone$ is open inside the product
    of $\mathbb{P}\Hom(W_1, V_1\otimes \LL^{\vee}|_{[T]})$ and $\sigmahat_2 \into W_1 \otimes \Vhatpar{1}$.
    The first factor is a projectivisation of the affine space $\Hom(W_1, V_1\otimes \LL^{\vee}|_{[T]})$. The torus weight on $\LL^{\vee}|_{[T]}$
    is $e_1 + e_{I\cup J}$, so the torus weights on $V_1\otimes \LL^{\vee}|_{[T]}$ are $e_{I\cup J}$ and $2e_1 + e_{I\cup J}$. The torus weights on
    $W_1$ are $\deg(\twx_1) = e_{I\cup J}$ and $\deg(\twy_1) = -e_I + e_J$, so the torus weights on $\Hom(W_1, V_1\otimes \LL^{\vee}|_{[T]})$ are
    \[
        0,\ 2e_1,\ 2e_I,\ 2e_1 + 2e_I.
    \]
    We discard the zero weight, since we projectivise.
    
    Finally, we need to understand the tangent space to $\sigmahat_2$
    at~\eqref{eq:fixedHeightOne}. We already know that this is a smooth point
    (sse~\S\ref{ssec:singularitiesMain}),
    so it is easiest to avoid formal use of smooth equivalences from~\S\ref{ssec:smoothEquivalences}
    and just write down $\dim \sigmahat_2 = 2d+2$ linearly independent
    $\TT$-stable tangent directions. Two of them are induced by $\TT$-action and hence have weight $0$.
    The remaining $2d$ are given by adding
    \begin{itemize}
        \item $\varepsilon \twx_1\otimes x_{I\cup J \setminus \{i\}}\otimes y_i$, where $2\leq i\leq d$, this yields $d-1$ directions, the weights are $2e_i$,
        \item $\varepsilon \twy_1\otimes y_{I\cup\{i\}}\otimes x_{J \setminus \{i\}}$, where $i\in J$, this yields $|J|$ directions, the weights are $2e_i$,
        \item $\varepsilon \twy_1\otimes y_{I\setminus\{ i \}}\otimes x_{J\cup \{i\}}$, where $i\in I$, this yields $|I|$ directions, the weights are $-2e_i$,
        \item $\varepsilon \twy_1 \otimes x_{I\cup J}$, $\varepsilon \twx_1 \otimes y_I\otimes x_J$, this yields the remaining two directions, of
            weights $\deg(\twy_1) - \deg(\twx_1)$, $\deg(\twx_1) - \deg(\twy_1)$, respectively.
    \end{itemize}
    This concludes the computation.
\end{proof}

\newcommand{\frtwo}{c_{1,2}\sigma_2^{\mathrm{fr}}}

We pass to points of height two. Let $K := \left\{ 4, \ldots ,d \right\}$. As
explained in Proposition~\ref{ref:sigmaFixedPoints:prop}, there are, up to coordinate changes, three
possibilities.
All of them are obtained by starting from the already considered point $[T] = [x_1\otimes x_2\otimes x_3\otimes x_K]$ and afterwards considering
a subspace
\[
    \spann{x_2\otimes x_3,\ y_2\otimes x_3}\otimes x_K\subseteq \Vhatpar{1}.
\]
Let us take a vector space $W_1 = \spann{\twx_1, \twy_1}$
with weights $\deg(\twx_1) = -\deg(x_2\otimes x_3\otimes x_K)$, $\deg(\twy_1) = -\deg(y_2\otimes x_3\otimes x_K)$ and
frame the above subspace using $W_1$. We obtain a $\TT$-fixed tensor $(\twx_1\otimes x_2\otimes x_3 + \twy_1\otimes y_2 \otimes x_3)\otimes x_K$,
which is concise on the first two coordinates. For reference, we record the relevant weights
\[
        \begin{array}{c c}
            \deg(\twx_1),\ \deg(\twy_1) & e_2 + e_{\{3, \ldots ,d\}},\ -e_2 + e_{\{3, \ldots ,d\}}\\
            T\mathbb{P}\Hom(W_1, V_1\otimes \LL^{\vee}|_{[x_{1}\otimes x_2\otimes x_3\otimes x_K]}) & 2e_1,\ 2e_2, 2e_1 + 2e_2,\\
            T\GL(W_1) & 0,\ -2e_2,\ 2e_2,\ 0
        \end{array}
\]
We then frame the tensor on the second coordinate, using a vector space
$W_2 = \spann{\twx_2, \twy_2}$ with weights $\deg(\twx_2) = \deg(x_2)$, $\deg(\twy_2) = \deg(y_2)$, so that we consider
it as a point of $\frtwo$.
The three fixed points now correspond to the subspaces
\begin{align}
    \spann{\twx_1\otimes \twx_2 + \twy_1\otimes \twy_2,\ \twx_1\otimes \twx_2}\otimes x_K\subseteq W_{1,2} \otimes V_{4, \ldots ,d}.\label{eq:fixedHeightTwoTypeOne}\\
    \spann{\twx_1\otimes \twx_2 + \twy_1\otimes \twy_2,\ \twx_1\otimes \twy_2}\otimes x_K\subseteq W_{1,2} \otimes V_{4, \ldots ,d}.\label{eq:fixedHeightTwoTypeTwo}\\
    \spann{\twx_1\otimes \twx_2 + \twy_1\otimes \twy_2,\ \twy_1\otimes \twx_2}\otimes x_K\subseteq W_{1,2} \otimes V_{4, \ldots ,d}.\label{eq:fixedHeightTwoTypeThree}
\end{align}
Finally, we frame each of them using the vector space $W_3$, which will have different weights in each case.
The case~\eqref{eq:fixedHeightTwoTypeOne} is very similar to the previous one, as also here the full unrestriction is the unit tensor.
\begin{lemma}\label{ref:weightsHeightTwoTypeOne:lem}
    The weights of $\TT$-action on the tangent space to $\csigma_2$ at~\eqref{eq:fixedHeightTwoTypeOne} are given by
    \begin{equation}\label{eq:weightsHeightTwoTypeOne}
        2e_1,\ 2e_2,\ 2e_1 + 2e_2,\ 2e_3,\ 2e_3,\ 2e_2,\ -2e_2,\ 2e_k, 2e_k, \mbox{ for } k=4, \ldots d
    \end{equation}
\end{lemma}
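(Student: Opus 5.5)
The plan mirrors the proof of Lemma~\ref{ref:weightsHeightOne:lem}, now one level deeper. First I reduce to $c_{1,2,3}\sigma_2$. The full unrestriction at~\eqref{eq:fixedHeightTwoTypeOne} is the unit tensor on the first three coordinates tensored with $x_K$. It is therefore concise on three factors, so by Proposition~\ref{p:sigma2_iso_after_3_unres} the map $\csigma_2\to c_{1,2,3}\sigma_2$ is a $\TT$-equivariant isomorphism near this point. Hence it suffices to compute the weights on $T c_{1,2,3}\sigma_2$.

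Next I pass to the framed variety $c_{1,2,3}\sigma_2^{\mathrm{fr}}$, a principal $\GL(W_1)\times\GL(W_2)\times\GL(W_3)$-bundle over $c_{1,2,3}\sigma_2$. Via $\alpha_{1,2,3}$ from~\eqref{eq:smoothEquivalenceCsigmaPartial}, near our point it is open in the product of
\[
\mathbb{P}\Hom(W_1, V_1\otimes\LL^{\vee}|_{[T]}),\quad \Hom(W_2,V_2),\quad \Hom(W_3,V_3)
\]
and the cone $\sigmahat_2\subseteq W_1\otimes W_2\otimes W_3\otimes V_K$. The weights on $T c_{1,2,3}\sigma_2$ are then obtained by the following bookkeeping:
\begin{enumerate}
\item take the multiset of weights of the three $\Hom$ factors, discarding one zero weight because the first factor is projectivised;
\item add the weights of $T\sigmahat_2$ at the framed tensor;
\item subtract the weights of $\mathfrak{gl}(W_1)\oplus\mathfrak{gl}(W_2)\oplus\mathfrak{gl}(W_3)$.
\end{enumerate}
The weights for $W_1$, $\GL(W_1)$ and $W_2$ are already recorded above. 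For $W_3$ I choose weights so that the framed tensor $w\otimes\twx_1\otimes\twx_2\otimes x_K + w'\otimes\twy_1\otimes\twy_2\otimes x_K$ is $\TT$-fixed, in the same way $W_1$ was framed in~\eqref{eq:fixedHeightOne}. The $\Hom(W_i,V_i)$ weights are then differences of weights of $V_i$ and $W_i$.

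For $T\sigmahat_2$ at this smooth unit-type point I will write down $2d+2$ explicit $\TT$-eigendirections, exactly as in Lemma~\ref{ref:weightsHeightOne:lem}. Two come from the torus and have weight $0$. The others are $\varepsilon$-perturbations replacing one factor in either summand. This means changing $x_k$ to $y_k$ for $k\in K$, or changing one of the framed basis vectors in $W_1$, $W_2$ or $W_3$ to its partner. Summing everything gives a multiset of size
\[
(3 + 4 + 4) + (2d+2) - 12 = 2d+1 = \dim\csigma_2,
\]
by Corollary~\ref{ref:dimensionOfcsigma:cor}. This count is the first sanity check.

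The main obstacle is the bookkeeping in the last step. One has to check that the $\mathfrak{gl}(W_i)$ weights cancel precisely against the $W$-directions in $T\sigmahat_2$ and the matching $\Hom$ weights, leaving exactly~\eqref{eq:weightsHeightTwoTypeOne}. Concretely, I expect:
\begin{enumerate}
\item the factors $k\in K$ contribute $2e_k$ twice, once through $\Hom$-type directions and once from the perturbation $x_k\mapsto y_k$ in each summand;
\item the first factor contributes $2e_1, 2e_2, 2e_1+2e_2$ through $\mathbb{P}\Hom(W_1,\ldots)$;
\item the second and third factors contribute $2e_2, -2e_2, 2e_3, 2e_3$.
\end{enumerate}
Because $\twx_1\otimes\twx_2$ and $\twy_1\otimes\twy_2$ have equal weight, some eigenspaces are two-dimensional. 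In that case I would fix eigenbases carefully and compare characters (sums of $t^{\text{weight}}$) rather than individual vectors. This makes the cancellation a formal identity of characters, which I would verify last.
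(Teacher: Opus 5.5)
Your proposal is correct and is essentially the paper's proof. Reduce to $c_{1,2,3}\sigma_2$ and frame $W_3$ with both weights $-e_3$, so the point becomes $(\twx_1\otimes\twx_2\otimes\twx_3+\twy_1\otimes\twy_2\otimes\twy_3)\otimes x_K$; then add the weights of $\mathbb{P}\Hom(W_1,\ldots)$, $\Hom(W_2,V_2)$, $\Hom(W_3,V_3)$ and $T\sigmahat_2$, and subtract those of $\mathfrak{gl}(W_1)\oplus\mathfrak{gl}(W_2)\oplus\mathfrak{gl}(W_3)$.

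Your expected attributions need correcting when you do the final bookkeeping. There is no $\Hom$ factor for $k\in K$, so both copies of $2e_k$ come from $T\sigmahat_2$. $\Hom(W_2,V_2)$ cancels $\mathfrak{gl}(W_2)$ exactly. The surviving $2e_2,-2e_2$ come from the six $W$-perturbation directions of $T\sigmahat_2$ (weights $\pm2e_2,\pm2e_2,0,0$) after removing the $\pm2e_2$ of $\mathfrak{gl}(W_1)$, not from the second factor as you guessed.
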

\begin{proof}
    We follow the proof of Lemma~\ref{ref:weightsHeightOne:lem} and only highlight the changes.
    For framing, it is useful to choose a slightly different basis, so that we obtain a tensor
    \[
        (\twx_1 \otimes \twx_2 \otimes \twx_3 + \twy_1 \otimes \twy_2 \otimes \twy_3)\otimes x_K.
    \]
    The tangent space to $c_{1,2,3}\sigma_2^{\mathrm{fr}}$ at this point decomposes into the tangent
    spaces to $\mathbb{P}\Hom(W_1, V_1\otimes \LL^{\vee}|_{[x_{1}\otimes x_2\otimes x_3\otimes x_K]})$, $\Hom(W_2, V_2)$,
    $\Hom(W_3, V_3)$, and $\sigmahat_2$, and then we need to subtract the weights coming from the tangent 
    space to $\GL(W_1)$, $\GL(W_2)$, $\GL(W_3)$. Observe that the weights of $\GL(W_2)$ and $\Hom(W_2,V_2)$ are the same and
    we can discard them. The remaining weights are as follows
    \[
        \begin{array}{c c}
            \deg(\twx_3), \deg(\twy_3) & -e_3,\ -e_3\\
            T\Hom(W_3, V_3) & 0,\ 2e_3,\ 0, 2e_3\\
            T\sigmahat_2 & 0, 0, \pm 2e_2, \pm 2e_2, 0, 0, 2e_k, 2e_k\mbox{ for } k=4, \ldots ,d\\
            T\GL(W_3) & 0, 0, 0, 0
        \end{array}
    \]
    For the weights on $T\sigmahat_2$, we note that the first two zeros come from the torus action, the next $6$ entries
    are obtained by adding monomials
    \[
        \varepsilon \twy_1 \otimes \twx_2 \otimes \twx_3,\
        \varepsilon \twx_1 \otimes \twy_2 \otimes \twy_3,\
        \varepsilon \twx_1 \otimes \twy_2 \otimes \twx_3,\
        \varepsilon \twy_1 \otimes \twx_2 \otimes \twy_3,\
        \varepsilon \twx_1 \otimes \twx_2 \otimes \twy_3,\
        \varepsilon \twy_1 \otimes \twy_2 \otimes \twx_3,
    \]
    multiplied by $x_K$, while the
    remaining $2d-6$ entries are given by adding
    $\varepsilon\twx_{1,2,3} \otimes x_{\{K\setminus j\}}\otimes y_j$, $\varepsilon\twy_{1,2,3}\otimes x_{\{K\setminus j\}}\otimes y_j$,
    for $j\geq 4$.
\end{proof}
The cases~\eqref{eq:fixedHeightTwoTypeTwo}-\eqref{eq:fixedHeightTwoTypeThree}
are obtained from each other by coordinate change and we treat them together.
These cases are a bit different from the previous one, as the unrestriction is a W-state, hence in $\csigma_2$ there is also 
a tangent direction going outside the orbit. In both cases, these direction are supplied by the following
families of border rank two tensors, where $\lambda\in \kk$
\begin{align}\label{eq:normalDirection}
    (\twx_1\otimes \twx_2 \otimes \twx_3 + \twy_1 \otimes \twy_2 \otimes \twx_3 + \twx_1 \otimes \twy_2 \otimes \twy_3 + \lambda \twy_1 \otimes \twx_2\otimes \twy_3)\otimes x_K,\\\notag
    (\twx_1\otimes \twx_2 \otimes \twx_3 + \twy_1 \otimes \twy_2 \otimes \twx_3 + \twy_1 \otimes \twx_2 \otimes \twy_3 + \lambda \twx_1 \otimes \twy_2\otimes \twy_3)\otimes x_K.
\end{align}

\begin{lemma}\label{ref:weightsHeightTwoTypeTwo:lem}
    The weights of $\TT$-action on the tangent space to $\csigma_2$ at~\eqref{eq:fixedHeightTwoTypeTwo} are given by
    \begin{equation}\label{eq:weightsHeightTwoTypeTwo}
        2e_1,\ 2e_2,\ 2e_1 + 2e_2,\ 2e_3,\ 2e_2 + 2e_3,\ -2e_2,\ -4e_2, \mbox{ and }
        2e_k, 2e_k+2e_2,\mbox{ where }\quad k=4, \ldots d.
    \end{equation}
\end{lemma}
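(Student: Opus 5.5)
The plan is to follow the proof of Lemma~\ref{ref:weightsHeightTwoTypeOne:lem} step by step and to change only what the W-state forces. We work on the framed space $c_{1,2,3}\sigma_2^{\mathrm{fr}}$. By Proposition~\ref{p:sigma2_iso_after_3_unres}, the map $\csigma_2\to c_{1,2,3}\sigma_2$ is an equivariant isomorphism near our fixed point. So the tangent weights of $\csigma_2$ are those of $c_{1,2,3}\sigma_2$. These equal the weights of $c_{1,2,3}\sigma_2^{\mathrm{fr}}$ minus those of $T\GL(W_1)$, $T\GL(W_2)$ and $T\GL(W_3)$.

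Via the smooth map $\alpha_{1,2,3}$ of~\eqref{eq:smoothEquivalenceCsigmaPartial}, the tangent space of the framed variety splits into four pieces:
\begin{itemize}
\item $T\mathbb{P}\Hom(W_1,V_1\otimes\LL^\vee|_{[T]})$, with weights $2e_1,2e_2,2e_1+2e_2$, as recorded in the table before~\eqref{eq:fixedHeightTwoTypeOne};
\item $T\Hom(W_2,V_2)$, which cancels against $T\GL(W_2)$ as before;
\item $T\Hom(W_3,V_3)$;
\item $T\sigmahat_2$ at the fixed tensor in $W_{1,2,3}\otimes V_K$.
\end{itemize}
From these we subtract $T\GL(W_1)$, with weights $0,-2e_2,2e_2,0$, and $T\GL(W_3)$.

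First, fix $W_3=\spann{\twx_3,\twy_3}$ dual to the basis $\twx_1\otimes\twx_2+\twy_1\otimes\twy_2$, $\twx_1\otimes\twy_2$ of the subspace~\eqref{eq:fixedHeightTwoTypeTwo}. With this choice the framed tensor is the W-state
\[
(\twx_1\otimes\twx_2\otimes\twx_3+\twy_1\otimes\twy_2\otimes\twx_3+\twx_1\otimes\twy_2\otimes\twy_3)\otimes x_K .
\]
Requiring $\TT$-invariance, using $\deg\twx_2=-e_2$, $\deg\twy_2=e_2$ and the weights of $\twx_1,\twy_1$ from the table, determines $\deg\twx_3$ and $\deg\twy_3$. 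Their difference is $\pm 2e_2$, unlike in the type-one case. From this we get the weights of $T\Hom(W_3,V_3)$ and of $T\GL(W_3)$. The $T\GL(W_3)$ weights $0,0,\pm(\deg\twx_3-\deg\twy_3)$ largely cancel against the $\Hom(W_3,V_3)$ weights. What survives should be $2e_3$ together with the shifted weight $2e_2+2e_3$.

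Second, and this is the main obstacle, we list $2d+2=\dim\sigmahat_2$ linearly independent $\TT$-eigenvectors in the tangent space to $\sigmahat_2$ at the W-state tensor; this point is smooth by Theorem~\ref{ref:singularities:thm}. The W-state orbit in $W_{1,2,3}$ is not open in $\sigmahat_2$, so only the orbit directions come from adding monomials $\varepsilon\,w_1\otimes w_2\otimes w_3\otimes x_K$. These directions are the tangent space of the $\GL(W_1)\times\GL(W_2)\times\GL(W_3)$-orbit, which is one less than $\dim\sigmahat_2(W_{1,2,3})$. The missing direction is the normal one from~\eqref{eq:normalDirection}. Its weight is the weight of $\twy_1\otimes\twx_2\otimes\twy_3$, and we expect it to produce $-4e_2$ after the bookkeeping. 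The remaining $2(d-3)$ directions come from replacing $x_j$ by $y_j$ for $j\in K$ in each of two suitable terms. Because the W-state has no unit-tensor symmetry, we expect their weights to be $2e_k$ and $2e_k+2e_2$, rather than $2e_k$ twice. We will choose the two terms so that the resulting directions are independent modulo the orbit directions.

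Finally, we collect all weights and cancel the zeros and the $\pm2e_2$ pairs against $T\GL(W_1)$ and $T\GL(W_3)$. The result should be~\eqref{eq:weightsHeightTwoTypeTwo}. As a check, the list must have $2d+1=\dim\csigma_2$ entries (Corollary~\ref{ref:dimensionOfcsigma:cor}), which it does: seven explicit weights plus $2(d-3)$ from $K$.
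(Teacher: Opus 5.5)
Your skeleton is the same as the paper's. You pass to $c_{1,2,3}\sigma_2^{\mathrm{fr}}$, split off $T\mathbb{P}\Hom(W_1,V_1\otimes\LL^{\vee}|_{[T]})$, and cancel $T\Hom(W_2,V_2)$ against $T\GL(W_2)$. You frame $W_3$ dually to the given basis, which gives $\deg\twx_3=-e_3$ and $\deg\twy_3=-e_3-2e_2$. You take the normal direction $\twy_1\otimes\twx_2\otimes\twy_3\otimes x_K$, of weight $-4e_2$.

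\textbf{The gap} is in the remaining $2(d-3)$ tangent directions of $\sigmahat_2$. These are exactly where the new weights $2e_k+2e_2$ of the statement come from.
\begin{itemize}
\item Because the framed tensor is $\TT$-fixed, each of its three terms $\twx_1\otimes\twx_2\otimes\twx_3$, $\twy_1\otimes\twy_2\otimes\twx_3$, $\twx_1\otimes\twy_2\otimes\twy_3$ (times $x_K$) has weight $0$. Replacing $x_k$ by $y_k$ in any of them gives weight $2e_k$. So no choice of ``two suitable terms'' can produce $2e_k+2e_2$.
\item Worse, such single-term replacements are not tangent to $\sigmahat_2$ at this point at all. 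The $y_k$-part of the tangent space is two-dimensional.
\item It is spanned by the $\GL(V_k)$-orbit direction (replace $x_k$ by $y_k$ in the whole tensor, weight $2e_k$) and by $\twx_1\otimes\twy_2\otimes\twx_3\otimes y_k\otimes x_{K\setminus\{k\}}$.
\item Here $\twx_1\otimes\twy_2\otimes\twx_3\otimes x_K$ is the point of $\Seg$ at which the W-state is tangent. Adding it is the orbit direction $\twy_1\mapsto\twy_1+\varepsilon\twx_1$, of weight $2e_2$. Hence the vector above lies in the affine tangent space to $\Seg$ at that point, so it is tangent to $\sigmahat_2$. It is not a term of the tensor, and its weight is $2e_2+2e_k$.
\end{itemize}
Carried out as described, your mechanism would give $2e_k,2e_k$ as in the unit-tensor case. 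The values you state are only an expectation; the missing idea is this base point of the tangent line.

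Two smaller points in the bookkeeping. First, you never compute the seven orbit weights in $W_{1,2,3}\otimes x_K$. They are $0,0,0$ (rescaling the three terms), then $-2e_2$ three times (from $\twx_1\mapsto\twx_1+\varepsilon\twy_1$, $\twy_2\mapsto\twy_2+\varepsilon\twx_2$, $\twx_3\mapsto\twx_3+\varepsilon\twy_3$), and $2e_2$. Second, $T\Hom(W_3,V_3)$ has weights $0,2e_3,2e_2,2e_2+2e_3$ while $T\GL(W_3)$ has $0,0,\pm 2e_2$. So after cancelling, a $0$ and a $-2e_2$ from $T\GL(W_3)$ must still be removed from the $\sigmahat_2$ part, together with $T\GL(W_1)$. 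Doing so leaves exactly $-2e_2,-4e_2,2e_k,2e_k+2e_2$. Together with $2e_1,2e_2,2e_1+2e_2,2e_3,2e_2+2e_3$ this gives the claimed list.
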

\begin{proof}
    The strategy is once more the same as in
    Lemma~\ref{ref:weightsHeightTwoTypeOne:lem}, with the one change being
    that we need to include the weight coming from~\eqref{eq:normalDirection}
    among the tangent weights in the $\sigmahat_2$ part and calculate the orbit weights. We record the weights as follows
    \[
        \begin{array}{c c}
            \deg(\twx_3), \deg(\twy_3) & -e_3,\ -e_3 - 2e_2\\
            T\Hom(W_3, V_3) & 0,\ 2e_3,\ 2e_2, 2e_2 + 2e_3\\
            T\sigmahat_2 & 0, 0, 0, -2e_2, -2e_2, \pm2e_2, \mathbf{-4e_2}, 2e_k, 2e_k+2e_2\mbox{ for } k=4, \ldots ,d\\
            T\GL(W_3) & 0,\ -2e_2,\ 2e_2,\ 0
        \end{array}
    \]
    In the orbit we have $3$ directions coming from adding an element of
    \[
        \spann{\twx_1\otimes \twx_2 \otimes \twx_3, \twy_1 \otimes \twy_2 \otimes \twx_3, \twx_1 \otimes \twy_2 \otimes \twy_3}\otimes x_K.
    \]
    They have weight $0$. Next, there are three directions coming from
    coordinate changes $\twx_1 \mapsto \varepsilon \twy_1$, $\twy_2 \mapsto
    \varepsilon \twx_2$, $\twx_3 \mapsto \varepsilon \twy_3$. They all have weight
    $-2e_2$. There is a direction coming from adding $\varepsilon \twx_1\otimes \twy_2 \otimes \twx_3 \otimes x_K$. It has weight $2e_2$.
Additionally, we have $d-4$ directions coming from adding $\varepsilon \twx_{1}\otimes \twy_{2}\otimes \twx_{3}\otimes y_k \otimes x_{K\setminus\{k\}}$.
    These yield weights $2e_k+2e_2$, $4\leq k\leq d$. We also have $d-4$ directions coming from the coordinate changes $x_i\mapsto \varepsilon y_i$.
    Also these yield weights $2e_k$, $4\leq k\leq d$.

    The last tangent direction comes from~\eqref{eq:normalDirection}, it is bolded above. It has weight $-4e_2$.
\end{proof}

\begin{lemma}\label{ref:weightsHeightTwoTypeThree:lem}
    The weights of $\TT$-action on the tangent space to $\csigma_2$ at~\eqref{eq:fixedHeightTwoTypeThree} are given by
    \begin{equation}\label{eq:weightsHeightTwoTypeThree}
        2e_1,\ 2e_2,\ 2e_2,\ 2e_1 + 2e_2,\ 2e_3,\ -2e_2 + 2e_3,\ 4e_2 \mbox { and }
        2e_k, 2e_k-2e_2,\mbox{ where }\quad k=4, \ldots d.
    \end{equation}
\end{lemma}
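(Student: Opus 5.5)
The plan is to repeat the computation of Lemma~\ref{ref:weightsHeightTwoTypeTwo:lem} verbatim, with the roles of the two basis vectors on the second coordinate interchanged. The cases \eqref{eq:fixedHeightTwoTypeTwo} and \eqref{eq:fixedHeightTwoTypeThree} differ by the coordinate change $\twx_2\leftrightarrow\twy_2$ on $W_2$, together with the induced change on $W_1$. On weights this should act essentially by $e_2\mapsto -e_2$ on the part of the tangent space computed in the framed $\sigmahat_2$-factor. The weights of the $\PP\Hom(W_1,\ldots)$-factor, namely $2e_1, 2e_2, 2e_1+2e_2$, are fixed by the first two unrestriction steps and are \emph{not} flipped. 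This explains why \eqref{eq:weightsHeightTwoTypeThree} is not simply \eqref{eq:weightsHeightTwoTypeTwo} with $e_2$ negated.

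Concretely, I will frame the third coordinate so that the point becomes the W-state
\[
    (\twx_1\otimes\twx_2\otimes\twx_3+\twy_1\otimes\twy_2\otimes\twx_3+\twy_1\otimes\twx_2\otimes\twy_3)\otimes x_K,
\]
which is the second family in \eqref{eq:normalDirection} at $\lambda=0$. Torus invariance forces $\deg\twx_3=-e_3$ and $\deg\twy_3=-e_3+2e_2$. The tangent space to $c_{1,2,3}\sigma_2^{\mathrm{fr}}$ splits, via the smooth map $\alpha_{1,2,3}$ from \eqref{eq:smoothEquivalenceCsigmaPartial}, into four pieces: the tangent space to $\PP\Hom(W_1,V_1\otimes\LL^\vee)$, the spaces $\Hom(W_2,V_2)$ and $\Hom(W_3,V_3)$, and $T\sigmahat_2$ at the W-state. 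From this we subtract $T\GL(W_1)$, $T\GL(W_2)$ and $T\GL(W_3)$.

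The pieces are computed as follows.
\begin{enumerate}
    \item As before, $\Hom(W_2,V_2)$ and $\GL(W_2)$ carry the same weights and cancel.
    \item $\Hom(W_3,V_3)$ has weights $0,\ 2e_3,\ -2e_2,\ -2e_2+2e_3$.
    \item $T\GL(W_3)$ has weights $0,\ 0,\ \pm2e_2$.
    \item For $T\sigmahat_2$ I list $2d+2$ independent $\TT$-stable directions, mirroring the previous proof. There are three weight-$0$ directions from adding elements of $\spann{\twx_1\twx_2\twx_3,\ \twy_1\twy_2\twx_3,\ \twy_1\twx_2\twy_3}\otimes x_K$. There are three directions from the elementary coordinate changes that preserve the W-state shape. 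There is one direction from adding the missing ``middle'' monomial. There are $2(d-4)$ directions from $x_k\mapsto\varepsilon y_k$ and from adding $\varepsilon(\cdot)\otimes y_k$, with weights $2e_k$ and $2e_k-2e_2$. Finally, there is the normal direction from \eqref{eq:normalDirection}, which adds $\lambda\,\twx_1\otimes\twy_2\otimes\twy_3\otimes x_K$ and has weight $4e_2$.
\end{enumerate}

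The remaining step is bookkeeping. I will combine the three $\PP\Hom(W_1,\ldots)$ weights $2e_1, 2e_2, 2e_1+2e_2$ and the weights of $\Hom(W_3,V_3)$ with the $T\sigmahat_2$ weights. From this I remove the multiset of weights of $\GL(W_1)$ and $\GL(W_3)$, namely $0,0,\pm2e_2$ for each. The target is exactly the list \eqref{eq:weightsHeightTwoTypeThree}.

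The main obstacle is getting the signs right in $T\sigmahat_2$. Each monomial's weight must be computed from the framing weights $\deg\twx_1,\deg\twy_1,\deg\twx_2,\deg\twy_2,\deg\twx_3,\deg\twy_3$ rather than guessed from symmetry. One must also check that the three coordinate-change directions, the middle monomial and the normal direction are linearly independent modulo the orbit. As a sanity check, I will verify that the total number of weights equals $\dim\csigma_2=2d+2+(d-1)\cdot 0+\ldots$ as given by Corollary~\ref{ref:dimensionOfcsigma:cor} with $m=2$, $\dim V_\bullet=2$, that is $2d+1$. I will also check that the list matches \eqref{eq:weightsHeightTwoTypeTwo} after flipping only the $\sigmahat_2$- and $W_3$-contributions.
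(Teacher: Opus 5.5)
Your proposal is correct and follows the paper's own proof essentially line by line. It uses the same framing $\deg\twx_3=-e_3$, $\deg\twy_3=-e_3+2e_2$, and the same decomposition into $T\PP\Hom(W_1,V_1\otimes\LL^\vee)$, $\Hom(W_3,V_3)$ and $T\sigmahat_2$ at the W-state, minus $T\GL(W_1)$ and $T\GL(W_3)$, with $\Hom(W_2,V_2)$ cancelling $\GL(W_2)$ and the normal direction of weight $4e_2$ taken from \eqref{eq:normalDirection}.

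For the bookkeeping, note two things. The three coordinate changes $\twy_1\mapsto\varepsilon\twx_1$, $\twx_2\mapsto\varepsilon\twy_2$, $\twx_3\mapsto\varepsilon\twy_3$ each have weight $2e_2$, and the base monomial $\twy_1\otimes\twx_2\otimes\twx_3\otimes x_K$ has weight $-2e_2$; this is what makes the multiset cancel to the stated list. Also, $K=\{4,\dots,d\}$ contributes $2(d-3)$ directions, not $2(d-4)$, which is what your total of $2d+2$ requires.
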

\begin{proof}
    This case is resolved exactly as the previous one. The weights are
    \[
        \begin{array}{c c}
            \deg(\twx_3), \deg(\twy_3) & -e_3,\ -e_3 + 2e_2\\
            T\Hom(W_3, V_3) & 0,\ 2e_3,\ -2e_2, -2e_2 + 2e_3\\
            T\sigmahat_2 & 0, 0, 0, 2e_2, 2e_2, 2e_2, -2e_2, \mathbf{4e_2}, 2e_k, 2e_k-2e_2\mbox{ for } k=4, \ldots ,d\\
            T\GL(W_3) & 0,\ -2e_2,\ 2e_2,\ 0
        \end{array}
    \]
    The three directions coming from
    coordinate changes are $\twy_1 \mapsto \varepsilon \twx_1$, $\twx_2 \mapsto
    \varepsilon \twy_2$, $\twx_3 \mapsto \varepsilon \twy_3$, with weights $2e_2$, $2e_2$, $2e_2$.
    There is a direction coming from adding $\varepsilon \twy_1\otimes \twx_2 \otimes \twx_3 \otimes x_K$. It has weight $-2e_2$.
    The bold entry comes from~\eqref{eq:normalDirection}. In this case, we have
    $d-4$ directions coming from adding $\varepsilon \twy_{1}\otimes
    \twx_{2,3}\otimes y_k \otimes x_{K\setminus\{k\}}$. These
    yield weights $2e_k-2e_2$, $4\leq k\leq d$.
\end{proof}

\newcommand{\Knot}{K_{0}(\operatorname{Var}_{\kk})}

The following proposition gives the classes of $\csigma_2$ and $\sigma_2$ in the ring $\Knot$, denoted by
$[\csigma_2]$, $[\sigma_2]$, respectively; these are so-called motives. Computing the motive of a smooth projective variety, such as $\csigma_2$, allows to
compute the Betti numbers and the Hodge diamond, in particular. Computing the motive of a singular variety (not necessarily
projective), such as $\sigma_2$, allows one to compute the number of $\mathbb{F}_p$-points of this variety.
We use the notation $\Lmot{} := [\mathbb{A}^1_{\kk}]$.
\begin{proposition}[Betti numbers of $\csigma_2$ and $\sigma_2$]\label{ref:motivecsigma2:prop}
    Let $\csigma_2$ be the second concise secant and let $\sigma_2\subseteq \mathbb{P}(\tspace)$ be the ordinary secant. Their motives are
    given by the formula
    \begin{align*}
        [\csigma_2] =  (1+\Lmot)&\cdot (1+\Lmot^2)\cdot \left( \frac{1}{2}\left( (1+\Lmot)^{2(d-1)} + (1+\Lmot^2)^{d-1} \right) + \Lmot{}\cdot (1+\Lmot{})^{d-1}\cdot \left( 1+\Lmot{}+\Lmot{}^2+ \ldots +\Lmot{}^{d-3} \right) \right)\\
        [\sigma_2] = [\csigma_2] &- \binom{d}{2}\left( \Lmot{}^3-\Lmot{} \right)\cdot (\Lmot{}+\Lmot{}^2)\cdot (1+\Lmot{})^{d-2}-\\
                                 &- \left( (\Lmot{}+1)^{d-1}+(\Lmot{}+\Lmot{}^2+ \ldots +\Lmot{}^{d-2}) + (d-1)\Lmot{}^2 - 1 \right)\cdot (1+\Lmot{})^{d}.
    \end{align*}
    In particular, both varieties admit a polynomial point count. The Picard rank of $\csigma_2$ is $d$.
\end{proposition}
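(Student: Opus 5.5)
The plan is to compute $[\csigma_2]$ with the \BBname{} decomposition, and then obtain $[\sigma_2]$ by comparing $\csigma_2$ and $\sigma_2$ over the orbits where $\rho$ fails to be an isomorphism.

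\textbf{Step 1: $[\csigma_2]$.} By Theorem~\ref{ref:singularities:thm}, $\csigma_2$ is smooth and projective. By Proposition~\ref{ref:sigmaFixedPoints:prop}, the torus $\TT$ has finitely many fixed points on it.
\begin{enumerate}
\item Choose a generic cocharacter $\lambda\colon\Gmult\to\TT$ with $\langle e_i,\lambda\rangle=a_i$, where the $a_i$ are positive, rationally independent and ordered $a_1\gg a_2\gg\ldots\gg a_d$. Genericity ensures that no tangent weight at a fixed point pairs to zero.
\item The \BBname{} decomposition is Zariski locally trivial into affine spaces. It gives
\[
[\csigma_2]=\sum_{p}\Lmot^{n_+(p)},
\]
where the sum runs over fixed points $p$ and $n_+(p)$ is the number of tangent weights at $p$ that are positive on $\lambda$.
\item Lemmas~\ref{ref:weightsHeightOne:lem}, \ref{ref:weightsHeightTwoTypeOne:lem}, \ref{ref:weightsHeightTwoTypeTwo:lem} and~\ref{ref:weightsHeightTwoTypeThree:lem} give these weights for the fixed points over $[x_{\{1,\dots,d\}}]$.
\item For the fixed points over another basis simple tensor, where $x_i$ is replaced by $y_i$ for $i$ in some set $S$, apply the coordinate swap $x_i\leftrightarrow y_i$. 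This replaces $e_i$ by $-e_i$ for $i\in S$ in all weights. Height-two points also need the roles of indices $2,3$ to be permuted, according to which $J=\{j\}$ occurs.
\item Summing over $S$ and over the subsets $I$ (height one) and indices $j$ (height two) should factor. Each free coordinate contributes a factor $(1+\Lmot)$ or $(1+\Lmot^2)$, depending on whether the weights $\pm2e_i$ or $2e_i,2e_i$ occur. The average $\tfrac12\bigl((1+\Lmot)^{2(d-1)}+(1+\Lmot^2)^{d-1}\bigr)$ should arise from summing over $I$ together with the parity constraint $|J|\ge 2$. The height-two points should produce the geometric series $1+\Lmot+\ldots+\Lmot^{d-3}$, indexed by the position of $j$.
\item I would check the final closed form against $d=3$ by hand.
\item The Picard rank is $b_2$, which is the coefficient of $\Lmot$: $\csigma_2$ is smooth projective with a \BBname{} cell decomposition, so its cohomology is spanned by algebraic cycles. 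The coefficient of $\Lmot$ is $1+1+2(d-1)-d\ldots$; I expect it simplifies to $d$, and this must be checked from the formula.
\end{enumerate}

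\textbf{Step 2: $[\sigma_2]$.} Stratify $\sigma_2$ into the orbits $O(B_I)$ and $O(C_I)$ of the normal forms. Over each orbit, $\rho^{-1}(O)\to O$ is $\GLV$-equivariant with a homogeneous base. Since $\GL$ is special, this map is Zariski locally trivial, so
\[
[\rho^{-1}(O)]=[O]\cdot[F_O],
\]
where $F_O$ is the fibre. Hence
\[
[\sigma_2]=[\csigma_2]-\sum_O[O]\bigl([F_O]-1\bigr).
\]
By Proposition~\ref{p:sigma2_iso_after_3_unres} and Theorem~\ref{ref:uniqueUnrestrictionSecant:thm}, $F_O$ is a point whenever the tensor is concise on at least three factors. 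So only three kinds of orbits contribute:
\begin{enumerate}
\item $|I|=2$, for both $B_I$ and $C_I$. There are $\binom d2$ choices of $I$. Their orbit classes have the shape $(\Lmot^3-\Lmot)\cdot(\ldots)(1+\Lmot)^{d-2}$: a $\PGL_2$-type factor for the two concise coordinates, times projective lines for the others.
\item $|I|\le 1$, which is the Segre $\Seg$ itself, with class $(1+\Lmot)^d$.
\end{enumerate}
The fibres are computed iteratively, as in the proof of Proposition~\ref{ref:sigmaFixedPoints:prop}:
\begin{enumerate}
\item First take the Grassmannian of $2$-planes containing the image of the flattening, cut by the minimal border rank condition.
\item Then take the $c_{1,2,3}$ steps, using the picture of Example~\ref{ex:fibres} of fibres glued from projective spaces.
\item Compute the class of each fibre by a further cell decomposition, or read it off from the fixed-point data of Step 1 restricted to the fibre.
\end{enumerate}

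\textbf{Main obstacle.} I expect the hardest part to be the fibre over a Segre point. Its class should be $(\Lmot+1)^{d-1}+(\Lmot+\ldots+\Lmot^{d-2})+(d-1)\Lmot^2$. It is a union of components indexed by $I$ (height one) and by $j$ (height two), glued along subvarieties. I would first compute its class as the sum of $\Lmot^{n_+}$ over the fixed points lying in the fibre, using the \BBname{} decomposition for the cocharacter restricted to the stabiliser. If that is not legitimate, I would fall back on a direct stratification by the height of the unrestriction. Polynomial point count then follows for both varieties, since both classes are polynomials in $\Lmot$.
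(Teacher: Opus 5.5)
Your Step~1 is the paper's argument: the \BBname{} decomposition of the smooth projective $\csigma_2$, with the weights of Lemmas~\ref{ref:weightsHeightOne:lem}--\ref{ref:weightsHeightTwoTypeThree:lem} transported to the other basis tensors by $x_i\leftrightarrow y_i$ and by relabelling the height-two indices. Your strata in Step~2 are also the paper's.

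Three small corrections.
\begin{enumerate}
\item For $|I|=2$ the forms $B_I$ and $C_I$ lie in one orbit. So each of the $\binom d2$ strata is $(\PP^3\setminus\PP^1\times\PP^1)\times(\PP^1)^{d-2}$, of class $(\Lmot^3-\Lmot)(1+\Lmot)^{d-2}$, with fibre $\PP^2$.
\item Zariski local triviality of $\rho^{-1}(G/H)\to G/H$ follows from the stabiliser $H$ being special, not from $\GL$ being special. It does hold here, since the stabilisers are built from $\GL_2\times\Gmult$ and Borel subgroups.
\item The factor $\tfrac12$ is not a parity effect. At a height-one point, the signs of $2e_I$ and $2e_1+2e_I$ after the swaps depend only on the swaps at $1$ and at one extremal index of $I$. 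Each other index of $I$ therefore contributes $2\Lmot$, which produces $\tfrac12\sum_{k\geq 2}\binom{d-1}{k}(2\Lmot)^k(1+\Lmot^2)^{d-1-k}$.
\end{enumerate}

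\textbf{The genuine gap} is the fibre over a Segre point, the step you flag.
\begin{enumerate}
\item That fibre is reducible and singular, so the \BBname{} theorem gives it no affine paving.
\item Restricting $\lambda$ to the stabiliser changes nothing, since $\mathbb{T}$ already fixes $[T_0]=[x_1\otimes\cdots\otimes x_d]$.
\item The missing observation, which the paper uses, is that $[T_0]$ is the lowest-weight point of $\PP(\tspace)$. Hence no other point of $\PP(\tspace)$ flows to it under $\lambda^{-1}$.
\item Since $\rho$ is proper and equivariant and the fibre is closed and stable, $q\in\csigma_2$ lies in $\rho^{-1}([T_0])$ if and only if $\lim_{t\to 0}\lambda(t)^{-1}q$ does.
\item So $\rho^{-1}([T_0])$ is the disjoint union of the $\lambda^{-1}$-cells of the \emph{smooth} variety $\csigma_2$ at the fixed points over $[T_0]$. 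Its class is $\sum_p\Lmot^{n_-(p)}$, where $n_-(p)$ counts the \emph{negative} weights on $T_p\csigma_2$.
\end{enumerate}
Your literal recipe with $n_+$ fails. At the height-one point indexed by $I$ one has $n_+=2d+1-|I|\geq d+2$, whereas $\dim\rho^{-1}([T_0])=d-1$. With $n_-$ you get:
\begin{enumerate}
\item $\sum_{|I|\geq 2}\Lmot^{|I|}=(1+\Lmot)^{d-1}-1-(d-1)\Lmot$ from height one;
\item $(d-1)\Lmot$, $(d-1)\Lmot^2$ and $1+\Lmot+\dots+\Lmot^{d-2}$ from the three height-two types.
\end{enumerate}
These sum to the stated fibre class.

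\textbf{Picard rank.} Your check will not give $d$. The bracket in $[\csigma_2]$ is $1+d\Lmot+O(\Lmot^2)$: $d-1$ comes from the average and $1$ from the last summand. Also $(1+\Lmot)(1+\Lmot^2)=1+\Lmot+O(\Lmot^2)$, so $b_2(\csigma_2)=d+1$.

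For $d=3$ this can be confirmed independently. There $\sigma_2=\PP^7$, and $\csigma_2$ is three successive blow-ups along smooth codimension-$3$ centres: $\PP^1\times\PP^3$, then two $\PP^3$-bundles over $\PP^1$. The resulting class $[\PP^7]+3(\Lmot+\Lmot^2)(1+\Lmot)(1+\Lmot+\Lmot^2+\Lmot^3)$ equals the displayed formula, and the Picard rank is $4$.

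So your method is right, but the last sentence of the statement appears to be off by one. The paper's proof does not address this claim.
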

\begin{proof}
    \def\diag{\operatorname{diag}}%
    We begin with computing $[\csigma_2]$. The overall idea is to use the \BBname{} decomposition~\cite{Ellingsrud_Stromme__On_the_homology}.
    Consider the torus $\TT\subseteq \SLV$ and fix a one-parameter subgroup $\Gmult \into \TT$ given by
    diagonal matrices $\diag(t^{-e_1}, t^{e_1})$, $\diag(t^{-e_2}, t^{e_2})$, \ldots, $\diag(t^{-e_d}, t^{e_d})$,
    where $0 < e_1 \ll e_2 \ll \ldots \ll e_d$.
    The fixed points of $\Gmult$ coincide with the fixed points for $\TT$ and, in particular, are isolated.
    By general theory of \BBname{}~\cite{BialynickiBirula__decomposition} decompositions, we obtain that
    \[
        [\csigma_2] = \bigoplus_{x\in \csigma_2^{\Gmult}} \Lmot{}^{n_x},
    \]
    where $n_x$ is the number of positive weights in the tangent space at the fixed point $x$.
    These weights were computed in Lemmas~\ref{ref:weightsHeightOne:lem}, \ref{ref:weightsHeightTwoTypeOne:lem}, \ref{ref:weightsHeightTwoTypeTwo:lem},
    \ref{ref:weightsHeightTwoTypeThree:lem} above. Computing the weights, we arrive at the formula; we omit the
    combinatorics part here.

    Recall the map $\rho\colon \csigma_2\to \sigma_2$.
    We use the \BBname{} decomposition also to compute the motive of the fibre $\rho^{-1}([T_0])$, where $[T_0]\in \Seg$ is
    a tensor of rank one. Up to coordinate change, we may assume that $T_0 = x_1\otimes \ldots \otimes x_d$.
    In the \BBname{} decomposition of $\mathbb{P}(\tspace)$, the point $[T_0]$ is isolated: no other point converges to it.
    The morphism $\rho$ is equivariant, hence the fibre $\rho^{-1}([T_0])$ is a union of \BBname{} cells for the variety $\csigma_2$.
    Counting the positive weights (but only for fixed points in the fibre!), we learn that the motive of the fibre is
    \begin{equation}\label{eq:fibreMotive}
        (\Lmot{}+1)^{d-1}+(\Lmot{}+\Lmot{}^2+ \ldots +\Lmot{}^{d-2}) + (d-1)\Lmot{}^2.
    \end{equation}
    To compute the class of $\sigma_2$ we use the fact that for every locally-closed subvariety $V$ of a variety $X$,
    we have $[X] = [V] + [X\setminus V]$ in $\Knot$. Informally speaking, cutting and pasting is allowed.
    To compute $[\sigma_2]$, we subdivide points of $\sigma_2$ into three groups:
    \begin{itemize}
        \item tensors not concise on any coordinate, which form the Segre subvariety $\Seg \subseteq \sigma_2$,
        \item tensors concise on exactly two coordinates, which form a locally-closed subvariety $\sigma_2'\subseteq \sigma_2$,
        \item tensors concise on at least three coordinates, which form an open subvariety $\sigma_2^{\op}\subseteq \sigma_2$.
    \end{itemize}
    The fibre over a tensor in $\sigma_2^{\op}$ is a point, as explained in Proposition~\ref{p:sigma2_iso_after_3_unres}.
    The fibre over a tensor in $\sigma_2'$ is a $\mathbb{P}^2$, as explained in the proof of Proposition~\ref{ref:sigmaFixedPoints:prop}.
    The fibre over a tensor in $\Seg$ is given in~\eqref{eq:fibreMotive}.

    It remains to compute how many tensors in $\Seg$ and $\sigma_2'$ are there. The Segre variety has motive $[\mathbb{P}^1]^d = (\Lmot{}+1)^d$.
    The locus $\sigma_2'$ consists of $\binom{d}{2}$ copies of 
    $(\mathbb{P}^3 \setminus \mathbb{P}^1\times\mathbb{P}^1) \times (\PP^1)^{d-2}$, which has motive
    $((\Lmot{}^3 + \Lmot{}^2 + \Lmot{} + 1) - (\Lmot{}+1)^2) (\Lmot{} + 1)^{d-2} = 
    (\Lmot{}^3 - \Lmot{})(\Lmot{} + 1)^{d-2}$. Since in $\csigma_2$ it comes multiplied by $[\mathbb{P}^2] =
    \Lmot{}^2 + \Lmot{} + 1$,
    while in $\sigma_2$ just by $1$, we remove $\Lmot{}^2 + \Lmot{}$.
\end{proof}

\section{Smoothable and cactus ranks}

    In this section, we produce an open embedding $\AsigmaHilb_m$ into $\csigma_m$.

    To do this, we propose a new way of thinking about several familiar objects. Since this may be confusing,
    let us explain in broad strokes the content.
    We will be working with the following objects: a finite algebra $\cA$, modules $W_1^{\vee}$, \ldots , $W_d^{\vee}$ over $\cA$
    which are isomorphic to $\cA^{\oplus 1}$ itself, linear maps $\varphi_i\colon W_i\to V_i$ and a tensor $\Tppar{}\in \tconcspace$.
    These objects will appear in three configurations.
    \begin{enumerate}
        \item Evaluations tensors associated to an algebra. In this setting $W_1^{\vee} =  \ldots = W_d^{\vee} = \cA^{\oplus 1}$,
            $\Tppar{}$ is an evaluation tensor and the maps
            $\varphi_{\bullet}$ yield a restriction of $\Tppar{}$ to a tensor in $\tspace$.
        \item Line bundles and algebraic characterisation of spans. In this setting, we additionally have a
            finite scheme $Z$ with a map $Z\to \mathbb{P}V_1\times \ldots
            \times \mathbb{P}V_d$. Here $\cA = H^0(\OO_{Z})$,
            the morphisms $Z\to \mathbb{P}V_i$ induce surjections $\cA \otimes_{\kk} V_i^{\vee}\onto
            W_i^{\vee}$ onto 
            line bundles $W_i^{\vee}$, and the surjections yield restriction maps
            $W_i\to V_i$.
        \item Algebras from centroids. In this setting, $\Tppar{}$ is a $1$-generic tensor and the maps $\varphi_i\colon W_i\to V_i$ yield its restriction
            to a tensor in $\tspace$. Here, $\cA = \Cen{\Tppar{}}$, the $\cA$-module structure
            on $W_i^{\vee}$ comes from the centroid action, and the isomorphisms $W_i^{\vee}\simeq \cA^{\oplus 1}$ are consequences of
            $1$-genericity.
    \end{enumerate}
    
    \subsection{Evaluation tensors}\label{ssec:evaluation}
    Let $\cA$ be a finite $\kk$-algebra. By its \emph{(iterated) multiplication tensor} we mean 
    the tensor in $\cA^{\vee}\otimes \ldots \otimes \cA^{\vee}\otimes \cA$ associated to be the multiplication map
    \[
        \underbrace{\cA\times \ldots \times\cA}_{d-1}\to \cA.
    \]
    Let $\epsilon \colon \cA\to \kk$ be a
    functional. The \emph{(order $d$) evaluation tensor} associated to $(\cA, \epsilon)$ is
    an element of $\cA^{\vee}\otimes \ldots \otimes \cA^{\vee}$ given by the composed multilinear map
    \begin{equation}\label{eq:evaluationTensor}
        \begin{tikzcd}
            \eval\colon \underbrace{\cA\times  \ldots \times \cA}_d \ar[r, "\mathrm{mult.}"] & \cA \ar[r, "\epsilon"]
            & \kk.
        \end{tikzcd}
    \end{equation}
    Recall that a finite algebra $\cA$ is \emph{Gorenstein} if there exists a functional $\epsilon \in \cA^{\vee}$
    such that $\cA^{\vee} = \cA\cdot \epsilon$. Every such functional is called a \emph{dual generator} of $\cA$.

    \begin{proposition}\label{ref:evaluationTensors:prop}
        Let $\cA$ be a finite $\kk$-algebra. Let $d\geq 2$ be an integer. The following conditions are
        equivalent for a tensor $T$:
        \begin{enumerate}
            \item\label{it:evalOne} the tensor $T$ is isomorphic to an evaluation tensor
                associated to some $\epsilon\colon \cA\to \kk$,
            \item\label{it:evalTwo} the tensor $T$ is isomorphic to a
                multiplication tensor of a
                Gorenstein quotient algebra $\cA' = \cA/I$.
        \end{enumerate}
        In particular, if there exists a concise evaluation tensor $T\in \cA^{\vee}\otimes \ldots \otimes \cA^{\vee}$, then
        $\cA$ is Gorenstein.
    \end{proposition}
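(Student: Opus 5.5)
The plan is to reduce both conditions to a statement about the annihilator of $\epsilon$ and then compare the two tensors through one explicit isomorphism of vector spaces.

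Fix a functional $\epsilon\colon \cA\to\kk$ and set
\[
I := \{a\in \cA \mid \epsilon(ab) = 0 \mbox{ for all } b\in \cA\}.
\]
This is the largest ideal of $\cA$ contained in $\ker\epsilon$, so $\epsilon$ descends to a functional $\bar\epsilon$ on $\cA' := \cA/I$.

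The first step is to show that $\cA'$ is Gorenstein with dual generator $\bar\epsilon$. The pairing $(a,b)\mapsto \bar\epsilon(ab)$ on $\cA'$ is nondegenerate by the very definition of $I$. Hence the map $\cA'\to \cA'^{\vee}$, $a\mapsto a\cdot\bar\epsilon$, is injective, and since both spaces have the same dimension it is an isomorphism. This is exactly the condition $\cA'^{\vee} = \cA'\cdot\bar\epsilon$.

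The second step compares the evaluation tensor $\eval$ with the evaluation tensor of $(\cA',\bar\epsilon)$. If some argument $a_j$ lies in $I$, then $\epsilon(a_1\cdots a_d) = 0$. So $\eval$ factors through $\cA'\times\dots\times\cA'$, that is, $\eval$ lies in the subspace $\cA'^{\vee}\otimes\dots\otimes\cA'^{\vee}\subseteq \cA^{\vee}\otimes\dots\otimes\cA^{\vee}$. This is an inclusion of tensor spaces, and up to that inclusion (isomorphism of tensors allows padding by zeros) $\eval$ equals $\epsilon_{\cA'}$.

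The third step identifies $\epsilon_{\cA'}$ with the multiplication tensor of $\cA'$. Use the isomorphism $\cA'\to\cA'^{\vee}$, $c\mapsto c\bar\epsilon$, on the last factor. The multiplication tensor $\cA'^{\times(d-1)}\to\cA'$ then becomes the map $(a_1,\dots,a_{d-1})\mapsto (a_1\cdots a_{d-1})\bar\epsilon$. As a $d$-linear form this is $(a_1,\dots,a_d)\mapsto\bar\epsilon(a_1\cdots a_d)$, which is precisely $\epsilon_{\cA'}$. This proves (a)$\Rightarrow$(b).

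For (b)$\Rightarrow$(a), start from a Gorenstein quotient $\cA' = \cA/I$ with dual generator $\bar\epsilon$. Let $\epsilon$ be the composite $\cA\to\cA'\xrightarrow{\bar\epsilon}\kk$. Its annihilator ideal contains $I$. It equals $I$ because $\bar\epsilon$ is a dual generator, so the pairing on $\cA'$ is nondegenerate. Steps two and three then run in reverse and show that the multiplication tensor of $\cA'$ is isomorphic to $\eval$.

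For the final claim, suppose $\eval$ is concise. Its flattenings are then injective, and by the second step this forces $I=0$. Then $\cA = \cA'$, and $\cA$ is Gorenstein by the first step.

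The main subtlety is bookkeeping rather than substance. One has to be precise about what ``isomorphic'' means when the ambient spaces differ ($\cA^{\vee}$ versus $\cA'^{\vee}$), and about the fact that the multiplication tensor lives in $\cA'^{\vee}\otimes\dots\otimes\cA'$ while the evaluation tensor lives in $(\cA'^{\vee})^{\otimes d}$. The identification via $c\mapsto c\bar\epsilon$ in the third step is the key point. Commutativity of $\cA$ is used implicitly when moving factors inside $\epsilon$; for noncommutative $\cA$ one would need left and right annihilators.
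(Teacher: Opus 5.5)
Your proposal is correct and follows the paper's proof essentially step by step. Both use the same ideal $I=\{a\mid \epsilon(\cA a)=0\}$, the same injectivity-plus-dimension argument making $\bar\epsilon$ a dual generator of $\cA/I$, the same factorisation of the evaluation tensor through $\cA/I$ followed by identification with the multiplication tensor via $c\mapsto c\bar\epsilon$, and the same reversal and conciseness argument forcing $I=0$. Your extra check in (b)$\Rightarrow$(a) that the annihilator of $\epsilon$ equals $I$ is harmless but not needed, since only $\epsilon(I)=0$ is used there.
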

    \begin{proof}
            $\ref{it:evalOne}\implies \ref{it:evalTwo}$. Let $I = \left\{ a\in \cA\
            |\ \epsilon(\cA\cdot a) = 0 \right\}$. This is an ideal of $\cA$. Let
            $\cA' := \cA/I$.

            The functional $\epsilon$ vanishes on $I$, so it
            yields a functional $\epsilon'\colon \cA'\to \kk$ given by
            $\epsilon'(a + I) := \epsilon(a)$.
            Consider the map $\varphi\colon \cA'\to (\cA')^{\vee}$
            given by $\varphi(a')= a'\cdot \epsilon'$. If $\varphi(a') = 0$
            for some $a' = a + I\in \cA'$, then $\epsilon(\cA a) = 0$, so $a\in I$ and
            $a' = 0$. This
            shows that $\varphi$ is injective. The dimensions of $\cA'$,
            $(\cA')^{\vee}$ are the same, so $\varphi$ is bijective, hence $\cA'$
            is Gorenstein with dual generator $\epsilon'$.

            The evaluation tensor can be rewritten as a multilinear map
            \[
                T\colon \underbrace{\cA\times  \ldots \times \cA}_{d-1} \to \cA^{\vee}
            \]
            given by $T(a_1, \ldots ,a_{d-1}) := (a_1 \ldots a_{d-1})\cdot
            \epsilon$. Whenever at least one $a_j$ lies in $I$, then $T(a_1, \ldots
            ,a_{d-1}) = 0$, so $T$ is isomorphic to a multilinear map
            \[
                T'\colon \underbrace{\cA'\times  \ldots \times \cA'}_{d-1} \to (\cA')^{\vee}
            \]
            given by $T'(a_1', \ldots ,a_{d-1}') := (a_1' \ldots a_{d-1}')\cdot
            \epsilon'$. Composing with the isomorphism $\varphi^{-1}\colon
            (\cA')^{\vee} \to \cA'$, we obtain that $T'$ is isomorphic to the
            multiplication tensor in $\cA'$.

            $\ref{it:evalTwo}\implies \ref{it:evalOne}$. The algebra $\cA'$ is
            Gorenstein and finite, hence there exists a functional
            $\epsilon'\colon \cA' \to \kk$ such that $(\cA')^{\vee} = \cA'\cdot
            \epsilon'$. Let $\varphi\colon \cA'\to (\cA')^{\vee}$ be given by
            $\varphi(a') = a'\cdot \epsilon'$ and let $\epsilon\colon \cA\to
            \kk$ be the composition of $\cA\onto \cA'$ and $\epsilon'$. The proof
            above reverses and shows that the multiplication tensor in $\cA'$ is
            isomorphic to the evaluation tensor associated to $(\cA, \epsilon)$.

            To prove the final statement, observe that $T$ is isomorphic to a multiplication
            tensor in a Gorenstein quotient algebra $\cA' = \cA/I$ but by conciseness, we have
            $I = 0$.
    \end{proof}

    \begin{corollary}\label{ref:dualSocle:cor}
        Let $\cA$ be a finite Gorenstein algebra and $\epsilon \colon \cA\to \kk$ be a
        dual generator. The evaluation tensor associated to $(\cA, \epsilon)$ is
        isomorphic to the multiplication tensor of $\cA$. In particular, the
        tensors associated to different $\epsilon$ are all isomorphic.
    \end{corollary}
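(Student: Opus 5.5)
The plan is to deduce the corollary directly from the proof of Proposition~\ref{ref:evaluationTensors:prop}, implication \ref{it:evalOne}$\implies$\ref{it:evalTwo}, specialised to a dual generator. Let $\epsilon$ be a dual generator of $\cA$ and consider the ideal $I = \{a\in\cA \mid \epsilon(\cA\cdot a) = 0\}$ used there.

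\emph{Step 1: $I = 0$.} If $a \in I$, then the functional $a\cdot\epsilon$ vanishes identically, since $(a\cdot\epsilon)(b) = \epsilon(ab) = 0$ for all $b\in\cA$. The map $\cA\to\cA^{\vee}$, $a\mapsto a\cdot\epsilon$, is surjective because $\cA^{\vee} = \cA\cdot\epsilon$. Since $\dim_\kk\cA = \dim_\kk\cA^{\vee}$, this map is a bijection. Hence $a\cdot\epsilon = 0$ forces $a = 0$, so $I = 0$ and $\cA' = \cA$.

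\emph{Step 2: apply the proof of the proposition.} With $\cA' = \cA$, the proof of \ref{it:evalOne}$\implies$\ref{it:evalTwo} gives two things. First, the evaluation tensor, read as the multilinear map $(a_1,\dots,a_{d-1})\mapsto (a_1\cdots a_{d-1})\cdot\epsilon \in \cA^{\vee}$, is turned into $(a_1,\dots,a_{d-1})\mapsto a_1\cdots a_{d-1}\in\cA$ by composing with the isomorphism $\varphi^{-1}\colon\cA^{\vee}\to\cA$. Second, this composition is a linear isomorphism on the last factor only. Hence the evaluation tensor associated to $(\cA,\epsilon)$ is isomorphic to the multiplication tensor of $\cA$.

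\emph{Step 3: independence of $\epsilon$.} The multiplication tensor of $\cA$ does not depend on $\epsilon$, so the evaluation tensors for any two dual generators are both isomorphic to it, and hence to each other.

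There is no real obstacle. The only point needing care is Step 1, namely that a dual generator has no nonzero annihilating ideal; this is the dimension count above.
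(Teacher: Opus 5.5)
Your proposal is correct and is essentially the paper's argument. The paper says only that the corollary follows from Proposition~\ref{ref:evaluationTensors:prop}; you spell out the one point that needs checking, namely that for a dual generator the ideal $I$ in the proof of \ref{it:evalOne}$\implies$\ref{it:evalTwo} is zero, so the Gorenstein quotient $\cA'$ is $\cA$ itself.
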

    \begin{proof}
        Follows from Proposition~\ref{ref:evaluationTensors:prop}.
    \end{proof}
    Corollary~\ref{ref:dualSocle:cor} was also obtained independently in~\cite{Chang_Gesmundo_Zuiddam}.

    \begin{remark}\label{ref:evaluationTensorsLineBundles:rem}
        One can slightly generalise the notion of evaluation tensor. Namely, let $\cA$ be a finite algebra
        and $W_1^{\vee}$, \ldots , $W_d^{\vee}$ be $\cA$-modules isomorphic to $\cA^{\oplus 1}$.
        For a functional $\epsilon \colon W_1^{\vee}\otimes_{\cA} \ldots \otimes_{\cA} W_d^{\vee}\to \kk$,
        we can form a generalised evaluation tensor
        \[
            \begin{tikzcd}
                W_1^{\vee}\otimes_{\kk} \ldots \otimes_{\kk} W_d^{\vee} \ar[r, two heads] & W_1^{\vee}\otimes_{\cA} \ldots \otimes_{\cA} W_d^{\vee}
                \ar[r, "\epsilon"] & \kk.
            \end{tikzcd}
        \]
        It corresponds to an element
        \[
            \epsilon \in (W_1^{\vee}\otimes_{\cA} \ldots \otimes_{\cA} W_d^{\vee})^{\vee}\subset \left( W_1^{\vee}\otimes_{\kk} \ldots \otimes_{\kk} W_d^{\vee} \right)^{\vee} = \tconcspace.
        \]
    \end{remark}

    \subsection{Line bundles and algebraic characterisation of spans}\label{ssec:lineBundlesAndSpans}

    Let $Z$ be a zero-dimensional scheme. We will use the words \emph{finite scheme} and \emph{zero-dimensional} scheme
    interchangeably, preferring the word finite. If $Z = \Spec(\cA)$, then the \emph{degree} of $Z$ is defined as $\dim_{\kk} \cA$.
    We will usually consider schemes of degree $\leq m$.

    Let $Z = \Spec(\cA)$ be a finite scheme with a morphism $Z\to \mathbb{P}V_1
    \times \ldots \times \mathbb{P}V_d$. To give such a morphism is the same as to give
    surjections $\cA \otimes_{\kk} V_i^{\vee} \onto W_i^{\vee}$ for $i=1, \ldots ,d$, where $W_i^{\vee}$ are
    line bundles on $Z$. We identify the line bundles with their global sections, so that $W_i^{\vee}$ becomes
    a vector space which is a $\cA$-module that is isomorphic to $\cA^{\oplus 1}$. In this way $W_i$ is also a $\cA$-module, which
    is, however, not necessarily isomorphic to $\cA^{\oplus 1}$. It may seem odd that $W_i^{\vee}$ plays a bigger role than $W_i$,
    but this convention is necessary to be compatible with unrestrictions, see~\S\ref{sec:abstractSecant}.

    The \emph{linear span} of $Z$,
    denoted $\spann{Z}$ is the smallest linear subspace $\spann{Z}\subseteq
    \tspace$ such that the image of $Z$ lies in $\mathbb{P}\spann{Z}\subseteq
    \mathbb{P}\tspace$. The linear span is usually defined only for closed embeddings.

    \begin{proposition}[Linear span from a tensor product]\label{ref:linearSpanAbstract:prop}
        Let $Z=\Spec(\cA)$ be a finite scheme with a morphism $Z\to \mathbb{P}V_1\times \ldots \times \mathbb{P}V_d$ and let
        $\cA\otimes_{\kk} V_i^{\vee}\onto W_i^{\vee}$ be the associated line bundles. Let $\varphi_i\colon W_i\to V_i$ be dual to
        $V_i^{\vee}\into \cA \otimes_{\kk} V_i^{\vee}\onto W_i^{\vee}$.
        The linear span of $Z$ is equal to the image of
        \[
            (W_1^{\vee}\otimes_{\cA} \ldots \otimes_{\cA}
            W_d^{\vee})^{\vee} \into W_1 \otimes_{\kk} \ldots \otimes_{\kk} W_d
            \to V_1\otimes_{\kk} \ldots \otimes_{\kk} V_d.
        \]
    \end{proposition}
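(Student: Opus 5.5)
The plan is to compute the span by dualising: $\spann{Z}$ is the annihilator in $\tspace$ of the space of multilinear forms vanishing on $Z$. Concretely, the image of $Z$ lies in $\mathbb{P}L$ for a subspace $L\subseteq \tspace$ exactly when every element of $L^{\perp}\subseteq \tspace^{\vee} = V_1^{\vee}\otimes\ldots\otimes V_d^{\vee}$ pulls back to zero on $Z$. So I would first identify this pullback as a $\kk$-linear map
\[
    \psi\colon V_1^{\vee}\otimes_{\kk}\ldots\otimes_{\kk} V_d^{\vee}\to H^0\bigl(Z, W_1^{\vee}\otimes_{\cA}\ldots\otimes_{\cA}W_d^{\vee}\bigr) = W_1^{\vee}\otimes_{\cA}\ldots\otimes_{\cA}W_d^{\vee},
\]
and then show that $\spann{Z} = (\ker\psi)^{\perp}$.

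For the first step, note that the composite $Z\to \mathbb{P}V_1\times\ldots\times\mathbb{P}V_d\to\mathbb{P}\tspace$ corresponds to the line bundle $\boxtimes\OO(1)$ pulled back, which on $Z$ is $W_1^{\vee}\otimes_{\cA}\ldots\otimes_{\cA}W_d^{\vee}$. The sections $V_1^{\vee}\otimes\ldots\otimes V_d^{\vee}$ map to it by tensoring the maps $V_i^{\vee}\to W_i^{\vee}$, i.e.\ $\psi$ factors as
\[
    \bigotimes_{\kk} V_i^{\vee}\xrightarrow{\otimes\varphi_i^{\vee}} \bigotimes_{\kk} W_i^{\vee}\onto \bigotimes_{\cA} W_i^{\vee}.
\]
This is the dual of the composite in the statement. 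I would also check that $\psi$ is surjective, since the $\cA$-module generated by the images of $\bigotimes V_i^{\vee}$ is everything and $V_i^{\vee}$ generates $W_i^{\vee}$ over $\cA$. Surjectivity is not strictly needed, but it clarifies that the dual map is injective onto its image.

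For the second step, the key point is that a hyperplane (or linear subspace) $\mathbb{P}L$ contains the image of $Z$ scheme-theoretically iff the linear forms of $L^{\perp}$ vanish on $Z$. That is, they map to zero in the line bundle $\OO_Z(1)$, i.e.\ $L^{\perp}\subseteq\ker\psi$. Hence the smallest such $L$ is $(\ker\psi)^{\perp} = \im(\psi^{\vee})$, and $\psi^{\vee}$ is exactly the map $(\bigotimes_{\cA}W_i^{\vee})^{\vee}\into\bigotimes_{\kk}W_i\to\tspace$ from the statement.

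The main obstacle is justifying cleanly the statement ``$Z$ lands in $\mathbb{P}L$ iff $L^{\perp}$ pulls back to zero'' for a non-reduced $Z$ and a morphism that need not be an embedding. I would handle it via the functor of points: $Z\to\mathbb{P}\tspace$ is given by the surjection $\cA\otimes\tspace^{\vee}\onto\OO_Z(1)$. It factors through $\mathbb{P}L$ iff this surjection factors through $\cA\otimes L^{\vee} = \cA\otimes(\tspace^{\vee}/L^{\perp})$, which is precisely the condition $L^{\perp}\subseteq\ker\psi$.
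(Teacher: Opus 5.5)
Your argument is correct and is essentially the paper's proof: the paper likewise takes the linear equations of $\spann{Z}$ to be the kernel of $V_1^{\vee}\otimes_{\kk}\ldots\otimes_{\kk}V_d^{\vee}\to H^0(\OO_Z(1,\ldots,1))\simeq W_1^{\vee}\otimes_{\cA}\ldots\otimes_{\cA}W_d^{\vee}$, factors this map through $\bigotimes_{\kk}W_i^{\vee}$, and dualises; your functor-of-points check just makes explicit a step the paper takes for granted when $Z$ is nonreduced or the morphism is not an embedding. Drop the aside claiming $\psi$ is surjective, though: its image is only the $\kk$-span of the products $\varphi_1^{\vee}(v_1)\cdots\varphi_d^{\vee}(v_d)$, which can be a proper subspace of $\cA$ even though it generates $\cA$ as an ideal (surjectivity is exactly the jointly spanning condition, which fails in Example~\ref{ex:jointSurjectivity}, and also for two reduced points with the same image), so the composite in the statement need not be injective --- this is harmless here, since only its image matters.
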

    \begin{proof}
        The ideal of $\spann{Z}$ is given by the kernel of the restriction map
        \[
            V_1^{\vee} \otimes_{\kk} \ldots
            \otimes_{\kk} V_d^{\vee} = H^0\left(
            \OO_{\mathbb{P}V_1\times \ldots \times\mathbb{P}V_d}(1,1, \ldots ,1)
            \right) \to H^0(\OO_{Z}(1,1, \ldots ,1)).
        \]
        To compute it, we complete this diagram to
        \begin{equation}\label{eq:linearSpanDiagram}
            \begin{tikzcd}
                V_1^{\vee}\ar[d, "\varphi_1^{\vee}"]\ar[rr, phantom, "\otimes_{\kk} \ldots
                \otimes_{\kk}"] && V_d^{\vee} \ar[d, "\varphi_d^{\vee}"]\ar[r] & H^0(\OO_{Z}(1,1, \ldots ,1))\\
                W_1^{\vee} \ar[rr, phantom, "\otimes_{\kk}  \ldots \otimes_{\kk}"] && W_d^{\vee} \ar[r, two heads] &
                W_1^{\vee}\otimes_{\cA} \ldots \otimes_{\cA} W_d^{\vee} \ar[u, "\simeq"]
            \end{tikzcd}
        \end{equation}
        Therefore, the ideal is the kernel of the composition
        \[
            V_1^{\vee} \otimes_{\kk} \ldots \otimes_{\kk} V_d^{\vee} \to
            W_1^{\vee}\otimes_{\kk} \ldots \otimes_{\kk} W_d^{\vee} \onto
            W_1^{\vee}\otimes_{\cA} \ldots \otimes_{\cA} W_d^{\vee},
        \]
        so the span is the image of the dual map, as claimed.
    \end{proof}

    \begin{example}[``Universal'' span]\label{ex:universalLinearSpan}
        For any finite scheme $Z = \Spec(\cA)$ we can take line bundles $W_i^\vee := \cA$ 
        and vector spaces $V_i := \cA^\vee$.
        Multiplication maps $\cA \otimes_\kk V_i^\vee \onto W_i^\vee$ yield an embedding 
        $Z \into \PP V_1\times \ldots \times \PP V_d = \PP\cA^{\vee}\times \ldots \times \PP\cA^{\vee}$.
        The associated restrictions $\varphi_i \colon W_i \to V_i$
        are just the identity maps and the map 
        \[
            \cA \otimes_\kk \ldots \otimes_\kk \cA =
            W_1^{\vee}\otimes_{\kk} \ldots \otimes_{\kk} W_d^{\vee} \onto
            W_1^{\vee}\otimes_{\cA} \ldots \otimes_{\cA} W_d^{\vee} =
            \cA \otimes_\cA \ldots \otimes_\cA \cA =
            \cA
        \]
        is just the iterated multiplication in $\cA$. 
        By Proposition~\ref{ref:linearSpanAbstract:prop}, the linear span $\spann{Z}$ is the image of
        the dual map of $\cA \otimes_\kk \ldots \otimes_\kk \cA \xrightarrow{\mathrm{mult.}} \cA$, that is,
        the image of
        \[
            \cA^\vee \to \cA^\vee \otimes_\kk \ldots \otimes_\kk \cA^\vee,\quad
            (\epsilon \colon \cA \to \kk) \mapsto 
            (\eval \colon \cA \otimes_\kk \ldots \otimes_\kk \cA \xrightarrow{\mathrm{mult.}} \cA \xrightarrow{\epsilon} \kk).
        \]
        In words, tensors in $\spann{Z} \subset \cA^\vee \otimes \ldots \otimes \cA^\vee$ are exactly the 
        evaluation tensors associated to functionals $\epsilon \colon \cA \to \kk$.
    \end{example}
    
    To proceed, we introduce two nondegeneracy conditions on restrictions: regularity and begin jointly
    spanning. The latter will not be needed until much later, so the reader is free to ignore it for the time being.
    
    \begin{definition}\label{ref:regularMap:def}
        Let $Z = \Spec(\cA)$ be a finite scheme, $W^{\vee}$ be a line bundle on $\cA$, and $V$ be a vector space with
        a linear map $\varphi\colon W \to V$. We say that $\varphi$ is
        \emph{regular} if the linear map $\cA \otimes_\kk V^{\vee} \to W^{\vee}$ given by $(a,
        v^{*})\mapsto a\varphi^{\vee}(v^*)$ is surjective, so that it gives a morphism 
        $Z \to \PP V$.
    \end{definition}

    \begin{definition}\label{ref:regularAndJointlySpanning:def}
        Let $Z = \Spec(\cA)$ be a finite scheme, $W_1^{\vee}$,  \ldots , $W_{d}^{\vee}$ be line bundles on $\cA$
        and $V_1, \ldots ,V_d$ be vector spaces with linear maps $\varphi_i\colon W_i\to V_i$ for $i=1, \ldots ,d$.
        \begin{enumerate}
            \item We say that $\varphi_1, \ldots ,\varphi_d$ is a \emph{regular restriction} if every $\varphi_1, \ldots ,\varphi_d$
                is regular,
            \item We say that $\varphi_1, \ldots ,\varphi_d$ are \emph{jointly spanning} if the composition
                \[
                    (W_1^{\vee}\otimes_{\cA} \ldots \otimes_{\cA}
                    W_d^{\vee})^{\vee} \into W_1 \otimes_{\kk} \ldots \otimes_{\kk} W_d
                    \to V_1\otimes_{\kk} \ldots \otimes_{\kk} V_d
                \]
                is injective.
        \end{enumerate}
        
    \end{definition}
    It will be useful to rephrase the notion of jointly spanning in dual terms. We have the following diagram
    \begin{equation}\label{eq:jointSpanConcrete}
            \begin{tikzcd}
                V_1^{\vee}\ar[rr, phantom, "\otimes_{\kk} \ldots \otimes_{\kk}"]\ar[d, equal] &&[-0.5cm] V_d^{\vee}\ar[d, equal]\ar[r, "\varphi_{\bullet}^{\vee}"] & W_1^{\vee}\ar[d, "\simeq"] \ar[rr, phantom, "\otimes_{\kk}  \ldots \otimes_{\kk}"] &&[-0.5cm] W_d^{\vee}\ar[d, "\simeq"] \ar[r, two heads, "\mathrm{mult.}"] & W_1^{\vee}\otimes_{\cA} \ldots \otimes_{\cA} W_d^{\vee}\ar[d, "\simeq"]\\
                V_1^{\vee}\ar[rr, phantom, "\otimes_{\kk} \ldots \otimes_{\kk}"] &&[-0.5cm] V_d^{\vee}\ar[r] & \cA \ar[rr, phantom, "\otimes_{\kk}  \ldots \otimes_{\kk}"] &&[-0.5cm] \cA \ar[r, two heads, "\mathrm{mult.}"] & \cA
            \end{tikzcd}
        \end{equation}
        and the restrictions $\varphi_{\bullet}$ are jointly spanning if the resulting linear map $\tspace^{\vee} \to \cA$ is surjective.

    \begin{lemma}\label{ref:regularity:lem}
        If $\varphi_1, \ldots ,\varphi_d$ are jointly spanning, then every $\varphi_1$, \ldots , $\varphi_d$ is regular.
        If $(\varphi_i)_{i\in I}$ are jointly spanning for some $I\subseteq \{1, \ldots ,d\}$ and $\varphi_j$ is regular,
        for some $j\in \left\{ 1, \ldots ,d \right\}$, then $(\varphi_i)_{i\in I\cup \{j\}}$ are also jointly spanning.
    \end{lemma}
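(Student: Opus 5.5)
We work throughout with the dual description~\eqref{eq:jointSpanConcrete}. Fix isomorphisms $W_i^{\vee}\simeq \cA$ of $\cA$-modules, and write $a_i(v) := \varphi_i^{\vee}(v)\in \cA$ for $v\in V_i^{\vee}$. In these terms, $\varphi_i$ is regular exactly when the ideal generated by $\varphi_i^{\vee}(V_i^{\vee})$ is all of $\cA$. The restrictions $(\varphi_i)_{i\in I}$ are jointly spanning exactly when the products $\prod_{i\in I} a_i(v_i)$ span $\cA$ as a $\kk$-vector space. For a subset $I$ we interpret the tensor product over the indices in $I$ only.

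For the first claim, assume the full set $\varphi_1,\ldots,\varphi_d$ is jointly spanning and fix an index $j$. Every spanning product $\prod_i a_i(v_i)$ contains the factor $a_j(v_j)$, so it lies in the ideal $\cA\cdot\varphi_j^{\vee}(V_j^{\vee})$. These products span $\cA$, hence that ideal is all of $\cA$. The ideal is the image of the map $\cA\otimes_{\kk} V_j^{\vee}\to W_j^{\vee}\simeq \cA$, $(a,v)\mapsto a\varphi_j^{\vee}(v)$, so this map is surjective and $\varphi_j$ is regular. The only care needed is to check that regularity does not depend on the chosen isomorphism $W_j^{\vee}\simeq\cA$; this is clear, since regularity is the surjectivity of an $\cA$-linear map.

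For the second claim, let $S\subseteq \cA$ be the $\kk$-span of the products $\prod_{i\in I} a_i(v_i)$; by assumption $S=\cA$. If $j\in I$ there is nothing to prove. Otherwise, the span of the products over $I\cup\{j\}$ equals the $\kk$-span of $S\cdot \varphi_j^{\vee}(V_j^{\vee})$, which is $\cA\cdot\varphi_j^{\vee}(V_j^{\vee})$. This is $\cA$ by regularity of $\varphi_j$.

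The main point to get right is the identification of the multiplication map $W_1^{\vee}\otimes_{\kk}\cdots\otimes_{\kk} W_d^{\vee}\onto W_1^{\vee}\otimes_{\cA}\cdots\otimes_{\cA} W_d^{\vee}$ with iterated multiplication in $\cA$ under the trivialisations, as in~\eqref{eq:jointSpanConcrete}. This identification holds because each $W_i^{\vee}$ is free of rank one, so $\cA\otimes_{\cA}\cdots\otimes_{\cA}\cA=\cA$ via multiplication. Once it is in place, both claims are the one-line ideal computations above.
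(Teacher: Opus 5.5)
Your proof is correct and is essentially the paper's argument. Both pass to the trivialised description \eqref{eq:jointSpanConcrete}, identify regularity of $\varphi_j$ with $\cA\cdot\varphi_j^{\vee}(V_j^{\vee})=\cA$ and joint spanning with the products $\prod_{i\in I}\varphi_i^{\vee}(V_i^{\vee})$ spanning $\cA$, and do the same one-line ideal computation; the only cosmetic difference is that the paper proves the first claim contrapositively (if $\operatorname{im}\varphi_j^{\vee}$ lies in a proper ideal, so do all the products), while you argue directly.
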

    \begin{proof}
        Suppose that $\varphi_i$ is not regular for some $i$. This is
        equivalent to saying that the ideal of $\cA$ generated by the image
        of $\varphi_i^{\vee}$ is not the whole $\cA$. In other words, $\im
        \varphi_i^{\vee}$ lies in an ideal $I\subsetneq \cA$. Then also the
        image of the first map of~\eqref{eq:jointSpanConcrete} is contained in $I$,
        hence the map is not surjective.

        Suppose now that $(\varphi_i)_{i\in I}$ are jointly spanning. In the
        formalism of~\eqref{eq:jointSpanConcrete}, this means that $\prod_{i\in
        I}\varphi_i^{\vee}(V_i^{\vee}) = \cA$. Since $\varphi_j$ is regular, we have
        $\cA\cdot \varphi_j^{\vee}(V_{j}^{\vee}) = \cA$. Putting this together, we have
        \[
            \prod_{i\in I\cup \{j\}} \varphi_i^{\vee}(V_i^{\vee}) = \left(\prod_{i\in I} \varphi_i^{\vee}(V_i^{\vee})\right)\cdot \varphi_j^{\vee}(V_j^{\vee}) = 
            \cA\cdot \varphi_j^{\vee}(V_j^{\vee}) = \cA,
        \]
        so the claim follows by~\eqref{eq:jointSpanConcrete} again.
    \end{proof}

    \begin{example}[Not jointly spanning regular maps]\label{ex:jointSurjectivity}
        Let $\cA = \kk[x]/(x^{5})$.
        Let $W_1^{\vee} = W_2^{\vee} = W_3^{\vee} = \cA$. Let $V_1^{\vee} = V_2^{\vee} = V_3^{\vee} = \spann{1, x}$ and let $V_i^{\vee} \to W_i^{\vee}$
        be the inclusion maps. They correspond to regular restrictions $W_i \to V_i$. The sequence~\eqref{eq:jointSpanConcrete} is
        \[
            \begin{tikzcd}
                \spann{1, x} \otimes_\kk \spann{1, x}\otimes_\kk  \spann{1, x} \ar[r, hook] & \cA \otimes_\kk \cA \otimes_\kk \cA \ar[r, two heads, "\mathrm{mult.}"] & \cA
            \end{tikzcd}
        \]
        which is not surjective, so the restrictions $(W_i\to V_i)_{i=1,2,3}$ are not jointly spanning.
    \end{example}
    
    Having dealt with the conditions on restrictions, we proceed to our main statements.

    \begin{theorem}[Cactus Apolarity Lemma]\label{ref:spanChar:thm}
        Let $T$ in $\tspace$ be a tensor and $\cA$ be a finite $\kk$-algebra. Let $Z = \Spec(\cA)$. The following are equivalent:
        \begin{enumerate}
            \item\label{it:spansGeo} there exists a morphism $Z \to \mathbb{P}V_1\times \ldots
                \times \mathbb{P}V_d$ such that $T$ lies in the linear span
                $\spann{Z}$,
            \item\label{it:spansAlg} there exist regular linear maps $\varphi_i\colon \cA^{\vee} \to V_i$ 
                and a functional $\epsilon \colon \cA \to \kk$ such that the tensor $T$ is associated to the
                multilinear map
                \[
                    \begin{tikzcd}
                        V_1^{\vee} \ar[d, "\varphi_1^{\vee}"]\ar[r, phantom, "\times
                        \ldots \times"] & V_{d}^{\vee}\ar[d,
                        "\varphi_d^{\vee}"] \ar[rr] && \kk\ar[d, equal]\\
                        \cA \ar[r, phantom, "\times \ldots \times"] & \cA \ar[r,
                        "\mathrm{mult.}"]& \cA \ar[r, "\epsilon"] &
                        \kk
                    \end{tikzcd}
                \]
        \end{enumerate}
    \end{theorem}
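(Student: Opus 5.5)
The plan is to reduce both directions to Proposition~\ref{ref:linearSpanAbstract:prop}, after trivialising the line bundles $W_i^{\vee}$. The key observation is that any line bundle $W_i^{\vee}$ on $Z=\Spec(\cA)$ is, by definition, an $\cA$-module isomorphic to $\cA^{\oplus 1}$. Choosing such isomorphisms $W_i^{\vee}\simeq \cA$ identifies $W_i$ with $\cA^{\vee}$. It also identifies $W_1^{\vee}\otimes_{\cA}\ldots\otimes_{\cA}W_d^{\vee}$ with $\cA\otimes_{\cA}\ldots\otimes_{\cA}\cA=\cA$, the quotient map from the $\kk$-tensor product becoming the iterated multiplication, as in diagram~\eqref{eq:jointSpanConcrete}. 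Under these identifications the subspace $(W_1^{\vee}\otimes_{\cA}\ldots\otimes_{\cA}W_d^{\vee})^{\vee}\subseteq W_1\otimes\ldots\otimes W_d$ is exactly the set of evaluation tensors $\eval$ for $\epsilon\in\cA^{\vee}$, as computed in Example~\ref{ex:universalLinearSpan}.

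For \ref{it:spansGeo}$\Rightarrow$\ref{it:spansAlg}, I take the morphism $Z\to\prod\mathbb{P}V_i$. It gives surjections $\cA\otimes V_i^{\vee}\onto W_i^{\vee}$, and hence maps $\varphi_i\colon W_i\to V_i$ dual to $V_i^{\vee}\to W_i^{\vee}$. These maps are regular by the very definition (Definition~\ref{ref:regularMap:def}), since the defining map $\cA\otimes V_i^{\vee}\to W_i^{\vee}$ is precisely the given surjection. By Proposition~\ref{ref:linearSpanAbstract:prop}, $T$ is the image under $\varphi_1\otimes\ldots\otimes\varphi_d$ of some element of $(W_1^{\vee}\otimes_{\cA}\ldots\otimes_{\cA}W_d^{\vee})^{\vee}$. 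After trivialising, this element is an evaluation tensor $\eval$. Reading off the multilinear map $(v_1^*,\ldots,v_d^*)\mapsto\epsilon(\varphi_1^{\vee}(v_1^*)\cdots\varphi_d^{\vee}(v_d^*))$ gives exactly the diagram in \ref{it:spansAlg}, with $\varphi_i$ now regarded as maps $\cA^{\vee}\to V_i$.

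For \ref{it:spansAlg}$\Rightarrow$\ref{it:spansGeo}, I reverse this. I set $W_i^{\vee}:=\cA$. Regularity of $\varphi_i$ says that $\cA\otimes V_i^{\vee}\to\cA$, $(a,v^*)\mapsto a\varphi_i^{\vee}(v^*)$, is surjective, so it defines a morphism $Z\to\mathbb{P}V_i$ whose associated line bundle is $\cA$ and whose associated restriction map is $\varphi_i$. Taking the product gives a morphism $Z\to\prod\mathbb{P}V_i$. Proposition~\ref{ref:linearSpanAbstract:prop}, combined with Example~\ref{ex:universalLinearSpan}, identifies $\spann{Z}$ with the set of images $\varphi_{1,\dots,d}(\eval)$ over all $\epsilon\in\cA^{\vee}$, and $T$ is one of these by hypothesis.

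The main point needing care is the bookkeeping of the identifications, not any genuine difficulty. I have to check that, once $W_i^{\vee}\simeq\cA$ is fixed, the map $W_i\to V_i$ produced from the morphism coincides with the dual of $V_i^{\vee}\to\cA$, and that the quotient $\otimes_{\kk}\to\otimes_{\cA}$ really becomes multiplication. Both are $\cA$-linear identifications and hold for any choice of generator; a different choice rescales by units, which can be absorbed into $\epsilon$ and the $\varphi_i$. I would also remark that $\spann{Z}$ here is taken for a not necessarily injective morphism, which is the generality in which Proposition~\ref{ref:linearSpanAbstract:prop} was proved.
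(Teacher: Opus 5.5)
Your proposal is correct and follows essentially the same route as the paper. Both arguments apply Proposition~\ref{ref:linearSpanAbstract:prop} and fix trivialisations $W_i^{\vee}\simeq\cA$. Under these trivialisations the quotient $\otimes_{\kk}\to\otimes_{\cA}$ becomes iterated multiplication, and the element of $(W_1^{\vee}\otimes_{\cA}\ldots\otimes_{\cA}W_d^{\vee})^{\vee}$ producing $T$ becomes a functional $\epsilon$. The converse is obtained by reversing the argument.

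Your version adds two checks that the paper's ``the argument can be reversed'' leaves implicit. First, the $\varphi_i$ are regular because the defining map is exactly the given $\cA$-linear surjection. Second, regular maps $\cA^{\vee}\to V_i$ define morphisms $Z\to\mathbb{P}V_i$ whose associated restriction maps are again the $\varphi_i$.
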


    \begin{proof}
        Assume~\ref{it:spansGeo} and apply Proposition~\ref{ref:linearSpanAbstract:prop}. Since $T\in \spann{Z}$, we have
        an element $\epsilon\in(W_1^{\vee}\otimes_{\cA} \ldots \otimes_{\cA} W_d^{\vee})^{\vee}$ yielding $T$.
        In terms of Diagram~\eqref{eq:linearSpanDiagram}, the tensor $T$ corresponds to a linear map
        \[
            V_1^{\vee}\otimes_\kk \ldots \otimes_\kk V_d^{\vee}\to
            W_1^{\vee}\otimes_{\kk} \ldots \otimes_{\kk}W_d^{\vee} \onto
            W_1^{\vee}\otimes_{\cA}  \ldots \otimes_{\cA} W_d^{\vee} \xrightarrow{\epsilon} \kk.
        \]
        Fix isomorphisms $W_i^{\vee}\to \cA$ for each $i=1, \ldots ,d$. Then the above linear map becomes
        \[
            V_1^{\vee}\otimes_\kk \ldots \otimes_\kk V_d^{\vee}\to
            \cA\otimes_{\kk} \ldots \otimes_{\kk}\cA \xrightarrow{\mathrm{mult.}}
            \cA \xrightarrow{\epsilon} \kk,
        \]
        as claimed. This yields~\ref{it:spansAlg}. The argument can be reversed.
    \end{proof}

    \begin{corollary}[Algebraic characterisation of
        spans]\label{ref:spanCharPopular:cor}
        Let $T$ in $\tspace$ be a tensor and $\cA$ be a finite $\kk$-algebra.
        Let $Z = \Spec(\cA)$. The following conditions are equivalent:
        \begin{enumerate}
            \item there exists a morphism $Z\to \mathbb{P}V_1\times \ldots
                \times \mathbb{P}V_d$ such that $T\in \spann{Z}$,
            \item there exists a functional $\epsilon\colon \cA\to \kk$ such that $T$ is a
                regular restriction (Definition~\ref{ref:regularAndJointlySpanning:def}) of the evaluation tensor of $(\cA, \epsilon)$,
            \item the tensor $T$ is a regular restriction of the multiplication tensor
                in a quotient algebra $\cA' = \cA/I$ which is Gorenstein.
        \end{enumerate}
    \end{corollary}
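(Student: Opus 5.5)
The plan is to chain Theorem~\ref{ref:spanChar:thm} with Proposition~\ref{ref:evaluationTensors:prop}, transporting the regularity condition along the quotient $\cA\onto\cA'$.

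\emph{(a)$\Leftrightarrow$(b).} This is Theorem~\ref{ref:spanChar:thm} rephrased. The tensor displayed in Theorem~\ref{ref:spanChar:thm}\ref{it:spansAlg} is the image of the evaluation tensor $\eval\in\cA^{\vee}\otimes\ldots\otimes\cA^{\vee}$ under $\varphi_1\otimes\ldots\otimes\varphi_d$, where $\varphi_i\colon\cA^{\vee}\to V_i$. Dually, it is the composite $V_1^{\vee}\otimes\ldots\otimes V_d^{\vee}\to\cA^{\otimes d}\to\cA\xrightarrow{\epsilon}\kk$. So ``$T$ is a regular restriction of the evaluation tensor of $(\cA,\epsilon)$'' says exactly the same thing as condition~\ref{it:spansAlg}, once we take $W_i^{\vee}=\cA$ in Definition~\ref{ref:regularAndJointlySpanning:def}.

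\emph{(b)$\Rightarrow$(c).} Let $I=\{a\in\cA : \epsilon(\cA a)=0\}$ and $\cA'=\cA/I$. The proof of Proposition~\ref{ref:evaluationTensors:prop} shows that $\cA'$ is Gorenstein with dual generator $\epsilon'$. It also shows that the evaluation tensor of $(\cA,\epsilon)$ factors through $(\cA')^{\vee}\otimes\ldots\otimes(\cA')^{\vee}\subseteq\cA^{\vee}\otimes\ldots\otimes\cA^{\vee}$, where it equals the evaluation tensor of $(\cA',\epsilon')$. By Corollary~\ref{ref:dualSocle:cor}, the latter is isomorphic to the multiplication tensor of $\cA'$.

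Set $\varphi_i'=\varphi_i|_{(\cA')^{\vee}}$, so that dually $(\varphi_i')^{\vee}$ is $\varphi_i^{\vee}$ followed by the projection $\cA\onto\cA'$. Then $(\varphi_i')_{\bullet}$ restricts the evaluation tensor of $(\cA',\epsilon')$ to $T$. Regularity is preserved: if $\im\varphi_i^{\vee}$ generates the unit ideal of $\cA$, then its image generates the unit ideal of $\cA'$. The isomorphism between the evaluation tensor and the multiplication tensor changes the $\cA'$-module identifications only by the isomorphism $\cA'\to(\cA')^{\vee}$, $a\mapsto a\epsilon'$. This is an $\cA'$-module isomorphism, so it preserves the condition ``image contains a generator / unit'' in the sense of Definition~\ref{ref:regularMap:def}.

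\emph{(c)$\Rightarrow$(b).} Run the argument backwards. Compose the dual generator $\epsilon'$ of $\cA'$ with $\cA\onto\cA'$ to obtain $\epsilon$. As in the reverse direction of Proposition~\ref{ref:evaluationTensors:prop}, the evaluation tensor of $(\cA,\epsilon)$ is the multiplication tensor of $\cA'$, via the inclusion $(\cA')^{\vee}\into\cA^{\vee}$. Given regular $\varphi_i'\colon(\cA')^{\vee}\to V_i$, extend them to $\varphi_i\colon\cA^{\vee}\to V_i$ by any linear extension, for instance by choosing a complement of $(\cA')^{\vee}$ and sending it to $0$. The restricted tensor is unchanged, since the evaluation tensor lives in $(\cA')^{\otimes d\,\vee}$.

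\emph{Main obstacle.} The step I expect to be delicate is regularity of the extension in (c)$\Rightarrow$(b). The maps $\varphi_i^{\vee}\colon V_i^{\vee}\to\cA$ lift $(\varphi_i')^{\vee}$, and their images generate the unit ideal of $\cA$ only modulo $I$. Since $\cA$ is finite, $I\subseteq\cA$ need not be nilpotent, so a naive lift can fail. I would fix this by choosing the lift carefully. One option is to use that $\cA$ is a product of local algebras and that an element is a unit iff it is a unit modulo each maximal ideal. If some maximal ideal contains $I$-complementary factors, I would add to a chosen lift of the unit an element of $I$ that is a unit on the factors killed by $I$. This should be possible because we only need the image of $V_i^{\vee}$ to contain one invertible element, and that element can be prescribed freely in $I$, since extending $\varphi_i$ on the complement of $(\cA')^{\vee}$ is unconstrained.
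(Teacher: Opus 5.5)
Your proposal is correct and follows the paper's route. The paper proves the corollary by citing Theorem~\ref{ref:spanChar:thm} for the first equivalence and Proposition~\ref{ref:evaluationTensors:prop} for the second, exactly as you do. Your explicit transport of regularity through $\cA\onto\cA'$ fills in a detail the paper leaves implicit. Your lifting argument for (c)$\Rightarrow$(b) works once you phrase it as choosing the lift so that its components on the local factors of $\cA$ killed by $I$ are units, rather than adding to an arbitrary lift an element of $I$ that is itself a unit there (that sum need not be a unit).
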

    \begin{proof}
        The equivalence of first two conditions is given in
        Theorem~\ref{ref:spanChar:thm}, while the equivalence of the latter
        two is given in Proposition~\ref{ref:evaluationTensors:prop}.
    \end{proof}

    \begin{proof}[Proof of Theorem~\ref{ref:restriction:introthm}]
        The equivalence $\ref{it:cactusrankAlg}\Leftrightarrow\ref{it:cactusrankSpan}$ follows from
        Corollary~\ref{ref:spanCharPopular:cor}.
        The implication $\ref{it:cactusrankOne}\implies\ref{it:cactusrankSpan}$ is purely formal.
        To obtain $\ref{it:cactusrankSpan}\implies\ref{it:cactusrankOne}$, take a map $Z\to \mathbb{P}V_1\times \ldots \times \mathbb{P}V_d$
        and consider its scheme-theoretic image $Z'\into \mathbb{P}V_1\times \ldots \times \mathbb{P}V_d$. By definitions,
        the spans of $Z$ and $Z'$ coincide. The argument can be given also in purely algebraic terms. In the setting of
        Theorem~\ref{ref:spanChar:thm}, let $\mathcal{B}\subseteq \cA$ be the subalgebra generated by the images of $\varphi_1^{\vee}$,
        \ldots, $\varphi_d^{\vee}$. One obtains that $T$ is a restriction
        of the evaluation tensor associated to $(\mathcal{B},
        \epsilon|_{\mathcal{B}})$ and that $\Spec(\mathcal{B})\to
        \mathbb{P}V_1\times \ldots \times \mathbb{P}V_d$ is a closed embedding.
    \end{proof}

    \begin{remark}\label{ref:smoothableApolarityLemma:rem}
        There is an analogue of Theorem~\ref{ref:restriction:introthm} for smoothable rank.
        It is not immediate, since the smoothability of $Z$ and $Z'$ given in the proof needs
        to be taken into account. The result
        is obtained by Buczy{\'n}ski (unpublished) and we do not mention it here further.
    \end{remark}
    
    \subsection{Algebras from centroids}\label{ssec:genericityConditions}

        Above, we started from a finite scheme $Z = \Spec(\cA)$ and obtained tensors $T\in\spann{Z}\subseteq \tspace$. In
        this section we reverse this procedure. The reversal is possible thanks to the theory of centroids, as developed in~\cite{JLP}.
        Namely, for a $1$-generic (defined below) centroid-abundant tensor $\Tppar{}\in \tconcspace$,
        we will obtain $\cA$ as $\Cen{\Tppar{}}$.

    The following theorem in the case of three factors was proved in~\cite[Proposition~5.3]{JLP}.
    The proof generalises immediately as soon as one takes the correct notion of ``full rank''.
    \begin{proposition}[genericity conditions]\label{ref:genericity:prop}
        Let $d\geq 3$ and $\dim_{\kk} W_{\bullet} = m$.
        Let $\Tppar{}\in W_1 \otimes  \ldots \otimes W_d$ be a concise tensor with $\dim_{\kk} \Cen{\Tppar{}} \geq
        m$. Fix an index $1\leq i\leq d$ and let $\alpha\in W_i^{\vee}$ be a
        functional. The following are equivalent:
        \begin{enumerate}
            \item\label{it:generityRank} for \emph{some} index $j\neq i$, the contraction
                \[
                    \Tppar{}(\alpha)\colon W_{j}^{\vee} \to W_1\otimes  \ldots \otimes
                    W_{\hat{i}}\otimes  \ldots \otimes W_{\hat{j}} \otimes
                    \ldots \otimes W_d
                \]
                is injective,
            \item\label{it:generityRankEvery} for \emph{every} index $j\neq i$, the contraction
                \[
                    \Tppar{}(\alpha)\colon W_{j}^{\vee} \to W_1\otimes  \ldots \otimes
                    W_{\hat{i}}\otimes  \ldots \otimes W_{\hat{j}} \otimes
                    \ldots \otimes W_d
                \]
                is injective.
            \item\label{it:generityGenerator} the $\Cen{\Tppar{}}$-module $W_i^\vee$ is generated by the element
                $\alpha$.
        \end{enumerate}
        If these hold, then $\dim_{\kk} \Cen{\Tppar{}} = m$ and the
        $\Cen{\Tppar{}}$-module $W_i^{\vee}$ is isomorphic to $\Cen{\Tppar{}}$. In this case,
        we say that $\Tppar{}$ is \emph{$1_{W_i}$-generic via $\alpha$} or that $\Tppar{}$
        is \emph{generic at the $i$-th coordinate via $\alpha$}.
    \end{proposition}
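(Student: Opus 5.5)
The plan is to follow the three-factor argument of~\cite[Proposition~5.3]{JLP}, with the centroid in place of the algebra. Recall that $\Cen{\Tppar{}}$ consists of tuples $(X_1,\ldots,X_d)\in\prod\End(W_\bullet)$ with $X_1\circ\Tppar{}=\ldots=X_d\circ\Tppar{}$; by conciseness each projection $\Cen{\Tppar{}}\to\End(W_i)$ is injective. Moreover, $\Cen{\Tppar{}}$ is a commutative unital subalgebra, which is the standard fact for concise tensors from~\cite{JLP}. Dualising, $\Cen{\Tppar{}}$ acts on each $W_i^\vee$ by $X_i^\vee$. The centroid relation says that for $\alpha\in W_i^\vee$ and $\beta\in W_j^\vee$ we have
\[
\Tppar{}(X_i^\vee\alpha,\beta)=\Tppar{}(\alpha,X_j^\vee\beta)=(X_k\circ\Tppar{})(\alpha,\beta)
\]
for any third index $k$, where $\Tppar{}(\cdot,\cdot)$ denotes contraction on coordinates $i,j$.

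The main line is a cycle of implications. First, (b)$\Rightarrow$(a) is trivial.

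Next, for (a)$\Rightarrow$(c), fix $j$ with $\Tppar{}(\alpha)|_{W_j^\vee}$ injective. Consider the map $\Cen{\Tppar{}}\to W_i^\vee$ sending $X\mapsto X_i^\vee\alpha$. Using the identity $\Tppar{}(X_i^\vee\alpha,\cdot)=\Tppar{}(\alpha,X_j^\vee\,\cdot)$, if $X_i^\vee\alpha=0$ then $\Tppar{}(\alpha)\circ X_j^\vee=0$. Injectivity gives $X_j=0$, and conciseness then gives $X=0$. Hence the map is injective, and $\dim\Cen{\Tppar{}}\ge m=\dim W_i^\vee$ forces it to be bijective. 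So $W_i^\vee=\Cen{\Tppar{}}\cdot\alpha$ is free of rank one, and $\dim\Cen{\Tppar{}}=m$.

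Finally, for (c)$\Rightarrow$(b), assume $W_i^\vee=\Cen{\Tppar{}}\alpha$ and take any $j\ne i$ and $\beta\in W_j^\vee$ with $\Tppar{}(\alpha,\beta)=0$. For every $X$ we get $\Tppar{}(X_i^\vee\alpha,\beta)=\Tppar{}(\alpha,X_j^\vee\beta)$, and by commutativity $\Tppar{}(\alpha,X_j^\vee\beta)=(X\cdot)\Tppar{}(\alpha,\beta)$ acting on a remaining coordinate. More concretely, this term is $(X_k\circ\Tppar{})(\alpha,\beta)$, which is $X_k$ applied to $\Tppar{}(\alpha,\beta)=0$, so it vanishes. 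Thus $\Tppar{}(\gamma,\beta)=0$ for all $\gamma\in W_i^\vee$, i.e.\ $\beta$ lies in the kernel of the $W_j$-flattening, and $\beta=0$ by conciseness. This step needs $d\ge3$ so that a third index $k\ne i,j$ exists. That is exactly where the hypothesis enters, and it is the ``correct notion of full rank'': injectivity of $W_j^\vee\to\bigotimes_{l\ne i,j}W_l$ rather than of a square matrix.

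I expect the main obstacle to be bookkeeping rather than ideas. One point is to check that the identity $\Tppar{}(X_i^\vee\alpha,\beta)=X_k(\Tppar{}(\alpha,\beta))$ holds as an equality in $\bigotimes_{l\ne i,j}W_l$, with $X_k$ acting on the $k$-th factor, for general $d$. The other is to make sure commutativity of the centroid (from~\cite{JLP}) is available for $d$ factors, since it is used implicitly to view the $W_\bullet^\vee$ as modules over a single algebra. Once $W_i^\vee\simeq\Cen{\Tppar{}}$ and $\dim\Cen{\Tppar{}}=m$ are established in (a)$\Rightarrow$(c), the final statements follow immediately.
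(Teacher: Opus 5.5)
Your argument is correct and is essentially the paper's. You show $a\mapsto a\cdot\alpha$ is injective using $\Tppar{}(a\cdot\alpha)=\Tppar{}(\alpha)\circ(a\cdot -)$ and conciseness, the dimension count gives (c) together with $\dim_{\kk}\Cen{\Tppar{}}=m$, and the converse reduces to conciseness on the $j$-th coordinate. The only organisational difference is that the paper proves (a)$\Leftrightarrow$(c) for a fixed $j$ and gets (b) from the $j$-independence of (c), while you close the cycle (b)$\Rightarrow$(a)$\Rightarrow$(c)$\Rightarrow$(b).

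Your identity $\Tppar{}(X_i^\vee\alpha,\beta)=X_k\bigl(\Tppar{}(\alpha,\beta)\bigr)$ for a third index $k$ usefully spells out a step the paper leaves implicit: why $\ker\Tppar{}(\alpha)$ lies in the kernel of every $\Tppar{}(a\cdot\alpha)$. This is exactly where $d\geq 3$ is used.
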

    \begin{proof}
        \def\What{W_{\widehat{ij}}}
        Fix an index $j\neq i$ and let $\What := W_1\otimes  \ldots \otimes
        W_{\hat{i}}\otimes \ldots \otimes W_{\hat{j}} \otimes \ldots \otimes
        W_d$. For any $\alpha'\in W_i^{\vee}$, we view $\Tppar{}(\alpha')$ as the
        linear map $\Tppar{}(\alpha')\colon W_j^{\vee}\to \What$.

        By $\Cen{\Tppar{}}$-linearity, for every $a\in \Cen{\Tppar{}}$
        we have
        \begin{equation}\label{eq:compatibility}
            \begin{tikzcd}
                W_j^{\vee} \ar[rr, "a\cdot(-)"]\ar[rd, "\Tppar{}(a\cdot \alpha)"'] &&
                W_j^{\vee}\ar[ld, "\Tppar{}(\alpha)"]\\
                & \What
            \end{tikzcd}
        \end{equation}
        The tensor $\Tppar{}$ is concise on the $i$-th coordinate, so for $\alpha'\in W_i^{\vee}$ we have that $\alpha' = 0$ if and
        only if $\Tppar{}(\alpha') = 0$. In particular, for every
        $a\in\Cen{\Tppar{}}$ we have $a\cdot \alpha = 0$ if and only if $\Tppar{}(a\cdot
        \alpha) = 0$.

        Now, assume that the condition~\ref{it:generityRank} from the
        statement holds and assume $a\cdot \alpha = 0$.
        Diagram~\eqref{eq:compatibility} implies that $a\cdot (-)\colon
        W_j^{\vee}\to W_j^{\vee}$ is zero, so $a = 0$ by conciseness. It follows that the
        $\Cen{\Tppar{}}$-module homomorphism
        \[
            \Cen{\Tppar{}}\ni a\mapsto a\cdot\alpha\in W_i^{\vee}
        \]
        is
        injective. By assumption, $\dim_{\kk} \Cen{\Tppar{}}\geq m = \dim_{\kk}
        W_i^{\vee}$, hence this homomorphism is an isomorphism, which means
        precisely that $\alpha$ generates
        the $\Cen{\Tppar{}}$-module $W_i^{\vee}$. Therefore, also~\ref{it:generityGenerator}
        holds, $\dim_{\kk} \Cen{\Tppar{}} = m$ and $W_i^{\vee}$ is isomorphic to
        $\Cen{\Tppar{}}$.

        Assume conversely, that~\ref{it:generityGenerator} holds. Suppose that
        $\beta\in W_j^{\vee}$ lies in the kernel of $\Tppar{}(\alpha)$.
        By~\eqref{eq:compatibility} it lies in the kernel of every
        $\Tppar{}(a\cdot\alpha)$. But $\Cen{\Tppar{}}\cdot\alpha = W_i^{\vee}$, so $\beta$ lies in the
        joint kernel of all elements of $\Tppar{}(W_i^{\vee})$, which is zero by conciseness of $\Tppar{}$.
        We proved that~\ref{it:generityRank} holds.

        We proved that~\ref{it:generityRank} and~\ref{it:generityGenerator}
        are equivalent. Since the latter does not depend on the choice of
        index $j$, it is also equivalent to~\ref{it:generityRankEvery}.
    \end{proof}

    We say that a tensor $\Tppar{}$ as in Proposition~\ref{ref:genericity:prop} is \emph{$1$-generic} if it
    is generic at every of its $d$ coordinates. Being $1$-generic is an open
    condition among concise tensors with $\dim_{\kk} \Cen{\Tppar{}} \geq m$.
    Being $1$-generic is tightly connected to the algebraic characterisation of spans from~\S\ref{ssec:lineBundlesAndSpans},
    as we now show.

    \begin{corollary}\label{ref:1generic:cor}
        Let $d\geq 3$ and $\dim_{\kk} W_{\bullet} = m$. Let $\Tppar{}\in W_1 \otimes  \ldots
        \otimes W_d$ be a concise tensor with $\dim_{\kk} \Cen{\Tppar{}} \geq m$.
        The following are equivalent:
        \begin{enumerate}
            \item $\Tppar{}$ is $1$-generic,
            \item $\Tppar{}$ is isomorphic to an evaluation tensor associated to some $(\cA, \epsilon)$.
        \end{enumerate}
        If the above condition hold, then $\cA$ is Gorenstein, isomorphic to $\Cen{\Tppar{}}$, the functional $\epsilon$ is its dual generator, the line bundles $W_1^{\vee}, \ldots ,W_{d}^{\vee}$ yield
        an embedding of $\Spec(\Cen{\Tppar{}})$ into $\mathbb{P}W_1\times \ldots \times \mathbb{P}W_d$, and $\Tppar{}$ lies in $\spann{\Spec(\Cen{\Tppar{}})} = \Cen{\Tppar{}}\cdot \Tppar{}$.
    \end{corollary}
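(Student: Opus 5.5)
The plan is to run both directions through Proposition~\ref{ref:genericity:prop} and Proposition~\ref{ref:evaluationTensors:prop}, with $\cA := \Cen{\Tppar{}}$ acting on every $W_i^{\vee}$.

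For (1)$\Rightarrow$(2), assume $\Tppar{}$ is $1$-generic. Proposition~\ref{ref:genericity:prop} gives $\dim_\kk \cA = m$, and for each $i$ a functional $\alpha_i \in W_i^\vee$ generating $W_i^\vee$ as an $\cA$-module. Thus $a\mapsto a\alpha_i$ is an isomorphism $\cA \simeq W_i^\vee$. Since $\cA$ is commutative (a standard fact from~\cite{JLP} for concise tensors), the tensor $\Tppar{}$, viewed as a multilinear form on $W_1^\vee\times\dots\times W_d^\vee$, is $\cA$-balanced: by the defining property of the centroid, $\Tppar{}(\dots,a\beta_i,\dots) = \Tppar{}(\dots,a\beta_j,\dots)$. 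Hence it factors through $W_1^\vee\otimes_\cA\dots\otimes_\cA W_d^\vee$. Transporting along the isomorphisms $W_i^\vee\simeq \cA$, this tensor product becomes $\cA$ via multiplication, and so $\Tppar{}(a_1\alpha_1,\dots,a_d\alpha_d) = \epsilon(a_1\cdots a_d)$ with $\epsilon(a) := \Tppar{}(a\alpha_1,\alpha_2,\dots,\alpha_d)$. This is exactly the evaluation tensor of $(\cA,\epsilon)$, so (2) holds with $\cA = \Cen{\Tppar{}}$. Conciseness together with the last statement of Proposition~\ref{ref:evaluationTensors:prop} shows $\cA$ is Gorenstein, and its proof (the ideal $I$ is zero) shows $\epsilon$ is a dual generator.

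For (2)$\Rightarrow$(1), let $\Tppar{}$ be isomorphic to the evaluation tensor of $(\cA,\epsilon)$.
\begin{enumerate}
\item Conciseness and Proposition~\ref{ref:evaluationTensors:prop} give that $\cA$ is Gorenstein, $\dim \cA = m$, and $\epsilon$ is a dual generator.
\item Multiplication by $\cA$ on each factor lies in the centroid.
\item Take $\alpha_i = 1\in\cA\cong W_i^\vee$. The contraction $\Tppar{}(1)\colon W_j^\vee\to\dots$ is injective: if $b\in\cA$ pairs to zero with everything, then $\epsilon(b\cA)=0$, so $b=0$ because $\epsilon$ is a dual generator.
\item Condition~\ref{it:generityRank} of Proposition~\ref{ref:genericity:prop} now holds at every coordinate, so $\Tppar{}$ is $1$-generic.
\item The same proposition gives $\dim\Cen{\Tppar{}}=m$. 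Since $\cA\subseteq\Cen{\Tppar{}}$ and both have dimension $m$, we get $\cA\cong\Cen{\Tppar{}}$.
\end{enumerate}

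For the final assertions, the line bundles $W_i^\vee\simeq\cA$ together with the identity maps are precisely the universal situation of Example~\ref{ex:universalLinearSpan}. That example gives the embedding $\Spec\cA\into\prod\mathbb{P}W_i$ and shows that $\spann{\Spec\cA}$ consists of all evaluation tensors $\epsilon'$, $\epsilon'\in\cA^\vee$. Since $\cA^\vee=\cA\cdot\epsilon$, this span equals $\cA\cdot\Tppar{}$.

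The main obstacle is the balancing step in (1)$\Rightarrow$(2): making sure the centroid identities give a well-defined factorisation through $\otimes_\cA$ on all $d$ factors simultaneously, and handling commutativity for $d>3$. If commutativity is not directly available from~\cite{JLP} in this generality, I would first try to derive it from conciseness, by comparing $ab$ and $ba$ acting on two different factors.
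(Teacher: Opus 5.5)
Your proposal is correct and follows the paper's argument: for (1)$\Rightarrow$(2) you take generators $\alpha_i$ from Proposition~\ref{ref:genericity:prop}, factor $\Tppar{}$ through $W_1^{\vee}\otimes_{\cA}\cdots\otimes_{\cA}W_d^{\vee}$ and identify the result with an evaluation tensor, and for the final assertions you use Example~\ref{ex:universalLinearSpan}. Your converse is also sound and is more explicit than the paper, which simply calls it ``by definition'': you take $\alpha_i=1$, use that $\epsilon$ is a dual generator (which needs a third factor, i.e.\ $d\geq 3$), and then the dimension count $\cA\subseteq\Cen{\Tppar{}}$ gives $\cA\cong\Cen{\Tppar{}}$ for an arbitrary $(\cA,\epsilon)$.
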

    \begin{proof}
        Assume that $\Tppar{}$ is $1$-generic.
        Let $\cA = \Cen{\Tppar{}}$. We use Proposition~\ref{ref:genericity:prop}.
        For every $i=1, \ldots ,d$, choose a generator $\alpha_i\in W_i^{\vee}$ of the $\cA$-module $W_i^\vee$.
        The map $\sigma_i\colon \cA\to W_i^{\vee}$ given by $\sigma_i(a) = a\alpha_i$ is an isomorphism.
        View $\Tppar{}$ as the contraction $\Tppar{}\colon W_1^{\vee}\otimes  \ldots \otimes W_d^{\vee}\to \kk$.
        By definition of the centroid, this map factors as
        \[
            \begin{tikzcd}
                W_1^{\vee}\otimes_{\kk} \ldots \otimes_{\kk} W_d^{\vee} \ar[r, two heads] & W_1^{\vee}\otimes_{\cA} \ldots \otimes_{\cA} W_d^{\vee}
                \ar[r, "\epsilon"] & \kk
            \end{tikzcd}
        \]
        where $\epsilon$ is some functional. Applying the isomorphisms $\sigma_1, \ldots ,\sigma_d$, we get
        an isomorphism $W_1^{\vee}\otimes_{\cA} \ldots \otimes_{\cA} W_d^{\vee}\simeq \cA$ and so $\Tppar{}$
        comes from an evaluation tensor associated to $\epsilon$, see also Remark~\ref{ref:evaluationTensorsLineBundles:rem}.
        Conversely, a concise evaluation tensor $\Tppar{}$ is by definition $1$-generic.

        To prove the final statements, observe that by
        Proposition~\ref{ref:evaluationTensors:prop} the algebra $\cA$ is Gorenstein
        and $\epsilon$ is a dual generator of $\cA$.
        The line bundles $W_1^{\vee}$, \ldots , $W_d^{\vee}$ on $\Spec(\cA)$ yield an embedding
        $\Spec(\cA)\to \mathbb{P}W_1\times \ldots \times\mathbb{P}W_d$.
        As in Example~\ref{ex:universalLinearSpan}, the linear span of this
        embedding coincides with $\cA^{\vee} = \cA\cdot \epsilon$. In particular, $1\cdot \epsilon$ is
        the element corresponding to $\Tppar{}$.
    \end{proof}
    For a tensor $\Tppar{}$ as above, we can consider the very same nondegeneracy conditions on restrictions as in Definition~\ref{ref:regularAndJointlySpanning:def},
    by applying these definitions to $\cA = \Cen{\Tppar{}}$ and line bundles $W_1^\vee, \dots, W_d^\vee$.

    \begin{lemma}\label{ref:consiseness1generic:lem}
        Let $\dim W_{\bullet} = m$ and let $\Tppar{}\in \tconcspace$ be a $1$-generic tensor (so that in particular $\dim_{\kk} \Cen{\Tppar{}}\geq m$)
        and let $\varphi_1, \ldots ,\varphi_d$ be regular restrictions.
        Fix a subset $I\subseteq \{1, \ldots ,d\}$ and let $J$ be its complement. Assume that $(\varphi_i)_{i\in I}$ are jointly spanning and let
        \[
            T = \varphi_I(\Tppar{})\in \bigotimes_{i\in I}V_{i} \otimes \bigotimes_{j\in J} W_j
        \]
        be the restriction. Then $T$ is $W_j$-concise for every $j\in J$.
    \end{lemma}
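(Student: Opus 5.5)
By Corollary~\ref{ref:1generic:cor}, we may identify $\cA = \Cen{\Tppar{}}$ and choose isomorphisms $W_i^{\vee}\simeq \cA$. Under these identifications, $\Tppar{}$ becomes the evaluation tensor of $(\cA,\epsilon)$ with $\epsilon$ a dual generator. As a functional on $W_1^\vee\otimes\ldots\otimes W_d^\vee$, it is $a_1\otimes\ldots\otimes a_d\mapsto \epsilon(a_1\cdots a_d)$. The restriction $T=\varphi_I(\Tppar{})$ is then the functional on $\bigotimes_{i\in I}V_i^\vee\otimes\bigotimes_{j\in J}W_j^\vee$ given by
\[
(v_i)_{i\in I},(a_j)_{j\in J}\mapsto \epsilon\Big(\prod_{i\in I}\varphi_i^\vee(v_i)\cdot\prod_{j\in J}a_j\Big).
\]

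Fix $j\in J$. To show that $T$ is $W_j$-concise, we must show that the contraction $W_j^\vee\to \bigotimes_{i\in I}V_i\otimes\bigotimes_{j'\in J\setminus\{j\}}W_{j'}$ is injective. So suppose $a\in W_j^\vee\simeq\cA$ lies in the kernel, that is,
\[
\epsilon\Big(a\cdot\prod_{i\in I}\varphi_i^\vee(v_i)\cdot\prod_{j'\neq j}a_{j'}\Big)=0
\]
for all choices of $v_i$ and $a_{j'}$. Specialise every $a_{j'}$ to $1$. Joint spanning of $(\varphi_i)_{i\in I}$ says, via~\eqref{eq:jointSpanConcrete}, that the products $\prod_{i\in I}\varphi_i^\vee(v_i)$ span $\cA$ linearly. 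Hence $\epsilon(a\cdot b)=0$ for all $b\in\cA$, i.e. $a\cdot\epsilon=0$ in $\cA^\vee$. Since $\epsilon$ is a dual generator, the map $\cA\to\cA^\vee$, $a\mapsto a\epsilon$, is bijective, so $a=0$. This gives injectivity.

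If $J=\{j\}$ is a singleton, the same argument applies with no $a_{j'}$ to specialise. If $I=\emptyset$, joint spanning of the empty family means the empty product $1$ spans $\cA$, which forces $m=1$, a degenerate case in which the argument still goes through. The main point needing care is the bookkeeping of the identifications: the factorisation through $\otimes_{\cA}$ from Corollary~\ref{ref:1generic:cor} is what lets us move all factors into a single product inside $\epsilon$. Regularity of the $\varphi_j$ for $j\in J$ is not needed for this argument; only joint spanning on $I$ and the Gorenstein duality are used.
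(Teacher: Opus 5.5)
Your proof is correct and is essentially the paper's argument: you identify $\Tppar{}$ with the evaluation tensor of a dual generator $\epsilon$ on $\cA=\Cen{\Tppar{}}$, use joint spanning via~\eqref{eq:jointSpanConcrete} to get $\epsilon(a\cdot\cA)=0$, and conclude $a=0$ by Gorenstein duality. The one difference is in the reduction step. The paper first invokes Lemma~\ref{ref:regularity:lem}, which uses regularity of $\varphi_{j'}$ for $j'\in J\setminus\{j\}$, to enlarge $I$ to the complement of $\{j\}$ and then restricts further. You instead plug the generator corresponding to $1$ into the remaining $W_{j'}^\vee$-slots, which, as you note, shows that regularity of the $\varphi_{j'}$ with $j'\in J$ is not needed.
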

    \begin{proof}
        Pick an index $j \in J$. Up to permuting coordinates, we can and do assume that $j=1$.
        Let $\cA = \Cen{\Tppar{}}$. By Corollary~\ref{ref:1generic:cor}, we may identify $W_j^{\vee}$ with $\cA$ for every $j\in J$
        and $\Tppar{}$ with the evaluation tensor associated to a dual generator $\epsilon\in \cA^{\vee}$.
        By Lemma~\ref{ref:regularity:lem}, the restrictions $\varphi_2, \ldots ,\varphi_d$ are jointly spanning.
        Since we are analysing $j=1$, we can restrict further to $W_1\otimes V_{2, \ldots ,d}$, that is, assume that $I = \left\{ 2, \ldots ,d \right\}$.
        
        If $T$ is not $W_1$-concise, then there exists an element $r\in \cA = W_1^\vee$ such that $T(r) = 0$.
        This implies that $T(r, v_2^*, \ldots , v_d^*) = 0$ for every $v_i^*\in V_i^{\vee}$, $i=2, \ldots ,d$. Since $T$ is an evaluation
        tensor, this translates to
        \[
            0 = \epsilon\left( r\cdot \varphi_2^{\vee}(V_2^{\vee})\cdot  \ldots \cdot \varphi_d^{\vee}(V_d^{\vee}) \right) = \epsilon(r\cdot \cA).
        \]
        In the last equality, we used~\eqref{eq:jointSpanConcrete}. The equality $\epsilon(r\cdot \cA)=0$ means that $r\epsilon = 0$. 
        But $\epsilon$ is a dual generator, so $r = 0$, which concludes the proof.
    \end{proof}

    \subsection{Abstract secant varieties}\label{sec:abstractSecant}

    The aim of this section is to show that the abstract secant variety
    (defined without closure) is an open subvariety of the concise secant
    variety. This requires a shift of perspective: traditionally, the points of
    an abstract secant (or cactus) variety are viewed as pairs $Z = \Spec(\cA)$, $T$ such that
    \[
        T\in \spann{Z}.
    \]
    This perspective is very useful under positivity assumptions, when all concise tensors become $1$-generic (see~\cite{Buczyska_Buczynski__cactus}).
    Without these assumptions, it becomes very hard to interpret the above away from the $1$-generic locus. We argue that one should rewrite the above as
    \[
        T \leq \eval,
    \]
    where $\eval$ is an evaluation tensor on $\cA$. For almost all applications, we can assume
    that $\cA$ is Gorenstein and replace $\eval$ by the (iterated) multiplication tensor on $\cA$.

    \bigskip

    We now discuss the spaces of interest.
        Let $\csigma_m$ be the concise secant variety
        with its family of tensors $\Tppar{}$. We define the \emph{tame locus} $\csigmaOneGen_m$
        of $\csigma_m$ as the locus given by the conditions:
        \begin{enumerate}
            \item\label{it:1generic} the restriction of the universal tensor $\Tppar{}$ to $\csigmaOneGen_m$ is $1$-generic, see Definition~\ref{ref:1genericityInFamilies:def},
            \item\label{it:jointSpan} the restriction map $\varphi_1$ is regular and the maps $\varphi_{\mathbf{2}}, \ldots ,\varphi_d$ are jointly spanning for $\cA = \Cen{\Tppar{}}$, see Definition~\ref{ref:regularAndJointlySpanning:def}; note that the indexing starts from $2$.
        \end{enumerate}

        Classically~\cite[p.439]{Chiantini_Ciliberto__On_the_concept}, the open part of the abstract secant variety
        consists of pairs $\{x_1, \ldots ,x_m\}$, $[T]$, where
        $x_1, \ldots ,x_m$ are general points of $\mathbb{P}V_1\times \ldots \times \mathbb{P}V_d$ and $T\in \spann{x_1, \ldots ,x_m}$
        is general. Here, we work with a slightly different locus, defined as follows.
        Consider the reduced subscheme $\AsigmaHilb_m$ of $\Hilb_m^{\sm}(\mathbb{P}V_1\times \ldots \times \mathbb{P}V_d) \times \mathbb{P}(\tspace)$,
        whose $\kk$-points are pairs $(Z, [T])$ where
        \begin{enumerate}
            \item $[Z] = [\Spec(\cA)]\in \Hilb_m^{\sm}(\mathbb{P}V_1\times \ldots
                \times \mathbb{P}V_d)$ is a smoothable Gorenstein degree $m$ subscheme whose 
                linear span in $\mathbb{P}(V_{\mathbf{2}, \ldots ,d})$ is isomorphic to $\mathbb{P}^{m-1}$; note that the
                indexing starts from $2$.
            \item $T\in \spann{Z} \subset \PP(\tspace)$ is a tensor, which is a restriction of the evaluation tensor on a dual generator
                $\epsilon\colon \cA\to \kk$.
        \end{enumerate}
        %
        %
        %

        The main result of this section, Theorem~\ref{ref:abstractInsideConcise:thm} below, gives an isomorphism of $\csigmaOneGen_m$ with $\AsigmaHilb_m$.
        Before we go to the formal proof, we discuss what happens on $\kk$-points.

        \subsubsection{Comparison of $\kk$-points}\label{ssec:kpointsAbstractSecant}

        Take a $\kk$-point of $\csigmaOneGen_m$. By Lemma~\ref{ref:regularity:lem}, the maps $\varphi_1, \ldots ,\varphi_d$
        are jointly spanning, in particular regular. Let $\cA = \Cen{\Tppar{}}$ and $Z = \Spec(\cA)$.
        Since $\varphi_2, \ldots ,\varphi_d$ are jointly spanning,
        by~\eqref{eq:jointSpanConcrete}, they yield a closed embedding $Z\into
        \mathbb{P}V_2\times \ldots \times \mathbb{P}V_d$ with $\spann{Z}$ isomorphic to $\mathbb{P}^{m-1}$.
        Since $\varphi_1, \ldots ,\varphi_d$, are jointly spanning, they yield a closed embedding $Z\into
        \mathbb{P}V_1\times \ldots \times \mathbb{P}V_d$, so a point $[Z]\in \Hilb_d(\mathbb{P}V_1\times \ldots \times\mathbb{P}V_d)$.
        The tensor $\Tppar{}$ has minimal border rank, so $\cA$ is smoothable, hence $[Z]$ lies in $\Hilb_d^{\sm}$.
        Let $[T] = \varphi_{1, \ldots ,d}(\Tppar{})\in \mathbb{P}\tspace$ be the restriction.
        Since $\Tppar{}$ is $1$-generic, by Proposition~\ref{ref:1generic:cor}, it is an evaluation tensor associated to
        a dual generator $\epsilon \in \cA^{\vee}$, hence $T$ is a restriction of such a tensor. We obtain a point of $\AsigmaHilb_m$.

        Now, take a $\kk$-point $(Z, [T])$ in $\AsigmaHilb_m$. The linear span of $Z$ in $\mathbb{P}V_{2, \ldots ,d}$ has maximal dimension,
        so also the linear span of $Z$ in $\mathbb{P}\tspace$ has maximal dimension. Let $\cA$ be an algebra such that $Z = \Spec(\cA)$.
        Let $W_1^{\vee}$, \ldots , $W_d^{\vee}$ be the line bundles coming from the embedding $Z\into \mathbb{P}V_1\times \ldots \times \mathbb{P}V_d$ and
        $\varphi_i\colon W_i\to V_i$ be the restriction maps for $i=1, \ldots ,d$.
        Twist $\varphi_1$ by $\OO_{[T]}(-1)$.
        Comparing Proposition~\ref{ref:linearSpanAbstract:prop} and Definition~\ref{ref:regularAndJointlySpanning:def}, we see that
        $\varphi_1, \ldots ,\varphi_d$ are jointly spanning, so there is a \emph{unique} evaluation tensor $\Tppar{}\in W_1\otimes \ldots \otimes W_d$
        such that $\varphi_{1, \ldots ,d}(\Tppar{}) = [T]$.

        Analogously, the maps $\varphi_2, \ldots ,\varphi_d$ are jointly spanning.
        By Lemma~\ref{ref:consiseness1generic:lem}, for every $1\leq i\leq d$, the restriction
        \[
            \varphi_{i+1, \ldots ,d}(\Tppar{}) \in W_{1, \ldots ,i}\otimes V_{i+1,  \ldots , d}
        \]
        is $W_1$-, \ldots , $W_i$-concise. These, essentially speaking, yield a point of $\csigmaOneGen_m$.
        To be more precise, the contraction of the $W_1$-concise tensor $\varphi_{2, \ldots ,d}(\Tppar{})$ yields
        a subspace of $V_{2, \ldots ,d}$, which we call $U_1^{\vee}$ and an isomorphism $W_1^{\vee}\to U_1^{\vee}$. This isomorphism
        allows us to interpret $\varphi_{3, \ldots ,d}(\Tppar{})$ as a tensor $\Tppar{1}$ in $U_1\otimes W_2\otimes V_{3, \ldots ,d}$.
        Its contraction with respect to the second coordinate yields a subspace $U_2^{\vee}\subseteq U_1\otimes V_{3, \ldots ,d}$
        and an isomorphism $W_2^{\vee}\to U_2^{\vee}$. The subspace yields a tensor $\Tppar{1,2}$. Continuing, we obtain tensors all the way up to
        $\Tppar{1, \ldots ,d}$ which is isomorphic to $\Tppar{}$, in particular it is $1$-generic.
        Arguing as above, we see that $\varphi_2, \ldots ,\varphi_d$ are jointly spanning, so we obtain a $\kk$-point of $\csigmaOneGen_m$.

        \begin{example}
            Above, it is not enough to assume that $\varphi_1, \ldots ,\varphi_d$ are jointly surjective, the condition on $\varphi_2, \ldots ,\varphi_d$
            is necessary. Indeed, consider the situation from Example~\ref{ex:jointSurjectivity}. Namely, take $d = 4$, $\cA = \kk[x]/(x^5)$,
            $W_1^{\vee} =  \ldots = W_4^{\vee} = \cA$ and $V_i^{\vee}\to
            W_i^{\vee}$ is the inclusion of $\spann{1, x}$ and $\varphi_1 =
            \ldots = \varphi_4\colon W_i\to V_i$ be the associated
            restriction maps.
            Let $\epsilon = (x^4)^*$ be the functional in the monomial basis of
            $\cA$, and $\Tppar{}\in W_{1, \ldots ,4}$ be the associated tensor.
            Then the restriction $\varphi_{2, \ldots ,4}(\Tppar{})\in W_1\otimes V_{2, \ldots ,4}$ is not $W_1$-concise.
            Indeed, the associated contraction $W_1^{\vee} \to V_{2, \ldots , 4}$ sends $x^4$ to zero.
        \end{example}

        \subsubsection{$S$-points and comparison of functors}

        To prove Theorem~\ref{ref:abstractInsideConcise:thm}, we repeat the argument given above on $\kk$-points,
        but using $S$-points, where $S$ is a $\kk$-scheme.
        To even formulate Theorem~\ref{ref:abstractInsideConcise:thm}, we need to have openness of $\csigmaOneGen_m$ inside $\csigma_m$.

    We begin by generalising the notions from~\S\ref{ssec:lineBundlesAndSpans} to the setting of families of tensors.
        \begin{definition}\label{ref:1genericityInFamilies:def}
            Let $\WW_1, \ldots, \WW_d$ be rank $m$ vector bundles on a scheme $X$ and let
            \[
                T\colon \LL \into \WW_1\otimes \ldots \otimes \WW_d,
            \]
            be a concise
            (Definition~\ref{ref:concise:def}) family of tensors.
            For a fixed $1\leq i\leq d$, we say that $T$ is a family of \emph{$1_{\WW_i}$-generic tensors} if for every point $x\in
            X$, the tensor $T|_x$ is $1_{\WW_i}$-generic in the sense of
            Proposition~\ref{ref:genericity:prop}. We say that $T$ is a family of $1$-generic tensors if
            every $T|_x$ is $1$-generic.
        \end{definition}

        \begin{proposition}\label{ref:centroidsInFamilies:prop}
            Let $T$ be a family of $1$-generic tensors on a scheme $X$. Assume
            that $T$ is centroid-abundant
            (Definition~\ref{ref:secantLocus:def}). Then there is a subbundle
            \[
                \Cen{T}\into \End(\WW_1)\times \ldots \times\End(\WW_d)
            \]
            of commutative subalgebras such that $(\Cen{T})|_x \simeq \Cen{T|_x}$ for every $x\in X$. We call
            it the \emph{centroid} of $T$ or \emph{the family of centroids}.
            The vector bundles $\WW_1^{\vee}$, \ldots ,$\WW_d^{\vee}$ on $X$ are $\Spec(\Cen{T})$-line bundles.
            There is also a subbundle $\Cen{T}\cdot T\into \WW_1\otimes \ldots
            \otimes \WW_d$, which satisfies $(\Cen{T}\cdot T)|_x \simeq
            (\Cen{T|_x})\cdot T|_{x}$ for every $x\in X$.

        \end{proposition}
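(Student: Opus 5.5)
The centroid should be built as the kernel of the linear map of vector bundles from Definition~\ref{ref:secantLocus:def},
\[
    \Phi\colon \End(\WW_1)\times \ldots \times \End(\WW_d)\to \prod_{i=2}^{d} \WW_1\otimes \ldots \otimes \WW_d,
    \qquad (X_1, \ldots ,X_d)\mapsto (X_1\circ T - X_i\circ T)_{i=2,\dots,d}.
\]
The first point is that $\Phi$ has constant rank. By centroid-abundance, the kernel of $\Phi|_x$ has dimension $\geq m$ at every point. By Proposition~\ref{ref:genericity:prop}, applied to the $1$-generic tensor $T|_x$, we get $\dim_{\kk}\Cen{T|_x} = m$ exactly. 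Hence $\ker(\Phi|_x)$ has dimension exactly $m$ everywhere, and so $\Phi$ has constant rank.

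From constant rank I would deduce that $\ker \Phi$ is a subbundle whose formation commutes with base change. For reduced $X$ this is standard. In general, centroid-abundance is imposed scheme-theoretically: the ideal of $(m')$-minors vanishes, where $m'$ is the complementary rank. One then uses the local criterion. Locally there is an $m'\times m'$ minor which is a unit at $x$, hence a unit nearby. Vanishing of all larger minors then forces the matrix to be equivalent to $\mathrm{diag}(I_{m'}, 0)$. So the cokernel and the image are locally free, and the kernel is a subbundle, as in Lemma~\ref{ref:subbudle:lem}. I expect this to be the main technical point: making sure the scheme structure of the centroid-abundant locus gives exactly this minor condition. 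If it does not, I would instead argue locally via the $1$-generic description below, which gives the kernel explicitly.

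Next I would show that $\Cen{T}$ is a sheaf of commutative subalgebras. It is closed under composition, since fibrewise the kernel is closed under composition: if $X\circ T = X'\circ T$ coordinatewise, composition preserves this. More precisely, the map $\ker\Phi\otimes\ker\Phi\to \prod\End$ lands in $\ker \Phi$, and this can be checked locally after trivialising, by the same formal computation used for a single tensor. Commutativity is checked the same way: the commutator map $\ker\Phi\otimes\ker\Phi\to\prod\End$ vanishes on every fibre, and its source is locally free. For nonreduced $X$ it is better not to rely on fibres. I would use the local model instead. Locally, choose a section $\alpha_i$ of $\WW_i^{\vee}$ which at $x$ is a generator as in Proposition~\ref{ref:genericity:prop}. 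The evaluation map $\Cen{T}\to \WW_i^{\vee}$, $a\mapsto a\cdot\alpha_i$, is then a map of rank $m$ bundles which is an isomorphism on fibres near $x$, hence an isomorphism. Through it, the centroid relations become the identities of an evaluation tensor, as in Corollary~\ref{ref:1generic:cor}, now with coefficients in $\OO_X$. Those identities give associativity and commutativity.

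This isomorphism also shows that each $\WW_i^{\vee}$ is locally free of rank one over $\Cen{T}$, that is, a $\Spec(\Cen{T})$-line bundle. Finally, $\Cen{T}\cdot T$ is the image of the map $\Cen{T}\to \WW_1\otimes\ldots\otimes\WW_d$, $a\mapsto a\cdot T$. This map is fibrewise injective: if $a\cdot T|_x=0$, contracting and using conciseness forces $a_1=0$, and then all $a_i=0$. So by Lemma~\ref{ref:subbudle:lem} it is a subbundle, and it is compatible with base change, which gives $(\Cen{T}\cdot T)|_x\simeq \Cen{T|_x}\cdot T|_x$.
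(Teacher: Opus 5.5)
Your proposal is correct and follows the paper's proof. In both, $\Cen{T}$ is the kernel of the centroid map, and centroid-abundance together with Proposition~\ref{ref:genericity:prop} pins its rank to exactly $m$ at every point; the map $a\mapsto a\cdot\alpha$ to $\WW_i^{\vee}$ at a local generator is an isomorphism of rank $m$ bundles; and $\Cen{T}\cdot T$ is the image of a fibrewise injective map. You go beyond the paper in care rather than strategy: the paper simply asserts that constant pointwise rank yields a subbundle and that the kernel consists of commutative subalgebras, whereas you note that over a nonreduced base this needs the scheme-theoretic (minor-vanishing) meaning of centroid-abundance, and that composition and commutativity must come from formal identities rather than fibrewise checks. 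Concretely, all $(dm^2-m+1)$-minors vanish while some $(dm^2-m)$-minor is a unit, which corrects the off-by-one in your ``$(m')$-minors''.
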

        \begin{proof}
            Define $\Cen{T}$ as the kernel of the map of vector bundles
            \[
                \End(\WW_1)\times \ldots \times\End(\WW_d)\to \prod_{i=2}^{d} \WW_1\otimes \ldots \otimes \WW_d
            \]
            given by $(X_1, \ldots ,X_d)\mapsto (X_1\circ T - X_i\circ T)_{i=2, \ldots ,d}$.
            We claim that this map has constant rank. 
            By assumption, the kernel at every point is at least $m$-dimensional.
            By Proposition~\ref{ref:genericity:prop}, at every point the centroid is exactly $m$-dimensional,
            hence the map has constant rank, equal to $d\cdot m^2 - m$.
            By the constancy of rank, the kernel of this map is
            a vector subbundle which commutes with base change. In particular,
            $\Cen{T}$ satisfies $(\Cen{T})|_x \simeq \Cen{T|_x}$ for every
            $x\in X$. Similarly, we take
            the map $\Cen{T} \to \WW_1\otimes \ldots \otimes \WW_d$ given by $\Cen{T}\ni r\mapsto r\cdot T$. It is a map of vector bundles and it is
            injective on each fiber, hence it is a subbundle and we obtain $\Cen{T}\cdot T$.

            Finally, each vector bundle $\WW_i$ comes with a $\Cen{T}$-action, hence $\WW_i$ and $\WW_i^{\vee}$ are $\Cen{T}$-modules. For every point $x\in X$,
            by Proposition~\ref{ref:genericity:prop}, after perhaps shrinking $X$ to a neighbourhood of $x$, we have an element $\alpha\in \WW_i^{\vee}$
            such that the map $\Cen{T}\to \WW_i^{\vee}$ given by $r\mapsto r\cdot \alpha$ is surjective. Both $\Cen{T}$ and $\WW_i^{\vee}$ are rank $m$ vector
            bundles on $X$, so this map is an isomorphism. This means that $\WW_i^{\vee}$ is a line bundle on $\Spec_X(\Cen{T})\to X$.
        \end{proof}

        \begin{proposition}[relative spans in the ``good case'']\label{ref:linearSpanAbstractRelative:prop}
            Let $\ZZ\to S$ be a finite flat family with a map $\ZZ\to S \times \mathbb{P}V_1 \times \ldots \times \mathbb{P}V_d$.
            Let $\varphi_i\colon \WW_i\to \OO_S\otimes V_i$ be the associated maps of vector bundles on $\OO_S$ and assume that $\varphi_1, \ldots ,\varphi_d$ are jointly spanning
            at every point of $S$. Then the composition
            \begin{equation}\label{eq:relativeSpanEq}
                (\WW_1^{\vee}\otimes_{\OO_{\ZZ}} \ldots \otimes_{\OO_{\ZZ}}
                \WW_d^{\vee})^{\vee} \into \WW_1 \otimes_{\OO_S} \ldots \otimes_{\OO_S} \WW_d
                \to \VV_1\otimes_{\OO_S} \ldots \otimes_{\OO_S} \VV_d
            \end{equation}
            is a vector subbundle and for every $s\in S$ and its restriction yields the linear span $\spann{\ZZ|_s}$.
            This subbundle is called the \emph{relative linear span} of $\ZZ$.
        \end{proposition}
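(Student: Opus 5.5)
The plan is to prove this fibrewise-to-bundle statement by the standard criterion for maps of vector bundles: a map of locally free sheaves that is injective on every fibre is a subbundle (Lemma~\ref{ref:subbudle:lem}), and then to identify the fibre with the span using Proposition~\ref{ref:linearSpanAbstract:prop}. The key point is that all objects involved commute with base change to points of $S$.

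\textbf{Step 1: the first term is locally free and commutes with base change.} Since $\ZZ\to S$ is finite flat, write locally $\ZZ = \Spec_S \cA$ with $\cA$ a locally free $\OO_S$-algebra of rank $m$. The bundles $\WW_i^{\vee}$ are line bundles on $\ZZ$, so locally on $S$ (after shrinking to an affine neighbourhood where each $\WW_i^{\vee}\simeq\cA$ as $\cA$-modules, which is possible since $\cA$ is semilocal over a local base) we get $\WW_1^{\vee}\otimes_{\OO_\ZZ}\ldots\otimes_{\OO_\ZZ}\WW_d^{\vee}\simeq \cA$. This is locally free over $\OO_S$, and its formation commutes with arbitrary base change $s\to S$, because tensor products over $\cA$ commute with base change. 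Dualising over $\OO_S$ also commutes with base change for locally free sheaves. Hence the first arrow in~\eqref{eq:relativeSpanEq}, dual to the surjective multiplication map, is a subbundle, and its fibre at $s$ is $(W_1^{\vee}\otimes_{\cA_s}\ldots\otimes_{\cA_s} W_d^{\vee})^{\vee}$, with $\cA_s = \cA\otimes\kappa(s)$.

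\textbf{Step 2: fibrewise injectivity of the composite.} Restrict the composite~\eqref{eq:relativeSpanEq} to $s\in S$. By Step 1 and the base-change compatibility of $\varphi_i$, we obtain exactly the composition of Definition~\ref{ref:regularAndJointlySpanning:def} for $\ZZ|_s = \Spec\cA_s$ with maps $\varphi_i|_s$. Joint spanning at $s$ says this is injective. By Lemma~\ref{ref:subbudle:lem} (points criterion), the composite is a subbundle of $\OO_S\otimes\tspace$.

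\textbf{Step 3: identification with the span.} The fibre of the subbundle at $s$ is the image of $(W_1^{\vee}\otimes_{\cA_s}\ldots)^{\vee}\to\tspace\otimes\kappa(s)$. By Proposition~\ref{ref:linearSpanAbstract:prop}, applied to $\ZZ|_s\to\mathbb{P}V_1\times\ldots\times\mathbb{P}V_d$ over $\kappa(s)$, this image is $\spann{\ZZ|_s}$. Here the restriction maps of the fibre are the $\varphi_i|_s$, since the line bundles and surjections $\cA\otimes V_i^{\vee}\onto\WW_i^{\vee}$ pull back.

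\textbf{Main obstacle.} The only delicate point is Step 1: checking that the relative tensor product over $\OO_\ZZ$ is locally free of rank $m$ and that its dual commutes with base change. I would handle this by the local trivialisation $\WW_i^{\vee}\simeq\cA$, which reduces everything to the algebra $\cA$ itself.
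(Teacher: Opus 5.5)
Your proposal is correct and follows the paper's proof. Joint spanning gives fibrewise injectivity, Lemma~\ref{ref:subbudle:lem} upgrades this to a subbundle, and Proposition~\ref{ref:linearSpanAbstract:prop} identifies each fibre with $\spann{\ZZ|_s}$. Your Step~1, where you check via local trivialisations of the line bundles that $(\WW_1^{\vee}\otimes_{\OO_{\ZZ}}\ldots\otimes_{\OO_{\ZZ}}\WW_d^{\vee})^{\vee}$ is locally free and commutes with restriction to points, makes explicit what the paper uses tacitly.
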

        \begin{proof}
            Since $\varphi_1, \ldots ,\varphi_d$ are jointly spanning, the restriction of~\eqref{eq:relativeSpanEq}
            to every point is injective. Hence~\eqref{eq:relativeSpanEq} is a vector subbundle. Its connection with
            linear spans is proved in Proposition~\ref{ref:linearSpanAbstract:prop}, see also~\eqref{eq:jointSpanConcrete}.
        \end{proof}

        \begin{corollary}[Openness of $1$-generic locus]\label{ref:opennessOf1generic:cor}
            The locus $\csigmaOneGen_m$ is open inside $\csigma_m$.    
        \end{corollary}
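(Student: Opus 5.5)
The plan is to show that each of the two conditions defining $\csigmaOneGen_m$ cuts out an open subset of $\csigma_m$; their intersection is then open. Throughout, write $\Tppar{}\colon \OO\into \UU_1\otimes\ldots\otimes\UU_d$ for the universal concise family on $\csigma_m$, with rank $m$ bundles $\UU_i$ and restriction maps $\varphi_1,\ldots,\varphi_d$.

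The first condition is $1$-genericity. By Proposition~\ref{ref:genericity:prop}, at a point $x$ the tensor $\Tppar{}|_x$ is $1_{W_i}$-generic if and only if there is some $\alpha\in (\UU_i^{\vee})|_x$ and some $j\neq i$ such that the contraction $\Tppar{}|_x(\alpha)\colon (\UU_j^{\vee})|_x\to \bigotimes_{k\neq i,j}(\UU_k)|_x$ is injective. This presupposes that $\dim\Cen{\Tppar{}|_x}\geq m$, which I will justify at the end of this paragraph. To see openness, pass to the projective bundle $q\colon \mathbb{P}(\UU_i^{\vee})\to \csigma_m$. On it, the condition that the contraction by the tautological line has maximal rank is the nonvanishing of some maximal minor, so it defines an open set $O$. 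Since $q$ is flat and of finite type, it is an open map, and therefore $q(O)$ is open. Intersecting over $i=1,\ldots,d$ gives the open locus where $\Tppar{}$ is $1$-generic. The assumption $\dim\Cen{}\geq m$ holds at every point, because minimal border rank tensors are centroid-abundant (the closedness of the centroid-abundant locus, together with its containing the unit tensors). On this open locus $U$, Proposition~\ref{ref:centroidsInFamilies:prop} provides a rank $m$ family of algebras $\Cen{\Tppar{}}$, and it makes each $\UU_i^{\vee}$ into a line bundle over $\Spec_U\Cen{\Tppar{}}$.

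The second condition concerns the maps $\varphi_i$. On $U$, the joint spanning of $\varphi_2,\ldots,\varphi_d$ is expressed via~\eqref{eq:jointSpanConcrete} in families. Working locally, trivialise $\UU_i^{\vee}\simeq \Cen{\Tppar{}}$ using local generators $\alpha_i$. The composite $\OO_U\otimes V_{2,\ldots,d}^{\vee}\to \Cen{\Tppar{}}$, given by $\varphi_2^{\vee}\otimes\ldots\otimes\varphi_d^{\vee}$ followed by multiplication, is then a map of vector bundles. Its surjectivity at a point is exactly joint spanning there, and by Nakayama surjectivity of a map of vector bundles is an open condition. Similarly, regularity of $\varphi_1$ is surjectivity of the bundle map $\Cen{\Tppar{}}\otimes V_1^{\vee}\otimes\LL\to \UU_1^{\vee}$, $a\otimes v\mapsto a\varphi_1^{\vee}(v)$, which is again open. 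Note that these conditions do not depend on the choice of the $\alpha_i$, since changing them multiplies by units of $\Cen{\Tppar{}}$.

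The main obstacle is the first step. In that step I need to justify that $1$-genericity in families is really detected pointwise by the existence of some $\alpha$, and that the centroid hypothesis of Proposition~\ref{ref:genericity:prop} holds at every point of $\csigma_m$. For the latter, the first thing I would try is to use the smooth equivalence~\eqref{eq:smoothEquivalenceCsigma}, which reduces the question to the concise locus of $\sigmahat_m$ in $\tconcspace$. There, the centroid-abundant condition is closed, and it contains the dense set of unit tensors.
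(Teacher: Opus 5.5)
Your proof is correct and takes the same route as the paper. It shows that $1$-genericity is open via Proposition~\ref{ref:genericity:prop}, uses the flat family of centroids from Proposition~\ref{ref:centroidsInFamilies:prop} on that locus, and then observes that the pointwise injectivity/surjectivity condition expressing regularity and joint spanning is open. You also supply details the paper leaves implicit: the open-map argument on $\mathbb{P}(\UU_i^{\vee})$, and the centroid-abundance of every point of $\csigma_m$, which follows from closedness of that condition together with density of the unit-tensor orbit in $\sigmahat_m$.
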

        \begin{proof}
            Being $1$-generic is open thanks to Proposition~\ref{ref:genericity:prop}.
            On the $1$-generic locus, there is a flat family of centroids
            (Proposition~\ref{ref:centroidsInFamilies:prop}). Using this family, we obtain a family of linear maps as in
            Definition~\ref{ref:regularAndJointlySpanning:def}. Being injective
            is open in such a family.
        \end{proof}
        We are now in position to embed the abstract secant variety $\AsigmaHilb_m$ into $\csigma_m$.
        %


    \begin{theorem}[Concise secant compactifies the abstract one]\label{ref:abstractInsideConcise:thm}
        Let $\csigma_m$ be the concise secant variety (Definition~\ref{ref:conciseSecant:def}).
        Its tame locus $\csigmaOneGen_m$ is isomorphic to the abstract secant variety $\AsigmaHilb_m$. 
    \end{theorem}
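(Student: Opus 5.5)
The plan is to upgrade the comparison of $\kk$-points in \S\ref{ssec:kpointsAbstractSecant} to a comparison of functors of points on reduced test schemes $S$, and to build two mutually inverse morphisms $\Phi\colon \csigmaOneGen_m\to \AsigmaHilb_m$ and $\Psi\colon \AsigmaHilb_m\to \csigmaOneGen_m$. Since $\AsigmaHilb_m$ is defined as a reduced subscheme and $\csigma_m$ (hence its open subset $\csigmaOneGen_m$, by Corollary~\ref{ref:opennessOf1generic:cor}) is reduced, it is enough to construct the morphisms on reduced $S$ and check that they are mutually inverse. The two descriptions on $\kk$-points already agree and are bijective, so what remains is to make each step work in families.

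For $\Phi$, take $S\to \csigmaOneGen_m$. Pull back the universal concise family $\Tppar{}\colon \LL\to \UU_1\otimes\ldots\otimes\UU_d$ together with the maps $\varphi_\bullet$. Proposition~\ref{ref:centroidsInFamilies:prop} gives a finite flat family of algebras $\Cen{\Tppar{}}$, so $\ZZ := \Spec_S(\Cen{\Tppar{}})\to S$ is finite flat of degree $m$, and the bundles $\UU_i^{\vee}$ are line bundles on $\ZZ$. The maps $\varphi_i^{\vee}$ give $\OO_\ZZ$-linear maps $\OO_\ZZ\otimes V_i^{\vee}\to \UU_i^{\vee}$ (twisted by $\LL$ for $i=1$). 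These are surjective fibrewise by regularity, which follows from Lemma~\ref{ref:regularity:lem}, and by Nakayama they are surjective. This yields a morphism $\ZZ\to S\times\mathbb{P}V_1\times\ldots\times\mathbb{P}V_d$. Joint spanning, via \eqref{eq:jointSpanConcrete}, makes it a closed embedding fibrewise, hence a closed embedding, and so we get $S\to\Hilb_m$. The fibres are smoothable because $\Tppar{}$ has minimal border rank and are Gorenstein by Corollary~\ref{ref:1generic:cor}. The map to $\mathbb{P}\tspace$ is $\rho$, and Proposition~\ref{ref:linearSpanAbstractRelative:prop} places $T$ in the relative span. This gives $S\to \AsigmaHilb_m$.

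For $\Psi$, take $S\to\AsigmaHilb_m$ with $S$ reduced. Pull back the universal family $\ZZ\to S$ and the line bundles $\WW_i^{\vee}$, and twist $\varphi_1$ by $\OO(-1)$. By the fibrewise condition on spans, the maps $\varphi_2,\ldots,\varphi_d$ are jointly spanning at every point, and hence so are $\varphi_1,\ldots,\varphi_d$ by Lemma~\ref{ref:regularity:lem}. Proposition~\ref{ref:linearSpanAbstractRelative:prop} then identifies the relative span as a subbundle mapping isomorphically onto its image. So the line $\LL\subset \tspace\otimes \OO_S$ lifts uniquely to $\Tppar{}\colon \LL\to (\WW_1^{\vee}\otimes_{\OO_\ZZ}\ldots\otimes_{\OO_\ZZ}\WW_d^{\vee})^{\vee}\subset \WW_1\otimes\ldots\otimes\WW_d$, that is, to a family of evaluation tensors. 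Lemma~\ref{ref:consiseness1generic:lem}, applied fibrewise with Remark~\ref{ref:concise:rem}, shows that each partial restriction $\varphi_{i+1,\ldots,d}(\Tppar{})$ is $\WW_1,\ldots,\WW_i$-concise. We then iterate Proposition~\ref{ref:functorOfUnrestrictions:prop} exactly as in the proof of Proposition~\ref{ref:framedUnrestrictionPoints:prop}, using the subbundles $U_i^{\vee}$ given by the contractions. This produces $S\to\unresfullpar{\Tproj}$. The image lies in $\csigma_m$ because $S$ is reduced and the fibres have minimal border rank (smoothable Gorenstein implies $\brk=m$). It lies in the tame locus by Corollary~\ref{ref:1generic:cor}.

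The main obstacle is the family version of ``the fibre at the dual generator is $1$-generic''. We need that the functional $\epsilon$ obtained in $\Psi$ is a dual generator fibrewise, which is needed both for conciseness and for $\Tppar{}$ to be $1$-generic. The definition of $\AsigmaHilb_m$ states this condition only pointwise, and we then need that $\Phi\Psi$ and $\Psi\Phi$ are the identity. For the composites, I would check that the centroid of an evaluation tensor at a dual generator is canonically $\OO_\ZZ$ acting on $\WW_i^{\vee}$, i.e.\ that $\Cen{\Tppar{}}=\OO_\ZZ$ as subbundles of $\prod\End(\WW_i)$. The inclusion $\OO_\ZZ\subseteq\Cen{\Tppar{}}$ is clear, and equality holds by rank count using Proposition~\ref{ref:centroidsInFamilies:prop}. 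Given this, both composites are the identity on reduced $S$, and the isomorphism follows.
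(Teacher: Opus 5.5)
Your proposal is correct and follows the paper's proof essentially step by step. In one direction, the family of centroids gives $\ZZ=\Spec_S(\Cen{\Tppar{}})$, embedded via the $\varphi_i$; in the other, the relative span subbundle, the unique lift of $T$ to an evaluation tensor, and the iterated contractions give the point of $\csigmaOneGen_m$.

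Your restriction to reduced test schemes, and your identification $\Cen{\Tppar{}}=\OO_\ZZ$ to check that the two composites are the identity, make explicit points the paper leaves implicit. The ``main obstacle'' you flag is harmless, since conciseness and $1$-genericity of a family are pointwise conditions (Lemma~\ref{ref:subbudle:lem}, Definition~\ref{ref:1genericityInFamilies:def}).
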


    \begin{proof}

        To prove that $\csigmaOneGen_m$ and $\AsigmaHilb_m$ are isomorphic, we produce maps in both directions 
        on $S$-points for an any $\kk$-scheme $S$. This generalises the comparison for $S = \Spec(\kk)$ from~\S\ref{ssec:kpointsAbstractSecant}.

        Let $\Tppar{}$ be the universal tensor on $\csigmaOneGen_m$.
        Take an $S$-point of $\csigmaOneGen_m$, that is, a map $f\colon S\to \csigmaOneGen_m$.
        Let $\cA := \Cen{f^*\Tppar{}}$ be the centroid of $f^*\Tppar{}$ as in
        Proposition~\ref{ref:centroidsInFamilies:prop}.
        Let $g\colon \ZZ = \Spec_S(\cA) \to S$. This is a finite flat family of degree $m$ over $S$ and it is Gorenstein,
        that is, has Gorenstein fibres.

        Let $\varphi_1, \ldots ,\varphi_d$ be the restriction maps obtained from Proposition~\ref{ref:functorOfUnrestrictions:prop},
        so that $\varphi_1\colon U_1\to f^*\OO(1) \otimes V_1$ and $\varphi_i\colon U_i\to \OO_S\otimes V_i$ for $i=2, \ldots ,d$.
        The bundles $U_1^{\vee}, \ldots , U_d^{\vee}$ are $\cA$-modules. In fact, they are line bundles by assumption on $1$-genericity.
        To check that the associated maps
        \[
            g^*f^*\OO(-1) \otimes V_1^{\vee} \to
            U_1^{\vee},\quad \OO_{\ZZ}\otimes V_i^{\vee}
            \to U_i^{\vee}\quad\mbox{for}\quad i=2, \ldots ,d
        \]
        are surjective, we can argue point by point, so it follows from~\S\ref{ssec:kpointsAbstractSecant}.
        Each of the maps yields a globally generated line bundle on $\ZZ$, hence jointly they produce a morphism
        \begin{equation}\label{eq:Zmorphism}
            \ZZ\to S\times \mathbb{P}V_1\times \ldots \times \mathbb{P}V_d.
        \end{equation}
        %
        The restrictions $\varphi_2, \ldots
        ,\varphi_d$ are jointly spanning at any point, so
        by~\eqref{eq:jointSpanConcrete} we get that the map
        \begin{equation}\label{eq:surj}
            \OO_S\otimes V_2^{\vee}\otimes \ldots \otimes V_d^{\vee}\to  U_2^{\vee}\otimes_{\cA} \ldots \otimes_{\cA} U_d^{\vee}
        \end{equation}
        is surjective pointwise, so it is surjective. This means that the linear span $\spann{\ZZ}\subseteq \OO_S\otimes V_{2, \ldots ,d}$ defined in Proposition~\ref{ref:linearSpanAbstractRelative:prop} is a
        rank $m$ vector subbundle. Taking into account also $\varphi_1$, this implies in particular that the map~\eqref{eq:Zmorphism} is a closed embedding,
        so it yields a morphism $S \to \Hilb_m(\mathbb{P}V_1\times \ldots \times \mathbb{P}V_d)$. Since members of $f^*\Tppar{}$ have
        minimal border rank, the map actually factors through $\Hilb_m^{\sm}(\mathbb{P}V_1\times \ldots \times \mathbb{P}V_d)$.
        Projecting the tensor $f^*\Tppar{}$ using $\varphi_1, \ldots ,\varphi_d$, we obtain a tensor $T\colon f^*\OO(-1) \to \OO_S\otimes \tspace$,
        which lies in the family of spans of $\ZZ$ by Proposition~\ref{ref:linearSpanAbstractRelative:prop}.
        Since $\Tppar{}$ is $1$-generic pointwise, the tensor $T$ is pointwise a restriction of the evaluation tensor obtained
        from a dual generator, as claimed.
        The tensor $T$ yields a map $S \to \AsigmaHilb_m$.

        Conversely, take a morphism $S\to \AsigmaHilb_m$. It gives a morphism $S\to \Hilb_m^{\sm}(\mathbb{P}V_1\times \ldots \times \mathbb{P}V_d)$,
        which corresponds to a family $f\colon \ZZ\to S$ and its closed embedding $\ZZ\into S\times \mathbb{P}V_1\times \ldots \times \mathbb{P}V_d$.
        Let $\cA := f_*\OO_{\ZZ}$ be the corresponding algebra.
        The induced maps $\ZZ\to \mathbb{P}V_i$ yield maps $\cA\otimes_{\kk} V_i^{\vee}\onto \WW_i^{\vee}$ for some line bundles $\WW_i^{\vee}$ on $\ZZ$.
        Let $\varphi_i\colon \WW_i \to \OO_S \otimes_{\kk} V_i$ be the associated restriction maps; they are regular by their very construction.
        These maps produce a composition
        \begin{equation}\label{eq:tmpRelativeSpan}
            (\WW_1^{\vee}\otimes_{\cA} \ldots \otimes_{\cA} \WW_d^{\vee})^{\vee}\to \WW_1\otimes_{\OO_S} \ldots \otimes_{\OO_S} \WW_{d} \to \OO_S\otimes_{\kk} \tspace.
        \end{equation}
        By assumption and by Proposition~\ref{ref:linearSpanAbstractRelative:prop} this composition is injective pointwise, so (by Lemma~\ref{ref:subbudle:lem})
        it is a subbundle.
        The morphism $S\to \AsigmaHilb_m$ yields also a family of tensors $T\colon \LL\into \spann{\ZZ} \into \OO_S\otimes \tspace$
        which factors from the image of~\eqref{eq:tmpRelativeSpan}. Since~\eqref{eq:tmpRelativeSpan} is a subbundle,
        this family gives a family of tensors
        \[
            \Tppar{}\colon \LL\into
            (\WW_1^{\vee}\otimes_{\cA} \ldots \otimes_{\cA} \WW_d^{\vee})^{\vee}\into \WW_1\otimes_{\OO_S} \ldots \otimes_{\OO_S} \WW_d.
        \]
        By assumption, at every point, the family $\Tppar{}$ is an evaluation on a dual socle generator,
        see Corollary~\ref{ref:dualSocle:cor}, in the smoothable algebra, so every of these tensors has minimal border rank.
        For clarity, we replace $\WW_1$ by $\WW_1\otimes \LL^{\vee}$, so that
        $\Tppar{}$ becomes a map from $\OO_S$ and $\varphi_1\colon \WW_1 \to
        \LL^{\vee}\otimes_{\kk} V_1$.

        Using the restriction maps, we obtain tensors $\varphi_{2, \ldots
        ,d}(\Tppar{})\colon \OO_S\to \WW_1 \otimes_{\kk} V_{2, \ldots ,d}$,
        $\varphi_{3, \ldots ,d}(\Tppar{})\colon \OO_S \into \WW_1\otimes \WW_2\otimes_{\kk} V_{3, \ldots ,d}$, etc.
        The tensor $\varphi_{2, \ldots ,d}(\Tppar{})$ yields a contraction $\WW_1^{\vee}\to \OO_S^{\vee}\otimes_{\kk} V_{2, \ldots ,d}$.
        By~\S\ref{ssec:kpointsAbstractSecant}, this contraction is injective pointwise, so it is a subbundle. We obtain a
        subbundle $U_1^{\vee}\subseteq \OO_S\otimes_{\kk} V_{2, \ldots ,d}$ and an isomorphism $\WW_1^{\vee}\simeq U_1^{\vee}$.
        Similarly, we produce $U_2^{\vee}$, \ldots , $U_d^{\vee}$ and a tensor $\OO_S\into U_{1, \ldots ,d}$ isomorphic to $\Tppar{}$.
        The tensor $\Tppar{}$ is $1$-generic pointwise, hence so is $\OO_S\into U_{1, \ldots ,d}$.

        By assumption and by Proposition~\ref{ref:linearSpanAbstractRelative:prop} also the composition
        \[
            (\WW_2^{\vee}\otimes_{\cA} \ldots \otimes_{\cA} \WW_d^{\vee})^{\vee}\to \WW_2\otimes_{\OO_S} \ldots \otimes_{\OO_S} \WW_{d} \to \OO_S\otimes_{\kk} V_{2, \ldots ,d}.
        \]
        is injective pointwise, so (by Lemma~\ref{ref:subbudle:lem})
        it is a subbundle, hence $\varphi_2, \ldots ,\varphi_d$ are jointly spanning pointwise.
    \end{proof}

    \begin{example}
        The variety $\AsigmaHilb_r$ admits a map to $\Sym^m(\mathbb{P}V_1\times \ldots \times\mathbb{P}V_d)$ and one could ask whether
        it extends to a morphism on the whole $\csigma_r$. However, it turns out that the support of a minimal border rank tensor is not well-defined in general.
        Indeed, consider the Perazzo cubic $F = x_0^2x_3 + x_0x_1x_4 + x_1^2x_5$.
        As in~\cite{bucz_bucz_smoothable_rank_example}, one can produce a limit $F = \lim_{t\to 0} F_t$, where $F_t = \sum_{i=1}^5 \lambda_i(t)$ are of rank $5$ and where
        the five points $[\lambda_i(t)]$ converge to arbitrary $5$ pairwise different points on the line $\spann{x_0, x_1}$.
        The support of $F_t$ is $\left\{ [\lambda_1(t)], \ldots , [\lambda_5(t)] \right\}$, so this shows that the support of $F$ cannot
        be defined such that support of each of these families is a continuous map.
        This shows that the rational map $\csigma_r\dashrightarrow \Asigma_r$ is not defined everywhere.
    \end{example}

    \subsection{Relation with identifiability}\label{ssec:identifiability}

        Theorem~\ref{ref:abstractInsideConcise:thm} gives us plenty of information about the birational behaviour of
        the map $\rho$. The \emph{(nonembedded) expected dimension} of $\sigma_m\subseteq \mathbb{P}\tspace$ is the number 
        $m\cdot (\dim \Seg + 1) - 1 = m\cdot (\sum\dim V_{\bullet} - (d-1)) - 1$. We add the word ``nonembedded'', since
        it is more common to define the \emph{expected dimension} by
        \[
            \expdim \sigma_m = \min\left( m\cdot (\dim \Seg + 1)-1,\ \dim \mathbb{P}\tspace \right).
        \]
        The critical value of $m$ in the above formula is
        \begin{equation}\label{eq:scrit}
            s(\Seg) := \left\lfloor \frac{\prod_{i=1}^{d}\dim V_i}{\sum_{i=1}^d (\dim V_i - 1)+1} \right\rfloor.
        \end{equation}
        Below, we usually assume $m < s(\Seg)$, so that the nonembedded and usual expected dimensions agree.
        In contrast with $\sigma_m$, the dimension of $\csigma_m$ is always the expected one, as follows from Corollary~\ref{ref:dimensionOfcsigma:cor}.

        The Segre variety $\Seg \subseteq \mathbb{P}\tspace$ is \emph{$m$-nondefective} if
        $\dim \sigma_m = \expdim \sigma_m$, or equivalently, the map $\AsigmaHilb_m \to \sigma_m$
        is generically finite. Otherwise, it is \emph{$m$-defective}.
        This variety is \emph{$m$-identifiable} if the map $\AsigmaHilb_m\to \sigma_m$ is birational. 
        This can happen only if $\dim \sigma_m$ is equal to the nonembedded expected dimension.

        \begin{lemma}[Concise secants and defectivity]\label{ref:identifiability:lem}
            Take $V_{1}, \ldots ,V_d$ and $m < s(\Seg)$.
            Then the following hold:
            \begin{enumerate}
                \item $\Seg\subseteq \mathbb{P}\tspace$ is $m$-nondefective if and only if $\rho\colon \csigma_m\to \sigma_m$ is generically finite,
                \item $\Seg \subseteq \mathbb{P}\tspace$ is $m$-identifiable if and only if $\rho\colon \csigma_m\to \sigma_m$ is birational.
            \end{enumerate}
        \end{lemma}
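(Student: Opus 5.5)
The plan is to compare $\rho\colon \csigma_m\to\sigma_m$ with the classical projection $\AsigmaHilb_m\to\sigma_m$ through the open embedding of Theorem~\ref{ref:abstractInsideConcise:thm}. By that theorem, the tame locus $\csigmaOneGen_m$ is open in $\csigma_m$, and $\rho$ restricted to it is the natural projection $\AsigmaHilb_m\to\sigma_m$. Both claims follow once we know that $\csigmaOneGen_m$ is dense in $\csigma_m$ and that $\csigma_m$ is irreducible. Then $\rho$ and the abstract projection agree on a dense open subset, so generic finiteness and birationality can be checked on either one.

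\emph{Step 1: irreducibility of $\csigma_m$.} I would use the smooth equivalence~\eqref{eq:smoothEquivalenceCsigma}. The map $\upsilon\colon\csigmafr_m\to\csigma_m$ is a principal $\GLW$-bundle with connected fibres. The map $\alpha\colon\csigmafr_m\to\sigmahat_m^{\mathrm{conc}}$ is smooth, and its fibres are open subsets of the affine space $\prod\Hom(W_i,V_i)$ modulo scalars, cut out by the open conditions~\eqref{eq:starCondition}. These fibres are therefore irreducible whenever they are nonempty. The base $\sigmahat_m^{\mathrm{conc}}$ is irreducible, being an open subset of the irreducible $\sigmahat_m\subseteq\tconcspace$. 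Since $\alpha$ is smooth, hence open, with irreducible fibres over an irreducible base, $\csigmafr_m$ is irreducible, and so is its image $\csigma_m$.

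\emph{Step 2: density and dimension of the tame locus.} It suffices that $\csigmaOneGen_m$ is nonempty. I would produce a point by taking $m$ general points of $\Seg$ together with a general $T$ in their span; the hypothesis $m<s(\Seg)$ is exactly what I would invoke here to guarantee that $m$ general points have span $\mathbb{P}^{m-1}$ already in $\mathbb{P}V_{2,\ldots,d}$. Note that $\dim\AsigmaHilb_m = m\dim\Seg + m-1$, which matches $\dim\csigma_m$ from Corollary~\ref{ref:dimensionOfcsigma:cor}, consistent with openness.

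\emph{Step 3: conclude.} For (a), the target $\sigma_m$ is irreducible and $\rho$ is surjective, so $\rho$ is generically finite iff $\dim\sigma_m=\dim\csigma_m$. By Corollary~\ref{ref:dimensionOfcsigma:cor}, $\dim\csigma_m$ equals the nonembedded expected dimension, which coincides with $\expdim\sigma_m$ because $m<s(\Seg)$. This gives nondefectivity. For (b), $\rho$ is birational iff its restriction to the dense open $\csigmaOneGen_m\simeq\AsigmaHilb_m$ is birational onto $\sigma_m$, which is the definition of $m$-identifiability. I should also check that this definition is insensitive to replacing $\AsigmaHilb_m$ by its open subset of reduced $Z$.

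The main obstacle is Step 2: showing that the spanning condition in $\mathbb{P}V_{2,\ldots,d}$ holds for general points and confirming that $\AsigmaHilb_m$ is nonempty in the whole range $m<s(\Seg)$. I would first try the estimate $m\le \prod_{i\ge2}\dim V_i$, which should follow from $m<s(\Seg)$ in non-degenerate cases, and handle small unbalanced cases separately if that estimate fails.
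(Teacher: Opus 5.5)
Your proposal is correct and follows the paper's route: the paper's proof is the single line ``directly from definitions and Theorem~\ref{ref:abstractInsideConcise:thm}'', and your Steps 1--3 make explicit what that line uses silently, namely that $\csigma_m$ is irreducible (via the smooth equivalence) and that the tame locus is nonempty, hence dense. The step you flag as the main obstacle is immediate: the denominator in~\eqref{eq:scrit} is at least $\dim V_1$, so $m < s(\Seg) \leq \prod_{i\geq 2}\dim V_i$ always holds, and $m$ general points of the nondegenerate Segre $\mathbb{P}V_2\times\ldots\times\mathbb{P}V_d$ span a $\mathbb{P}^{m-1}$ with no unbalanced cases to treat separately.
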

        \begin{proof}
            Directly from definitions and Theorem~\ref{ref:abstractInsideConcise:thm}.
        \end{proof}

        The above is a birational result. Using $\csigma_m$, we can transform it into a result about varieties
        of sums of powers.
        \begin{corollary}\label{ref:infiniteFibers:cor}
            Assume that $\Seg$ is $m$-identifiable and let $[T]\in \sigma_m$ be a normal point.
            Then the concise variety of sums of powers $\cVSP([T])$ is either a point or
            it is positive-dimensional.
        \end{corollary}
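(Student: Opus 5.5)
The plan is to apply Zariski's Main Theorem to the proper birational morphism $\rho\colon \csigma_m\to \sigma_m$ near the normal point $[T]$, mirroring the proof of Theorem~\ref{ref:uniqueUnrestrictionSecant:thm}.

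First, since $\Seg$ is $m$-identifiable, Lemma~\ref{ref:identifiability:lem} shows that $\rho$ is birational. The map $\rho$ is also projective, hence proper, because it is the restriction of $\pi\colon \unresfullpar{\Tproj}\to \mathbb{P}(\tspace)$, a composition of Grassmannian bundles cut out by closed conditions. By Definition~\ref{ref:conciseVSP:def}, $\cVSP([T]) = \rho^{-1}([T])$. Surjectivity of $\rho$ onto $\sigma_m$ (Definition~\ref{ref:conciseSecant:def}) guarantees that this fibre is nonempty.

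Suppose the fibre is not positive-dimensional, so that $\rho^{-1}([T])$ is finite. By upper semicontinuity of fibre dimension for the proper map $\rho$, there is an open neighbourhood $U\ni [T]$ in $\sigma_m$ over which $\rho$ is quasi-finite. A proper quasi-finite morphism is finite, so $\rho^{-1}(U)\to U$ is finite. To apply Zariski's Main Theorem in the form ``a finite birational morphism onto a normal variety is an isomorphism'', I also need $\csigma_m$ to be a variety, that is, integral, or at least $\rho$ to be birational on every component through the fibre. I would get this from the smooth equivalence~\eqref{eq:smoothEquivalenceCsigma}: $\csigmafr_m\to\csigma_m$ is a smooth surjection and $\csigmafr_m\to\sigmahat_m^{\mathrm{conc}}$ is smooth with connected fibres (open subsets of affine spaces), and $\sigmahat_m^{\mathrm{conc}}$ is irreducible. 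This gives irreducibility of $\csigma_m$, and $\csigma_m$ is reduced by definition as a minimal border rank locus. Then $\rho^{-1}(U)\to U$ is finite and birational with normal target at $[T]$, so after shrinking $U$ it is an isomorphism. Hence the fibre $\cVSP([T])$ is a single (reduced) point.

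The main obstacle is the integrality and irreducibility of $\csigma_m$ near the fibre, i.e., excluding extra components of $\csigma_m$ that map into a proper closed subset of $\sigma_m$ through $[T]$. If the argument above via irreducibility of $\sigmahat_m^{\mathrm{conc}}$ is insufficient (for instance, if connectedness of the fibres of $\alpha$ fails), I would fall back on the dimension statement Corollary~\ref{ref:dimensionOfcsigma:cor} combined with the smoothness of $\alpha$. Every component of $\csigma_m$ has the expected dimension, which equals $\dim\sigma_m$ by identifiability. Any component with finite fibres over $U$ must then dominate $U$. Birationality then forces a unique such component, and that component is the closure of $\AsigmaHilb_m$ by Theorem~\ref{ref:abstractInsideConcise:thm}.
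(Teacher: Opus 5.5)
Your argument is correct and is essentially the paper's proof. The paper invokes Zariski's Main Theorem in its connectedness form: the fibres of the proper birational $\rho$ over the normal point $[T]$ are connected, so a finite fibre is a single point. You instead pass to a quasi-finite neighbourhood and use the form ``finite birational onto a normal variety is an isomorphism'', exactly as in the proof of Theorem~\ref{ref:uniqueUnrestrictionSecant:thm}.

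Your explicit check that $\csigma_m$ is integral fills in a hypothesis the paper uses only implicitly: it is reduced by definition, and irreducible via the smooth equivalence~\eqref{eq:smoothEquivalenceCsigma}. For that step, note that the fibres of $\alpha$ are nonempty open subsets of irreducible spaces, hence irreducible; connectedness alone would not suffice. The fallback dimension argument is not needed.
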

        \begin{proof}
            By definition, the scheme $\cVSP([T])$ is equal to $\rho^{-1}([T])$.
            By Zariski Main Theorem, this fibre of $\rho$ is connected.
        \end{proof}
        
        By Oeding's Conjecture~\ref{ref:Oeding:conj}, in ``good'' situations normality should hold.

        The questions of defectivity and identifiability of Segre (and Segre-Veronese) varieties have gathered much attention
        classically and since the Alexander and Hirschowitz theorem~\cite{AlexHirsch},
        the field exploded with new beautiful results. A very much non-exhaustive list, mostly geared towards the Segre case, is \cite{Chiantini_Ciliberto__Weakly_defective, Chiantini_Ciliberto__On_the_concept, Chiantini_Ciliberto__On_the_dimension, Abo_Brambilla, Abo_Ottaviani_Peterson_segre, Araujo_Massarenti_Rischter, Laface_Postinghel, Ballico__Partially_symmetric, Massarenti_Mella, Vannieuwenhoven_Vandebril_Meerbergen, Chiantini_Ottaviani_Vannieuwenhoven, Chiantini_Ottaviani_Vannieuwenhoven__On_generic_identifiability, Laface_Massarenti_Rischter, Ballico_Bernardi_Santarsiero, Dolevalek_Ken}. 
        To begin, for example, see the discussion in \cite{Abo_Brambilla_Galuppi_Oneto} or in~\cite{Casarotti_Mella}. For Segre, the picture, at least conjecturally, is quite complete.
        \begin{enumerate}
            \item For $m\leq 6$, the $m$-defective cases are listed in~\cite{Abo_Ottaviani_Peterson_segre}.
                The authors also formulate Question~6.6 which conjecturally lists \emph{all} defective cases in the Segre case.
            \item Casarotti-Mella~\cite[Theorem~18, Corollary~22]{Casarotti_Mella} (see also~\cite{Massarenti_Mella}) provide
                powerful tools to transform results about nondefectivity to ones about identifiability. We list two special cases below.
            \item If $\dim V_{\bullet} = 2$ then $\rho$ is birational for every $d$ and $m < s(\Seg)$ except for
                $(m, d) = (2, 1), (3,2), (4,2), (5,4), (6, 9)$, see~\cite[Theorem~26]{Casarotti_Mella}.
            \item If $d = 3$, $\dim V_1 = \dim V_2 = \dim V_3$, then $\rho$ is
                birational for every $m < s(\Seg)$,
                see~\cite[Theorem~28]{Casarotti_Mella}.
        \end{enumerate}

{\small
\newcommand{\etalchar}[1]{$^{#1}$}
}

\end{document}